\theoremstyle{plain}\newtheorem{Theorem}{Theorem}[section]
\theoremstyle{plain}\newtheorem{Corollary}[Theorem]{Corollary}
\theoremstyle{plain}\newtheorem{Lemma}[Theorem]{Lemma}
\theoremstyle{plain}\newtheorem{Definition}[Theorem]{Definition}
\theoremstyle{plain}\newtheorem{Proposition}[Theorem]{Proposition}
\theoremstyle{plain}
\theoremstyle{plain}
\theoremstyle{plain}\newtheorem*{Theorem*}{Theorem}
\newtheorem*{rep@theorem}{\rep@title}
\newcommand{\newreptheorem}[2]{%
\newenvironment{rep#1}[1]{%
 \def\rep@title{#2 \ref{##1}}%
 \begin{rep@theorem}}%
 {\end{rep@theorem}}}
\theoremstyle{plain}\newreptheorem{theorem}{Theorem}
\theoremstyle{remark}\newtheorem{Addendum}[Theorem]{Addendum}
\theoremstyle{remark}\newtheorem{remark}[Theorem]{Remark}
\theoremstyle{remark}\newtheorem{example}[Theorem]{Example}
\numberwithin{equation}{section}
\DeclareMathOperator{\ima}{Im}
\DeclareMathOperator{\SU}{SU}
\DeclareMathOperator{\SO}{SO}
\DeclareMathOperator{\Spin}{Spin}
\DeclareMathOperator{\Stab}{Stab}
\DeclareMathOperator{\Orb}{Orb}
\DeclareMathOperator{\Crit}{Crit}
\DeclareMathOperator{\Bif}{Bif}
\DeclareMathOperator{\Ad}{Ad}
\DeclareMathOperator{\Hom}{Hom}
\DeclareMathOperator{\Hess}{Hess}
\DeclareMathOperator{\id}{id}
\DeclareMathOperator{\ind}{ind}
\DeclareMathOperator{\CS}{CS} 
\DeclareMathOperator{\hol}{Hol} 
\DeclareMathOperator{\sym}{Sym}
\DeclareMathOperator{\U}{U}
\DeclareMathOperator{\s}{S}
\DeclareMathOperator{\diag}{Diag}
\DeclareMathOperator{\mat}{Mat}
\DeclareMathOperator{\Conj}{Conj}
\DeclareMathOperator{\supp}{supp}
\DeclareMathOperator{\grad}{grad}
\DeclareMathOperator{\mult}{mult}
\newcommand{\bC}{\mathbb{C}}
\newcommand{\bH}{\mathbb{H}}
\newcommand{\bK}{\mathbb{K}}
\newcommand{\bR}{\mathbb{R}}
\newcommand{\bZ}{\mathbb{Z}}
\newcommand{\clA}{\mathcal{A}}
\newcommand{\clC}{\mathcal{C}}
\newcommand{\clH}{\mathcal{H}}
\newcommand{\clG}{\mathcal{G}}
\newcommand{\clM}{\mathcal{M}}
\newcommand{\clP}{\mathcal{P}}
\newcommand{\clR}{\mathcal{R}}
\newcommand{\clS}{\mathcal{S}}
\newcommand{\clT}{\mathcal{T}}
\newcommand{\clF}{\mathcal{F}}
\newcommand{\frg}{\mathfrak{g}}
\newcommand{\frp}{\mathfrak{p}}
\newcommand{\fru}{\mathfrak{u}}
\newcommand{\frv}{\mathfrak{v}}
\begin{document}

\author{Shaoyun Bai}
\address{Department of Mathematics, MIT, Boston, MA, 02139, USA}
\email{shaoyunb@mit.edu}
\author{Boyu Zhang}
\address{Department of Mathematics, The University of Maryland at College Park, Maryland 20742, USA}
\email{bzh@umd.edu}
\title{Equivariant Cerf theory and perturbative $\SU(n)$ Casson invariants}
\begin{abstract}
We develop an equivariant Cerf theory for Morse functions on finite-dimensional manifolds with group actions, and adapt the technique to the infinite-dimensional setting to study the moduli space of perturbed flat $\SU(n)$--connections. As a consequence, we prove the existence of perturbative $\SU(n)$ Casson invariants on integer homology spheres for  all $n\ge 3$, and  write down an explicit formula when $n=4$. This generalizes the previous works of Boden-Herald \cite{boden1998the} and Herald \cite{herald2006transversality}.
\end{abstract}

\maketitle

\setcounter{tocdepth}{1}

\section{Introduction}

The Casson invariant is an invariant for oriented integer homology 3-spheres introduced by Casson in 1985 (see \cite{akbulut1990casson} or \cite{marin1988nouvel}).  Taubes \cite{taubes1990casson} proved that the Casson invariant is equal to half of the number of points (counted with signs) of the moduli space of irreducible critical points of the perturbed Chern-Simons functionals with $\SU(2)$--gauge. Boden and Herald \cite{boden1998the} studied the case when the gauge group is $\SU(3)$ and constructed a perturbative $\SU(3)$ Casson invariant for integer homology spheres. Variations of the $\SU(3)$ Casson invariant were later given by Boden-Herald-Kirk \cite{BHK2001} and Cappell-Lee-Miller \cite{cappell2002perturbative}. However, the construction of perturbative $\SU(n)$ Casson invariants for $n\ge 4$ has remained open since then.

A different approach of generalizing the Casson invariant was introduced by Boyer-Nicas \cite{boyer1990varieties} and Walker \cite{walker1992extension}, where one studies the intersection of representation varieties of handlebodies. Cappell-Lee-Miller \cite{cappell1990symplectic} announced a program of extending the Casson invariant to all oriented closed 3-manifolds and all semi-simple Lie groups using Bierstone transversality. The program was carried out in detail for $\SO(3)$, $\U(2)$, $\Spin(4)$, and $\SO(4)$ by Curtis \cite{curtis1994generalized}.

The main difficulty of constructing $\SU(n)$ Casson invariants using the Chern-Simons functional is that even if one could perturb it so that the moduli space of irreducible critical points is cut out transversely, the signed count of irreducible critical points depend on the perturbation. Therefore, one has to study the moduli space of both reducible and irreducible critical points, and understand the difference of the critical sets between different choices of perturbations, in order to write down a counting of critical points that is independent of the perturbation.

In the $\SU(3)$ case, Boden and Herald \cite{boden1998the} showed that the moduli space of critical points over a generic 1-parameter family of perturbations can only admit a particular type of bifurcation, and that one can write down a weighted counting of  critical points using spectral flow so that the counting does not change under the bifurcation, therefore the $\SU(3)$ Casson invariant is constructed. The constructions in \cite{BHK2001} and \cite{cappell2002perturbative} were  based on the same line of argument but used different  weight functions to make the resulting invariants behave better. Later, Herald \cite{herald2006transversality} studied the  bifurcations of moduli spaces for general gauge groups and characterized the possible bifurcations for $\SU(4)$, but the relation between the bifurcations and the spectral flows was not given, and the $\SU(4)$ Casson invariant was not discussed. The argument of \cite{herald2006transversality} relied on a technical property of $\SU(4)$--connections called ``sphere transitivity'', which is not satisfied by $\SU(n)$--connections in general.

In this paper, we give a complete description of the possible changes of the moduli space of critical points with different perturbations when the gauge group is $\SU(n)$, for all $n\ge 3$. We also compute the corresponding changes of the spectral flows. As a consequence, we prove that perturbative $\SU(n)$ Casson invariants exist for all $n\ge 3$ on integer homology spheres. We also write down an explicit formula of Casson invariant when $n=4$. Most of the arguments work for general three-manifolds and for arbitrary simple compact gauge groups, and we will state the results in the more general setting whenever possible. In fact, the only place that requires the manifold to be an integer homology sphere and that the gauge group to be $\SU(n)$ is in the construction of the equivariant index in Section \ref{subsec_gauge_inv_equi_ind}.

\begin{figure}
\label{figure_bif_SU5}
  \begin{overpic}[width=0.7\textwidth]{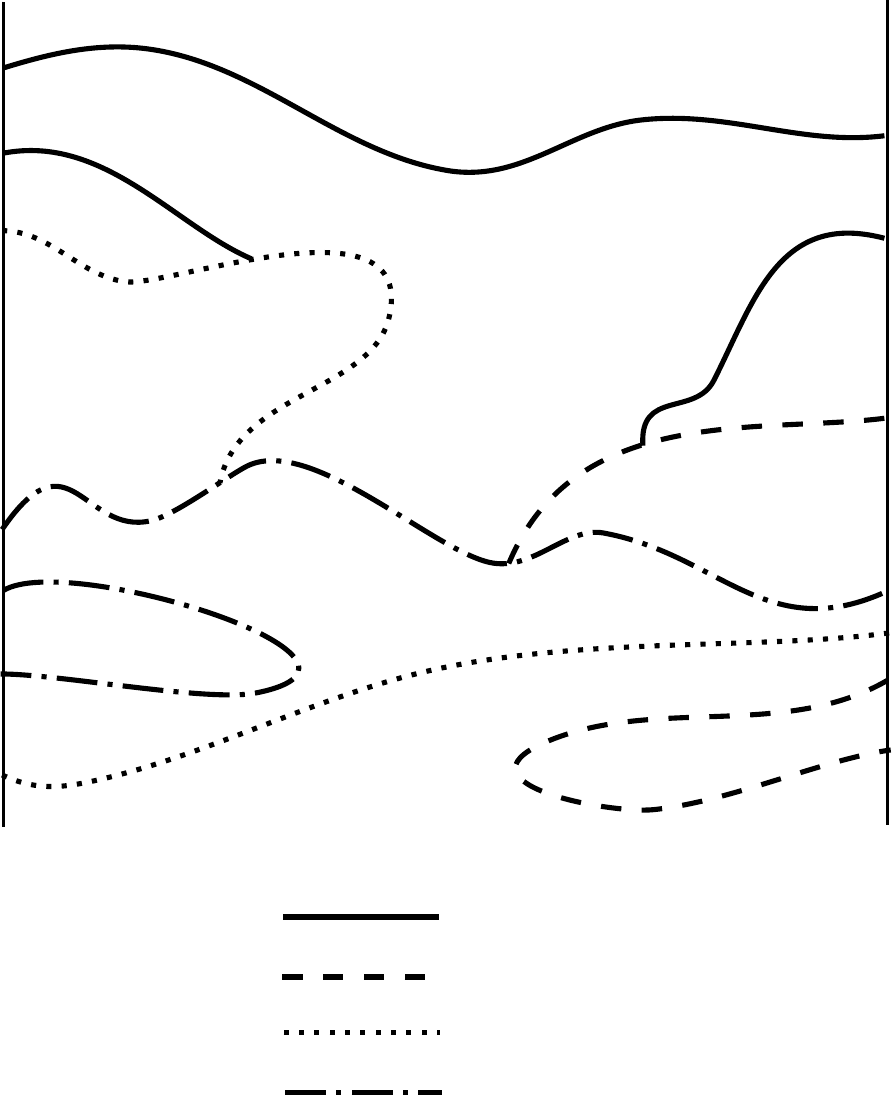}
  	\put(-3,20){$t=0$}
    \put(78,20){$t=1$}
    \put(45,15){$\mathbb{C}^5$}
    \put(45,10){$\mathbb{C}^4\oplus \mathbb{C}$}
    \put(45,5){$\mathbb{C}^3\oplus \mathbb{C}^2$}
    \put(45,-1){$(\mathbb{C}^2)^{\oplus 2}\oplus \mathbb{C}$}
  \end{overpic}
\caption{A possible bifurcation diagram for $\SU(5)$}
\end{figure}

In order to define the $\SU(n)$ Casson invariant, one needs to study the reducible connections with different possible stabilizers simultaneously. 
Figure \ref{figure_bif_SU5} illustrates some possible bifurcations of the moduli space over a 1-parameter family of perturbations when the gauge group is $\SU(5)$. Here, the notation $\bC^5$ means that the corresponding connection is irreducible; $\bC^4\oplus \bC$ means  it is given by the direct sum of an irreducible connection on the trivial $\bC^4$--bundle and a connection on the trivial $\bC$--bundle; $\bC^3\oplus \bC^2$ means  it is given by the direct sum of an irreducible connection on the trivial $\bC^3$--bundle and an irreducible connection on the trivial $\bC^2$--bundle; and $(\bC^2)^{\oplus 2}\oplus \bC$ means  the connection is given by $B_2\oplus B_2\oplus B_1$, where $B_2$ is an irreducible connection on the trivial $\bC^2$--bundle, and $B_1$ is a connection on the trivial $\bC$--bundle.

In the definition of the $\SU(3)$--Casson invariants \cite{boden1998the,BHK2001,cappell2002perturbative}, the weights on the reducible connections are given by the spectral flow, which assigns an integer to each critical point.
However, notice that in Figure \ref{figure_bif_SU5}, the moduli space of reducible connections of the form $(\bC^2)^{\oplus 2}\oplus \bC$ has two possible bifurcations: it can either bifurcate a reducible connection of the form $\bC^4\oplus \bC$, or a reducible connection of the form $\bC^3\oplus \bC^2$. Therefore, in order to keep track of this, we need to use a refinement of the spectral flow that can take values in higher-dimensional spaces. In Section \ref{subsec_equiv_spec_flow}, we will define an \emph{equivariant spectral flow}, which assigns to each critical point an element in the representation ring of the stabilizer. Similar to the classical spectral flow, the  equivariant spectral flow is not gauge invariant. In Section \ref{subsec_gauge_inv_equi_ind}, we add the equivariant spectral flow by another term given by the Chern-Simons functional to cancel the gauge ambiguity. As a result, we associate an \emph{equivariant index} to each critical orbit. The equivariant index takes value in $\widetilde{\clR}_{\SU(n)}$, which is a space defined by Definition \ref{def_tilde_R_G} using the representation rings of the subgroups of $\SU(n)$. The equivariant index of the orbit of $B$ will be denoted by $\ind B$. 

We say that a critical point of the perturbed Chern-Simons functional is \emph{non-degenerate}, if the Hessian of the perturbed Chern-Simons functional at the point has the minimum possible kernel. The precise definition will be given by Definition \ref{def_non-deg_perturbed_connection}. The next result will be proved as an immediate consequence of Theorem \ref{thm_change_total_index_gauge}:

\begin{Theorem}
\label{thm_existence_casson_intro}
For every $n\ge 3$, there exists a function 
	$$
	w: \widetilde{\clR}_{\SU(n)} \to \bC
	$$
	with the following property. 
	Suppose $Y$ is an integer homology sphere, let 
	$$P=\SU(n)\times Y$$ 
	be the trivial $\SU(n)$--bundle over $Y$, let $\theta$ be the trivial connection of $P$.
	Then for a generic holonomy perturbation $\pi$, the critical set of the perturbed Chern-Simons functional consists of finitely many non-degenerate orbits. 
	Let $\clM_\pi$ be the moduli space of critical points of the Chern-Simons functional perturbed by $\pi$, and decompose $\clM_\pi$ as 
	$$\clM_\pi=\clM_\pi^*\sqcup \clM_\pi^r,$$
	where $\clM_\pi^*$ consists of irreducible critical orbits, and $\clM_\pi^r$ consists of reducible critical orbits. Then for $\pi$ sufficiently small, the sum
	\begin{equation}
	\label{eqn_formula_lambda_intro}
			\lambda_{w}:= \sum_{[B]\in \clM^*} (-1)^{Sf(B,\pi)} + \sum_{[B]\in \clM^r} e^{\pi i\cdot \CS(\hat B)/(\pi^2)}\cdot w(\ind B)
	\end{equation}
	is independent of $\pi$, where $Sf(B,\pi)\in \bZ$ is the (classical) spectral flow from the extended Hessian (see \eqref{eqn_def_K}) of the $\pi$-perturbed Chern--Simons functional at the critical point $B$ to the extended Hessian of the unperturbed Chern--Simons functional at the trivial connection, and $\hat B$ is a flat connection close to $B$.
	 \end{Theorem}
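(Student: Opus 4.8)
The plan is to derive Theorem \ref{thm_existence_casson_intro} from Theorem \ref{thm_change_total_index_gauge}, which (as announced in the introduction) gives the complete local description of how the moduli space $\clM_\pi$ changes over a generic $1$-parameter family of holonomy perturbations, together with the corresponding change in the classical spectral flow and in the equivariant index. The strategy is the standard cobordism/bifurcation argument: given two small generic perturbations $\pi_0,\pi_1$, connect them by a generic path $\pi_t$; by the equivariant Cerf theory developed in the body of the paper, the parametrized moduli space $\bigcup_t \{t\}\times\clM_{\pi_t}$ is a $1$-manifold with boundary away from finitely many exceptional times, and at those exceptional times exactly one of a short list of elementary bifurcations occurs --- either a birth/death of a pair of critical orbits, or a crossing where reducibles of one stabilizer type split off reducibles of a smaller stabilizer type (the moves pictured in Figure \ref{figure_bif_SU5}). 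So it suffices to show that the quantity $\lambda_w$ in \eqref{eqn_formula_lambda_intro} is \emph{locally constant} across each of these elementary moves; invariance along the path then follows, and since the path can be chosen through arbitrarily small perturbations (using that the trivial connection is isolated and the relevant spaces of small perturbations are connected), $\lambda_w$ is well defined.

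Next I would organize the check move-by-move. For a birth/death of an irreducible pair, the two orbits $[B_+],[B_-]$ that appear or disappear have classical spectral flows differing by $\pm 1$, so their contributions $(-1)^{Sf(B_+,\pi)}+(-1)^{Sf(B_-,\pi)}$ cancel; this is the familiar $\SU(2)$-type computation and uses only that the Hessian acquires a single eigenvalue crossing zero. For a birth/death of a reducible pair, or for a crossing in which an irreducible orbit becomes reducible (or a reducible orbit of one type degenerates into one of smaller stabilizer), one reads off from Theorem \ref{thm_change_total_index_gauge} the precise change in the equivariant index $\ind B\in\widetilde{\clR}_{\SU(n)}$ and in $\CS(\hat B)$; the requirement that $\lambda_w$ be unchanged then becomes a finite system of \emph{linear} equations on the values of $w$ on $\widetilde{\clR}_{\SU(n)}$ (one equation per type of elementary move, indexed by the combinatorial data of how a partition of $n$ refines). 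The content of the argument is that this linear system is consistent: one either solves it explicitly, or exhibits a solution recursively on the partial order of stabilizer types (starting from the maximal reducibles, which are closest to the trivial connection $\theta$, and working outward), using that each bifurcation relates a type only to strictly ``more reducible'' types. The exponential factor $e^{\pi i\,\CS(\hat B)/\pi^2}$ is forced precisely so that the gauge-transformation ambiguity in $\ind B$ (noted in the discussion preceding the theorem) is cancelled, i.e.\ so that each summand is gauge invariant and the sum is over the genuine moduli space.

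The main obstacle I expect is verifying the consistency of this linear system, i.e.\ that a single function $w$ simultaneously compensates every elementary bifurcation and also every ``collision'' in which two bifurcation loci meet (the codimension-two phenomena that guarantee path-independence among the chosen generic paths). Concretely, at a reducible orbit with stabilizer larger than a torus --- such as the $(\bC^2)^{\oplus 2}\oplus\bC$ stratum in Figure \ref{figure_bif_SU5}, which can bifurcate to either $\bC^4\oplus\bC$ or $\bC^3\oplus\bC^2$ --- the two possible decays must impose compatible constraints on $w(\ind B)$, and this is exactly why a $\bZ$-valued spectral flow is insufficient and the $\widetilde{\clR}_{\SU(n)}$-valued equivariant index from Section \ref{subsec_gauge_inv_equi_ind} is needed: the finer invariant records enough to make the constraints compatible. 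I would handle this by showing that Theorem \ref{thm_change_total_index_gauge} expresses the index change in each bifurcation in terms of a ``universal'' local model depending only on the representation-theoretic data of the smaller stabilizer inside the larger one, so that solving the system reduces to a computation in representation rings that is insensitive to the global geometry; then a dimension count / triangularity argument on the stratification poset produces $w$. The subsidiary points --- that small generic perturbations suffice, that $\clM_\pi$ is finite, and that each summand is gauge invariant --- follow from the non-degeneracy hypothesis (Definition \ref{def_non-deg_perturbed_connection}), compactness of the moduli space, and the construction of the equivariant index, respectively, and I would dispatch them briefly before turning to the bifurcation bookkeeping.
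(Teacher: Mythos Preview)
Your overall strategy is sound, but you are misreading what Theorem \ref{thm_change_total_index_gauge} actually says and consequently proposing to redo a great deal of work that has already been packaged for you. Theorem \ref{thm_change_total_index_gauge} is not a ``local description of each bifurcation type''; it is the single global statement $\ind_\eta(\pi_0)-\ind_\eta(\pi_1)\in\widetilde{\Bif}_{\SU(n)}$. In other words, the bifurcation bookkeeping you describe (birth/death pairs, reducible crossings, Kuranishi reduction, the codimension-two compatibilities) has already been absorbed into the definition of the subgroup $\widetilde{\Bif}_{\SU(n)}\subset\bZ\widetilde{\clR}_{\SU(n)}$ and into the proof of Theorem \ref{thm_change_total_index_gauge} (via the finite-dimensional equivariant Cerf theory and Lemma \ref{lem_compare_indices_on_slice}). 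So once Theorem \ref{thm_change_total_index_gauge} is in hand, any homomorphism $w:\bZ\widetilde{\clR}_{\SU(n)}\to\bC$ that vanishes on $\widetilde{\Bif}_{\SU(n)}$ automatically makes $w(\ind \pi)$ perturbation-independent; there is no move-by-move check left to perform.

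The ``consistency of the linear system'' you worry about is precisely the content of Proposition \ref{prop_quotient_by_tilde_Bif} (the $\widetilde{\clR}$-analogue of Theorem \ref{thm_equivariant_euler_finite_abstract}): the map $\bZ\widetilde{\clR}_{\SU(n)}^{(0)}\to\bZ\widetilde{\clR}_{\SU(n)}/\widetilde{\Bif}_{\SU(n)}$ is an isomorphism, so $w$ can be freely prescribed on $\widetilde{\clR}_{\SU(n)}^{(0)}$ and then \emph{uniquely} extended to vanish on $\widetilde{\Bif}_{\SU(n)}$. The inductive construction on the stabilizer poset you sketch is indeed how that proposition is proved (see Lemmas \ref{lem_reduce_independent_of_g} and \ref{lem_reduce_independent_of_order}), but the point is that you should cite it rather than reprove it. The paper's actual proof of Theorem \ref{thm_existence_casson_intro} is therefore very short: embed $\bR\cong\clR(\{1\})\otimes\bR\hookrightarrow\widetilde{\clR}_{\SU(n)}$, set $w(s)=e^{\pi i s}$ on $[0,1)=\bR\cap\widetilde{\clR}_{\SU(n)}^{(0)}$, extend $w$ arbitrarily to the rest of $\widetilde{\clR}_{\SU(n)}^{(0)}$, invoke Proposition \ref{prop_quotient_by_tilde_Bif} to extend $w$ over all of $\bZ\widetilde{\clR}_{\SU(n)}$ with $w|_{\widetilde{\Bif}_{\SU(n)}}=0$, and then observe that for an irreducible orbit the equivariant index lies in $\bR$ and equals $Sf(B,\pi)-\CS(\hat B)/(4\pi^2 n)$, so that $w(\ind B)=(-1)^{Sf(B,\pi)}e^{-\pi i\,\CS(\hat B)/(4\pi^2 n)}$; multiplying through by $e^{\pi i\,\CS(B_\eta)/(4\pi^2 n)}$ componentwise gives the formula \eqref{eqn_formula_lambda_intro}.
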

The term $\pi^2$ in \eqref{eqn_formula_lambda_intro} comes from the normalization convention of the definition of the Chern-Simons functional in Equation \eqref{eqn_def_chern_simons}. 

When $n=3$, the $\SU(3)$--Casson invariant of Boden-Herald \cite{boden1998the} can be arranged into the form of \eqref{eqn_formula_lambda_intro}.

The function $w$ in Theorem \ref{thm_existence_casson_intro} is constructed by induction and hence it is possible to write down the formula for any given value of $n$. 
We will write down an explicit formula when $n=4$. \\

The proof of the main result is organized as follows: In Section \ref{sec_G_Morse} and Section \ref{sec_equivariant_Cerf}, we prove an analogous result on finite-dimensional manifolds. In Section \ref{sec_holonomy_perturbation}, we develop the necessary transversality properties for the holonomy perturbations. In Section \ref{sec_su(n)_casson}, we apply a Kuranishi reduction argument to prove the main results by reducing to the finite-dimensional case. In  Section \ref{sec_examples}, we characterize all the possible bifurcations of moduli space in $\SU(n)$--gauge and write down an explicit formula for the $\SU(4)$ Casson invariant.\\

The finite-dimensional results established in Section \ref{sec_G_Morse} and Section \ref{sec_equivariant_Cerf} can be thought of as an equivariant version of Cerf theory \cite{cerf1970stratification} and may be of independent interest. We briefly summarize the main result here. Suppose $G$ is a compact Lie group acting on a smooth closed oriented manifold $M$. From \cite{wasserman1969equivariant}, a smooth $G$--invariant function $f: M \to \bR$ is called \emph{$G$--Morse} if for all $p \in M$ being a critical point of $f$, the kernel of the Hessian of $f$ at $p$ is equal to the tangent space of the $G$--orbit passing through $p$. 
Suppose $p$ is a critical point of a $G$--Morse function,  we will define the \emph{equivariant index} of $p$ in Definition \ref{def_equi_index_finite_dim}, which is given by the subspace of $T_pM$ spanned by the negative eigenvectors of $\Hess_p f$, as a representation of the stabilizer of $p$. We use $\ind p\in\clR_G$ to denote the equivariant index of $p$, where $\clR_G$ is the set given by Definition \ref{def_clR_G} that consists of isomorphism classes of representations. 
It will be shown in Section \ref{sec_G_Morse} that the equivariant index is constant on the orbit of $p$. Let $\Conj(G)$ be the set of conjugation classes of closed subgroups of $G$, and let $\bZ\Conj(G)$ be the free abelian group generated by $\Conj(G)$. The set $\Conj(G)$ embeds canonically in $\clR_G$ by taking the zero representations, and we will regard $\Conj(G)$ as a subset of $\clR_G$.  Theorem \ref{thm_equivairant_cerf_finite_dim} will give a complete description of the possible differences of critical sets of different $G$--Morse functions on $M$. We will then prove the following result using Theorem \ref{thm_equivairant_cerf_finite_dim}.

\begin{Theorem}
\label{thm_finite_dim_Cerf_into}
Given $G$, there exists a unique map $\eta:\clR_G\to \bZ\Conj(G)$ with $\eta (\sigma)=\sigma$ for all $\sigma\in \Conj(G)$, such that the following holds. For every closed $G$--manifold $M$, let $f$ be a $G$--Morse function on $M$, let $\Crit(f)$ be the set of critical orbits of $f$, then the sum
$$
\sum_{[p]\in\Crit(f)}\eta(\ind p)
$$
is independent of the function $f$.
\end{Theorem}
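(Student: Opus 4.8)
The plan is to deduce the invariance of $\sum_{[p]\in\Crit(f)}\eta(\ind p)$ from the equivariant Cerf theorem (Theorem~\ref{thm_equivairant_cerf_finite_dim}), which classifies how $\Crit(f)$ can change in a generic $1$-parameter family, and then to extract the map $\eta$ by solving the resulting system of constraints. Since any two $G$--Morse functions $f_0,f_1$ on $M$ can be joined by a generic path $\{f_t\}$, and since a sum over $\Crit(f_t)$ is locally constant away from a discrete set of ``catastrophe'' parameters, it suffices to verify that the sum $\sum_{[p]}\eta(\ind p)$ does not change across each elementary bifurcation allowed by Theorem~\ref{thm_equivairant_cerf_finite_dim}. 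These bifurcations come in two flavors: (i) a birth/death where a pair of critical orbits $[p_-],[p_+]$ is created or annihilated, and (ii) a handle-slide type event where the orbit type/indices are unaffected. For (ii) there is nothing to check. For (i), the local model (an equivariant version of the standard Cerf death, so the two orbits differ by adding one more negative eigendirection inside a slice representation) forces a precise relation between $\ind p_-$, $\ind p_+$, and the stabilizer data; invariance becomes the single linear equation $\eta(\ind p_-)+\eta(\ind p_+)=0$ for each such admissible pair.

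First I would make the birth/death relation explicit: in the equivariant setting the Hessian at a degenerate orbit has kernel exactly the orbit direction plus one extra mode lying in some irreducible summand $V$ of the slice representation of the stabilizer $H$ at the degenerate point, and resolving the degeneracy produces two nearby orbits whose stabilizers and equivariant indices differ in a controlled way determined by $(H,V)$. This gives, for each pair $(H,V)$ with $V$ an $H$-irreducible appearing in a slice, a "cancellation relation" $R_{H,V}$ among elements of $\clR_G$. The desired $\eta$ must then be a homomorphism-like assignment $\clR_G\to\bZ\Conj(G)$ that (a) restricts to the identity on $\Conj(G)\subset\clR_G$, and (b) kills every $R_{H,V}$. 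I would set up $\eta$ by induction, ordering representations by dimension (or by the partial order on orbit types refined by dimension of the negative eigenspace): for $\sigma\in\Conj(G)$, set $\eta(\sigma)=\sigma$; for a general class $\rho\in\clR_G$ of positive "excess dimension", use one of the relations $R_{H,V}$ involving $\rho$ to express $\eta(\rho)$ in terms of $\eta$ of strictly smaller representations, which are already defined by induction. Existence of $\eta$ follows because Theorem~\ref{thm_equivairant_cerf_finite_dim} guarantees the relations are consistent (they all arise from genuine bifurcations, which compose associatively), and uniqueness follows because every $\rho\notin\Conj(G)$ is forced by at least one relation.

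The main obstacle I expect is showing that the inductive definition of $\eta$ is \emph{well-defined}, i.e. independent of which bifurcation relation $R_{H,V}$ one uses to reduce a given $\rho$, and that the induction actually terminates. Well-definedness is essentially a coherence statement: any two ways of resolving the "excess" of a representation by successive elementary equivariant deaths must lead to the same element of $\bZ\Conj(G)$; this should follow from Theorem~\ref{thm_equivairant_cerf_finite_dim} applied to a model manifold (e.g. a representation sphere $S(W)$) on which one can realize both reduction sequences as two generic paths between the same pair of $G$--Morse functions, so that the invariance of the (already-known) count on that model manifold forces the two expressions to agree. Termination requires a well-founded order on $\clR_G$; the dimension of the representation together with the finiteness of $\Conj(G)$ up to the relevant bound should suffice, though one must be careful that the slice-irreducibles $V$ appearing in the relations indeed have strictly positive dimension so each step strictly decreases the order. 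Once these two points are in place, the invariance statement is immediate: $\sum_{[p]}\eta(\ind p)$ is unchanged across every generic bifurcation, hence constant along any generic path, hence independent of $f$.
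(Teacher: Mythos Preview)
Your overall strategy---reduce via the equivariant Cerf theorem to a system of linear relations on $\eta$, then solve that system inductively and check coherence on model spaces---is exactly the paper's approach. However, your description of the bifurcations has a genuine inaccuracy that would derail the execution if left uncorrected.

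There are no ``handle-slide'' events in this story; the two elementary changes are (a) the birth--death bifurcation of Definition~\ref{def_birth_death_bif}, which occurs when the extra kernel direction $V$ is the \emph{trivial} $H$--representation and indeed creates or cancels a pair of orbits with indices $\sigma$ and $\sigma\oplus\bR$, and (b) the \emph{irreducible bifurcation} of Definition~\ref{def_irred_bifurcation}, which occurs when $V$ is a nontrivial irreducible $H$--representation. In case~(b) the local picture is not ``two nearby orbits'': the orbit at the origin persists (with its index shifting by $V$), and in addition a whole collection of new orbits appears on the sphere $S(V)$ at radius $\sim\sqrt{t/2}$, one for each critical orbit of an auxiliary $H$--Morse function $g$ on $S(V)$. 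These new orbits have strictly smaller stabilizers. The resulting relation is therefore not $\eta(\ind p_-)+\eta(\ind p_+)=0$ but rather $\eta\big(i^H_G\xi_H(V,V',g)\big)=0$, where $\xi_H^+$ involves the entire total index $\ind g\in\bZ\clR_H$ (see Definition~\ref{def_xi_+-} and equations~\eqref{eqn_total_index_t=-epsilon}--\eqref{eqn_total_index_t=+epsilon}).

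Once you have the correct relations, your inductive construction and coherence argument are right in spirit and match the paper. The paper packages the relations as the subgroup $\Bif_G\subset\bZ\clR_G$ and proves (Theorem~\ref{thm_equivariant_euler_finite_abstract}) that $\bZ\clR_G^{(0)}\hookrightarrow\bZ\clR_G\to\bZ\clR_G/\Bif_G$ is an isomorphism; the inverse of this isomorphism, composed with the quotient map, is your $\eta$. The induction there is on $(m(H),\dim V)$ rather than pure dimension, which is what guarantees termination: the orbits created by an irreducible bifurcation have stabilizers $K$ with $m(K)\prec m(H)$. Your proposed coherence check via model manifolds is exactly what Lemmas~\ref{lem_reduce_independent_of_g} and~\ref{lem_reduce_independent_of_order} accomplish, by applying Proposition~\ref{prop_equivairant_cerf_finite_dim_induction} to balls in $V$ or $V_1\oplus V_2$.
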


\begin{remark}
After the completion of the first version of this paper, we learned that the invariant given in Theorem \ref{thm_finite_dim_Cerf_into} is closely related to the notion of universal equivariant Euler characteristic from \cite[Chapter IV, 1]{tom-Dieck}, whose definition does not use Morse theory. See Addendum \ref{add:universal} for a more detailed discussion.
\end{remark}

\begin{remark}
	Let $\eta$ be given by Theorem \ref{thm_finite_dim_Cerf_into}.
For every homomorphism $\varphi$ from $\bZ\Conj(G)$ to $\bC$, the composition $\varphi\circ \eta$ defines  $\bC$--valued weight functions. Although the $\bZ\Conj(G)$--valued weight function $\eta$ is unique by Theorem \ref{thm_finite_dim_Cerf_into}, we see that $\bC$--valued weight functions are not unique.

On the other hand, Theorem \ref{thm_existence_casson_intro} is only stated with respect to $\bC$--valued weights, and it claims the existence but not the uniqueness of the weight functions. The reason that our Casson invariant doesn't lift to $\bZ\Conj(G)$ is because the spectral flow to the trivial connection is not gauge invariant, and we will use an additional term involving the Chern--Simons functional to cancel the gauge dependency. This leads to some additional algebraic difficulties that is discussed in  Section \ref{subsec_extend_ind_over_R}. As a consequence, the $\bC$--valued weights defined in  Theorem \ref{thm_existence_casson_intro} do not in general lift to  $\bZ\Conj(G)$. It may be possible to use the methods from \cite{BHK2001} to define $\bZ$--valued or $\bZ\Conj(G)$--valued Casson invariants.

%
\end{remark}

We finish the introduction with several additional remarks.

For the proofs of both Theorem \ref{thm_existence_casson_intro} and Theorem \ref{thm_finite_dim_Cerf_into}, it is crucial to establish certain transversality results under group actions (Lemmas \ref{lem_one_degeneracy_codim}, \ref{lem_two_degeneracy_codim}, \ref{lem_gauge_one_degeneracy_codim}, and \ref{lem_gauge_two_degeneracy_codim}). Our argument is adapted from an equivariant transversality argument of Wendl \cite[Theorem D]{wendl2016transversality}, which can be further dated back to Taubes \cite{taubes1996counting}.
	This argument seems to have simplified an earlier argument of Herald \cite{herald2006transversality}: roughly speaking, Herald's transversality argument would require considerations of higher order derivatives of the Chern--Simons functional (cf. \cite[Definition 20]{herald2006transversality}), while the transversality argument we use only considers derivatives up to the second order.

The sum over the reducible connections in \eqref{eqn_formula_lambda_intro} can be thought of as a ``correction term'' for the counting of irreducible connections. 
Although the construction of correction terms has its root in many papers on gauge theory, for example in \cite{boden1998the} and \cite{mrowka2011seiberg}, this paper shows that it is possible to construct correction terms for reducible connections on all strata simultaneously.
  Similar correction terms are also being sought for in other fields, for example, in the construction of enumerative invariants of calibrated 3-manifolds in $G_2$-manifolds \cite{doan2017counting}, and in the construction of integer-valued refinements of Gromov-Witten invariants.  We hope this paper can provide some insight into those questions as well.
  
  Nakajima \cite[Section 1(iv)]{nakajima2016towards} conjectured that $\SU(n)$ Casson invariants are related to the counting of solutions to the generalized Seiberg-Witten equations. In particular, the bifurcation phenomenon of the moduli space of perturbed flat $\SU(n)$--connections is conjecturally related to the non-compactness of the moduli space of generalized Seiberg-Witten equations. By this conjecture, the correction term in \eqref{eqn_formula_lambda_intro} could potentially be related to certain (conjectural) correction terms in the generalized Seiberg-Witten theory.

 It is also a natural question to ask about the \emph{properties} of the  $\SU(n)$ Casson invariants. The follow-up work of this paper \cite{bai2021casson} shows that the $\SU(n)$ Casson invariants can be understood as a version of equivariant intersection number of character varieties. Such an interpretation could potentially open up better structural understandings. In particular, it would be interesting to see if the invariants admit any surgery formulas. One can also ask about the asymptotic behavior of the $\SU(n)$ Casson invariant when $n\to\infty$. These questions will not be discussed in the current paper.

\subsection*{Acknowledgements} The first author thanks his Ph.D. advisor John Pardon for constant support and encouragements. He would also like to thank Mohan Swaminathan for several interesting discussions. This project was inspired by a joint work of the first author and Mohan Swaminathan on defining integer-valued refinements of Gromov-Witten invariants for Calabi-Yau $3$--folds. 

\section{$G$--Morse functions}
\label{sec_G_Morse}

Suppose $M$ is a closed smooth manifold, let $f_0$, $f_1$ be two (classical) Morse functions on $M$. Cerf's theorem \cite{cerf1970stratification} states that a generic 1-parameter family from $f_0$ to $f_1$ has finitely many degeneracies, and each degeneracy corresponds to a birth-death transition on the critical set. More precisely, suppose 
$$F:[0,1]\times M\to \bR$$ 
is a generic smooth map such that  $F(0,x)=f_0(x)$ and $F(1,x)=f_1(x)$, then $F(t,\cdot)$ is Morse for all but finitely many values of $t$; for every $t_0$ such that $F(t_0,\cdot)$ is not Morse, there is exactly one degeneracy point where $F$ is locally conjugate to 
\begin{equation}
\label{eqn_Cerf_normal_form}
c+x_1^3+\epsilon_1(t-t_0)x_1 + \epsilon_2 x_2^2+\cdots \epsilon _n x_n^2,
\end{equation}
with $\epsilon_i\in \{-1,1\}$ and $c\in \bR$ being constants.

Suppose $p$ is a critical point of a Morse function $f$, recall that the \emph{index} of $f$ at $p$ is defined to be the number of negative eigenvalues of the Hessian of $f$ at $p$ counted with multiplicities. By \eqref{eqn_Cerf_normal_form}, each time $F$ goes through a degeneracy point, it creates or cancels a pair of critical points with consecutive indices. Let $n_k(f)$ be the number of critical points of a Morse function $f$ with index $k$. Then by Cerf's theorem, for any two Morse functions $f_0$ and $f_1$ on $M$, we have
$$
\sum_{k=0}^{\dim M} (-1)^k n_k(f_0) = \sum_{k=0}^{\dim M} (-1)^k n_k(f_1). 
$$
As a consequence, the value of 
\begin{equation}
\label{eqn_alternating_sum_Morse}
	\sum_{k=0}^{\dim M} (-1)^k n_k(f)
\end{equation}
 does not depend on the Morse function $f$ and hence is an invariant of $M$.
It is well-known that \eqref{eqn_alternating_sum_Morse} is equal to the Euler number of $M$. 

The purpose of Section \ref{sec_G_Morse} and Section \ref{sec_equivariant_Cerf} is to generalize the results above and establish a Cerf theory for manifolds with group actions. Section \ref{sec_G_Morse} will introduce the necessary terminologies, and Section \ref{sec_equivariant_Cerf} will state and prove the main results.

\subsection{The equivariant topology of $M$}
For the rest of Section \ref{sec_G_Morse} and Section \ref{sec_equivariant_Cerf},
$G$ will denote a compact Lie group (which can be disconnected), and $M$ will denote a compact smooth manifold possibly with boundary. We also fix a smooth $G$--action and a smooth $G$--invariant Riemannian metric on $M$. By the invariance of domain, if $\partial M\neq \emptyset$, then $\partial M$ is preserved by the $G$--action.

This subsection establishes the basic topological properties of the $G$--action on $M$. Most of the results are standard, and the reader may refer to, for example, \cite{wasserman1969equivariant}, for a more complete discussion.

\begin{Lemma}
\label{lem_tbl_nbhd_partial_M}
$\partial M$ has a neighborhood that is $G$--equivariantly diffeomorphic to $(-1,0]\times \partial M$, where $G$ acts on $(-1,0]$ trivially.
\end{Lemma}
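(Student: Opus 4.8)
The plan is to realize the desired neighborhood as the image of the inward normal exponential map of the fixed $G$--invariant metric, and to observe that every ingredient of the classical collar-neighborhood construction is compatible with the $G$--action.

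First I would use the $G$--invariant Riemannian metric already chosen on $M$ and let $\nu$ be the inward-pointing unit normal vector field along $\partial M$. Since each $a\in G$ acts on $M$ by a diffeomorphism $\phi_a$ which is an isometry and which preserves $\partial M$, the field $\nu$ is $G$--equivariant: $d\phi_a(\nu_x)=\nu_{\phi_a(x)}$ for all $a\in G$ and $x\in\partial M$. Then, for a small constant $\epsilon>0$ to be fixed, I would consider the map
$$
\Phi\colon (-1,0]\times\partial M\longrightarrow M,\qquad \Phi(t,x)=\exp_x(-\epsilon t\,\nu_x),
$$
noting that $-\epsilon t\ge 0$, so this is the inward normal geodesic of length $-\epsilon t$ issuing from $x$. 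Because each $\phi_a$ is an isometry it intertwines the geodesic flow, i.e.\ $\phi_a(\exp_x(v))=\exp_{\phi_a(x)}(d\phi_a(v))$; combined with the equivariance of $\nu$ this yields $\Phi(t,\phi_a(x))=\phi_a(\Phi(t,x))$, so $\Phi$ is $G$--equivariant when $G$ acts trivially on $(-1,0]$. Moreover $\Phi(0,\cdot)=\id_{\partial M}$, and along $\{0\}\times\partial M$ the differential $d\Phi$ sends $\partial_t$ to $-\epsilon\nu_x$ and is the identity on $T_x\partial M$, hence is an isomorphism onto $T_xM$ since $\nu_x\notin T_x\partial M$. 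As $\partial M$ is compact, the tubular/collar neighborhood theorem then gives that for $\epsilon$ small enough $\Phi$ is a diffeomorphism onto an open neighborhood of $\partial M$ in $M$; since it is also $G$--equivariant, its inverse is the required $G$--equivariant diffeomorphism of a tubular neighborhood of $\partial M$ with $(-1,0]\times\partial M$.

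I do not expect a serious obstacle here; the substance is just the standard fact that the exponential map of an invariant metric is equivariant, together with compactness of $\partial M$ to obtain a collar of uniform width. The only point deserving a word of care is the bookkeeping that makes the collar have exactly the domain $(-1,0]$: this is why I insert the factor $-\epsilon t$, so that the linear change of variable $t\mapsto-\epsilon t$ identifies $(-1,0]$ with $[0,\epsilon)$ while fixing the endpoint corresponding to $\partial M$, making the normalization automatic.
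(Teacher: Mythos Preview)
Your proposal is correct and follows essentially the same approach as the paper: use the exponential map of the $G$--invariant metric on the normal bundle of $\partial M$, and invoke equivariance of the exponential map together with compactness of $\partial M$ to obtain a $G$--equivariant collar. The paper's proof is just a two-sentence sketch of exactly this construction; your version simply fills in more detail (the explicit equivariance check and the rescaling to normalize the collar width to $(-1,0]$).
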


\begin{proof}
	Let $\nu(\partial M)$ be the orthogonal complement of $T(\partial M)$ in $TM|_{\partial M}$. Since the metric on $M$ is $G$--invariant, the exponential map on $\nu(\partial M)$ gives a $G$--equivariant embedding from $(-\epsilon,0]\times \partial M$ to $M$ for $\epsilon$ sufficiently small.
\end{proof}

\begin{Definition}
\label{def_stab_orb_finite}
	For $p\in M$, define 
	\begin{align*}
		\Stab(p):=\{g\in G| g(p) = p\}, \\
		\Orb(p):=\{g(p) \in M| g\in G\}.
	\end{align*}
	Then $\Stab(p)$ is a closed subgroup of $p$, and $\Orb(p)$ is diffeomorphic to $G/\Stab(p)$. 
If $p\in M-\partial M$, then $\Orb(p)$ is a closed submanifold of $M$; if $p\in \partial M$, then $\Orb(p)$ is a closed submanifold of $\partial M$.
\end{Definition}

\begin{Definition}
	\label{defn_Sp}
	Suppose $p\in M-\partial M$, let $S_p\subset T_pM$ be the orthogonal complement of $T_p\Orb(p)$ in $T_pM$.
\end{Definition}

Since the metric on $M$ is $G$--invariant, $S_p$ is invariant under the action of $\Stab(p)$ and hence can be regarded as an orthogonal representation of $\Stab(p)$. Viewing $G$ as a principal $\Stab(p)$--bundle over $\Orb(p)$, then $G\times _{\Stab(p)} S_p$ is an associated vector bundle over $\Orb(p)$.
The next lemma is a well-known property of compact Lie group actions, and the reader may refer to, for example, \cite[Theorem 2.4.1]{duistermaat2012lie}, for a proof.

\begin{Lemma}
\label{lem_local_slice}
Suppose $p\in M-\partial M$, then $\Orb(p)$ has a $G$--invariant open tubular neighborhood that is $G$--equivariantly diffeomorphic to a $G$--invariant open neighborhood of the zero section of $G\times _{\Stab(p)} S_p$, where $\Orb(p)$ embeds as the zero section.
\qed
\end{Lemma}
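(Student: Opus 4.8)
The plan is to construct the diffeomorphism directly from the exponential map of the fixed $G$--invariant metric, which is the classical route to the slice theorem. First I would consider the normal bundle $\nu$ of $\Orb(p)$ in $M$, defined fiberwise as the orthogonal complement of $T_q\Orb(p)$ in $T_qM$ for $q\in\Orb(p)$; by definition its fiber over $p$ is exactly $S_p$. Since $G$ acts on $M$ by isometries and preserves $\Orb(p)$, the differential of the $G$--action preserves $\nu$, so $\nu\to\Orb(p)$ is a $G$--equivariant vector bundle. Viewing $G\to\Orb(p)=G/\Stab(p)$ as a principal $\Stab(p)$--bundle, the map $G\times S_p\to\nu$ sending $(g,v)$ to $dg_p(v)\in\nu_{g\cdot p}$ is well-defined, surjective, and $\Stab(p)$--invariant (because $dh_p$ preserves $S_p=\nu_p$ for $h\in\Stab(p)$), hence descends to a $G$--equivariant bundle map $\Phi\colon G\times_{\Stab(p)}S_p\to\nu$; one checks $\Phi$ is a diffeomorphism of total spaces taking the zero section to the zero section.

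Next I would use the $G$--equivariance of the exponential map: because the metric is $G$--invariant, $\exp_{g\cdot q}\circ\, dg_q=g\circ\exp_q$ for all $g\in G$ and $q\in M$. Since $\Orb(p)\cong G/\Stab(p)$ is compact (with $G$ compact) and disjoint from $\partial M$, the tubular neighborhood theorem, together with a standard compactness argument to make the radius uniform over the orbit, produces an $\epsilon>0$ such that $\exp$ restricts to a diffeomorphism from the open radius--$\epsilon$ disk subbundle $\nu_{<\epsilon}$ onto an open neighborhood $U$ of $\Orb(p)$ in $M$; shrinking $\epsilon$ further one arranges $U\subset M-\partial M$. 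The set $\nu_{<\epsilon}$ is $G$--invariant because $G$ acts on $\nu$ by isometries, and $U=\exp(\nu_{<\epsilon})$ is then $G$--invariant by the equivariance of $\exp$.

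Finally I would compose: $W:=\Phi^{-1}(\nu_{<\epsilon})\subset G\times_{\Stab(p)}S_p$ is a $G$--invariant open neighborhood of the zero section, and $\exp\circ\,\Phi|_W\colon W\to U$ is a composition of $G$--equivariant diffeomorphisms, hence itself a $G$--equivariant diffeomorphism sending the zero section onto $\Orb(p)$, which is exactly the claim. I do not expect a genuine obstacle here, since the statement is classical; the only points needing care are the compactness argument that yields a uniform tubular radius over the orbit, and the repeated bookkeeping that $\nu$, $\nu_{<\epsilon}$, $\Phi$, and $\exp$ are all compatible with the $G$--action. Alternatively, all of this can simply be replaced by a citation to \cite[Theorem 2.4.1]{duistermaat2012lie}, which is what we do.
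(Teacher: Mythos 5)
Your argument is correct and matches the paper exactly in spirit: the paper gives no proof of this lemma, simply citing \cite[Theorem 2.4.1]{duistermaat2012lie}, which is the same reference you fall back on, and your sketch via the equivariant exponential map on the normal bundle is the standard argument behind that citation.
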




We introduce the following definitions:

\begin{Definition}
\label{def_slice}
Suppose $p\in M-\partial M$. Let $D$ be an embedded closed disk in $M$ with dimension equal to the dimension of $S_p$.  Then $D$ is called a \emph{slice of $p$} if the following hold:
\begin{enumerate}
	\item $D$ intersects $\Orb(p)$ transversely at $p$, and $D$ is invariant under the $\Stab(p)$--action;
	\item $D$ is $\Stab(p)$--equivariantly diffeomorphic to a closed ball in $S_p$ where $p\in D$ is mapped to $0$;
	\item The map
		\begin{align*}
		 \varphi_D: G\times_{\Stab(p)} D & \to M \\
		[g,x] &\mapsto g(x)
	\end{align*}
	is a smooth embedding.
\end{enumerate}
\end{Definition}

By Lemma \ref{lem_local_slice}, every interior point of $M$ has a slice.

\begin{Definition}
\label{def_U_p(D)}
	Suppose $D$ is a slice of $p$, and suppose $\varphi_D$ is the diffeomorphism given by Definition \ref{def_slice}. Define $U_p(D)$ to be the image of $\varphi_D$ in $M$.
\end{Definition}

\begin{remark}
	\label{rem_local_G-invariant_function_slice}
 By definition, $U_p(D)$ is a closed $G$--invariant neighborhood of $\Orb(p)$. The set of $G$--invariant functions on $U_p(D)$ is in one-to-one correspondence with the set of $\Stab(p)$--invariant functions on $D$ via restrictions to $D$.
\end{remark}

\begin{Lemma}
\label{lem_conjugate_rep_same_nbhd}
	Suppose $H_1,H_2$ are closed subgroups of $G$, and suppose there exists $u\in G$ such that $$H_1=uH_2u^{-1}.$$ 
	Let
	 $$\rho_1:H_1\to \Hom(V,V)$$ 
	be a representation of $H_1$, and let $$\rho_2:H_2\to \Hom(V,V)$$ be the representation of $H_2$ defined by 
		$$\rho_2(h)=\rho_1(uhu^{-1}).$$
		Then $G\times_{\rho_1} V$ is $G$--equivariantly diffeomorphic to $G\times_{\rho_2} V$. 
\end{Lemma}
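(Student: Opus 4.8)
The plan is to write down an explicit $G$--equivariant diffeomorphism given by right translation on the $G$--factor. Recall that $G\times_{\rho_i}V$ is the quotient of $G\times V$ by the equivalence relation $(g,v)\sim (gh^{-1},\rho_i(h)v)$ for $h\in H_i$, with $G$ acting by left multiplication on the first factor, and that the quotient map $\pi_i\colon G\times V\to G\times_{\rho_i}V$ is a smooth fiber bundle projection, hence a surjective submersion. First I would define $\Phi\colon G\times V\to G\times V$ by $\Phi(g,w)=(gu^{-1},w)$ and check that it is compatible with the two equivalence relations, so that it descends to a map $\bar\Phi\colon G\times_{\rho_2}V\to G\times_{\rho_1}V$. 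The verification amounts to the following: given $h\in H_2$, the element $k:=uhu^{-1}$ lies in $H_1=uH_2u^{-1}$, and $\Phi$ sends $(gh^{-1},\rho_2(h)w)$ to $(gh^{-1}u^{-1},\rho_1(uhu^{-1})w)=(gu^{-1}k^{-1},\rho_1(k)w)$, which is $\sim$--equivalent in $G\times_{\rho_1}V$ to $(gu^{-1},w)=\Phi(g,w)$. Since $\bar\Phi\circ\pi_2=\pi_1\circ\Phi$ is smooth and $\pi_2$ is a submersion, $\bar\Phi$ is smooth.

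Next I would construct the inverse in the same way: the map $\Psi(g,v)=(gu,v)$ descends to $\bar\Psi\colon G\times_{\rho_1}V\to G\times_{\rho_2}V$, where now the relevant observation is that for $k\in H_1$ one has $h:=u^{-1}ku\in H_2$ and $\rho_2(h)=\rho_1(uhu^{-1})=\rho_1(k)$. Then $\bar\Psi\circ\bar\Phi$ and $\bar\Phi\circ\bar\Psi$ are induced on $G\times V$ by the identity $(g,v)\mapsto(guu^{-1},v)=(g,v)$, hence both are the identity, so $\bar\Phi$ is a diffeomorphism. Finally, $G$--equivariance is immediate because right translation commutes with left translation: $\bar\Phi(g_0\cdot[g,w])=[g_0gu^{-1},w]=g_0\cdot\bar\Phi([g,w])$.

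There is no genuine obstacle here; the only thing requiring care is keeping the left/right conventions and the direction of the conjugation by $u$ consistent, and confirming that translation by $u^{\pm 1}$ really does intertwine the $H_1$-- and $H_2$--actions on $G\times V$ in the sense above. Once the map $\bar\Phi$ is written down, the remaining checks are routine.
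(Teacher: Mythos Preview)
Your proof is correct and follows essentially the same approach as the paper: both construct the diffeomorphism by right translation by $u^{\pm 1}$ on the $G$--factor and check that it intertwines the two $H_i$--actions. The only difference is cosmetic---the paper defines the map in the direction $G\times_{\rho_1}V\to G\times_{\rho_2}V$ via $(g,v)\mapsto[gu,v]$ and leaves the inverse and equivariance as ``straightforward to verify,'' whereas you go the other way and spell out the inverse explicitly.
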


\begin{proof}
	Consider the map 
	\begin{align*}
	\varphi: G\times V &\to G\times_{\rho_2} V \\
	(g,v) &\mapsto [gu,v].
	\end{align*}
	For $h\in H_1$, we have
	\begin{align*}
	\varphi(gh,v) & = [ghu,v] = [gu\cdot u^{-1}hu,v]
	\\ & = [gu,\rho_2(u^{-1}hu)v] =[gu,\rho_1(h)v]= \varphi\big(g,\rho_1(h)v\big).
	\end{align*}
	Therefore $\varphi$ induces a map $\bar\varphi$ from $G\times_{\rho_1} V$ to $G\times_{\rho_2} V$. It is straightforward to verify that $\bar\varphi$ is a $G$--equivariant diffeomorphism.
\end{proof}

Lemma \ref{lem_local_slice} and
Lemma \ref{lem_conjugate_rep_same_nbhd} are the motivations of the following definition:

\begin{Definition}
\label{def_clR_G}
Let $\clR_G$ be the set of isomorphism classes of $(H,V,\rho)$, where $H$ is a closed subgroup of $G$, and $\rho:H\to\Hom(V,V)$ is a finite-dimensional representation of $H$. We say that $(H,V,\rho)$ is isomorphic to $(H',V',\rho')$, if there exists $g\in G$ and an isomorphism $\varphi:V\to V'$, such that
$$
	H' = g H g^{-1},
$$
and 
$$
\rho'(ghg^{-1}) =\varphi\circ \rho(h)\circ \varphi^{-1}.
$$
\end{Definition}

We also introduce the following notations for later reference:

\begin{Definition}
	Let $\Conj(G)$ be the set of conjugation classes of closed subgroups of $G$. Suppose $H$ is a closed subgroup of $G$, we will use $[H]\in \Conj(G)$ to denote the conjugation class of $H$.
\end{Definition}

\begin{Definition}
	Suppose $[H] \in \Conj(G)$. 
	Define $\clR_G([H])$ to be the subset of $\clR_G$ consisting of elements represented by the representations of $H$.
\end{Definition}

Let $\sigma_1,\sigma_2\in \clR_G([H])$, and suppose $\sigma_i$ is represented by $(H,V_i,\rho_i)$ for $i=1,2$. Then the direct sum of $\sigma_1$ and $\sigma_2$ is in general not well-defined, because there may exist an element $g\in G$ with $H=gHg^{-1}$, such that $h\mapsto \rho_1(h)\oplus \rho_2(h)$ and $h\mapsto \rho_1(h)\oplus \rho_2(ghg^{-1})$ are non-isomorphic representations of $H$ on $V_1\oplus V_2$. However, the following statement holds nonetheless. 

\begin{Lemma}
	\label{lem_defn_direct_sum}
Suppose $\sigma \in (H,V_1,\rho_1)$ represents an element in $\clR_G([H])$. 
Suppose $\tau\in \clR_G([G])$ is given by $(G,V_2,\rho_2)$. Then the element
\begin{equation}
	\label{eqn_direct_sum_defn}
\sigma\oplus \tau := [H,V_1\oplus V_2,\rho_1\oplus (\rho_2|_{H})]\in \clR_G([H])
\end{equation}
does not depend on the choice of the representatives. 
\end{Lemma}
\begin{proof}
Suppose $\tau$ is represented by another triple $(G, V_2', \rho_2')$ such that there exists $u \in G$ and an isomorphism $\varphi: V_2 \to V_2'$ with $\rho_2'(u g u^{-1}) = \varphi \circ \rho_2(g) \circ \varphi^{-1}$ for all $g \in G$. Then $(G,V_2,\rho_2)$ and $(G, V_2', \rho_2')$ are isomorphic as $G$--representations. Therefore \eqref{eqn_direct_sum_defn} does not depend on the choice of the representative of $G$. 

Now suppose $\sigma$ is represented by another triple $(H', V_1', \rho_1')$. Then there exists $u\in G$ and an isomorphism $\varphi:V_1\to V_1'$ such that $H' = uHu^{-1}$, and  $\rho_1'(u h u^{-1}) = \varphi \circ \rho_1(h) \circ \varphi^{-1}$ for all $h \in H$. Let $\psi = \varphi \oplus \rho_2(u): V_1\oplus V_2 \to V_1'\oplus V_2$. Then the pair $(u,\psi)$ defines an isomorphism between $[H,V_1\oplus V_2, \rho_1\oplus (\rho_2|_{H})]$ and $[H,V_1' \oplus V_2, \rho_1'\oplus (\rho_2|_{H})]$.
\end{proof}


\begin{Definition}
	\label{defn_dir_sum_clR}
Let $\bZ\clR_G$ denote the free abelian group generated by $\clR_G$. Define
\begin{equation}
\label{eqn_dir_sum_clR}
	\oplus : \bZ\clR_G\times \clR_G([G])\to \bZ\clR_G.
\end{equation}
to be the linear extension of the operator given by \eqref{eqn_direct_sum_defn}.
\end{Definition}

\begin{Definition}
\label{def_i^H_G}
	Suppose $H$ is a closed subgroup of $G$. Define 
	$$i^H_G:\clR_H\to\clR_G$$
	to be the tautological map by viewing subgroups of $H$ as subgroups of $G$. Then $i^H_G$ induces a homomorphism from $\bZ\clR_H$ to $\bZ\clR_G$, which we also denote by $i^H_G$.
\end{Definition}

\begin{Definition}
	Suppose $H$ is a compact Lie group, and let $\rho:H\to\Hom(V,V)$ be a finite-dimensional real representation of $H$. 
	\begin{enumerate}
		\item We say that $\rho$ is \emph{trivial}, if $\rho(h)= \id$ for all $h\in H$.
		\item  We say that $\rho$ \emph{has no trivial component}, if the isotypic decomposition of $(V,\rho)$ has no trivial component, or equivalently, if $\rho$ does not have non-zero fixed point.
	\end{enumerate}
\end{Definition}

We now return to the discussion of the topology of $M$.

\begin{Definition}
\label{def_stratify_M_representation}
For each $\sigma\in \clR_G$, define $M_\sigma$ to be the set of $p\in M-\partial M$ such that the $\Stab(p)$--representation $S_p$ represents the isomorphism class $\sigma$.
\end{Definition}

The following lemma is another standard property of compact Lie group actions, and the proof is essentially the same as \cite[Theorem 2.7.4]{duistermaat2012lie}.

\begin{Lemma}
\label{lem_M_sigma_basic_properties}
The decomposition $$M-\partial M=\bigcup_{\sigma\in \clR_G} M_\sigma$$ has the following properties:
\begin{enumerate}
	\item For every $\sigma$, the set $M_\sigma$ is a (not necessarily closed) $G$--invariant submanifold of $M-\partial M$.
	\item Suppose $p\in M_\sigma$. Then the action of $\Stab(p)$ on $T_p M_\sigma/ T_p\Orb(p)$ is trivial, and the action of $\Stab(p)$ on $T_p M/ T_p M_\sigma$ has no trivial component.
	\item Suppose $p\in M_\sigma$, let $[p]$ be the image of $p$ in $M_\sigma/G$. Let $D$ be a slice of $p$, let $D^0\subset D$ be the fixed-point subset of the $\Stab(p)$--action. Then $M_\sigma/G$ is a manifold, and $D^0$ maps diffeomorphically to a closed neighborhood of $[p]$ in $M_\sigma/G$ by the quotient map.
	\item There are only finitely many $\sigma$ such that $M_\sigma\neq \emptyset$.
\end{enumerate}
\end{Lemma}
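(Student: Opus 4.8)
The claim is a standard local-structure result for compact group actions (the "slice theorem" applied stratum-by-stratum), so the plan is to deduce it from Lemma~\ref{lem_local_slice} (the slice theorem) by a local computation, exactly mirroring the proof of \cite[Theorem 2.7.4]{duistermaat2012lie}. The key reduction: fix $p\in M_\sigma$, set $H:=\Stab(p)$, and use a slice $D$ for $p$, which we identify $H$--equivariantly with a closed ball in $S_p$ with $p\leftrightarrow 0$. Via the embedding $\varphi_D\colon G\times_H D\to M$ of Definition~\ref{def_slice}, the whole neighborhood $U_p(D)$ is modeled on $G\times_H D$, so all four assertions become statements about the linear $H$--action on $S_p$ that we can analyze directly. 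The first thing I would do is decompose $S_p=S_p^H\oplus W$ into the fixed subspace $S_p^H$ and its orthogonal complement $W$ (the part with no trivial component), using $H$--invariance of the metric on $S_p$.

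\textbf{Identifying $M_\sigma$ locally.} The central point is: a point $q=\varphi_D([g,x])$ with $x\in D\subset S_p$ lies in $M_\sigma$ iff $x\in S_p^H$ (up to shrinking $D$). Indeed, if $x$ is fixed by $H$ then $\Stab(q)$ is conjugate to a group containing $H$; one checks $\Stab(x)=H$ for $x$ near $0$ in $S_p^H$ (here I would invoke the standard fact that the isotropy can only drop to a smaller conjugacy class on any neighborhood, together with the principal-orbit-type argument along $S_p^H$ — this is precisely where "$M_\sigma$ near $p$ equals the $G$--saturation of a neighborhood of $0$ in $S_p^H$" comes from), and then the normal slice representation at $x$ is canonically isomorphic to $S_p$ as an $H$--representation, hence represents $\sigma$. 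Conversely, if $x\notin S_p^H$ then $\Stab(x)\subsetneq H$, so $q$ lies in a different stratum. Granting this, $M_\sigma\cap U_p(D)=\varphi_D(G\times_H (D\cap S_p^H))=\varphi_D(G\times_H D^0)$, which is a submanifold of $M-\partial M$, $G$--invariant, proving (1); and since $D\cap S_p^H=D^0$ maps into $M_\sigma$, the composite $D^0\hookrightarrow M_\sigma\to M_\sigma/G$ is a homeomorphism onto a neighborhood of $[p]$ (injectivity because $\varphi_D$ is an embedding and distinct $H$--orbits in $D^0$ are singletons), and it is a diffeomorphism onto its image once we know $M_\sigma/G$ is a manifold — so (3) follows as soon as the manifold structure is in place, which it is via these charts.

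\textbf{The tangent-space statements.} For (2), differentiate at $p$: $T_p\Orb(p)$ is the image of $\frg\to T_pM$, and $T_pM=T_p\Orb(p)\oplus S_p$ with $H$ acting trivially on $T_p\Orb(p)$ is false in general — rather $H$ acts on $T_pM/T_p\Orb(p)\cong S_p$, and $T_pM_\sigma/T_p\Orb(p)\cong S_p^H$ from the local model above. By construction $S_p^H$ is exactly the trivial isotypic component, so $H$ acts trivially on $T_pM_\sigma/T_p\Orb(p)$, while $T_pM/T_pM_\sigma\cong S_p/S_p^H\cong W$ has no trivial component. That gives (2). Finally (4): by Lemma~\ref{lem_local_slice} every orbit has such a neighborhood, the isotropy types occurring are locally finite, and $M$ is compact, so a standard covering/induction-on-orbit-type argument (as in \cite{duistermaat2012lie}) shows only finitely many $\sigma$ occur; alternatively one notes each $M_\sigma$ is the intersection of $M-\partial M$ with a locally closed set and the $M_\sigma$ partition the compact space $M$ into manifolds of bounded codimension, so finiteness follows.

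\textbf{Main obstacle.} The routine parts are the tangent-space bookkeeping in (2) and the compactness argument in (4). The genuine content — and the step I would write most carefully — is the claim that $M_\sigma\cap U_p(D)$ is precisely $\varphi_D(G\times_H D^0)$, i.e. that passing to a point $x\in S_p^H$ near $0$ does \emph{not} enlarge the stabilizer beyond (a conjugate of) $H$ and does not change the slice representation's isomorphism class. This requires the standard facts about principal orbit types restricted to the fixed-point manifold $S_p^H$ of the $H$--action, plus an induction on $\dim M$ or on the partial order of isotropy types; this is exactly the heart of \cite[Theorem 2.7.4]{duistermaat2012lie} and I would cite it for the bulk of this verification, spelling out only the identification of the slice representation at $x$ with $S_p$ as $H$--modules.
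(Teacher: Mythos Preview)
Your approach is essentially the same as the paper's: both reduce everything to the slice model and identify $M_\sigma\cap U_p(D)$ with $G\times_{\Stab(p)} D^0$, from which (1)--(3) follow directly, and both handle (4) by an inductive/compactness argument (the paper inducts explicitly on $\dim M$, reducing to the $\Stab(p)$--action on the unit sphere of $S_p$).

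Two minor comments. First, your ``alternative'' for (4) --- that the $M_\sigma$ partition a compact space into locally closed submanifolds of bounded codimension, hence must be finite in number --- is not valid as stated (a compact manifold admits partitions into infinitely many such pieces); stick with the inductive argument you mention first. Second, your ``main obstacle'' is slightly overstated: in the model $G\times_H D$ the stabilizer of $[e,x]$ is exactly $\Stab_H(x)\subset H$, so it can never enlarge. For $x\in D^0$ it equals $H$ and the slice representation at $[e,x]$ is visibly $S_p$ again (via the linear identification $T_xD\cong S_p$), so $[e,x]\in M_\sigma$. For $x\notin D^0$ the stabilizer is a \emph{proper} closed subgroup of $H$, and since for compact groups no proper closed subgroup of $H$ can be conjugate to $H$ in $G$ (compare dimension and number of components), the pair $(\Stab_H(x),S_{[e,x]})$ cannot represent $\sigma$. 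So the identification $M_\sigma\cap U_p(D)=\varphi_D(G\times_H D^0)$ is more immediate than you suggest, which is why the paper simply asserts it.
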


\begin{proof}	
	Let $p\in M-\partial M$ and $g\in G$, then $\Stab(gp)=g\Stab(p)g^{-1}$, and $S_{gp}=(Tg)(S_p)$, where $Tg$ is the tangent map of the action by $g$. Therefore $S_p$ and $S_{gp}$ represent the same element in $\clR_G$, and hence $M_\sigma$ is $G$--invariant.
	
	Let $D$ be a slice of $p$, and let $U_p(D)$ be the neighborhood given by Definition \ref{def_U_p(D)}. Let $D^0$ be the fixed-point subset of $D$ with respect to the $\Stab(p)$--action. Then $M_\sigma\cap U_p(D)$ is given by $G\times _{\Stab(p)} D^0$. Therefore $M_\sigma$ is a submanifold of $M$, and Parts (1), (2), (3)  of the lemma are proved.

To prove Part (4) of the lemma, we apply induction on $\dim M$. The statement is obvious if $\dim M=0$. Now suppose the statement is true for $\dim M<k$, we show that it also holds for $\dim M=k$. For $p\in M$, let $U_p$ be as above, let $M'$ be the unit sphere of $S_p$. By the induction hypothesis on $(\Stab(p),M')$, we conclude that there are only finitely many $\sigma\in \clR_G$ such that 
	$$M_\sigma\cap (U_p-\Orb(p))\neq \emptyset.$$ 
	Therefore, there are only finitely many $\sigma\in \clR_G$ such that 
	$M_\sigma\cap U_p\neq \emptyset.$
	 The statement then follows from the compactness of $M$ and Lemma \ref{lem_tbl_nbhd_partial_M}.
\end{proof}

\begin{Definition}
Let $\nu(M_\sigma)$ be the orthogonal complement of $TM_\sigma$ in $TM|_{M_\sigma}$.
\end{Definition}

By definition, $\nu(M_\sigma)$ is a $G$--equivariant vector bundle over $M_\sigma$. By Part (2) of Lemma \ref{lem_M_sigma_basic_properties}, for each $p\in M_\sigma$, the action of $\Stab(p)$ on $\nu(M_\sigma)|_p$ has no trivial component.

\begin{remark}
By standard linear algebra, orthogonal complements are canonically isomorphic to quotient spaces. For example, 
 $\nu(M_\sigma)$ is canonically isomorphic to $TM|_{M_\sigma} / TM_\sigma$. In the following, we will frequently identify orthogonal complements with quotient spaces without further comments.
\end{remark}

\subsection{$G$--Morse functions}

Suppose $f$ is a $G$--invariant $C^2$ function on $M$, then the critical set of $f$ is invariant under the $G$--action. An orbit $\Orb(p)\subset M$ is called a \emph{critical orbit} if it consists of critical points. 
Let $p$ be a critical point of $f$. We use $\Hess_pf:T_pM\to T_pM$ to denote the Hessian of $f$ at $p$. Then $\Hess_pf$ is a $\Stab(p)$--equivariant self-adjoint map, and 
\begin{align*}
&\Hess_pf\,\big(T_p\Orb(p)\big)=\{0\},\\
&\Hess_pf\,(S_p)\subset S_p.
\end{align*}

We can now introduce the definition of \emph{$G$--Morse functions}. Our definition of $G$--Morse functions is equivalent to the definition of \emph{Morse functions} in \cite{wasserman1969equivariant}. 
In the following definition, $N$ denotes a smooth manifold possibly with boundary, and it is endowed with a smooth $G$--action and a smooth $G$--invariant Riemannian metric. The manifold $N$ is allowed to be non-compact.

\begin{Definition} 
	Let $f$ be a $G$--invariant $C^2$ function on $N$. We say that $f$ is \emph{$G$--Morse}, if 
	\begin{enumerate}
		\item $\nabla f\neq 0$ everywhere on $\partial N$, 
		\item $\ker  \Hess_p f=T_p\Orb(p)$ for all critical points $p$ of $f$.
	\end{enumerate} 
	
\end{Definition}

\begin{remark}
Suppose $f$ is a $G$--Morse function on $N$, then the critical orbits are discrete, in the sense that their image in the quotient space $N/G$ forms a discrete set. If $N$ is compact, then $f$ has only finitely many critical orbits.
\end{remark}
 
\begin{remark}
Let $C^\infty_G(M)$ be the space of $G$--invariant $C^\infty$ functions on $M$, and recall that $M$ is compact.
If $\partial M = \emptyset$, by \cite[Lemma 4.8]{wasserman1969equivariant}, the set of smooth $G$--Morse functions are dense in $C^\infty_G(M)$, therefore $G$--Morse functions exist. If $\partial M\neq \emptyset$, one can construct a $G$--Morse function on $M$ by taking a $G$--Morse function $f$ on the double of $M$ such that $\nabla f \neq 0$ on $\partial M$. In conclusion, $G$--Morse functions always exist on $M$.
\end{remark}

\begin{remark}
\label{rmk_G_Morse_open}
	Let $C^2_G(M)$ be the Banach space of $G$--invariant $C^2$ functions on $M$. Then the set of $G$--Morse functions is open in $C^2_G(M)$.
\end{remark}

	

\begin{remark}
	\label{rem_local_slice_Morse}
	Suppose $p\in M-\partial M$, let $D$ be a slice of $p$, let $U_p(D)$ be the neighborhood of $p$ defined by Definition  \ref{def_U_p(D)}. Then the set of $G$--Morse functions on $U_p(D)$ is in one-to-one correspondence with the set of $\Stab(p)$--Morse functions on $D$ via restrictions to $D$.
\end{remark}

For $\sigma\in \clR_G$, recall that $\nu(M_\sigma)$ denotes the orthogonal complement of $TM_\sigma$ in $TM|_{M_\sigma}$.
	Suppose $f$ is a $G$--invariant function on $M$, and suppose $p\in M_\sigma$. Since $\Hess_p f$ restricts to a $\Stab(p)$--equivariant map on $S_p$, by Schur's lemma and Part (2) of Lemma \ref{lem_M_sigma_basic_properties}, we have 
	$$
	(\Hess_pf)\, \nu(M_\sigma)|_p\subset \nu(M_\sigma)|_p.
	$$
	
\begin{Definition}
\label{def_Hess_sigma_f}
	Suppose $M_\sigma\neq \emptyset$, let $f$ be a $G$--invariant $C^2$ function on $M$. Define 
	$$\Hess_\sigma f: \nu(M_\sigma) \to \nu(M_\sigma)$$ to be the bundle map  given by the Hessian of $f$. 
\end{Definition}
	
By definition, $\Hess_\sigma f$ is self-adjoint.

\begin{Lemma}
\label{lem_characterize_Morse_on_M_sigma}
	A $G$--invariant function $f$ is $G$--Morse if and only if the following holds: 
	\begin{enumerate}
		\item $\nabla f\neq 0 $ on $\partial M$,
		\item for every $\sigma\in \clR_G$ such that $M_\sigma\neq \emptyset$, the function $f|_{M_\sigma}$ reduces to a (classical) Morse function on $M_\sigma/G$.
		\item for every $\sigma\in \clR_G$ such that $M_\sigma\neq \emptyset$, $\Hess_\sigma f$ is non-degenerate at all critical points of $f|_{M_\sigma}$. 
	\end{enumerate}
\end{Lemma}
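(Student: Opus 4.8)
The plan is to prove Lemma \ref{lem_characterize_Morse_on_M_sigma} by translating the global $G$--invariant condition into a local one on slices, and then comparing it with the characterization of classical Morse functions on the quotients $M_\sigma/G$. First I would dispose of condition (1): the requirement $\nabla f\neq 0$ on $\partial M$ appears verbatim in both the definition of $G$--Morse and in the statement, so there is nothing to prove there, and I may restrict attention to the interior $M-\partial M$ for the rest of the argument. By the remark following Definition \ref{def_U_p(D)} and the slice remark after Remark \ref{rmk_G_Morse_open}, for each interior point $p$ with slice $D$, $G$--Morse functions on $U_p(D)$ correspond bijectively to $\Stab(p)$--Morse functions on $D$, and critical orbits of $f$ in $U_p(D)$ correspond to critical points of $f|_D$. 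So it suffices to work near a single critical orbit $\Orb(p)$, reducing everything to the local model of a $\Stab(p)$--action on a disk $D\cong$ (ball in $S_p$), with $p\mapsto 0$.

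Next I would analyze the local model. Write $H=\Stab(p)$ acting on $S_p$; let $D^0=S_p^H$ be the fixed subspace and $S_p=D^0\oplus W$ the $H$--isotypic splitting with $W$ having no trivial component (Part (2) of Lemma \ref{lem_M_sigma_basic_properties}). At a critical point $q$ of $f|_D$ lying in $D^0$ (which is where the stratum $M_\sigma\cap U_p(D)$ sits, by the computation $M_\sigma\cap U_p(D)=G\times_H D^0$ in the proof of Lemma \ref{lem_M_sigma_basic_properties}), the Hessian $\Hess_q(f|_D)$ is $H$--equivariant and self-adjoint, hence by Schur's lemma it splits along the isotypic decomposition: it preserves $T_qD^0\ (=D^0)$ and $W$, and the block on $W$ is exactly $\Hess_\sigma f|_q$ (up to the identification $\nu(M_\sigma)|_p\cong W$), while the block on $D^0$ is the Hessian of $f|_{D^0}$, which under $D^0\cong$ (neighborhood of $[p]$ in $M_\sigma/G$) is the Hessian of the reduced function on $M_\sigma/G$. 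The $G$--Morse condition $\ker\Hess_q f = T_q\Orb(q)$, pulled back to $D$, says precisely $\ker\Hess_q(f|_D)=0$; by the block decomposition this is equivalent to: $\Hess_\sigma f|_q$ is nondegenerate \emph{and} the reduced Hessian on $M_\sigma/G$ is nondegenerate, i.e. $f|_{M_\sigma}$ reduces to a genuine Morse function near $[q]$. This gives the equivalence pointwise on each stratum.

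Finally I would assemble the two directions. For ($\Rightarrow$): if $f$ is $G$--Morse, then at every critical orbit the local analysis above shows condition (2) holds near that orbit; since critical points of $f|_{M_\sigma}$ come from critical orbits of $f$ and the statement is local, (2) holds globally. One should also note that no critical point of $f$ can lie in $M-\partial M$ outside the union of the $M_\sigma$ since the $M_\sigma$ stratify $M-\partial M$ (Lemma \ref{lem_M_sigma_basic_properties}). For ($\Leftarrow$): given (1) and (2), take any critical point $q\in M-\partial M$; it lies in some $M_\sigma$, and running the local model in reverse, nondegeneracy of both the reduced Hessian on $M_\sigma/G$ and of $\Hess_\sigma f$ at $q$ forces $\ker\Hess_q f=T_q\Orb(q)$, so $f$ is $G$--Morse. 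The main obstacle, and the step requiring the most care, is the clean identification of the three pieces — the $G$--orbit directions, the stratum directions $T_qM_\sigma/T_q\Orb(q)$ (on which $H$ acts trivially, Part (2) of Lemma \ref{lem_M_sigma_basic_properties}), and the normal directions $\nu(M_\sigma)$ (on which $H$ acts with no trivial component) — together with the verification via Schur's lemma that the Hessian is block-diagonal with respect to this trichotomy, so that "minimum possible kernel" decouples into the two separate nondegeneracy conditions. Checking that $f|_{M_\sigma}$ being $G$--invariant descends to a $C^2$ function on the manifold $M_\sigma/G$ (Part (3) of Lemma \ref{lem_M_sigma_basic_properties}) whose critical points and Hessians are computed by restriction to $D^0$ is a routine but essential bookkeeping point.
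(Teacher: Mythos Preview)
Your proposal is correct and follows essentially the same approach as the paper: both reduce to the $\Stab(p)$--isotypic decomposition of $S_p$ (equivalently, of $T_pM/T_p\Orb(p)$) into trivial and nontrivial parts, then use Schur's lemma to see that $\Hess_p f$ is block-diagonal, so that the $G$--Morse condition decouples into nondegeneracy of the reduced Hessian on $M_\sigma/G$ and of $\Hess_\sigma f$. The paper makes one point slightly more explicit than you do---namely, that $\nabla_p f$ is $\Stab(p)$--invariant and hence tangent to $M_\sigma$, so critical points of $f$ in $M_\sigma$ coincide with critical points of $f|_{M_\sigma}$---but you allude to this in your bookkeeping remarks and it is indeed routine.
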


\begin{proof}
	Let $f$ be a $G$--invariant function, and take $p\in M_\sigma$.
	 By Lemma \ref{lem_local_slice}, $T_pM$ decomposes as a $\Stab(p)$--representation into
	$$
		T_pM=T_p\Orb(p)\oplus T_p M_\sigma/T_p\Orb(p) \oplus T_pM/T_p M_\sigma,
	$$
	where $T_p M_\sigma/T_p\Orb(p)$ is the trivial representation, and $T_pM/T_p M_\sigma$ does not have trivial components. Since $\nabla_p f$ is $\Stab(p)$--invariant, it must be tangent to $T_pM_\sigma$, therefore $p$ is a critical point of $f$ if and only it is a critical point of $f|_{M_\sigma}$. Besides, $\Hess_pf$ is injective on $T_pM/T_p\Orb(p)$ if and only if it is injective on both $T_pM/T_p M_\sigma$ and $T_p M_\sigma/T_p\Orb(p)$. Therefore the lemma is proved.
\end{proof}

The next two lemmas will be used in the proofs of Lemma \ref{lem_one_degeneracy_codim} and Lemma \ref{lem_two_degeneracy_codim} when we establish the equivariant transversality properties. The proofs of these lemmas are essentially contained in \cite{wasserman1969equivariant}.

\begin{Lemma}
\label{lem_surj_jet_M_sigma}
	Suppose $M_\sigma\neq \emptyset$, and suppose $F\subset M$ is a $G$--invariant compact subset disjoint from $M_\sigma$. Let $f_\sigma$ be a $G$--invariant smooth function on $M_\sigma$, and let 
	$$H_\sigma: \nu(M_\sigma)\to \nu(M_\sigma)$$
	 be a $G$--equivariant, smooth, self-adjoint bundle map. Suppose  $f_\sigma$ and $H_\sigma$ are compactly supported. Then there exists a $G$--invariant smooth function $f$ on $M$, such that 
	 \begin{enumerate}
	 	\item 	 $f|_{M_\sigma}=f_\sigma$, $\Hess_\sigma f=H_\sigma$,
	 	\item $f=0$ on a neighborhood of $F$.
	 \end{enumerate}

\end{Lemma}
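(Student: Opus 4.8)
The plan is to build $f$ by the standard equivariant extension-from-a-slice procedure, gluing the data across the finitely many orbit types of $M_\sigma$ and then damping to zero away from $M_\sigma$ (and in particular near $F$). First I would choose, using Lemma \ref{lem_local_slice}, a $G$--invariant tubular neighborhood $\mathcal{N}$ of $M_\sigma$ in $M-\partial M$ that is $G$--equivariantly diffeomorphic to a neighborhood of the zero section of $\nu(M_\sigma)\to M_\sigma$; since $M_\sigma$ has compact closure intersected with $\supp f_\sigma\cup\supp H_\sigma$, and $F$ is compact and disjoint from $M_\sigma$, I may shrink $\mathcal{N}$ so that $\overline{\mathcal{N}}\cap F=\emptyset$. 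Fix a $G$--invariant fiber metric and define, on $\nu(M_\sigma)$, the candidate
$$
\tilde f(q,v) = f_\sigma(q) + \tfrac12\,\langle H_\sigma(q)\,v,\,v\rangle ,
$$
where $q\in M_\sigma$ and $v\in\nu(M_\sigma)|_q$; this is $G$--invariant because $f_\sigma$, $H_\sigma$ and the metric are, it restricts to $f_\sigma$ on the zero section, and a direct computation of its fiberwise second derivative at $v=0$ gives $\Hess_\sigma \tilde f = H_\sigma$ (here one uses Part (2) of Lemma \ref{lem_M_sigma_basic_properties}: the normal Hessian sees exactly $\nu(M_\sigma)$ because $\Stab(p)$ acts without trivial component there, so there is no cross term with $TM_\sigma/T\Orb(p)$ to worry about). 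Transporting $\tilde f$ through the tubular-neighborhood diffeomorphism gives a $G$--invariant smooth function on $\mathcal{N}$ with the required 1- and 2-jet along $M_\sigma$.

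Next I would globalize. Pick a $G$--invariant smooth cutoff $\chi: M\to[0,1]$ with $\chi\equiv 1$ on a smaller $G$--invariant neighborhood of $\supp f_\sigma\cup\supp H_\sigma$ inside $\mathcal{N}$ and $\supp\chi\subset\mathcal{N}$ (such a $\chi$ exists by averaging an ordinary cutoff over $G$ using the Haar measure, which is legitimate since $G$ is compact). Set $f := \chi\cdot\tilde f$, extended by $0$ outside $\mathcal{N}$. Then $f$ is $G$--invariant and smooth; on the zero section $M_\sigma$ we have $\chi\equiv 1$ on the support of $f_\sigma$ and $f_\sigma$ vanishes elsewhere, so $f|_{M_\sigma}=f_\sigma$; and since $\chi\equiv 1$ near $\supp H_\sigma$, the normal Hessian is unchanged there, while away from $\supp H_\sigma$ both $\Hess_\sigma f$ and $H_\sigma$ vanish, giving $\Hess_\sigma f = H_\sigma$ on all of $\nu(M_\sigma)$. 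Finally, because $\overline{\mathcal{N}}\cap F=\emptyset$ and $\supp f\subset\overline{\mathcal{N}}$, the function $f$ vanishes on a neighborhood of $F$, which is Part (2).

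The only genuine subtlety — and the step I expect to be the main obstacle — is verifying that the fiberwise quadratic form $\tfrac12\langle H_\sigma(q)v,v\rangle$ really does produce the prescribed $\Hess_\sigma f$ and introduces no spurious degeneracy or cross terms in the full Hessian along $M_\sigma$: one must check that in the splitting $T_pM = T_p\Orb(p)\oplus T_pM_\sigma/T_p\Orb(p)\oplus \nu(M_\sigma)|_p$ the Hessian of $f$ at a point $p\in M_\sigma$ is block-diagonal with the $\nu$--block equal to $H_\sigma(p)$, so that Definition \ref{def_Hess_sigma_f} applied to $f$ returns $H_\sigma$. This is exactly where $G$--invariance plus Schur's lemma (as already used before Definition \ref{def_Hess_sigma_f}) and Part (2) of Lemma \ref{lem_M_sigma_basic_properties} are needed; the argument is essentially the slice computation in \cite{wasserman1969equivariant}, and I would either cite it or carry out the one-line jet computation in the slice model $D$ for a representative orbit type, noting that the construction is canonical enough to patch over the finitely many orbit types of $M_\sigma$ (finiteness by Part (4) of Lemma \ref{lem_M_sigma_basic_properties}) since everything is defined directly on the bundle $\nu(M_\sigma)$ rather than chart-by-chart.
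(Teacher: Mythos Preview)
Your proposal is correct and follows essentially the same construction as the paper: pull $f_\sigma$ back to a tubular neighborhood of (a compact piece of) $M_\sigma$ via the normal-bundle projection, add the fiberwise quadratic $\tfrac12\langle H_\sigma v,v\rangle$, cut off equivariantly, and extend by zero away from $F$. The paper writes an additional correction term $-\langle(\Hess_\sigma\tilde f)\,x,x\rangle$ which in fact vanishes when the tubular neighborhood is the exponential map (radial lines are then geodesics along which $\tilde f=f_\sigma\circ\pi$ is constant), so your omission of it is harmless; likewise your closing worries are unnecessary---the cross terms between $\nu(M_\sigma)$ and $TM_\sigma$ vanish automatically by the Schur argument already recorded before Definition~\ref{def_Hess_sigma_f}, and $M_\sigma$ is by construction a single stratum, so there is nothing to patch.
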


\begin{proof}
Let $A,B$ be $G$--invariant open subsets of $M_\sigma$, such that
$$ \supp f_\sigma\cup\supp H_\sigma\subset A\subset \overline A\subset B \subset \overline B\subset M_\sigma.$$

	Let $\exp:\nu(M_\sigma)|_{B}\to M$ be the exponential map, then $\exp$ is a diffeomorphism near the zero section. Let $U\subset \nu(M_\sigma)|_{B}$ be a $G$--invariant open neighborhood of the zero section such that $\exp$ is a diffeomorphism on $U$. We may choose $U$ sufficiently small such that $U$ is disjoint from $F$. Let $\chi$ be a $G$--invariant smooth cut-off function that are supported in $U$ and is equal to $1$ on $A$. 
	
	Let $\pi:\nu(M_\sigma) \to M_\sigma$ be the projection map, let 
	$$\tilde{f}(x):= f_\sigma\Big(\pi\big((\exp|_{U})^{-1} (x)\big)\Big)$$ 
	be the pull-back of $f_\sigma$ to $U$. Endow $U$ with the pull-back metric, define a function $\hat f$ on $U$ by
	$$\hat f(x) : = \tilde{f}(x) - \langle (\Hess_\sigma \tilde{f})\, x,x\rangle + \langle H_\sigma x, x\rangle.$$
	Extending the function $(\chi\cdot \hat f)\circ (\exp|_{U})^{-1}$ to $M$ by zero yields the desired $f$.
\end{proof}

\begin{Lemma}
\label{lem_surj_jet_M_sigma_2pts}
	Let $\sigma_1,\sigma_2\in \clR_G$. For $i=1,2$, let $A_i\subset M_{\sigma_i}$ be $G$--invariant open sets such that $\overline{A_i}\subset M_{\sigma_i}$. If $\sigma_1=\sigma_2$, we further assume that $\overline{A_1}\cap \overline{A_2}=\emptyset$. Let $f_i$ be a $G$--invariant smooth function on $M_{\sigma_i}$, and let 
	$$H_i: \nu(M_{\sigma_i})\to \nu(M_{\sigma_i})$$
	 be a $G$--equivariant smooth self-adjoint bundle map. Suppose  $\supp f_i\cup \supp H_i\subset A_i$. Then there exists a $G$--invariant smooth function $f$ on $M$, such that $f|_{A_i}=f_i$, and $\Hess_\sigma f|_{A_i}=H_i$ for $i=1,2$.
\end{Lemma}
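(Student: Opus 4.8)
The plan is to reduce everything to Lemma \ref{lem_surj_jet_M_sigma}, distinguishing the cases $\sigma_1=\sigma_2$ and $\sigma_1\neq\sigma_2$.

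\emph{Case $\sigma_1=\sigma_2=:\sigma$.} Here $A_1,A_2$ are $G$--invariant open subsets of the single stratum $M_\sigma$ with $\overline{A_1}\cap\overline{A_2}=\emptyset$, so the supports of $f_1,f_2$ (and likewise of $H_1,H_2$) are disjoint. I would therefore set $f_\sigma:=f_1+f_2$, a well-defined $G$--invariant smooth, compactly supported function on $M_\sigma$, and $H_\sigma:=H_1+H_2$, a $G$--equivariant smooth, compactly supported, self-adjoint bundle map of $\nu(M_\sigma)$. Applying Lemma \ref{lem_surj_jet_M_sigma} to this data (with, say, $F=\emptyset$, so that conclusion (2) is vacuous) yields a $G$--invariant smooth $f$ on $M$ with $f|_{M_\sigma}=f_\sigma$ and $\Hess_\sigma f=H_\sigma$. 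Restricting to $A_i$ and using that $f_{3-i}$ and $H_{3-i}$ vanish there, since $\supp f_{3-i}\cup\supp H_{3-i}\subset A_{3-i}$ is disjoint from $A_i$, gives $f|_{A_i}=f_i$ and $\Hess_\sigma f|_{A_i}=H_i$, as required.

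\emph{Case $\sigma_1\neq\sigma_2$.} By Definition \ref{def_stratify_M_representation} the strata $M_{\sigma_1}$ and $M_{\sigma_2}$ are disjoint, so $\overline{A_i}$, being a $G$--invariant compact subset of $M_{\sigma_i}$, is disjoint from $M_{\sigma_{3-i}}$. I would apply Lemma \ref{lem_surj_jet_M_sigma} to the data $(f_1,H_1)$ on $M_{\sigma_1}$ with $F=\overline{A_2}$, obtaining a $G$--invariant smooth $g_1$ on $M$ with $g_1|_{M_{\sigma_1}}=f_1$, $\Hess_{\sigma_1}g_1=H_1$, and $g_1\equiv 0$ on an open neighborhood of $\overline{A_2}$; symmetrically I would obtain $g_2$ with $g_2|_{M_{\sigma_2}}=f_2$, $\Hess_{\sigma_2}g_2=H_2$, and $g_2\equiv 0$ on an open neighborhood of $\overline{A_1}$. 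Then I would set $f:=g_1+g_2$. On the open set where $g_2\equiv 0$, which contains $A_1$, all derivatives of $g_2$ vanish, so $f|_{A_1}=g_1|_{A_1}=f_1$ and, since the bundle map of Definition \ref{def_Hess_sigma_f} is linear in the function, $\Hess_{\sigma_1}f|_{A_1}=\Hess_{\sigma_1}g_1|_{A_1}=H_1$; the analogous identities hold on $A_2$. This $f$ is the desired function.

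The one point that needs care is the second case: the extension $g_1$ of $f_1$ must vanish on \emph{all} of $A_2$, not merely near $\supp f_2$, in order for $f=g_1+g_2$ to restrict correctly to $A_2$, and symmetrically for $g_2$ on $A_1$. This is precisely why one should feed $F=\overline{A_2}$ (rather than $F=\supp f_2$) into Lemma \ref{lem_surj_jet_M_sigma}, and it is here that the hypothesis $\overline{A_i}\subset M_{\sigma_i}$ is used, together with the disjointness of distinct strata, to guarantee that $\overline{A_2}$ is an admissible choice of $F$. Beyond this bookkeeping, the argument is a routine ``two-point'' strengthening of Lemma \ref{lem_surj_jet_M_sigma} and should present no genuine difficulty.
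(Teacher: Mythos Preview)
Your proof is correct and follows essentially the same approach as the paper: in the case $\sigma_1=\sigma_2$ you reduce directly to Lemma~\ref{lem_surj_jet_M_sigma} by summing the data, and in the case $\sigma_1\neq\sigma_2$ you apply Lemma~\ref{lem_surj_jet_M_sigma} twice with $F=\overline{A_{3-i}}$ and add the resulting functions, exactly as the paper does. Your remark about why one must take $F=\overline{A_2}$ rather than $F=\supp f_2$ is a helpful clarification that the paper leaves implicit.
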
 

\begin{proof}
	If $\sigma_1=\sigma_2$, then the statement follows from Lemma \ref{lem_surj_jet_M_sigma}. If $\sigma_1\neq \sigma_2$, then by Lemma \ref{lem_surj_jet_M_sigma}, there exist smooth $G$--invariant functions $f^{(1)}$ and $f^{(2)}$ on $M$, such that
	\begin{enumerate}
		\item $f^{(1)}|_{A_1}=f_1$, $\Hess_\sigma f^{(1)}|_{A_1}=H_1$,
		\item $f^{(2)}|_{A_2}=f_1$, $\Hess_\sigma f^{(2)}|_{A_2}=H_2$,
		\item $f^{(1)}=0$ on a neighborhood of $\overline{A_2}$,
		\item $f^{(2)}=0$ on a neighborhood of $\overline{A_1}$.
	\end{enumerate} 
	Therefore, the function $f=f^{(1)}+f^{(2)}$ satisfies the desired conditions.
\end{proof}

\subsection{Equivariant indices of $G$--Morse functions}

We introduce the notion of equivariant index, which will play a crucial role in the equivariant Cerf theory in Section \ref{sec_equivariant_Cerf}.

\begin{Definition}
\label{def_equi_index_finite_dim}
	Suppose $f$ is a $G$--Morse function on $M$, suppose $O$ is a critical orbit of $f$.  Let $p\in O$, let $\Hess^-_pf$ be the subspace of $T_pM$ spanned by the negative eigenvectors of $\Hess_pf$, then $\Hess^-_pf$ is $\Stab(p)$--invariant. Define the \emph{equivariant index} of $O=\Orb(p)$ to be the element in $\clR_G$ represented by $\Hess^-_pf$ as a $\Stab(p)$--representation. 
\end{Definition}

It is straightforward to verify that 
the equivariant index does not depend on the choice of the point $p\in O$.

\begin{Definition}
\label{def_n_f}
Recall that $\bZ\clR_G$ denotes the free abelian group generated by $\clR_G$. 
	Suppose $f$ is a $G$--Morse function on $M$. Define the \emph{total index} of $f$ to be the element 
	$$
	\sum_{\sigma\in\clR_G} n_\sigma(f) \cdot \sigma \in \bZ\clR_G,
	$$
	 where $n_\sigma(f)$ denotes the number of critical orbits of $f$ with equivariant index $\sigma$.
\end{Definition}

We will use $\ind f$ to denote the total index of a $G$--Morse function $f$.

\begin{Lemma}
\label{lem_n_f_invariant_under_Morse_deformation}
	Suppose $f_t:M\to \bR$  is a smooth 1-parameter family of $G$--Morse functions on $M$, then $\ind f_t$ is constant with respect to $t$.
\end{Lemma}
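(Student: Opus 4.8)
The plan is to reduce this to a local statement near each degeneracy time, exploiting the stratification $M-\partial M = \bigcup_\sigma M_\sigma$ together with the slice theorem, so that invariance of $\ind f_t$ follows from ordinary (non-equivariant) Cerf theory applied stratum by stratum. First I would observe that, since $M$ is compact and the $G$--Morse condition is open in $C^2_G(M)$ (Remark \ref{rmk_G_Morse_open}), the critical orbits of $f_t$ vary continuously and the equivariant index $\ind p$ is locally constant along any path of critical orbits that stays non-degenerate; hence $\ind f_t$ can only change at isolated times $t_0$ where $f_{t_0}$ fails to be $G$--Morse. By compactness it suffices to work in a neighborhood of a single such $t_0$, and near each critical orbit $O_0$ of $f_{t_0}$ where degeneracy occurs. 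Away from such orbits the family $f_t$ stays $G$--Morse for $t$ near $t_0$ and contributes nothing to the change.

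Next I would localize using a slice. Pick $p \in O_0$ with $\Stab(p) = H$ and $p \in M_\sigma$ for some $\sigma \in \clR_G$; by Definition \ref{def_slice} and Lemma \ref{lem_local_slice} choose a slice $D \cong$ (ball in $S_p$) on which $f_t$ restricts to an $H$--invariant family, and $G$--invariant functions on $U_p(D)$ correspond bijectively to $H$--invariant functions on $D$ (the remark after Definition \ref{def_U_p(D)}). The equivariant index $\ind O$ is computed from $\Hess_p^- f$ acting on $S_p$, i.e. from the restricted function on $D$. Now apply Lemma \ref{lem_characterize_Morse_on_M_sigma}: being $G$--Morse is equivalent to (a) $f|_{M_\tau}$ descending to an ordinary Morse function on $M_\tau/G$ for every $\tau$, together with (b) $\Hess_\tau f$ nondegenerate on the normal bundle $\nu(M_\tau)$. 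On the fixed-point set $D^0 \subset D$ (which maps diffeomorphically to a neighborhood of $[p]$ in $M_\sigma/G$, by Part (3) of Lemma \ref{lem_M_sigma_basic_properties}) the family is just an ordinary $1$--parameter family of functions, and $\Hess_\sigma f_t$ is a self-adjoint $H$--equivariant bundle map which, by Schur's lemma, decomposes over the isotypic pieces of $\nu(M_\sigma)$; a degeneracy occurs when either an eigenvalue of the ordinary Hessian on $D^0$ crosses zero, or an isotypic block of $\Hess_\sigma f_t$ becomes singular.

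The third step is to handle each of these two degeneration mechanisms. For a degeneracy happening along $M_\sigma/G$ itself (the "classical" direction): by the genericity assumption embedded in the hypothesis that $f_t$ is a family of $G$--Morse functions — more precisely, I would invoke the transversality results the paper has set up, i.e. Lemmas \ref{lem_one_degeneracy_codim} and the normal-form analysis that will be used to prove Theorem \ref{thm_equivairant_cerf_finite_dim}, but for this lemma I only need that along a $1$--parameter family the bad set is codimension one and generically a single birth–death — the local model \eqref{eqn_Cerf_normal_form} in the $D^0$--directions creates or cancels a pair of critical orbits of the same stabilizer type $[H]$ with equivariant indices $\sigma_0$ and $\sigma_0 \oplus (\text{one extra }+\text{ in the trivial } H\text{-direction})$ versus with the eigenvalue signs giving consecutive indices; these two contributions to $\ind f_t$ are $\sigma$ and $\sigma$ shifted, and the claim is they cancel. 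Here I should be slightly careful: the statement of this particular lemma, Lemma \ref{lem_n_f_invariant_under_Morse_deformation}, is about an \emph{arbitrary} smooth family $f_t$, not a generic one, so I would instead argue as follows — $\ind f_t$ is a priori only locally constant on the open dense set of $t$ where $f_t$ is $G$--Morse, but the lemma as stated already \emph{assumes} $f_t$ is $G$--Morse for \emph{all} $t$, so in fact there are no degeneracies at all, and the proof collapses to: the critical orbits move continuously without births/deaths/index changes, hence $n_\sigma(f_t)$ is constant. So the real content is just the continuity/local-constancy argument from Step 1, now carried out carefully: along a continuous path of nondegenerate critical orbits the stabilizer conjugacy class is constant and the negative eigenspace of $\Hess_p f_t$ varies continuously as an $H$--representation, hence its isomorphism class in $\clR_G$ is constant.

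The main obstacle — and the step that needs the most care — is verifying that the negative eigenspace $\Hess_p^- f_t$ is \emph{locally constant as an element of $\clR_G$}, not merely that its dimension is. This requires: (i) that a path of critical orbits has constant stabilizer \emph{conjugacy class} (which holds because the orbit type is upper semicontinuous and a nondegenerate critical orbit is isolated, so it cannot jump to a larger stabilizer nearby, and a smaller one would force the orbit to be non-isolated or the Hessian to degenerate in the orbit directions — here the $G$--Morse condition $\ker\Hess_p f = T_p\Orb(p)$ is exactly what pins down the orbit dimension); and (ii) that, once we trivialize $\Stab(p_t) \equiv H$ along the path via a continuous family of group elements, the spectral projection onto the negative part of the self-adjoint $H$--equivariant operator $\Hess_{p_t} f_t|_{S_{p_t}}$ depends continuously on $t$, so by rigidity of representations of the compact group $H$ its isomorphism type is constant. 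Both points are standard but I would spell out (ii) using that the spectral gap at $0$ persists (no zero eigenvalue, by the $G$--Morse hypothesis) so the Riesz projection is continuous in $t$, and that $H$--equivariance is preserved because we conjugate the whole picture equivariantly. Once this is done, $n_\sigma(f_t)$ is locally constant in $t$ for every $\sigma$, hence constant on $[0,1]$ by connectedness, giving $\ind f_t$ constant.
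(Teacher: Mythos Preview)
Your proposal is ultimately correct, but the first two-thirds of it is a detour you yourself abandon: you set up machinery for analyzing degeneracy times $t_0$, birth--death transitions, and the transversality lemmas, none of which is relevant, since the hypothesis is that $f_t$ is $G$--Morse for \emph{every} $t$. The real argument begins at ``So the real content is just the continuity/local-constancy argument\ldots'', and that part is sound: critical orbits vary continuously by the implicit function theorem on a slice, the stabilizer conjugacy class is constant along such a path, and the Riesz projection onto the negative eigenspace of $\Hess_{p_t} f_t|_{S_{p_t}}$ is continuous because the spectral gap at $0$ persists, so by rigidity of representations of the compact group $\Stab(p)$ the equivariant index is locally constant. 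Your justification for point (i) is a little roundabout; the cleanest reason the stabilizer cannot shrink is simply that $\nabla f_t$ is $\Stab(p)$--invariant, hence tangent to the fixed-point set $D^0$, so the continued critical point stays in $D^0\subset M_\sigma$.

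The paper's proof packages the same idea more economically via Lemma~\ref{lem_characterize_Morse_on_M_sigma}: the $G$--Morse condition is equivalent to $f_t|_{M_\sigma}$ descending to a classical Morse function on each manifold $M_\sigma/G$ together with nondegeneracy of the normal Hessian $\Hess_\sigma f_t$. Constancy of $\ind f_t$ then reduces to the classical fact that Morse indices are constant along a family of Morse functions, applied stratum by stratum, combined with continuity of the equivariant bundle map $\Hess_\sigma f_t$. This avoids tracking stabilizers by hand, since the stratum $M_\sigma$ already records both the stabilizer type and the normal representation. Your approach works directly on the full Hessian via spectral projections; the paper's splits the Hessian into tangential and normal pieces first. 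Both are valid and essentially equivalent.
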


\begin{proof}
	If $G$ is trivial, then the result is a standard property of (classical) Morse functions. In the equivariant case, by Lemma \ref{lem_characterize_Morse_on_M_sigma}, for every $\sigma \in \clR_G$, the restriction of $f_t$ to $M_\sigma$ reduces to a (classical) Morse function $f_{t,\sigma}$ on $M_\sigma/G$, and the critical orbits of $f_t$ on $M_\sigma$ is in bijection with critical points of $f_{t,\sigma}$ on $M_\sigma/G$ via the quotient map. By classical theory, there exist smooth, disjoint,  1-parameter families of points $p_i(t)\in M_\sigma/G$ such that the critical points of $f_t$ on $M_\sigma/G$ are given by $\{p_i(t)\}$. Let $\bar{p}_i(t)$ be a lift of $p_i(t)$ in $M_\sigma$; we may choose $\bar{p}_i(t)$ so that is varies smoothly with respect to $t$.
	
	At each $p = \bar{p}_i(t)$, we have the decomposition 
		$$
	T_pM=T_p\Orb(p)\oplus T_p M_\sigma/T_p\Orb(p) \oplus T_pM/T_p M_\sigma,
	$$
	and $\Hess^-_p f_t$ decomposes as a $\Stab(p)$--representation into the direct sum of a subspace of $T_p M_\sigma/T_p\Orb(p)$ and a subspace of $T_pM/T_p M_\sigma$, which we denote by $V_1(i,t)$ and $V_2(i,t)$ respectively. By Lemma \ref{lem_M_sigma_basic_properties}, $V_1(i,t)$ is a trivial representation of $\Stab(p)$ and $V_2(i,t)$ has no trivial components. Moreover, the dimension of $V_1(i,t)$ is equal to the (classical) Morse index of $p_i(t)$ as a critical point of $f_{t,\sigma}$ on $M_\sigma/G$. Therefore, the dimension of $V_1$ is independent of $t$. By Lemma \ref{lem_characterize_Morse_on_M_sigma} Part (3), the isomorphism class of $V_2(i,t)$ in $\clR_G$ is independent of $t$. Therefore, the equivariant index of the critical orbit of $f$ at $[\bar{p}_i(t)]$ is independent of $t$. This proves the desired result.
\end{proof}

\section{Equivariant Cerf theory}
\label{sec_equivariant_Cerf}

This section establishes the equivariant Cerf theory. 
All representations will be finite-dimensional in this section unless otherwise specified.

\subsection{Local bifurcation models}
\label{subsec_local_bif_model_finite_dim}
If $G$ is the trivial group, then by Cerf's theorem, the critical points of two different $G$--Morse functions are related to each other by a sequence of isotopies and birth-death transitions. 

 When $G$ is non-trivial, it is possible to have more complicated bifurcations. 
This subsection constructs several examples of bifurcations that will serve as local models later in Section \ref{subsec_equivariant_cerf}.

The following is a simple example of a bifurcation that is different from birth-death transitions.
 \begin{example}
 \label{example_2_element_group_bif}
 	Let $G=\bZ \slash 2$, consider the action of $G$  on $M=[-1,1]$ such that the generator of $G$ acts by $x \mapsto -x$. Define a 1-parameter family of functions on $M$ by 
 	$$f_t(x)=tx^2-x^4,\qquad t\in [-1,1].$$
 	The function $f_t$ is $G$--Morse when $t\neq 0$. When $t<0$, the function $f_t$ has one critical orbit with stabilizer $\bZ/2$; when $t>0$, the function $f_t$ has two critical orbits, one with stabilizer $\bZ/2$ and the other with stabilizer $\{1\}$.
 \end{example}

A schematic diagram for the critical orbits of $f_t$ in Example \ref{example_2_element_group_bif} is shown in Figure \ref{fig:pitchfork}.

\begin{figure}
	\begin{overpic}[width=0.4\textwidth]{./pitchfork}
		\put(-5,-10){$t=-1$}
		\put(42,-10){$t=0$}
		\put(95,-10){$t=1$}
		\put(102,0){$x=0$}
		\put(102,15){$x = \pm \sqrt{t/2}$}
	\end{overpic}
\vspace{\baselineskip}
	\caption{Critical orbits of $f_t$}\label{fig:pitchfork}
\end{figure}

The next example generalizes Example \ref{example_2_element_group_bif} to higher dimensional representations. Suppose $H$ is a compact Lie group that acts orthogonally on a finite-dimensional Euclidean space $V$, we consider the function 
$$f_t(x)=t\|x\|^2-\|x\|^4$$ 
on $V$. When $t<0$, the function $f_t(\cdot)$ is an $H$--Morse function with one critical orbit at $x=0$. When $t>0$, all points on the sphere $\|x\| = \sqrt{t/2}$ are critical points. Since critical orbits of $H$--Morse functions must be discrete, the function $f_t(\cdot)$ $(t>0)$ is not $H$--Morse if the $H$--action is not transitive on the sphere. In the next example, we further perturb the function $f_t$ near the critical sphere to obtain an $H$--Morse function when $t>0$.

\begin{example}
\label{example_bifurcation_model_finite_dim_irred}
Let $H$ be an arbitrary compact Lie group, let $V$ be a finite-dimensional Euclidean space, and let $\rho_V:H\to\Hom(V,V)$ be an orthogonal representation of $H$. We construct a 1-parameter family of $H$--invariant functions as follows.  
Let $g$ be a non-negative smooth $H$--Morse function on the unit sphere of $V$.
Let $\chi:\bR\to\bR$ be a smooth non-negative function supported in $[1/2,2]$ such that $\chi(1)=1$, $\chi^{\prime}(1) = 0$, $\chi'(x)\ge 0$ when $x\le 1$, and $\chi'(x)\le 0$ when $x\ge 1$. Let $B_V(r)$ be the closed ball in $V$ centered at $0$ with radius $r$. Define
$$
F_V:[-1,1]\times B_V(2) \to \bR
$$
by
\begin{equation}
\label{eqn_bifurcation_model_finite_dim_irred}
	F_V(t,x) =
\begin{cases}
t\|x\|^2-\|x\|^4 & \mbox{if } t\le 0,\\
t\|x\|^2 - \|x\|^4 + e^{-1/t}\cdot \chi\Big(\|x\|/\sqrt{\frac{t}{2}}\Big)\cdot g(x/\|x\|) & \mbox{if } t> 0 \mbox{ and } x \neq 0,\\
0 & \mbox{if } t>0 \mbox{ and } x = 0.
\end{cases}
\end{equation}
We show that the function $F_V(t,\cdot)$ is $H$--Morse when $t\neq 0$.

When $t<0$, we have 
$$\grad F_V(t,\cdot) (x) = (2t - 4 \|x\|^2){x},$$ so the function $F_V(t,\cdot)$ has exactly one critical orbit at $0$ and it is $H$--Morse.

When $t>0$ and $\|x\|<\frac12\sqrt{t/2}$, we have $F_V(t,x) = t\|x\|^2-\|x\|^4$, and it is straightforward to verify that the only critical orbit of $F_V(t,\cdot)$ with $\|x\|<\frac12\sqrt{t/2}$ is at $x=0$, and the critical orbit is non-degenerate. 

When $t>0$ and $\|x\|\ge \frac12\sqrt{t/2}$, we have 
\begin{multline*}
\grad F_V(t,\cdot) (x) = (2t - 4 \|x\|^2){x} + e^{-1/t}\sqrt{\frac{2}{t}}\cdot \frac{1}{\|x\|}\cdot \chi'\Big(\|x\|/\sqrt{\frac{t}{2}}\Big)\cdot g(x/\|x\|)\cdot x
\\
 + e^{-1/t}\cdot \chi\Big(\|x\|/\sqrt{\frac{t}{2}}\Big)\cdot\frac{1}{\|x\|}\cdot (\grad g)(x/\|x\|).
\end{multline*}
Since $(\grad g)(x/\|x\|)$ is tangent to the unit sphere, it is orthogonal to $x$, therefore the critical points of $F_V(t,\cdot)$ are given by the equations
\begin{align}
(2t - 4 \|x\|^2)+ e^{-1/t}\sqrt{\frac{2}{t}}\cdot \frac{1}{\|x\|}\cdot \chi'\Big(\|x\|/\sqrt{\frac{t}{2}}\Big)\cdot g(x/\|x\|) & = 0
\label{eqn_critical_points_FV_1}\\
(\grad g)(x/\|x\|) & = 0 
\end{align}
It is straightforward to verify that the left-hand side of \eqref{eqn_critical_points_FV_1} is positive when $\|x\| < \sqrt{t/2}$, zero when $\|x\|= \sqrt{t/2}$, and negative when $\|x\|>\sqrt{t/2}$. Therefore, all critical points are on the sphere $\partial B_V\Big(\sqrt{\frac{t}{2}}\Big)$, and $x\in \partial B_V\Big(\sqrt{\frac{t}{2}}\Big)$ is a critical point of $F_V(t,-)$ if and only if $x/\|x\|$ is a critical point of $g$. Since $g$ is $H$--Morse on the unit sphere, it is straightforward to verify that all critical orbits on $\partial B_V\Big(\sqrt{\frac{t}{2}}\Big)$ are non-degenerate.

In conclusion, when $t>0$, 
the function $F_V(t,\cdot)$ has one critical orbit at $0$ and a finite set of critical orbits on $\partial B_V\Big(\sqrt{\frac{t}{2}}\Big)$ that are in bijection with the critical orbits of $g$.
\end{example}

Example \ref{example_bifurcation_model_finite_dim_irred} can be further generalized as follows.

\begin{example}
\label{example_bifurcation_model_finite_dim_sum}
	Let $H,V,g,F_V$ be as in Example \ref{example_bifurcation_model_finite_dim_irred}.
	Let $V'$ be another orthogonal representation of $H$. Let $B_{V\oplus V'}(r)$ be the closed ball in $V\oplus V'$ centered at 0 with radius $r$, and define $B_V(r)$ and $B_{V'}(r)$ similarly. We define a 1-parameter family of $H$--invariant functions on $B_{V\oplus V'}(2)$ as follows. Let $h:V'\to \bR$ be an $H$--Morse function on $B_{V'}(2)$ such that $0$ is the unique critical point of $h$. Define
\begin{align}
F_{V\oplus V'} :[-1,1]\times B_{V\oplus V'}(2) & \to \bR 
\nonumber
\\
(t,(x,y)) & \mapsto F_V(t,x) + h(y),
\label{eqn_F_V_V'_local_model}
\end{align}
where $x\in V$, $y\in V'$. The function $F_{V\oplus V'}(t,\cdot)$ is $H$--Morse for $t\neq 0$, and $(x,y)$ is a critical point of  $F_{V\oplus V'}(t,\cdot)$ if and only if $y=0$ and $x$ is a critical point of $F_V(t,\cdot)$. 
\end{example}

\begin{remark}
	In the above example, the point $0$ is not necessarily a local minimum or a local maximum of $h$. In fact, $h$ can have an arbitrary equivariant index at $0$. This flexibility will be important for the later discussions.
\end{remark}

We now extend Example \ref{example_bifurcation_model_finite_dim_sum} to general $G$--manifolds using slices.

\begin{example}
\label{example_bifurcation_model_finite_dim_manifold}
Suppose $M$ is a $G$--manifold and $p\in M$. Let $H= \Stab(p)$, let $D$ be a slice of $p$, and recall that $U_p(D)$ is the closed $G$--invariant tubular neighborhood of $\Orb(p)$ defined by Definition \ref{def_U_p(D)}. By definition, $U_p(D)$ is equivariantly diffeomorphic to $G\times_H D$. Recall that by Remark \ref{rem_local_G-invariant_function_slice}, $G$--invariant functions on $U_p(D)$ are in one-to-one correspondence with $H$--invariant functions on $D$ via the restriction to $D$. By Remark \ref{rem_local_slice_Morse}, $G$--Morse functions on $U_p(D)$ correspond to $H$--Morse functions on $D$. 
Since $D$ is $H$--equivariantly diffeomorphic to the unit disk in an orthogonal $H$--representation,
Example \ref{example_bifurcation_model_finite_dim_sum} gives rise to 1-parameter families of $H$--invariant functions on $D$, and hence it defines 1-parameter families of $G$--invariant functions on $U_p(D)$. 
\end{example}

\begin{Definition}
\label{def_irred_bifurcation}
Suppose $f_t$ is a smooth 1-parameter family of $G$--invariant functions on $M$. We say that $f_t$ has an \emph{irreducible bifurcation} at $t=t_0$, if there exists $p\in M-\partial M$ such that the following holds:
\begin{enumerate}
	\item There exists $\epsilon>0$, such that $f_t$ is $G$--Morse for 
	$$t\in [t_0-\epsilon,t_0)\cup(t_0,t_0+\epsilon].$$
	\item 
	The function $f_{t_0}$ has exactly one degenerate critical orbit at $\Orb(p)$, and $\nabla f_{t_0}\neq 0$ on $\partial M$.
	\item  $S_p$ decomposes as $V\oplus V'$ as an orthogonal $\Stab(p)$--representation, where $V$ is non-trivial and irreducible.
	\item There exists a slice $D$ at $p$, a non-negative $\Stab(p)$--Morse function $g$ on the unit sphere of $V$, and a smooth $\Stab(p)$--Morse function $h$ on $B_{V'}(2)$ with $0$ being the only critical point of $h$, such that $f_{t-t_0}$ or $f_{t_0-t}$ is locally given by the family $F_{V\oplus V'}$ defined by \eqref{eqn_F_V_V'_local_model} on $D$. 
\end{enumerate}
\end{Definition}

We now compute the change of total indices under an irreducible bifurcation. 

Suppose $f_t$ is a 1-parameter family of $G$--invariant functions on $M$, such that 
\begin{enumerate}
	\item $f_t$ is $G$--Morse for $t\neq 0$,
	\item $f_t$ has an irreducible bifurcation at $t=0$ on the orbit $\Orb(p)$. 
\end{enumerate}
Without loss of generality, assume there is a slice $D$ of $p$ such that $f_t$ (instead of $f_{-t}$) is locally given by $F_{V\oplus V'}$ on $D$.

 Let $H=\Stab(p)$, let $S_p\cong V\oplus V'$ be the  decomposition of $S_p$ in  Part (2) of Definition \ref{def_irred_bifurcation}, and let $h$ and $g$ be as in Part (4) of Definition \ref{def_irred_bifurcation}. Let $$\rho_V:H\to\Hom(V,V)$$ be the representation of $H$ on $V$. 
 
Let $\Hess^- h\subset V'$ be the subspace spanned by the negative eigenvectors of the Hessian of $h$ at the origin. Let
  $$\rho_h:H\to\Hom(\Hess^-h,\Hess^-h)$$ 
 be the representation of $H$ on $\Hess^-h$. Suppose the total index of $g$ as an $H$--Morse function is given by 
 $$
 \ind g\in  \bZ\clR_H.
 $$

\begin{Lemma}
	\label{lem_change_of_index_under_irred_bif}
	For $t=-\epsilon$ with $\epsilon>0$ and sufficiently small, the total index of $f_t$ on $U_p(D)$ is given by 
	\begin{equation}
		\label{eqn_total_index_t=-epsilon}
		[H,\Hess^-h\oplus V,\rho_h\oplus \rho_V]\in \bZ\clR_G.
	\end{equation}
For $t=\epsilon$ with $\epsilon>0$ sufficiently small, the total index of $f_t$ on $U_p(D)$ is given by 
	\begin{equation}
		\label{eqn_total_index_t=+epsilon}
		[H, \Hess^-h,\rho_h] 
		+ i^H_G ( \ind g\oplus \bR\oplus [H, \Hess^-h,\rho_h]) \in \bZ\clR_G,
	\end{equation}
	where $\bR\in\clR_H([H])$ is given by the trivial representation of $H$ on $\bR$, the direct sum operator is taken in $\clR_H$ and is defined by \eqref{eqn_dir_sum_clR}, and the map $i^H_G$ is defined by Definition \ref{def_i^H_G}.

\end{Lemma}

\begin{proof}
	We only need to compute the total indices of $f_t$ as $H$--invariant functions on $D$, and then apply the map $i^H_G$ from Definition \ref{def_i^H_G}. By definition, the restriction of $f_t$ to $D$ is equal to $F_{V\oplus V'}(t,\cdot)$ from Example \ref{example_bifurcation_model_finite_dim_sum}. 
	
	When $t<0$, there is exactly one critical orbit at the origin, and its equivariant index is $[H,\Hess^-h\oplus V,\rho_h\oplus \rho_V]$. 
	
	When $t>0$, the critical orbit at $0$ has equivariant index $[H,\Hess^-h,\rho_h]$. Assume $x$ is a critical point of $F_{V\oplus V'}(t,\cdot)$ with $\|x\| = \sqrt{t/2}$. Then $x/\|x\|$ is a critical point of $g$, and $\Hess_x^- f_t$ is isomorphic (as an $H$--representation) to the direct sum of $\Hess^-_{x/\|x\|} g$, $\Hess^-h$, and a trivial 1-dimensional representation. The $1$--dimensional representation comes from the normal bundle of $\partial B(\sqrt{t/2})$ in $V$. Therefore the desired results are proved.
\end{proof}

Notice that the total indices given by \eqref{eqn_total_index_t=-epsilon} and \eqref{eqn_total_index_t=+epsilon} only depend on $H$,  $V$, $\rho_V$, $\Hess^-h$, $\rho_h$, and the $H$--Morse function $g$.
Therefore we make the following definition.
\begin{Definition}
\label{def_xi_+-}
 Suppose $H$ is a closed subgroup of $G$, and $V$ is a non-trivial irreducible orthogonal representation of $H$. Let $g$ be a non-negative $H$--Morse function on the unit sphere of $V$, and let $V'$ be a real representation of $H$. Suppose the total index of $g$ is given by $\ind g\in \clR_H$, and let $\rho_V$, $\rho_{V'}$ be the actions of $H$ on $V$, $V'$ respectively.
 Define 
 \begin{equation*}
  \xi_H^-(V,V',g):= [H,V\oplus V', \rho_V \oplus \rho_{V'}]\in \bZ\clR_H,
 \end{equation*}
 and 
$$
\xi_H^+(V,V',g):= [H, V',\rho_{V'}] + \ind g\oplus \bR \oplus [H, V',\rho_{V'}]
\in \bZ\clR_H,
$$
where $\bR$ denotes the trivial representation of $H$ on $\bR$. Define
$$
\xi_H(V,V',g):= \xi_H^-(V,V',g) - \xi_H^+(V,V',g).
$$
\end{Definition}

By \eqref{eqn_total_index_t=-epsilon} and \eqref{eqn_total_index_t=+epsilon}, an irreducible bifurcation changes the total index by
$$\pm i^H_G\big(\xi_H(V,V',g)\big),$$
 where $H$, $V$, $g$ are as in Definition \ref{def_irred_bifurcation}, and $V'$ is given by $(\Hess^-h,\rho_h)$.
 Also, by the definitions, we have
\begin{align}
\xi^{\pm}_H(V,V',g) &= \xi^{\pm}_H(V,0,g) \oplus [V'],
\label{eqn_dir_sum_xi_pm}
\\
\xi_H(V,V',g) &= \xi_H(V,0,g) \oplus [V'],
\label{eqn_dir_sum_xi}
\end{align}
where $0$ denotes the zero representation of $H$.

We now define the birth-death bifurcations with the presence of $G$ action.

\begin{Definition}
\label{def_birth_death_bif}
	Suppose $f_t$ is a smooth 1-parameter family of $G$--invariant functions on $M$. We say that $f_t$ has a \emph{birth-death bifurcation} at $t=t_0$, if there exists $p\in M$, such that following holds:
	\begin{enumerate}
	\item There exists $\epsilon>0$, such that $f_t$ is $G$--Morse for 
	$$t\in [t_0-\epsilon,t_0)\cup(t_0,t_0+\epsilon].$$
	\item  
	The function $f_{t_0}$ has exactly one degenerate critical orbit at $\Orb(p)$, and $\nabla f_{t_0}\neq 0$ on $\partial M$.
	\item  Suppose $p\in M_\sigma$, then $\Hess_\sigma f_{t_0}$ (from Definition \ref{def_Hess_sigma_f}) is non-degenerate at $p$.
	\item  The 1-parameter family of functions on $M_\sigma/G$ induced by $f_{t}|_{M_\sigma}$ has a birth-death singularity (in the classical sense) at $(t_0,[p])$, where $[p]$ is the image of $p$ in $M_\sigma/G$.  
\end{enumerate}
\end{Definition}

\begin{Lemma}
A birth-death bifurcation changes the total index by the addition of an element of the form $\pm (\sigma  +\sigma \oplus \bR)$, where $\bR\in \clR_G([G])$ is the element represented by the trivial representation of $G$ on $\bR$.
\end{Lemma}

\begin{proof}
Suppose $\tau\in \clR_G$, and $[q]\subset M_\tau$ is a non-degenerate critical orbit of $f_t$. Let $q$ be a point on the orbit. Recall that we have the decomposition
$$
T_qM=T_q\Orb(q)\oplus T_q M_\tau/T_q\Orb(q) \oplus T_qM/T_q M_\tau.
$$
So the equivariant index at $q$, which is represented by a $\Stab(q)$ representation, is decomposed as the direct sum of 
a component from $T_q M_\tau/T_q\Orb(q)$ and a component from $T_qM/T_q M_\tau$. We will call the first component the ``tangent'' component, and the second component the ``normal'' component. By Lemma \ref{lem_M_sigma_basic_properties}, the tangent component is given by a trivial representation, and the normal component is given by a representation without trivial components. Moreover, the dimension of the tangent component is equal to the (classical) Morse index of the induced function on $M_\tau/G$.

Since $\Hess_\tau f_{t_0}$ is non-degenerate at $p$, the two critical orbits created by a ``birth'' bifurcation or canceled out by a ``death" bifurcation have the same normal component in the equivariant indices. By the definition of the (classical) birth-death singularity, their  (classical) Morse indices for the induced functions on $M_\tau/G$ differ by $1$. Hence the equivariant indices of the two critical orbits have the form $\sigma$ and $\sigma\oplus \bR$.
\end{proof}

\begin{Definition}
	Let $\Bif_G\subset \bZ\clR_G$ be the subgroup generated by 
	$$
	i^H_G\big(\xi_H(V,V',g)\big) \quad\mbox{and } \quad \sigma  +\sigma \oplus \bR
	$$
for all possible choices of $H$, $V$, $V'$, $g$, and
for all $\sigma\in \clR_G$.
\end{Definition}

The following lemma follows immediately from the definitions and \eqref{eqn_dir_sum_xi}.
\begin{Lemma}
\label{lem_direct_sum_Bif_unfiltered}
Suppose $\sigma \in \clR_G([G])$, then 
$\Bif_G\oplus \, \sigma \subset \Bif_G.$ \qed
\end{Lemma}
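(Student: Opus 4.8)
The plan is to reduce the claim to the generators of $\Bif_G$ and then apply \eqref{eqn_dir_sum_xi} together with an elementary compatibility between the map $i^H_G$ and the direct sum operator. Since $\oplus\,\sigma\colon\bZ\clR_G\to\bZ\clR_G$ is the $\bZ$--linear extension from \eqref{eqn_dir_sum_clR}, it is a group homomorphism, and $\Bif_G$ is by definition the subgroup generated by the elements $i^H_G\big(\xi_H(V,V',g)\big)$ and $\tau+\tau\oplus\bR$ (for all admissible $H,V,V',g$ and all $\tau\in\clR_G$). Hence it suffices to verify that each such generator, after applying $\oplus\,\sigma$, lies in $\Bif_G$; in fact I expect each to remain a generator of the same type.

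For a generator of the second type: since $\sigma,\bR\in\clR_G([G])$ are both genuine representations of $G$, direct sums of $G$--representations are commutative and associative up to isomorphism, so $(\tau+\tau\oplus\bR)\oplus\sigma=(\tau\oplus\sigma)+\big((\tau\oplus\sigma)\oplus\bR\big)$, again a generator of $\Bif_G$.

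For a generator of the first type, I would first record the identity $i^H_G(y)\oplus\sigma=i^H_G\big(y\oplus(\sigma|_H)\big)$ for every $y\in\bZ\clR_H$, where $\sigma|_H\in\clR_H([H])$ denotes the restriction of $\sigma$ to the subgroup $H$; this follows by unwinding Definition \ref{def_i^H_G} and Definition \ref{def_dir_sum_clR_G_} on representatives. Next, using that $\oplus$ on $\bZ\clR_H\times\clR_H([H])$ is associative and that $[V']\oplus(\sigma|_H)$ is again the class of an $H$--representation, which I call $V''$, two applications of \eqref{eqn_dir_sum_xi} give
\[
\xi_H(V,V',g)\oplus(\sigma|_H)=\xi_H(V,0,g)\oplus\big([V']\oplus(\sigma|_H)\big)=\xi_H(V,0,g)\oplus[V'']=\xi_H(V,V'',g).
\]
Combining this with the previous identity yields $i^H_G\big(\xi_H(V,V',g)\big)\oplus\sigma=i^H_G\big(\xi_H(V,V'',g)\big)\in\Bif_G$.

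Putting the two cases together with the additivity of $\oplus\,\sigma$ gives $\Bif_G\oplus\sigma\subset\Bif_G$. The only mildly delicate point is the bookkeeping behind the identity $i^H_G(y)\oplus\sigma=i^H_G\big(y\oplus(\sigma|_H)\big)$ and the well-definedness of $[V']\oplus(\sigma|_H)$ as a class in $\clR_H([H])$; both reduce to the observation that conjugating a representation of $H$ by an element of $H$ produces an isomorphic representation, so there is no genuine obstacle here.
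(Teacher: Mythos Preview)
Your proposal is correct and is exactly the argument the paper has in mind: the paper's own proof is simply the one-line remark that the lemma ``follows immediately from the definitions and \eqref{eqn_dir_sum_xi},'' and you have spelled out precisely those details by checking the two types of generators and using the compatibility of $i^H_G$ with $\oplus$.
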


\subsection{Statement of the equivariant Cerf theorem}
\label{subsec_equivariant_cerf}
The main result of this section is the following theorem, which states that irreducible bifurcations and birth-death bifurcations generate all the possible changes on the total index.

\begin{Theorem}
\label{thm_equivairant_cerf_finite_dim}
	Suppose $f_0$ and $f_1$ are $G$--Morse functions on $M$, and suppose there exists a smooth 1-parameter family $f_t$, $t\in[0,1]$ connecting $f_0$ and $f_1$ such that 
	$$\nabla f_t\neq 0 \mbox{ on }\partial M$$
	 for all $t$. Then $\ind f_0-\ind f_1\in \Bif_G$. 
\end{Theorem}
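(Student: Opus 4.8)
The plan is to reduce the equivariant statement to the classical Cerf theorem applied to each stratum quotient $M_\sigma/G$, exactly as Lemma \ref{lem_characterize_Morse_on_M_sigma} and Lemma \ref{lem_n_f_invariant_under_Morse_deformation} suggest, but now keeping track of the degenerations that can occur along the 1-parameter family. First I would perturb the family $f_t$ rel endpoints to a \emph{generic} family (using a parametrized transversality argument of the kind promised in Lemmas \ref{lem_one_degeneracy_codim} and \ref{lem_two_degeneracy_codim}, built from Lemma \ref{lem_surj_jet_M_sigma} and Lemma \ref{lem_surj_jet_M_sigma_2pts}) so that: (i) $\nabla f_t \neq 0$ on $\partial M$ throughout; (ii) for all but finitely many $t$, $f_t$ is $G$--Morse; (iii) at each exceptional parameter $t_0$ there is exactly one degenerate critical orbit $\Orb(p)$, and it is either an irreducible bifurcation (Definition \ref{def_irred_bifurcation}) or a birth-death bifurcation (Definition \ref{def_birth_death_bif}). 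The genericity argument is stratified by $\clR_G$: on each $M_\sigma/G$ one applies classical Cerf theory to the induced family, while the normal Hessian $\Hess_\sigma f_t$ stays nondegenerate except where a new irreducible subrepresentation $V\subset \nu(M_\sigma)|_p$ acquires a zero eigenvalue — the first case produces birth-deaths, the second produces irreducible bifurcations, and the local normal forms of Examples \ref{example_bifurcation_model_finite_dim_irred}--\ref{example_bifurcation_model_finite_dim_manifold} realize precisely these.

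Once the family is generic, $\ind f_t$ is locally constant on the complement of the finitely many exceptional parameters by Lemma \ref{lem_n_f_invariant_under_Morse_deformation} (applied on each subinterval where the family is $G$--Morse), so $\ind f_0 - \ind f_1$ is a sum over the exceptional parameters of the jumps $\ind f_{t_0-\epsilon} - \ind f_{t_0+\epsilon}$. At each such parameter the change is localized in a tubular neighborhood $U_p(D)$ around the degenerate orbit, and by the computations recorded after Definition \ref{def_irred_bifurcation} the jump equals $\pm\, i^H_G(\xi_H(V,V',g))$ in the irreducible case and $\pm(\sigma + \sigma\oplus\bR)$ in the birth-death case; in both cases this lies in $\Bif_G$ by definition. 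Summing over the finitely many exceptional parameters shows $\ind f_0 - \ind f_1\in\Bif_G$.

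The main obstacle I expect is step (iii) of the genericity reduction: showing that a generic family has \emph{only} these two types of degeneration, each occurring one at a time on a single orbit, and that the normal form near such a degeneration really is (up to the $G$--equivariant coordinate change afforded by Lemma \ref{lem_local_slice}) one of the model families $F_{V\oplus V'}$. The delicate point is equivariant transversality: the space of $G$--invariant perturbations must be rich enough to make the relevant jet-evaluation maps transverse to the degeneracy strata inside the bundle of $\Stab(p)$--equivariant Hessians over $M_\sigma$, which is where Lemma \ref{lem_surj_jet_M_sigma} (surjectivity of the jet onto $(f|_{M_\sigma}, \Hess_\sigma f)$) and its two-point version are essential, and one must check the codimension count so that degeneracies of two orbits, or a doubly-degenerate Hessian, are avoided for a generic path. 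A secondary technical nuisance is handling the stratum $M_\sigma$ with $\sigma$ such that $S_p$ has several isomorphic irreducible summands (as in the $(\bC^2)^{\oplus 2}\oplus\bC$ picture of Figure \ref{figure_bif_SU5}): there the direct sum in $\clR_G$ is genuinely not well-defined, but since the bifurcation only involves \emph{one} irreducible summand $V$ at a time against a fixed complement $V' = \Hess^- h$, and the formulas \eqref{eqn_dir_sum_xi_pm}--\eqref{eqn_dir_sum_xi} express everything via $\oplus [V']$ with $V'$ a genuine representation, the ambiguity does not enter the final jump formula.
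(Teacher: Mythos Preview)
Your step (iii) is where the argument breaks. The paper's remark immediately following the statement of Theorem \ref{thm_equivairant_cerf_finite_dim} gives an explicit counterexample: for $G=\bZ/3$ acting on $\bC$ by rotations, the family $f_t(z)=t|z|^2+\operatorname{Re}(z^3)$ is $G$--Morse for $t\neq 0$, has a single degenerate critical orbit at $t=0$ with $\ker\Hess_0 f_0$ equal to the irreducible representation $\bC$, yet the jump in total index across $t=0$ is $[\bZ/3,\bC]-[\bZ/3,0]$, which is neither of the form $\pm\, i^H_G(\xi_H(V,V',g))$ nor $\pm(\sigma+\sigma\oplus\bR)$. Moreover any sufficiently small perturbation of this family still has a single degeneration of the same kind, so genericity cannot rescue the normal form. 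The point is that the model family $F_{V\oplus V'}$ has the radial quartic $-\|x\|^4$ as its leading higher-order term, while a generic $H$--invariant function on $V$ may have lower-degree $H$--invariants (here the cubic $\operatorname{Re}(z^3)$) that no equivariant coordinate change can remove. So the transversality lemmas give you one degenerate orbit with irreducible kernel, but \emph{not} that the degeneration matches one of the two model bifurcations.

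The paper circumvents this not by refining the normal form but by induction on $k=\max_{p\in M} m(\Stab(p))$, proving the sharper Proposition \ref{prop_equivairant_cerf_finite_dim_induction}. Corollary \ref{cor_wall_cross_finite_dim} does give one degenerate orbit with irreducible kernel at each bad time, and the argument splits on whether that irreducible is trivial. If trivial, $\Hess_\sigma f_t$ stays nondegenerate and classical Cerf on $M_\sigma/G$ finishes (Lemma \ref{lem_Hess_change_at_trivial_direction}). If nontrivial, Lemma \ref{lem_Hess_change_at_non_trivial_direction} localizes to a slice and reduces to a ball in a representation with no trivial component; there Lemma \ref{lem_special_case_linear_no_trivial_component} \emph{artificially inserts} irreducible bifurcations at the origin to force the Hessian positive definite at both endpoints (Lemma \ref{lem_connect_local_positive}), then excises a small ball around the origin and applies the induction hypothesis on the remaining annulus, where every stabilizer has strictly smaller $m(\cdot)$. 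The model bifurcations thus enter as deliberate modifications of the family, not as a claimed generic normal form; this is the missing idea in your outline.
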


\begin{remark}
	\label{rem_path_not_generic_in_equivariant_Cerf}
	Unlike the classical Cerf theorem, we do not claim that a \emph{generic} 1-parameter family only contains irreducible and birth-death bifurcations. In fact, this claim is not true: let $G=\{u\in \bC| u^3=1\}$, and consider the action of $G$ on $\bC$ by multiplications. We will call this action the ``standard action'' of $G$ on $\bC\cong \bR^2$. Consider the 1-parameter family of $G$--invariant functions on $\bC$ given by 
	$$
		f_t(z) := t|z|^2 + \mbox{Re} (z^3).
	$$
	When $t\neq 0$, the critical orbits of $f_t$ are given by $(0,0)$ and the orbit of $(-2t/3, 0)$. The equivariant index at $(0,0)$ is represented by $\bC\cong \bR^2$ with the standard $G$ action when $t<0$, and is represented by the zero representation when $t>0$. At the orbit of $(-2t/3,0)$, the index remains the same for all $t\neq 0$ and is given by a $1$-dimensional trivial representation of the trivial group.
	
	Therefore, the change of the total index is given by the sum of $-\xi_G(V,V',g)$ and $(\sigma+\sigma\oplus \bR)$, where $V$ is $\bC\cong \bR^2$ endowed with the standard $G$ action, $V'$ is the zero representation, $g$ is a $G$--Morse function on the unit circle of $\bC$ that has two critical orbits, and $\sigma$ is the zero-dimensional space viewed as a representation of the trivial group. This sum cannot be realized as a single irreducible or birth-death bifurcation.
	
	 By a straightforward computation, one can show that $\{f_t\}$ as a path in the space of $G$--invariant functions is transverse to $\mathcal{F}_\sigma$ defined in Definition \ref{defn_F_sigma} below, where $\sigma$ is given by the standard action of $G$ on $\bR^2$. Hence the behavior of bifurcation persists after small perturbations of $\{f_t\}$.
\end{remark}

The proof of Theorem \ref{thm_equivairant_cerf_finite_dim} consists of two main parts. In the first part, we show that in an appropriate sense, the set of non-$G$--Morse functions can be written as a countable union of submanifolds with positive codimensions in the space of $G$--invariant functions. The proof of this statement is based on an equivariant transversality argument. As a result, we may take a generic perturbation of the path $\{f_t\}$ so that it intersects the locus of non-$G$--Morse functions transversely.

The  transversality condition does not immediately prove the theorem because of two reasons. First, our transversality result will only show that on a generic path $\{f_t\}$, there are at most \emph{countably} many values of $t$ such that $f_t$ is non-$G$--Morse (see Remark \ref{rem_countable_but_not_finite} for more discussions).  Since a countable and closed subset of $\bR$ is not necessarily finite, it is not straightforward to directly use this result to study the change of total index with respect to $t$.  

Second, as is shown in Remark \ref{rem_path_not_generic_in_equivariant_Cerf}, even if we assume that there is only one value of $t$ such that $f_t$ is not $G$--Morse, and assume that the path $\{f_t\}$ is transverse to the locus of non-$G$--Morse functions, the change of total index is still not necessarily given by an irreducible or birth-death bifurcation. From a technical point of view, this is because the transversality assumption only depends on the properties of the second order derivatives of $f_t(\cdot)$. But for $G$--invariant functions, the bifurcation behavior can depend on the higher order derivatives.  This difficulty is resolved by the following idea: when there is only one degeneracy in the path $\{f_t\}$ ($t\in[0,1]$) that is transverse to the locus of non-$G$--Morse functions, we first construct a pair of functions $\hat f_0$ and $\hat f_0$, so that $\ind f_i - \ind \hat f_i\in \Bif_G$ for $i=0,1$. The functions $\hat f_0$ and $\hat f_1$ are constructed so that there exists a path between them that only intersects the  locus of non-$G$--Morse functions in its ``lower'' strata, where the term ``lower'' is characterized by the dimension and the number of connected components of the stabilizer group at the degeneracy (see Definition \ref{def_m(H)}).  We can then use an induction argument to control the difference of total indices of $\hat f_0$ and $\hat f_1$. The core of the induction argument will be given by Lemma \ref{lem_special_case_linear_no_trivial_component}.

The proof of Theorem \ref{thm_equivairant_cerf_finite_dim}  is organized as follows.  In Section \ref{sebsec_linear_alg} , we will prove several technical lemmas in linear algebra. These results will be used to study the linearizations of various maps between Banach manifolds.  We will then establish the equivariant transversality result in Section \ref{subsec_transversality_finite_dim}, and finish the proof of Theorem \ref{thm_equivairant_cerf_finite_dim} in Section \ref{subsec_proof_thm_equi_cerf}.

\subsection{Preliminaries on linear algebra}
\label{sebsec_linear_alg}
Notice that if $H$ is a compact Lie group and $V$ is an irreducible representation of $H$, then $\Hom_H(V,V)$ is an associative division algebra over $\bR$, therefore $\Hom_H(V,V)\cong \bR$, $\bC$ or $\bH$. 

\begin{Definition}
	Suppose $H$ is a compact Lie group and $V$ is an irreducible representation of $H$.
	We say that $V$ is of \emph{type} $\bR$, $\bC$ or $\bH$, if 
	$$\Hom_H(V,V)\cong \bR, \bC \mbox{ or }\bH$$ respectively.
\end{Definition}

Recall that if $V$ is an Euclidean space over $\bR$, a linear map $f: V\to V$ is called \emph{symmetric} if for all $v, w \in V$, we have $\langle f(v), w \rangle = \langle v, f(w)\rangle.$ If $\bK\in \{\bR,\bC,\bH\}$, then there is a standard Euclidean structure on $\bK^r$, and a $\bK$--linear map $f: \bK^r \to \bK^r$ is called \emph{self-dual} if it is symmetric when viewed as an $\bR$--linear map.

\begin{Definition}
Suppose $H$ is a compact Lie group and $V$ is an orthogonal representation of $H$, define $\sym_H(V)$ to be the subspace of $\Hom_H(V,V)$ consisting of symmetric maps.
\end{Definition}

\begin{Definition}
Suppose $\bK\in \{\bR,\bC,\bH\}$. Let $r$ be a non-negative integer. Define $d_{\bK}(r)$ to be the (real) dimension of self-dual $\bK$--linear maps on $\bK^r$. Then we have
$$
d_{\bK}(r)=
\begin{cases}
\frac12 r(r+1) & \text{if $\bK=\bR$,} \\
r^2 & \text{if $\bK=\bC$,} \\
2r^2 - r & \text{if $\bK=\bH$}.
\end{cases}
$$
\end{Definition}

The following statement in a straightforward consequence of Schur's lemma.

\begin{Lemma}
\label{lem_dim_of_sym_isotypic_component}
	Suppose $H$ is a compact Lie group, $V$ is an orthogonal representation of $H$, and suppose the isotypic decomposition of $V$ is given by 
	$$
	V\cong V_1^{\oplus a_1}\oplus\cdots\oplus V_m^{\oplus a_m},
	$$
	where $V_i$ is of type $\bK_i$ for $\bK_i\in\{\bR,\bC,\bH\}$. Then
	$$
	\dim_\bR \sym_H(V) = \sum_{i=1}^m d_{\bK_i}(a_i).
	\phantom\qedhere\makeatletter\displaymath@qed
	$$
\end{Lemma}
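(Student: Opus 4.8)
The statement to prove is Lemma \ref{lem_dim_of_sym_isotypic_component}: given an orthogonal representation $V$ of a compact Lie group $H$ with isotypic decomposition $V \cong V_1^{\oplus a_1} \oplus \cdots \oplus V_m^{\oplus a_m}$, where $V_i$ has type $\bK_i$, we have $\dim_\bR \sym_H(V) = \sum_{i=1}^m d_{\bK_i}(a_i)$.

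The plan is straightforward and rests entirely on Schur's lemma. First I would decompose $\Hom_H(V,V)$ using the isotypic decomposition: since $\Hom_H(V_i^{\oplus a_i}, V_j^{\oplus a_j}) = 0$ for $i \neq j$ (no common irreducible constituents), we get $\Hom_H(V,V) \cong \bigoplus_{i=1}^m \Hom_H(V_i^{\oplus a_i}, V_i^{\oplus a_i})$, and this decomposition is orthogonal with respect to any $H$-invariant inner product on $\Hom_H(V,V)$, so it passes to the symmetric parts: $\sym_H(V) \cong \bigoplus_{i=1}^m \sym_H(V_i^{\oplus a_i})$. Thus it suffices to treat a single isotypic block, i.e. to show $\dim_\bR \sym_H(W^{\oplus a}) = d_{\bK}(a)$ when $W$ is irreducible of type $\bK = \Hom_H(W,W)$.

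For a single block, Schur's lemma gives $\Hom_H(W^{\oplus a}, W^{\oplus a}) \cong \mat_a(\bK)$, the $a \times a$ matrices over the division algebra $\bK = \Hom_H(W,W)$, where a morphism $W^{\oplus a} \to W^{\oplus a}$ is recorded as the matrix of its "block components" in $\Hom_H(W,W) = \bK$. The next step is to identify the symmetric maps under this correspondence. Fix an $H$-invariant inner product on $W$; this is unique up to scalar when $\bK = \bR$, and more generally induces a canonical involution (conjugate-transpose) on $\bK$ — namely the identity on $\bR$, complex conjugation on $\bC$, quaternionic conjugation on $\bH$ — because the adjoint of an element of $\bK = \Hom_H(W,W)$ with respect to the invariant inner product is again $H$-equivariant and the map $\ell \mapsto \ell^*$ is an anti-involution of the division algebra $\bK$ fixing $\bR$. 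Then a map $T \in \mat_a(\bK)$ is self-adjoint on $W^{\oplus a}$ (with the orthogonal direct sum inner product) if and only if the matrix $T$ equals its conjugate-transpose $\bar T^{\mathsf t}$. The real dimension of the space of such "Hermitian" $a \times a$ matrices over $\bK$ is exactly $d_{\bK}(a)$: counting, one has $a$ diagonal entries each lying in the $\mathrm{Re}(\bK)$-fixed subspace (which is $\bR$, so $1$-dimensional each when $\bK = \bR, \bC, \bH$) plus $\binom{a}{2}$ strictly-upper-triangular entries each free in $\bK$ (dimensions $1, 2, 4$ respectively), giving $a + \binom{a}{2}$ for $\bK = \bR$ (i.e. $\tfrac12 a(a+1)$), $a + 2\binom{a}{2} = a^2$ for $\bK = \bC$, and $a + 4\binom{a}{2} = 2a^2 - a$ for $\bK = \bH$, matching the definition of $d_{\bK}(r)$. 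Summing over the blocks $i = 1, \dots, m$ completes the proof.

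The only genuinely delicate point — and the one I would be most careful about — is the claim that the involution on $\bK$ induced by taking adjoints with respect to the $H$-invariant inner product is the standard conjugation (identity / complex conjugation / quaternionic conjugation) rather than some other anti-involution; this is what makes "self-adjoint" correspond to "Hermitian" in the standard sense and yields the stated formulas for $d_{\bK}(r)$. One resolves this by noting that a real division algebra of dimension $1$, $2$, or $4$ carries a unique positive anti-involution fixing $\bR$ pointwise (for $\bC$: the only field automorphism or the conjugation; for $\bH$: conjugation is the unique anti-involution whose associated norm form is positive-definite), and the adjoint operation is positive in the sense that $\ell^* \ell$ has nonnegative trace — equivalently, $\langle \ell w, \ell w\rangle \ge 0$ forces the norm form to be positive-definite. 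Everything else is routine bookkeeping with Schur's lemma and dimension counts, so I do not expect any real obstacle beyond stating this identification cleanly.
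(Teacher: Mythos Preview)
Your proposal is correct and follows exactly the approach the paper indicates: the paper does not give a proof at all, stating only that the lemma is ``a straightforward consequence of Schur's lemma'' and appending a \qedsymbol. Your argument is precisely the standard unpacking of that sentence --- block-diagonalizing $\Hom_H(V,V)$ along isotypic components, identifying each block with $\mat_{a_i}(\bK_i)$, and counting Hermitian matrices --- and the care you take with the adjoint-induced involution on $\bK$ is appropriate and correctly resolved.
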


\begin{Definition}
\label{def_d(sigma)}
	Suppose $\sigma\in\clR_G$ is represented by an orthogonal representation $(H,V)$. Define
	$$
	d(\sigma) := \dim_\bR \sym_H(V).
	$$
\end{Definition}

By Lemma \ref{lem_dim_of_sym_isotypic_component}, $d(\sigma)$ does not depend on the choice of $(H,V)$ or the Euclidean structure of $V$.

\begin{Definition}
	Let $H$ be a compact Lie group. Suppose  $V$ is an orthogonal $H$--representation. For $\sigma\in \clR_G([H])$, 
	let $\sym_{H,\sigma}(V)\subset \sym_H(V)$ be the subspace consisting of $s\in \sym_H(V)$ such that $\ker s$ represents $\sigma$.
\end{Definition}

\begin{Lemma}
\label{lem_sym_sigma_submanifold_finite_dim}
$\sym_{H,\sigma}(V)$ is a submanifold of $\sym_H(V)$ with codimension $d(\sigma)$. Moreover, suppose $s\in \sym_{H,\sigma}(V)$, and suppose $L\subset \sym_H(V)$ is a linear subspace, let $\Pi:V\to \ker s$ be the orthogonal projection onto $\ker s$, then $s+L$ is transverse to $\sym_{H,\sigma}(V)$ if and only if the map
$$
\varphi: L\to \sym_H(\ker s) 
$$
defined by 
$$\varphi(l)(x) := \Pi \big(l(x)\big)$$
is a surjection.
\end{Lemma}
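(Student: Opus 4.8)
The plan is to reduce the whole statement to a transversality computation on the space of self-adjoint endomorphisms of a single isotypic component, using Schur's lemma to split $\sym_H(V)$ into a product over isotypic types. First I would fix the isotypic decomposition $V\cong \bigoplus_i V_i^{\oplus a_i}$ with $V_i$ of type $\bK_i$, so that by Schur $\Hom_H(V,V)\cong \prod_i \mat_{a_i}(\bK_i)$ and $\sym_H(V)\cong \prod_i \{$self-dual $\bK_i$--linear maps on $\bK_i^{a_i}\}$. An element $s\in\sym_H(V)$ corresponds to a tuple $(s_i)$, and $\ker s$ as an $H$--representation is determined by $(\ker s_i)$, i.e. by the list of $\bK_i$--ranks of the $s_i$; hence $\sym_{H,\sigma}(V)$ is the product of the loci $\{s_i : \rank_{\bK_i} s_i = r_i\}$ for the appropriate ranks $r_i$ dictated by $\sigma$. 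So it suffices to treat one block: for $\bK\in\{\bR,\bC,\bH\}$, show that the set $\clS_{r}$ of self-adjoint $\bK$--linear maps on $\bK^a$ of $\bK$--rank exactly $r$ (equivalently corank $a-r$) is a submanifold of the space $\sym_{\bK}(a)$ of all self-adjoint such maps, of codimension $d_{\bK}(a-r)$, and identify its tangent space.

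For the one-block statement I would use the standard local model for fixed-rank loci. Near a given $s_0$ of corank $k=a-r$, choose the $\bK$--orthogonal splitting $\bK^a = \ker s_0 \oplus (\ker s_0)^{\perp}$; writing $s$ in block form against this splitting, $s_0$ has an invertible lower-right block, so by the implicit function theorem the condition that $s$ have corank $k$ is, locally, the vanishing of the Schur complement, which is a self-adjoint $\bK$--linear map on $\ker s_0$ depending smoothly on $s$. This exhibits $\clS_r$ locally as the zero set of a submersion onto $\sym_{\bK}(k)$, giving codimension $\dim_{\bR}\sym_{\bK}(k)=d_{\bK}(k)=d_{\bK}(a-r)$; summing over blocks gives codimension $\sum_i d_{\bK_i}(a_i-r_i)=d(\sigma)$ by Definition \ref{def_d(sigma)} and Lemma \ref{lem_dim_of_sym_isotypic_component} applied to $\ker s$. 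The same block computation shows the tangent space $T_s\sym_{H,\sigma}(V)$ is exactly $\{l\in\sym_H(V) : \Pi\circ l|_{\ker s}=0\}$, where $\Pi:V\to\ker s$ is the orthogonal projection — i.e. the kernel of the linear map $l\mapsto (x\mapsto \Pi(l(x)))$ from $\sym_H(V)$ to $\sym_H(\ker s)$, which one checks is surjective (this is just the statement that every self-adjoint $H$--map on $\ker s$ extends to one on $V$, e.g. by zero off $\ker s$). The transversality criterion for $s+L$ is then immediate: $s+L$ meets $\sym_{H,\sigma}(V)$ transversally iff $L + T_s\sym_{H,\sigma}(V) = \sym_H(V)$, and since $T_s\sym_{H,\sigma}(V)$ is precisely the kernel of the above surjection $\varphi_{\mathrm{full}}:\sym_H(V)\to\sym_H(\ker s)$, this holds iff $\varphi=\varphi_{\mathrm{full}}|_L$ is surjective.

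The main obstacle I anticipate is purely bookkeeping rather than conceptual: keeping the $\bK$--linear (as opposed to $\bR$--linear) structure straight when $\bK=\bC$ or $\bH$, since "self-adjoint $\bK$--linear" maps form an $\bR$--vector space but the rank that controls the isomorphism class of $\ker s$ as an $H$--module is the $\bK$--rank, and one must verify that the Schur-complement map genuinely lands in self-adjoint $\bK$--linear maps on $\ker s_0$ (not just $\bR$--linear ones) so that the dimension count produces $d_{\bK}$ and not something larger. Once the single-block model is set up carefully with $\bK$-coefficients, the rest — the product decomposition, the codimension sum, and the transversality reformulation — is routine. I would also remark that one could phrase the submersion cleanly by noting $\sym_{H,\sigma}(V)$ is a single orbit-type stratum for the action of the group of $H$--linear isometries of $V$ on $\sym_H(V)$, but the explicit Schur-complement argument is more elementary and directly yields the tangent-space description needed for the transversality statement.
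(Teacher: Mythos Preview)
Your proposal is correct and uses the same core idea as the paper---the Schur complement local model with respect to the splitting $V=\ker s\oplus(\ker s)^\perp$---but you take an unnecessary detour through the isotypic decomposition. The paper works directly with $H$--equivariant self-adjoint maps on $V$: writing $s'$ near $s$ in block form against $V_1=\ker s$ and $V_2=V_1^\perp$, one checks that $s'\in\sym_{H,\sigma}(V)$ iff the Schur complement $S_{11}-S_{12}S_{22}^{-1}S_{21}\in\sym_H(V_1)$ vanishes, and the tangent space at $s$ is $\{S_{11}=0\}$. Since $S_{12}S_{22}^{-1}S_{21}$ is automatically $H$--equivariant and self-adjoint whenever the $S_{ij}$ are (no need to pass to $\bK_i$--coefficients), the codimension is $\dim\sym_H(\ker s)=d(\sigma)$ directly from Definition~\ref{def_d(sigma)}, and your $\bK$--linear bookkeeping concern evaporates. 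Your block-by-block reduction buys a slightly more explicit picture of why the rank condition on each isotypic piece is what encodes~$\sigma$, but the paper's argument is shorter and sidesteps exactly the obstacle you flagged.
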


\begin{proof}
Suppose $s\in \sym_{H,\sigma}(V)$. Let $V_1=\ker s$, and let $V_2$ be the orthogonal complement of $V_1$, then $s$ restricts to an invertible self-adjoint map on $V_2$. Suppose $s'\in \sym_H(V)$ is close to $s$, then under the decomposition $V=V_1\oplus V_2$, the map $s'$ is decomposed as 
$$
s' = \begin{pmatrix}
S_{11} & S_{12} \\
S_{21} & S_{22}
\end{pmatrix},
$$
where $S_{ij}:V_i\to V_j$ is an $H$--equivariant map. For $s'$ sufficiently close to $s$, the map $S_{22}$ is invertible, and we have
\begin{align}
	&\begin{pmatrix}
		\id & -S_{12} \circ S_{22}^{-1} \\
	0  & \id
	\end{pmatrix}
	\cdot 
	\begin{pmatrix}
		S_{11} & S_{12} \\
		S_{21} & S_{22}
	\end{pmatrix}
	\cdot
	\begin{pmatrix}
		\id & 0 \\
		-S_{22}^{-1}\circ S_{21} & \id
	\end{pmatrix}
	\nonumber
	\\
	= &
	\begin{pmatrix}
		S_{11}-S_{12}\circ S_{22}^{-1}\circ S_{21} & 0 \\
		0 & S_{22}
	\end{pmatrix}.
	\label{eqn_elementary_matrix_trans_S22_finite_dim}
	\end{align}
Hence $s'\in \sym_{H,\sigma}(V)$ if and only if 
$$
S_{11}-S_{12}\circ S_{22}^{-1}\circ S_{21} = 0.
$$
As a result, $\sym_{H,\sigma}$ is a manifold near $s$, and its tangent space at $s$ is given by $S_{11}=0$. Therefore the lemma is proved.
\end{proof}

\subsection{Transversality}
\label{subsec_transversality_finite_dim}
Let $f_0$ and $f_1$ be as in Theorem  \ref{thm_equivairant_cerf_finite_dim}, this subsection studies the property of a generic path from $f_0$ to $f_1$.

Let $C_G^\infty(M)$ be the space of $G$--invariant $C^\infty$ functions $f$ on $M$, then $C_G^\infty(M)$ is a closed subspace of $C^\infty(M)$. Endow  $C_G^\infty(M)$ with the standard $C^\infty$--topology.

Let $f_t$ be as in Theorem \ref{thm_equivairant_cerf_finite_dim}.
Since $\nabla f_t\neq 0$ on $\partial M$, there exists $$0=t_1<t_2<\cdots<t_m=1$$ such that 
$$u\,f_{t_i}+(1-u)\,f_{t_{i+1}}$$ 
have non-vanishing gradients on $\partial M$ for all $i=1,\cdots,m-1$ and $u\in[0,1]$. 
Let $\{g_0,g_1,\cdots \}$ be a countable dense subset of $C_G^\infty(M)$ that contains $f_{t_i}$ for all $i=1,\cdots,m$. For $m\in \bZ^+$, let
$$N_m := \sup \{\|g_0\|_{C^i},\cdots,\|g_m\|_{C^m}\}.$$ Let $\clF$ be the Banach space  defined by 
$$\clF:=\{(a_0, a_1, \cdots)|\sum_{m\ge 0} N_m|a_m|<+\infty\}.$$
 Then the map 
\begin{align*}
\iota:\clF &\to C_G^\infty(M) \\
(a_0,a_1,\cdots)&\mapsto \sum_{m\ge 0} a_m g_m
\end{align*}
is a continuous linear map with dense image. 

Let 
$$\clF':=\{p\in\clF|\nabla\iota(p)\neq 0 \mbox{ on } \partial M\},$$ then $\clF'$ is an open subset of $\clF$. Take $\frp_0,\frp_1\in\clF$ such that $\iota(\frp_0)=f_0$, $\iota(\frp_1)=f_1$, then $\frp_0$ and $\frp_1$ are in the same connected component of $\clF'$ by the construction of $\clF$.

\begin{Definition}
\label{def_C_infty_subvariety}
	Suppose $\clM$ is a Banach manifold and $d$ is a non-negative integer. A subset $\clS\subset \clM$ is called a \emph{$C^\infty$--subvariety with codimension at least $d$}, if 
$\clS$ can be covered by the image of countably many smooth Fredholm maps with index $-d$.
\end{Definition}

\begin{Definition}
	\label{defn_F_sigma}
Suppose $\sigma\in \clR_G$.
	Let 
	\begin{multline*}
		 \clF_\sigma:=\{\fru\in\clF'\,|\,\exists p\in M \mbox{ such that } \\
		 \nabla_p\iota(\fru)=0, \mbox{ and } \ker \Hess_p \iota(\fru)/T_p\Orb p \mbox{ represents }  \sigma\}.
	\end{multline*}
\end{Definition}

\begin{Lemma}
\label{lem_one_degeneracy_codim}
	$\clF_\sigma$ is a $C^\infty$--subvariety of  $\clF'$ with codimension at least $d(\sigma)$.
\end{Lemma}

\begin{proof}
	Let 
	$$\widetilde \clF_\sigma := \{(\fru,p)\in \clF'\times M \, | \, \nabla \iota(\fru)(p) = 0,\, \ker \Hess_p \iota(\fru)/T_p\Orb p \mbox{ represents }  \sigma\}.$$
	Suppose $(\fru,p)\in \widetilde \clF_\sigma$, let $D$ be a slice of $p$, we claim that there exists an open neighborhood $U$ of $(\fru,p)$ in $\clF'\times D$, such that 
	\begin{enumerate}
		\item $\widetilde \clF_\sigma\cap U$ is a Banach manifold,
		\item The projection of $\widetilde \clF_\sigma\cap U$ to $\clF'$ is Fredholm and has index $-d(\sigma)$.
	\end{enumerate}
	The result then follows from the above claim and the separability of $\clF'\times M$.
	
	To prove the claim, notice that changing the $G$--invariant metric of $M$ does not change the set $\widetilde{\clF}_\sigma$, therefore we may assume without loss of generality that $D$ is $\Stab(p)$--equivariantly diffeomorphic to a closed disk of $S_p$ (see Definition \ref{defn_Sp}) via an isometry, therefore $TD$ is canonically trivialized by $S_p$ via parallel translations. Let $D^0$ be the fixed-point subset of $D$ with respect to the $\Stab(p)$--action, let $S_p^0$ be the fixed-point subset of $S_p$. Then
$$\widetilde \clF_\sigma\cap (\clF'\times D)=\widetilde \clF_\sigma\cap (\clF'\times D^0),$$
and $\widetilde \clF_\sigma\cap (\clF'\times D^0)$ is given by the pre-image of $\{0\}\times \sym_{\Stab(p),\sigma}(S_p)$ of the map
\begin{align*}
\varphi:\clF'\times D^0 &\to (S_p^0)^* \times \sym_{\Stab(p)}(S_p) \\
(\frv,q)&\mapsto (\nabla_q\iota(\frv)|_{D^0} , \Hess \iota(\frv)|_{S_p}).
\end{align*}
By Lemma \ref{lem_surj_jet_M_sigma} and the density of $\ima \iota$ in $C^\infty_G(M)$, the image of the tangent map of $\varphi$ at $(\fru,p)$ is dense.  Since the co-domain of the tangent map is a finite-dimensional linear space and the image is closed, we conclude that the tangent map of $\varphi$ at $(\fru,p)$ is surjective. Therefore by Lemma \ref{lem_sym_sigma_submanifold_finite_dim}, 
$$\varphi^{-1}\big(\{0\}\times \sym_{\Stab(p),\sigma}(S_p)\big)$$ is a Banach manifold near $(\fru,p)$. 

The embedding of $\varphi^{-1}\big(\{0\}\times \sym_{\Stab(p),\sigma}(S_p)\big)$ in $\clF'\times D^0$ is Fredholm with index $-\dim D^0 - d(\sigma)$, and the projection of $\clF'\times D^0$ to $\clF'$ is Fredholm with index $\dim D^0$.
Therefore the projection of $\varphi^{-1}\big(\{0\}\times \sym_{\Stab(p),\sigma}(S_p)\big)$ to  $\clF'$ is Fredholm and has index $-d(\sigma)$.
\end{proof}

Notice that $d(\sigma)=0$ if and only if $\sigma$ is given by a zero representation.
Therefore by Lemma \ref{lem_one_degeneracy_codim}, the function $\iota(\fru)$ is $G$--Morse for a generic $\fru\in\clF$. Hence we have recovered the existence theorem of $G$--Morse functions by Wasserman \cite{wasserman1969equivariant}. 

The following discussion shows that for a generic 1-parameter family $f_t$, there is at most one degeneracy orbit at any given $t$.

\begin{Definition}
Suppose $\sigma_1,\sigma_2\in \clR_G$.
	Let $\clF_{\sigma_1,\sigma_2}$ to be the set of $\fru\in\clF'$, such that there exist $p,q\in M$ with the following properties:
	\begin{enumerate}
		\item $\Orb(p)\neq \Orb(q),$
		\item $\nabla_p\iota(\fru)=0,  \nabla_q\iota(\fru)=0, $
		\item $\ker \Hess_p \iota(\fru)/T_p\Orb p$  represents  $\sigma_1$,  
		\item 
		 $\ker \Hess_q \iota(\fru)/T_q\Orb q$ represents  $\sigma_2$.
	\end{enumerate}
\end{Definition}

\begin{Lemma}
\label{lem_two_degeneracy_codim}
	$\clF_{\sigma_1,\sigma_2}$ is a $C^\infty$--subvariety of  $\clF'$ with codimension at least $d(\sigma_1)+d(\sigma_2)$.
\end{Lemma}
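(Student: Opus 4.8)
The plan is to mimic the proof of Lemma \ref{lem_one_degeneracy_codim}, now working over the configuration space $\clF^0\times(M\times M)$ with the diagonal-type locus removed. First I would introduce the incidence variety
\[
\widetilde\clF_{\sigma_1,\sigma_2}:=\{(\fru,p,q)\in\clF^0\times M\times M\,|\,\Orb(p)\neq\Orb(q),\ \nabla_p\iota(\fru)=0,\ \nabla_q\iota(\fru)=0,\ \ker\Hess_p\iota(\fru)/T_p\Orb p\text{ represents }\sigma_1,\ \ker\Hess_q\iota(\fru)/T_q\Orb q\text{ represents }\sigma_2\},
\]
and note that projection to $\clF^0$ has image exactly $\clF_{\sigma_1,\sigma_2}$. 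By separability of $\clF^0\times M\times M$ it suffices, near each $(\fru,p,q)\in\widetilde\clF_{\sigma_1,\sigma_2}$, to find a neighborhood on which $\widetilde\clF_{\sigma_1,\sigma_2}$ is a Banach manifold and the projection to $\clF^0$ is Fredholm of index $-(d(\sigma_1)+d(\sigma_2))$.

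Fix such a triple. Because $\Orb(p)\neq\Orb(q)$, these orbits are disjoint compact submanifolds, so I can choose disjoint slices $D_1$ at $p$ and $D_2$ at $q$ with $\overline{U_p(D_1)}\cap\overline{U_q(D_2)}=\emptyset$. As in Lemma \ref{lem_one_degeneracy_codim}, adjusting the $G$--invariant metric I may assume each $D_i$ is isometric to a ball in $S_p$, resp. $S_q$, so that the degeneracy locus inside $D_i$ lives in the fixed-point subset $D_i^0$ of the stabilizer action. Then
\[
\widetilde\clF_{\sigma_1,\sigma_2}\cap(\clF^0\times D_1\times D_2)=\varphi^{-1}\bigl(\{0\}\times\sym_{\Stab(p),\sigma_1}(S_p)\times\{0\}\times\sym_{\Stab(q),\sigma_2}(S_q)\bigr),
\]
where
\[
\varphi(\frv,x,y)=\bigl(\nabla_x\iota(\frv)|_{D_1^0},\ \Hess\iota(\frv)|_{S_p},\ \nabla_y\iota(\frv)|_{D_2^0},\ \Hess\iota(\frv)|_{S_q}\bigr).
\]
The key point is that the tangent map of $\varphi$ is surjective at $(\fru,p,q)$: this follows from Lemma \ref{lem_surj_jet_M_sigma_2pts} (the two-point version of the jet-extension lemma, applied with $A_i$ small neighborhoods of $p$, $q$ inside $M_{\sigma_i}$, whose closures are disjoint since $\Orb(p)\neq\Orb(q)$) together with the density of $\ima\iota$ in $C^\infty_G(M)$. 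Granting surjectivity, Lemma \ref{lem_sym_sigma_submanifold_finite_dim} applied on each factor shows $\varphi^{-1}(\cdots)$ is a Banach submanifold near $(\fru,p,q)$, cut out with total codimension $\dim D_1^0+d(\sigma_1)+\dim D_2^0+d(\sigma_2)$; composing with the projection $\clF^0\times D_1^0\times D_2^0\to\clF^0$ (Fredholm of index $\dim D_1^0+\dim D_2^0$) yields a Fredholm map of index $-(d(\sigma_1)+d(\sigma_2))$, as desired.

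The main obstacle is the surjectivity of $d\varphi$, specifically handling the case $\sigma_1=\sigma_2$ (or more precisely when $p$ and $q$ lie in the same stratum $M_\sigma$): one cannot prescribe the $1$--jet and Hessian data at $p$ and at $q$ by a single function unless one knows the two orbits can be separated within that stratum, which is exactly the hypothesis $\Orb(p)\neq\Orb(q)$ combined with the disjointness assumption built into Lemma \ref{lem_surj_jet_M_sigma_2pts}. When $p$ and $q$ lie in different strata the separation is automatic since $\overline{M_{\sigma_1}}$ and $M_{\sigma_2}$ can be arranged disjoint near the two points. Once this separation is in place the argument is a routine repetition of Lemma \ref{lem_one_degeneracy_codim} on two factors, so no further difficulty arises.
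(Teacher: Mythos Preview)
Your proposal is correct and follows essentially the same argument as the paper: introduce the two-point incidence variety, choose slices with disjoint images in $M/G$, define the product map $\varphi$ into $(S_p^0)^*\times\sym_{\Stab(p)}(S_p)\times(S_q^0)^*\times\sym_{\Stab(q)}(S_q)$, invoke Lemma~\ref{lem_surj_jet_M_sigma_2pts} with density of $\ima\iota$ for surjectivity, and compute the Fredholm index. One small caveat: in your final paragraph the $\sigma$ in $M_\sigma$ refers to the slice-representation stratum of $p$ (resp.\ $q$), which is generally distinct from the kernel-type labels $\sigma_1,\sigma_2$ in $\clF_{\sigma_1,\sigma_2}$; the relevant dichotomy for applying Lemma~\ref{lem_surj_jet_M_sigma_2pts} is whether $p$ and $q$ lie in the same stratum, not whether $\sigma_1=\sigma_2$, but your substantive point (that $\Orb(p)\neq\Orb(q)$ permits disjoint slice neighborhoods) is exactly right.
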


\begin{proof}
The proof is essentially the same as Lemma \ref{lem_one_degeneracy_codim}.
Let 
	\begin{multline*}
		\widetilde \clF_{\sigma_1,\sigma_2} := \{(\fru,p,q)\in \clF'\times M \times M\, | \Orb(p)\neq \Orb(q),
		\\
		\, \nabla_p \iota(\fru) = 0,\, \ker \Hess_p \iota(\fru)/T_p\Orb p \mbox{ represents }  \sigma_1,
		\\
		\, \nabla_q \iota(\fru) = 0,\, \ker \Hess_q \iota(\fru)/T_p\Orb p \mbox{ represents }  \sigma_2
		\}.
	\end{multline*}

	Suppose $(\fru,p,q)\in \widetilde \clF_\sigma$, let $D_p$, $D_q$ be a slice of $p$ and $q$ respectively such that the images of $D_p$ and $D_q$ are disjoint in the quotient set $M/G$. we claim that there exists an open neighborhood $U$ of $(\fru,p,q)$ in $\clF'\times D_p\times D_q$, such that 
	\begin{enumerate}
		\item $\widetilde \clF_{\sigma_1,\sigma_2}\cap U$ is a Banach manifold,
		\item The projection of $\widetilde \clF_{\sigma_1,\sigma_2}\cap U$ to $\clF'$ is Fredholm and has index $-d(\sigma_1)-d(\sigma_2)$.
	\end{enumerate}
	The result then follows from the above claim and the separability of $\clF'\times M\times M$.
	
	To prove the claim, notice that changing the $G$--invariant metric of $M$ does not change the set $\widetilde{\clF}_{\sigma_1,\sigma_2}$, therefore we may assume without loss of generality that $D_p$ is  $\Stab(p)$--equivariantly diffeomorphic to a closed disk of $S_p$ via an isometry, and also  $D_q$ is $\Stab(q)$--equivariantly diffeomorphic to a closed disk of $S_q$ via an isometry. Therefore $TD_p$ and $TD_q$ are canonically trivialized by parallel translations. Let $D_p^0$ be the fixed-point subset of $D_p$ with respect to the $\Stab(p)$--action, let $S_p^0$ be the fixed-point subset of $S_p$, and define $D_q^0$, $S_q^0$ similarly. Then
$$\widetilde \clF_{\sigma_1,\sigma_2}\cap (\clF'\times D_p\times D_q)=\widetilde \clF_{\sigma_1,\sigma_2}\cap (\clF'\times D_p^0\times D_q^0),$$
and the intersection is given by the pre-image of 
$$\{0\}\times \sym_{\Stab(p),\sigma_1}(S_p)\times \sym_{\Stab(q),\sigma_2}(S_q)$$
 of the map
\begin{align*}
\varphi:\clF'\times D_p^0\times D_q^0 &\to (S_p^0)^* \times \sym_{\Stab(p)}(S_p)\times (S_q^0)^* \times \sym_{\Stab(q)}(S_q) \\
(\frv,s,t)&\mapsto (\nabla_s\iota(\frv)|_{D_p^0} , \Hess \iota(\frv)|_{S_p},\nabla_s\iota(\frv)|_{D_q^0} , \Hess \iota(\frv)|_{S_q}).
\end{align*}
By Lemma \ref{lem_surj_jet_M_sigma_2pts} and the density of $\ima \iota$ in $C^\infty_G(M)$, the tangent map of $\varphi$ is surjective at $(\fru,p,q)$, therefore  by Lemma \ref{lem_sym_sigma_submanifold_finite_dim}, 
\begin{equation}
\label{eqn_varphi_inverse_singular_set_finite_dim_two_pts}
\varphi^{-1}\big(\{0\}\times \sym_{\Stab(p),\sigma_1}(S_p)\times \sym_{\Stab(q),\sigma_2}(S_q)\big)
\end{equation}
is a Banach manifold near $(\fru,p,q)$.

The embedding of \eqref{eqn_varphi_inverse_singular_set_finite_dim_two_pts} to $\clF'\times D_p^0\times D_q^0 $ is Fredholm with index 
$$-\dim D_p^0 - \dim D_q^0 - d(\sigma_1)-d(\sigma_2),$$
and the projection from $\clF'\times D_p^0\times D_q^0 $ to $\clF'$ is Fredholm with index $\dim D_p^0 + \dim D_q^0$. Therefore the projection of \eqref{eqn_varphi_inverse_singular_set_finite_dim_two_pts} to $\clF'$ is Fredholm and has index $-d(\sigma_1)-d(\sigma_2)$.
\end{proof}

 Lemma \ref{lem_one_degeneracy_codim} and Lemma \ref{lem_two_degeneracy_codim} have the following immediate corollary.

\begin{Corollary}
\label{cor_wall_cross_finite_dim}
	Suppose $\frp_t$, $t\in[0,1]$ is a generic path from $\frp_0$ to $\frp_1$ in $\clF'$ that intersects all $\clF_\sigma$ and $\clF_{\sigma_1,\sigma_2}$ transversely, and let $f_t=\iota(\frp_t)$. Then there are at most countably many $t$ such that $f_t$ is not $G$--Morse; for every such $t$, there is exactly one critical orbit $\Orb(p)$ of $f_t$, and $\ker \Hess_p f_t/T_p\Orb(p)$ is an irreducible representation of $\Stab(p)$. \qed
\end{Corollary}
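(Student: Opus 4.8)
The plan is to read the conclusion off directly from the codimension bounds in Lemmas~\ref{lem_one_degeneracy_codim} and~\ref{lem_two_degeneracy_codim} by intersecting them with the one--dimensional parameter interval $[0,1]$, and then to use the arithmetic of $d(\sigma)$ to determine which strata $\clF_\sigma$ can actually be met. The starting observation is that, since $\frp_t\in\clF^0$ for all $t$, the gradient $\nabla f_t$ is nowhere zero on $\partial M$; hence $f_t$ fails to be $G$--Morse precisely when there is a critical point $p$ with $\ker\Hess_p f_t\supsetneq T_p\Orb(p)$, i.e.\ precisely when $\frp_t\in\clF_\sigma$ for some $\sigma\in\clR_G$ with $d(\sigma)\ge 1$ (recall $d(\sigma)=0$ exactly when $\sigma$ is a zero representation). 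Only finitely many such $\sigma$ are relevant: by Part~(4) of Lemma~\ref{lem_M_sigma_basic_properties} there are finitely many $\sigma'$ with $M_{\sigma'}\ne\emptyset$, and any degenerate kernel modulo the orbit direction is (the isomorphism class of) a subrepresentation of the fixed $\Stab(p)$--representation $S_p$ for some $p\in M_{\sigma'}$, of which there are finitely many.

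Next I would run the dimension count. Fix $\sigma$ with $d(\sigma)\ge 1$. By Lemma~\ref{lem_one_degeneracy_codim}, $\clF_\sigma$ is covered by the images of countably many Fredholm maps of index $-d(\sigma)$, and transversality of the smooth path $\frp_\bullet\colon[0,1]\to\clF^0$ to each of these maps makes the corresponding fibered intersection a manifold of dimension $1-d(\sigma)$. Consequently, if $d(\sigma)\ge 2$ the intersection is empty, so $\frp_t\notin\clF_\sigma$ for all $t$; if $d(\sigma)=1$ the intersection is a countable union of $0$--dimensional manifolds, so $\{\,t:\frp_t\in\clF_\sigma\,\}$ is countable. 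Taking the union over the finitely many relevant $\sigma$ shows that $f_t$ is non--$G$--Morse for at most countably many $t$.

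It remains to analyze such a $t$. Writing $\sigma=[H,V]$ with isotypic decomposition $V\cong V_1^{\oplus a_1}\oplus\cdots\oplus V_m^{\oplus a_m}$, where $V_i$ has type $\bK_i\in\{\bR,\bC,\bH\}$, Lemma~\ref{lem_dim_of_sym_isotypic_component} gives $d(\sigma)=\sum_i d_{\bK_i}(a_i)$; since $d_\bK(r)\ge 1$ for every $r\ge 1$ and every $\bK$, with equality only at $r=1$, this sum equals $1$ iff exactly one $a_i$ equals $1$ and the rest vanish, i.e.\ iff $V$ is irreducible and nonzero. Hence whenever $\frp_t\in\clF_\sigma$ with $\sigma$ actually attained, $d(\sigma)=1$ and the degenerate orbit $\Orb(p)$ satisfies that $\ker\Hess_p f_t/T_p\Orb(p)$ is irreducible. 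Finally, two distinct simultaneous degeneracies cannot occur: if $\Orb(p)\ne\Orb(q)$ were both degenerate, with kernels modulo orbits representing $\sigma_1,\sigma_2$ (necessarily $d(\sigma_i)\ge 1$), then $\frp_t\in\clF_{\sigma_1,\sigma_2}$, which by Lemma~\ref{lem_two_degeneracy_codim} is a $C^\infty$--subvariety of codimension $\ge d(\sigma_1)+d(\sigma_2)\ge 2$; the same dimension count as above forces $\frp_t\notin\clF_{\sigma_1,\sigma_2}$ for all $t$, a contradiction. Therefore each of the countably many non--$G$--Morse parameters carries a unique degenerate critical orbit, and for it $\ker\Hess_p f_t/T_p\Orb(p)$ is an irreducible $\Stab(p)$--representation, which is the assertion.

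I do not expect a genuine obstacle here: the analytic substance is already contained in Lemmas~\ref{lem_one_degeneracy_codim} and~\ref{lem_two_degeneracy_codim}. The one point that needs care is making precise what ``transverse to a $C^\infty$--subvariety'' means --- namely transversality to each of the countably many Fredholm charts covering it --- and verifying that the fibered intersection of the finite--dimensional path with a Fredholm map of index $-d$ is indeed a manifold of dimension $1-d$ (empty when $d\ge 2$, discrete when $d=1$); granting this, everything else is bookkeeping with countable (in fact finite) unions.
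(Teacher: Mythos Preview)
Your argument is correct and is precisely the unpacking the paper intends: the corollary is marked \qed and declared an ``immediate'' consequence of Lemmas~\ref{lem_one_degeneracy_codim} and~\ref{lem_two_degeneracy_codim}, and your dimension count plus the observation that $d(\sigma)=1$ forces $\sigma$ irreducible is exactly that deduction. One small clarification worth noting is that the corollary's phrase ``exactly one critical orbit'' really means ``exactly one \emph{degenerate} critical orbit'', which is what you prove and what is used downstream.
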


\begin{remark}
	\label{rem_countable_but_not_finite}
	The transversality argument does not immediately imply that the set of non-$G$--Morse $f_t$ is finite. This is because the sets $\mathcal{F}_\sigma$ and $\mathcal{F}_{\sigma_1,\sigma_2}$ are projection images of Banach submanifolds of $\clF'\times M$ with negative indices, and they are not necessarily submanifolds of $\clF'$. This issue will be resolved in the proof of Proposition \ref{prop_equivairant_cerf_finite_dim_induction} at the end of Section \ref{subsec_proof_thm_equi_cerf}.
\end{remark}

\subsection{Proof Theorem \ref{thm_equivairant_cerf_finite_dim}}
\label{subsec_proof_thm_equi_cerf}

This subsection finishes the proof of Theorem \ref{thm_equivairant_cerf_finite_dim} using the previous results and an induction argument. 

Define $\mathfrak{M}:=\bZ^{\ge 0}\times \bZ^{\ge 0}$, 
and let $\succeq$ be the lexicographical order on $\mathfrak{M}$. 
Namely, for $(a,b), (a',b')\in\mathfrak{M}$, we have $(a,b)\succeq (a',b')$ if and only if $a>a'$, or $a=a'$ and $b\ge b'$. We write $(a,b) \succ (a',b')$ if $(a,b) \succeq (a',b')$ and $(a,b) \neq (a',b')$. Define $\prec$ and $\preceq$ to be the reverses of $\succ$ and $\succeq$ respectively. 

\begin{Definition}
\label{def_m(H)}
	Suppose $H$ is a compact Lie group, define 
	$$m(H):=(\dim H, \# \pi_0(H))\in\mathfrak{M},$$
	where $\# \pi_0(H)$ denotes the number of connected components of $H$.
\end{Definition}

We define the following filtration on $\Bif_G$.

\begin{Definition}
	Suppose $k\in\mathfrak{M}$.
	Define $\Bif_G^{(k)}\subset \bZ\clR_G$ to be the subgroup generated by 
	$$
	i^H_G\big(\xi_H(V,V',g)\big) \quad\mbox{and }\quad \sigma  +\sigma \oplus \bR,
	$$
for all possible choices of $H$ such that $m(H)\preceq k$, and for all possible choices of $V$, $V'$, $g$, and
 $\sigma\in \clR_G([H])$.
\end{Definition}

Then Lemma \ref{lem_direct_sum_Bif_unfiltered} also holds for the filtrations: 
\begin{Lemma}
\label{lem_direct_sum_Bif}
Let $k\in \mathfrak{M}$.
Suppose $\sigma \in \clR_G([G])$, then 
$\Bif_G^{(k)}\oplus \, \sigma \subset \Bif_G^{(k)}.$ \qed
\end{Lemma}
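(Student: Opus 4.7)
The plan is to verify the inclusion on each of the two families of generators of $\Bif_G^{(k)}$ and then extend by $\bZ$-linearity, following the same template as the unfiltered Lemma \ref{lem_direct_sum_Bif_unfiltered}. The point will be that $m(H)$ is left unchanged by all of the relevant operations, so that the filtered version requires no genuinely new idea. Throughout, write $\sigma|_H\in \clR_H([H])$ for the restriction of (a representative of) $\sigma$ to a closed subgroup $H\subset G$.

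First I would handle a generator of the second type $\tau+\tau\oplus\bR$ with $\tau\in\clR_G([H])$ and $m(H)\preceq k$. Unpacking Definition \ref{def_dir_sum_clR_G_}, the operation $\oplus$ distributes over formal $\bZ$-sums, and on $\clR_G([H])$ two successive direct sums with $G$-representations commute (both reduce to the obvious isomorphism $V_\bR\oplus V_\sigma\cong V_\sigma\oplus V_\bR$ of $H$-modules). This gives
$$(\tau+\tau\oplus\bR)\oplus\sigma \;=\; (\tau\oplus\sigma)+(\tau\oplus\sigma)\oplus\bR.$$
Since $\tau\oplus\sigma\in \clR_G([H])$ with the same $H$, the right-hand side is another generator of the second type in $\Bif_G^{(k)}$.

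Next I would handle a generator of the first type $i^H_G(\xi_H(V,V',g))$ with $m(H)\preceq k$. The key step is the compatibility
$$i^H_G(\alpha\oplus \sigma|_H)=i^H_G(\alpha)\oplus\sigma\qquad\text{for all }\alpha\in\bZ\clR_H,$$
which follows from Definitions \ref{def_i^H_G} and \ref{def_dir_sum_clR_G_}: if $\alpha=[H'',V_\alpha,\rho_\alpha]$ with $H''\subset H\subset G$, both sides are represented by $(H'',V_\alpha\oplus V_\sigma,\rho_\alpha\oplus \rho_\sigma|_{H''})$. Combining this with \eqref{eqn_dir_sum_xi} gives
$$i^H_G(\xi_H(V,V',g))\oplus\sigma \;=\; i^H_G\bigl(\xi_H(V,V',g)\oplus\sigma|_H\bigr) \;=\; i^H_G\bigl(\xi_H(V,V'\oplus\sigma|_H,g)\bigr),$$
which is again a generator of the first type in $\Bif_G^{(k)}$ because $H$ has not changed.

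The anticipated obstacle is purely bookkeeping: keeping the three pieces of data ($\oplus$ on $\clR_G$ versus on $\clR_H$, the inclusion $i^H_G$, and the restriction $\sigma\mapsto\sigma|_H$) consistently aligned. No new geometric or analytic input beyond the definitions and \eqref{eqn_dir_sum_xi} will be needed, and the filtration compatibility comes for free because none of these operations enlarges the subgroup $H$ appearing in a given generator.
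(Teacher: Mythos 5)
Your proof is correct and is exactly the verification the paper leaves implicit: the lemma is stated with \qed because the paper regards it as following ``immediately from the definitions and \eqref{eqn_dir_sum_xi},'' the same line of reasoning you unpack (linearity of $\oplus$, the identity $i^H_G(\alpha\oplus\sigma|_H)=i^H_G(\alpha)\oplus\sigma$, and absorption into the $V'$ slot via \eqref{eqn_dir_sum_xi}). Both of your two cases produce a generator with the same subgroup $H$, so the filtration constraint $m(H)\preceq k$ is preserved, which is precisely the point.
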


Notice that $(\mathfrak{M},\succeq)$ is a totally ordered set, and every non-empty subset of $\mathfrak{M}$ has a minimum element. Therefore one can apply induction on $(\mathfrak{M},\succeq)$.
We prove the following stronger version of Theorem \ref{thm_equivairant_cerf_finite_dim} using induction on $k\in\mathfrak{M}$:

\begin{Proposition}
\label{prop_equivairant_cerf_finite_dim_induction}
	Suppose $f_0$ and $f_1$ are $G$--Morse functions on $M$, and suppose there exists a smooth 1-parameter family $f_t$, $t\in[0,1]$ connecting $f_0$ and $f_1$ such that 
	$$\nabla f_t\neq 0 \mbox{ on }\partial M$$
	 for all $t$. Let 
	 $$
	 k=\max_{p\in M}\, m\big(\Stab(p)\big),
	 $$
	 Then $\ind f_0-\ind f_1\in \Bif_G^{(k)}$. 
\end{Proposition}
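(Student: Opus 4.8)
\textbf{Proof plan for Proposition \ref{prop_equivairant_cerf_finite_dim_induction}.}

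The plan is to argue by induction on $k \in \mathfrak{M}$ with the lexicographic order, using the transversality results of Section \ref{subsec_transversality_finite_dim} together with the local bifurcation models of Section \ref{subsec_local_bif_model_finite_dim}. The base case is $k = m(\{1\}) = (0,1)$ (or the smallest $k$ for which $M_\sigma \neq \emptyset$ for some nonzero $\sigma$ forces nothing), where every point has trivial stabilizer; there the statement reduces to the classical Cerf theorem, since $M/G$ is a manifold, $f_t$ descends to a $G$-Morse, i.e.\ classical Morse, family on it, and the only generators we need are the birth-death generators $\sigma + \sigma \oplus \bR$. For the inductive step, fix $k$ and assume the conclusion holds for all $k' \prec k$. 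Given $f_0, f_1$ and a connecting family with $\max_p m(\Stab(p)) = k$, the first step is to perturb the family, rel endpoints and keeping $\nabla f_t \neq 0$ on $\partial M$, to a generic path $\frp_t$ in $\clF^0$ meeting all the $\clF_\sigma$ and $\clF_{\sigma_1,\sigma_2}$ transversely; by Corollary \ref{cor_wall_cross_finite_dim} the resulting family $f_t = \iota(\frp_t)$ is $G$-Morse except at countably many (in fact, by compactness and properness of the degeneracy locus, finitely many) parameter values, and at each such value there is a single degenerate critical orbit $\Orb(p)$ with $\ker \Hess_p f_t / T_p\Orb(p)$ an irreducible $\Stab(p)$-representation $V$. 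Since $\ind f_t$ is locally constant on the $G$-Morse locus by Lemma \ref{lem_n_f_invariant_under_Morse_deformation}, it suffices to bound the jump of $\ind f_t$ across each isolated degeneracy.

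The heart of the argument is the local analysis at one such degeneracy at $t = t_0$ on $\Orb(p)$, with $H = \Stab(p)$, $S_p = V \oplus V'$ where $V$ is the irreducible kernel direction and $V'$ its $\Hess$-nondegenerate complement, say with $\Hess^- = \Hess^-_p f_{t_0}|_{V'}$. Working in a slice $D$ via Lemma \ref{lem_local_slice}, the family $f_t|_D$ is, after the transversality reduction and the splitting lemma, $H$-equivalent near $p$ to a family of the form $G(t, x) + h(y)$ with $x \in V$, $y \in V'$, where $h$ is the nondegenerate quadratic part and $G$ is a one-variable-orbit family on $B_V(2)$ with $\nabla_x G|_{x=0}$ degenerate exactly at $t_0$. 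If $H$ acts transitively on the unit sphere of $V$ (the ``sphere-transitive'' case), then after a further change $G$ is $H$-equivalent to $F_V$ of \eqref{eqn_bifurcation_model_finite_dim_irred} — this is exactly an irreducible bifurcation in the sense of Definition \ref{def_irred_bifurcation} — so the jump is $\pm i^H_G(\xi_H(V, V', g))$, which lies in $\Bif_G^{(m(H))} \subset \Bif_G^{(k)}$ since $m(H) \preceq k$. If $H$ does \emph{not} act transitively on the unit sphere of $V$, then on the unit sphere $S(V)$ the stabilizers of points are proper closed subgroups $H'$ of $H$ with $m(H') \prec m(H) \preceq k$ (because $V$ is nontrivial irreducible, so no point of $S(V)$ is $H$-fixed, and the generic stabilizer on $S(V)$ is strictly smaller); applying the induction hypothesis to the $H$-manifold $S(V)$ (or to the relevant pieces of it) lets us further deform the one-parameter family $f_t$ near the degeneracy — by modifying the Morse function $g$ on $S(V)$ through a $G$-Morse family — into a composition of an irreducible bifurcation and birth-death bifurcations, at the cost only of generators already in $\Bif_G^{(k)}$. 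This is the step the excerpt's remark after Theorem \ref{thm_equivairant_cerf_finite_dim} is pointing at with the $\bZ/3$ example.

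Two bookkeeping points complete the proof. First, one must check that the perturbation of the family to a generic path can be done rel endpoints and within $\clF^0$: this follows because $\frp_0, \frp_1$ lie in the same component of $\clF^0$ and $\clF_\sigma, \clF_{\sigma_1,\sigma_2}$ are $C^\infty$-subvarieties of positive codimension ($d(\sigma) \geq 1$ for $\sigma$ nonzero, and $d(\sigma_1) + d(\sigma_2) \geq 2$) so a generic interpolating path avoids $\clF_{\sigma_1,\sigma_2}$ entirely and crosses each $\clF_\sigma$ ($d(\sigma)=1$) transversely in isolated points. Second, one must verify that at the finitely many crossing times the subtler degeneracies genuinely decompose as described; this is where I expect the main obstacle to lie — namely, showing that even without sphere-transitivity the local jump of $\ind f_t$ is realized by a composition of the model bifurcations of Section \ref{subsec_local_bif_model_finite_dim}, which requires carefully invoking the induction hypothesis on the lower-dimensional $H$-sphere $S(V)$ and patching the local model back into $U_p(D)$ using Lemma \ref{lem_surj_jet_M_sigma} without disturbing the rest of the critical set. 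Once the local jump is in every case seen to lie in $\Bif_G^{(k)}$, summing over the finitely many crossings gives $\ind f_0 - \ind f_1 \in \Bif_G^{(k)}$, completing the induction and hence proving the proposition (and Theorem \ref{thm_equivairant_cerf_finite_dim}).
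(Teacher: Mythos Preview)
Your induction framework, base case, and use of Corollary~\ref{cor_wall_cross_finite_dim} to reduce to single degenerate orbits with irreducible kernel all match the paper. The genuine gap is in your local analysis at a degeneracy.

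You assert that after a splitting lemma the family is locally $H$-equivalent to $G(t,x)+h(y)$, and that in the sphere-transitive case ``after a further change $G$ is $H$-equivalent to $F_V$''---i.e.\ the family \emph{is} an irreducible bifurcation. This normal-form claim is not established by the transversality results (Lemmas~\ref{lem_one_degeneracy_codim}, \ref{lem_two_degeneracy_codim} control only the $2$-jet), and the remark after Theorem~\ref{thm_equivairant_cerf_finite_dim} shows that a generic family need \emph{not} consist of model bifurcations. Your fallback in the non-transitive case---``modifying the Morse function $g$ on $S(V)$''---presupposes the family is already in the form $F_{V\oplus V'}$ with some $g$, which is exactly what is in question. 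You also do not separately treat the case where the irreducible kernel $V$ is the trivial representation (classical birth-death on $M_\sigma/G$), and your claim that the degeneracy set is finite is not justified by the stated transversality.

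The paper avoids the normal-form issue entirely. It separates the non-trivial and trivial kernel cases (Lemmas~\ref{lem_Hess_change_at_non_trivial_direction} and~\ref{lem_Hess_change_at_trivial_direction}). For the non-trivial case, after reducing to the slice it does \emph{not} try to normalize the family; instead it compares the two $G$-Morse \emph{endpoints} $f_{t_-}$, $f_{t_+}$ on a ball $B_V(2)$ directly (Lemma~\ref{lem_special_case_linear_no_trivial_component}): apply irreducible bifurcations at the origin to each endpoint until both have positive-definite Hessian there, connect the modified endpoints by a family keeping the Hessian positive at $0$ (Lemma~\ref{lem_connect_local_positive}), and then invoke the induction hypothesis on the annulus $B(1)\setminus B(\epsilon)$, where all stabilizers satisfy $m(\Stab(p))\prec k$. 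This sidesteps any need to put the one-parameter family itself into a standard form.
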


Notice that $m(H)\succeq (0,1)$ for all compact Lie groups $H$. If
$$\max_{p\in M}\, m\big(\Stab(p)\big)=(0,1),$$
 then $\Stab(p)$ is trivial for all $p\in M$, thus $M/G$ is a manifold, and $G$--equivariant functions on $M$ are equivalent to smooth functions on $M/G$. Therefore Proposition \ref{prop_equivairant_cerf_finite_dim_induction} follows from the classical Cerf's theorem.

Now suppose $k\succ (0,1)$, and suppose Proposition \ref{prop_equivairant_cerf_finite_dim_induction} is true for all $(M,G)$ such that
$$\max_{p\in M}\, m\big(\Stab(p)\big)\prec k.$$ 
We prove Proposition \ref{prop_equivairant_cerf_finite_dim_induction} when 
$$\max_{p\in M}\, m\big(\Stab(p)\big)=k.$$

We start with several technical lemmas.

\begin{Lemma}
\label{lem_connect_local_positive}
	Let $H$ be a compact Lie group, and suppose $V$ is an orthogonal representation of $H$ without trivial components. Let $B(r)$ be the closed ball in $V$ centered at $0$ with radius $r$. Suppose $f_t$ with $t\in[0,1]$ is a smooth 1-parameter family of $H$--invariant functions on $B(1)$, such that 	
	$\Hess f_i$ is positive definite at $0$ for $i=0,1$.
	 Then there exists a family of $H$--invariant functions $\tilde f_t$, $t\in [0,1]$ on $B(1)$, such that 
	\begin{enumerate}
		\item $\tilde f_t = f_t$ for $t=0,1$,
		\item there exists a neighborhood $N(\partial B(1))$ of $\partial B(1)$, such that $\tilde f_t = f_t$ on $N(\partial B(1))$  for all $t$,
		\item $\Hess \tilde f_t$ is positive definite at $0$ for all $t\in[0,1]$.
	\end{enumerate}
\end{Lemma}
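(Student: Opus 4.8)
The plan is to reduce to a purely linear-algebraic deformation near the origin and glue it to the given family away from the origin using a cutoff. First I would fix a $G$-invariant bump function $\beta\in C^\infty_c(B(1))$ that equals $1$ on a small ball $B(\delta)$ and is supported in $B(2\delta)$, where $2\delta$ is small enough that $B(2\delta)$ lies inside the region where $\Hess f_i$ is controllable; note $\beta$ can be taken $H$-invariant since the $H$-action is orthogonal. The neighborhood $N(\partial B(1))$ of item (2) will be any neighborhood of $\partial B(1)$ disjoint from $\supp\beta$. The idea is to write $\tilde f_t = f_t + \beta\cdot(\text{correction}_t)$, where the correction is a $t$-dependent $H$-invariant function vanishing to second order at $0$ (so it does not affect $\Hess$ at $0$) for $t=0,1$, but for intermediate $t$ it adds a positive-definite quadratic form that dominates the (possibly degenerate) Hessian of $f_t$.

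The key step is to produce a path of positive-definite $H$-equivariant symmetric bilinear forms interpolating the Hessians and to realize it via functions. Concretely, let $Q_t:=\Hess_0 f_t\in\sym_H(V)$; this is a continuous path in $\sym_H(V)$ with $Q_0,Q_1$ positive definite. Choose a continuous function $c(t)\ge 0$, $H$-invariantly meaning just a scalar, with $c(0)=c(1)=0$ and $Q_t + c(t)\,\id$ positive definite for all $t$ (possible since $\{Q_t\}$ is compact, so $Q_t\succeq -C\,\id$ for some $C$, and one takes $c(t)$ a bump reaching slightly above $C$ in the middle and vanishing at the endpoints). Then set
$$
\tilde f_t(x) := f_t(x) + \tfrac12\, c(t)\,\beta(x)\,\|x\|^2 .
$$
On $B(\delta)$ we have $\beta\equiv 1$, so $\Hess_0\tilde f_t = Q_t + c(t)\id$, which is positive definite for all $t$; at $t=0,1$ the added term vanishes so $\tilde f_t=f_t$; and the added term is supported in $\supp\beta$, which we arranged to be disjoint from $N(\partial B(1))$, so item (2) holds. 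Each $\tilde f_t$ is $H$-invariant because $f_t$, $\beta$, and $\|x\|^2$ all are. Smoothness in $(t,x)$ is clear.

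The main obstacle I anticipate is purely bookkeeping: making sure $c(t)$ can be chosen continuous (indeed smooth) in $t$ while vanishing exactly at the endpoints, and simultaneously large enough on the interior to restore positivity. This is where one uses that $t\mapsto Q_t$ is continuous on the compact interval $[0,1]$, hence $\inf_{t,\,\|v\|=1} \langle Q_t v,v\rangle =: -C > -\infty$; taking any smooth $c:[0,1]\to[0,\infty)$ with $c(0)=c(1)=0$ and $c(t) > C$ for $t\in[\epsilon,1-\epsilon]$, and noting that near $t=0,1$ the forms $Q_t$ are themselves positive definite (by continuity from $Q_0,Q_1$), handles the two regimes. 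A minor point to check is that $\beta(x)\|x\|^2$ is genuinely $C^\infty$ and $H$-invariant, which is immediate. No appeal to the slice theory or to $M_\sigma$ is needed here; this is a self-contained local statement, and the harder global use of it will occur later in Section \ref{subsec_proof_thm_equi_cerf}.
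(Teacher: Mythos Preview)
Your argument is correct, but the paper takes a slicker route that avoids the auxiliary function $c(t)$ altogether. Instead of adding a positive quadratic correction, the paper simply interpolates: with an $H$--invariant cutoff $\chi$ equal to $1$ near $0$ and $0$ near $\partial B(1)$, it sets
\[
\tilde f_t := (1-\chi)\cdot f_t + \chi\cdot\big((1-t)f_0 + t f_1\big).
\]
Near the origin $\tilde f_t$ coincides with the convex combination $(1-t)f_0 + tf_1$, so $\Hess_0\tilde f_t = (1-t)\Hess_0 f_0 + t\,\Hess_0 f_1$ is positive definite directly by convexity of the positive-definite cone; the endpoint and boundary conditions are automatic. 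Your approach works just as well and has the minor virtue of leaving $\Hess_0\tilde f_t$ equal to $Q_t$ plus a scalar (so the eigenspace decomposition is preserved), but it requires the two-regime argument for constructing a smooth $c(t)$ that is large on $[\epsilon,1-\epsilon]$ and vanishes at the endpoints while $Q_t$ stays positive definite on $[0,\epsilon]\cup[1-\epsilon,1]$; the paper's trick sidesteps this bookkeeping entirely.
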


\begin{proof}
Let $\chi:B(1)\to\bR$ be a $G$--invariant cut-off function that equals $1$ near $0$ and equals $0$ near $\partial B(1)$. Then the family
	$$\tilde{f}_t := (1-\chi)\cdot f_t+ \chi \cdot \big((1-t)f_0+tf_1\big)$$
	satisfies the desired conditions.
\end{proof}

\begin{Lemma}
\label{lem_index_on_slice}
	Let $H$ be a closed subgroup of $G$, let $V$ be a finite-dimensional orthogonal $H$--representation, and let $B(1)$ be the closed unit ball of $V$. Let $M=G\times_H B(1)$, let $f_0$ and $f_1$ be $G$--Morse functions on $M$. Let 
	$$D=H\times_H B(1)\subset M.$$
	 Then $f_0|_{D}$ and $f_1|_{D}$ are $H$--Morse functions. Suppose $\ind f_0|_D-\ind f_1|_D\in \Bif_H^{(k)}$, then $\ind f_0-\ind f_1\in \Bif_G^{(k)}$.
\end{Lemma}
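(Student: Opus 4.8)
The plan is to prove Lemma \ref{lem_index_on_slice} by reducing the equivariant Cerf problem on $M = G \times_H B(1)$ to the equivariant Cerf problem on the slice $D = H \times_H B(1) \cong B(1)$. The key structural observation is that there is a bijective correspondence between $G$-invariant functions on $M$ and $H$-invariant functions on $D$ via restriction: since $M = G \times_H B(1)$, every point of $M$ lies in the $G$-orbit of a point of $D$, so a $G$-invariant function is determined by its restriction to $D$, and conversely any $H$-invariant function on $B(1)$ extends to a $G$-invariant function on $M$ by the formula $f([g,x]) = f_D(x)$. Moreover this correspondence sends $G$-Morse functions to $H$-Morse functions and vice versa (this is essentially the remark after Definition \ref{def_U_p(D)}, applied to the global slice $D$), so $f_0|_D$ and $f_1|_D$ are indeed $H$-Morse.

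First I would establish the index dictionary: for a $G$-Morse function $f$ on $M$ and a critical orbit $O = \Orb(p)$ with $p \in D$, the equivariant index $\ind_G O \in \clR_G$ is the image under $i^H_G$ of the equivariant index $\ind_H \Orb_H(p) \in \clR_H$ of the corresponding critical orbit of $f|_D$. This follows because $T_p M$ decomposes $\Stab_G(p)$-equivariantly as $(T_p \Orb_G(p)/T_p\Orb_H(p)) \oplus T_p D$ with the first summand lying in the kernel of $\Hess_p f$ (it is tangent to the $G$-orbit), so the negative eigenspace of $\Hess_p f$ coincides with the negative eigenspace of $\Hess_p(f|_D)$ as a $\Stab_H(p) = \Stab_G(p)$-representation — here one uses that $H$ is a closed subgroup so $\Stab_G(p) = \Stab_H(p)$ since $p \in D$ and $D$ is a slice. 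Also the critical orbits of $f$ and of $f|_D$ are in bijection. Consequently $\ind f = i^H_G(\ind (f|_D))$ as elements of $\bZ\clR_G$, where $i^H_G : \bZ\clR_H \to \bZ\clR_G$ is the homomorphism of Definition \ref{def_i^H_G}.

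Next I would handle the 1-parameter family: from the given family $f_t$ on $M$ connecting $f_0$ to $f_1$ with $\nabla f_t \neq 0$ on $\partial M$, restrict to get a family $f_t|_D$ on $D$ connecting $f_0|_D$ to $f_1|_D$; since $\partial D$ maps into $\partial M$ under $\varphi_D$, we retain $\nabla(f_t|_D) \neq 0$ on $\partial D$ (using $G$-invariance of $\nabla f_t$ and that the tangential-to-$D$ component of the gradient is what matters). Actually the hypothesis of the lemma already gives us $\ind f_0|_D - \ind f_1|_D \in \Bif_H^{(k)}$, so this step is only needed if one wants to re-derive that; taking the hypothesis as stated, I would simply apply $i^H_G$ to both sides: $\ind f_0 - \ind f_1 = i^H_G(\ind f_0|_D) - i^H_G(\ind f_1|_D) = i^H_G(\ind f_0|_D - \ind f_1|_D)$.

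The remaining point — which I expect to be the genuine content of the lemma — is to verify that $i^H_G(\Bif_H^{(k)}) \subset \Bif_G^{(k)}$. This is where the definitions of $\Bif$ must be matched up carefully: $\Bif_H^{(k)}$ is generated by elements $i^{H'}_H(\xi_{H'}(V,V',g))$ for closed subgroups $H' \le H$ with $m(H') \preceq k$, together with $\sigma + \sigma \oplus \bR$ for $\sigma \in \clR_H([H])$. Applying $i^H_G$, the first type of generator maps to $i^{H'}_G(\xi_{H'}(V,V',g))$ by functoriality of the tautological inclusion maps ($i^H_G \circ i^{H'}_H = i^{H'}_G$, viewing $H' \le H \le G$), and since $H'$ is still a closed subgroup of $G$ with the same $m(H') \preceq k$, this lies in $\Bif_G^{(k)}$ by definition. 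For the birth-death generators $\sigma + \sigma \oplus_H \bR$ with $\sigma \in \clR_H([H])$, one checks that $i^H_G(\sigma) + i^H_G(\sigma \oplus_H \bR) = \tau + \tau \oplus_G \bR$ where $\tau = i^H_G(\sigma)$ — the subtle point being compatibility of the two $\oplus \bR$ operations (Definition \ref{def_dir_sum_clR_G_}) under the change of ambient group, which holds because adjoining a copy of the trivial 1-dimensional representation commutes with viewing an $H$-subgroup-representation as a $G$-subgroup-representation. Thus $i^H_G(\Bif_H^{(k)}) \subset \Bif_G^{(k)}$, completing the proof. The main obstacle is purely bookkeeping: making sure the filtration index $k$ and the $\oplus \bR$ operations are respected under $i^H_G$; there is no analytic difficulty.
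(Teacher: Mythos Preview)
Your proposal is correct and follows exactly the same approach as the paper's proof, which consists of the two assertions $i^H_G(\Bif_H^{(k)}) \subset \Bif_G^{(k)}$ and $i^H_G(\ind f_0|_D - \ind f_1|_D) = \ind f_0 - \ind f_1$; you have simply unpacked the details the paper leaves implicit. One minor slip: the birth-death generators of $\Bif_H^{(k)}$ are $\sigma + \sigma \oplus \bR$ for $\sigma \in \clR_H([H'])$ ranging over all closed subgroups $H' \le H$ with $m(H') \preceq k$, not just $H' = H$, but your argument goes through verbatim for all such $H'$.
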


\begin{proof}
We have
	$$i^H_G \,(\Bif_H^{(k)})\subset \Bif_G^{(k)},$$
	 and 
	 $$i^H_G\,(\ind f_0|_D-\ind f_1|_D) = \ind f_0-\ind f_1.$$
	 Hence the lemma is proved.
\end{proof}

\begin{Lemma}
	\label{lem_flip_sign_of_Hess}
	Suppose $V$ is a finite-dimensional orthogonal $G$--representation without trivial components, and let $M=B(1)$ be the closed unit ball of $V$. Let $f$ be a $G$--invariant Morse function on $M$ such that the Hessian of $f$ at $0$ is not positive definite. Then there exists $\hat f$ such that 
\begin{enumerate}
	\item $\hat f=f$ on a neighborhood of $\partial M$,
	\item $\hat f$ is $G$--Morse,
	\item $\ind f - \ind \hat f$ has the form $\xi_G(V_1,V_2,g)$, where $V_1\oplus V_2$ is isomorphic to a direct summand  of $V$ as $G$--representations, and $V_1$ is irreducible,
	\item the number of positive eigenvalues of $\Hess \hat f|_{x=0}$ (counted with multiplicities) is strictly greater than the number of positive eigenvalues of $\Hess f|_{x=0}$ (counted with multiplicities).
\end{enumerate}
\end{Lemma}

\begin{remark}
By Lemma \ref{lem_change_of_index_under_irred_bif}, Condition (3) above states that the change of the total index from $f$ to $\hat f$ is the same as the change of the total index under an irreducible bifurcation.
\end{remark}

\begin{remark}
	Since $V$ has no trivial components as a $G$--representation, the origin $x=0$ is a critical point for every $G$--invariant smooth function on $M$. 
\end{remark}

\begin{proof}
		Let $\mathcal{S}$ be the set of $G$--Morse functions on $M$ such that the desired $\hat f$ exists. 
		We prove that if $f$ is $G$--Morse and $\Hess f$ is not positive definite at $0$, then $f\in \mathcal{S}$. 	The general idea is to first construct a family of elements in the set $\clS$ using Example \ref{example_bifurcation_model_finite_dim_sum}, and then extend the examples by gluing them to other $G$--Morse functions.
	We present the proof in several steps. 
	
	In the following, all eigenvalues are counted with multiplicities. 
	
	\vspace{.5\baselineskip}	
	\textbf{Step 1.}
	We first find a family of elements in the set $\clS$. Suppose $V$ is orthogonally decomposed as $V=V_1\oplus V_2$ as $G$--representations such that $V_1$ is an irreducible representation. Here, $V_2$ is allowed to be the zero space. Assume $h$ is a $G$--invariant quadratic function on $V_2$ such that $\Hess h$ has a full rank. Consider the function
	\begin{align*}
	f_a : V = V_1\oplus V_2 & \to \bR \\
	    x = (x_1,x_2) & \mapsto -a \|x_1\|^2 - \|x_1\|^4 + h(x_2).
	\end{align*}
	We show that there exists $\epsilon >0$, which may depend on $h$, such that $f_a\in \mathcal{S}$ for all $a\in (0,\epsilon)$. 
	
	Note that $f_a = F_{V_1\oplus V_2}(-a,\cdot)$, where $F_{V_1\oplus V_2}$ is the function defined by Example \ref{example_bifurcation_model_finite_dim_sum} with respect to the given quadratic function $h$. Let $\chi$ be a smooth  $G$--invariant cut-off function on $V$ such that $\chi = 1$ on $B(1/3)$ and $\chi = 0$ on $B(1) - B(2/3)$. Consider the function
	$$
	\hat f_a (x):= \chi\cdot F_{V_1\oplus V_2}(a,x) + (1-\chi) f_a.
	$$
	Then $\hat f_a = f_a$ on a neighborhood of $\partial M$, and the number of positive eigenvalues of $\Hess \hat f_ a|_{x=0}$ is strictly greater than the number of positive eigenvalues of $\Hess f_a|_{x=0}$. We also know that $\hat f_a = F_{V_1\oplus V_2}(a,\cdot)$ on $B(1/3)$. On $B(1)- B(1/3)$, we have 
	$$\lim_{a\to 0^+} \hat f_a = \chi\cdot F_{V_1\oplus V_2}(0,\cdot) + (1-\chi) f_0 = f_0 = -\|x_1\|^4 + h(x_2)
	$$
	in the $C^\infty$ topology, thus there exists $\epsilon >0$ such that for all $a\in(0,\epsilon)$, the function $\hat f_a$ has no critical point on $B(1)- B(1/3)$. Hence we conclude that $\hat f_a$ is $G$--Morse and 
	$$\ind f_a - \ind \hat f_a = \ind  F_{V_1\oplus V_2}(-a,\cdot) - \ind F_{V_1\oplus V_2}(a,\cdot)= \xi_G(V_1, V_2', g),$$
    where $V_2'$ is the span of negative eigenvectors of $h$, and $g$ is the function on the unit sphere of $V_1$ that appears in the definition of $F_{V_1\oplus V_2}$. This shows $f_a\in\clS$.
		\vspace{.5\baselineskip}

	\textbf{Step 2.}
	Recall that the set of $G$--Morse functions on $M$ is an open subset of $C^\infty_G(M)$. 
	We show that $\mathcal{S}$ is also open in the $C^\infty$ topology.
	
	Assume $f_0\in \mathcal{S}$, let $\hat {f}_0$ be a function that satisfies the stated conditions with respect to $f_0$. Let $\chi:B(1)\to\bR$ be a smooth $G$--invariant cut-off function that equals $1$ whenever $f_0\neq \hat f_0$ and equals $0$ near $\partial B(1)$. 
	Suppose $f_1$ is a $G$--Morse function such that $\|f_0-f_1\|_{C^\infty}<\epsilon$, where $\epsilon$ will be determined later. 
	
	Assume $\epsilon$ is sufficiently small so that $f_1$ is $G$--Morse whenever $\|f_0-f_1\|_{C^\infty}<\epsilon$. 
	Then the number of positive eigenvalues of $\Hess f_1|_{x=0}$ is the same as the number of positive eigenvalues of  $\Hess f_0|_{x=0}$. 	
	Let 
	$$\hat f_1 := (1-\chi) f_1 + \chi \hat f_0.$$ Then $\hat f_1$ is a $G$--invariant function such that the number of positive eigenvalues of $\Hess \hat f_1|_{x=0}$ is strictly greater than the number of positive eigenvalues of $\Hess f_1|_{x=0}$, and that $\hat f_1= f_1$ on a neighborhood of $\partial M$.
	
	Note that the properties of $\chi$ imply
	$$(1-\chi) f_0 +  \chi \hat f_0=\hat f_0,$$
	so $\hat f_1 - \hat f_0 = (1-\chi)(f_1-f_0)$, and we have 
	$$
		t \hat f_1 + (1-t) \hat f_0  = t(1-\chi) (f_1-f_0) + \hat f_0.
	$$
	Therefore, for $\epsilon$ sufficiently small, the function $t \hat f_1 + (1-t) \hat f_0$ is $G$--Morse for all $t\in [0,1]$. In particular, $\hat f_1$ is $G$--Morse. By Lemma \ref{lem_n_f_invariant_under_Morse_deformation}, we also have 
	$$\ind f_1 - \ind \hat f_1 = \ind f_0 - \ind \hat f_0.$$
 So $f_1\in \mathcal{S}$. 
	\vspace{.5\baselineskip}
		
	\textbf{Step 3.}
	We show that the property $f\in \clS$ can be verified locally near $0$, in the following sense:
	
	Assume $U$ is a subset of the interior of $M$ such that there exists a $G$--equivariant diffeomorphism $\varphi:M\to U$.
	Since $V$ has no trivial components as a $G$--representation, we know that $0$ is the only point in $M$ whose stabilizer is $G$, therefore we must have $\varphi(0) = 0$, and $0$ is in the interior of $U$. 
	
	Define $f^{(U)}(x) = f\circ \varphi(x)$ as a function on $M$. Assume $f^{(U)}$ is $G$--Morse and $f^{(U)}\in \clS$, we claim that $f\in \clS$. 
	
	To prove the claim, suppose $\hat f^{(U)}$ is a $G$--Morse function on $M$ that satisfies the desired conditions with respect to $f^{(U)}$. 
	Define
	$$
	\hat f(x) = 
	\begin{cases}
		f(x) & \text{ if } x\notin U \\
		\hat f^{(U)} \circ \varphi^{-1}(x) & \text{ if } x\in U.
	\end{cases}
	$$
	Since diffeomorphisms preserve critical points and preserve the number of positive eigenvalues of Hessians at critical points, we see that $\hat f$ satisfies the desired conditions with respect to $f$.
		\vspace{.5\baselineskip}
		
	\textbf{Step 4.}
	Recall that in our notation, $\Hess f|_{x=0}$ denotes a symmetric linear map on $TM|_{x=0} \cong V$. Define $h(v) = \langle \Hess f|_{x=0} v, v\rangle$, then $h$ is a $G$--invariant quadratic function on $M$ whose Hessian is equal to $\Hess f|_{x=0}$.
	In this step, we show that if $h\in \clS$, then $f\in \mathcal{S}$.
	
	Assume $h\in \clS$. 
	For $c>1$, let $f^{(c)} (x)= c^2 f(x/c)$, and view $f^{(c)}$ as a function on $M$. Since $f^{(c)}$ converges to $h$ as $c\to +\infty$ in the $C^\infty$ topology, Step 2 implies that there exists $c>1$ so that $f^{(c)}\in \clS$, which implies $f(x/c)\in \clS$. Therefore, by Step 3, we conclude that $f\in \clS$. 
		\vspace{.5\baselineskip}
		
	\textbf{Step 5.} Now we finish the proof of the lemma.
	By Step 4, we only need to show that $f\in \clS$ when $f$ is a $G$--invariant quadratic function whose Hessian has a full rank and is not positive definite. In this case, $V$ can be orthogonally decomposed into $V=V_1\oplus V_2$ as $G$--representations such that $V_1$ is irreducible, and $f$ has the form
	\begin{align*}
	f: V = V_1\oplus V_2 & \to \bR \\
	    x = (x_1, x_2) & \mapsto -\lambda^2 \|x_1\|^2 + h(x_2),
	\end{align*}
	where $h$ is a $G$--invariant quadratic function on $V_2$ such that $\Hess h_2$ has a full rank and $\lambda\neq 0$. By rescaling $f$, we may also assume without loss of generality that $\lambda \ge 2$. Define 
	\begin{align*}
		f_a : V_1 \oplus V_2 & \to \bR \\
		x = (x_1, x_2) & \mapsto -a \|x_1\|^2 - \|x_1\|^4 + \frac{h(x_2)}{4},
	\end{align*}
then Step 1 implies that there exists $a\in (0,1)$ such that $f_a\in \clS$. 

Consider the map 
\begin{align*}
\varphi: V_1\oplus V_2& \to V_1\oplus V_2\\
     (x_1,x_2) & \mapsto (\sqrt{a + \|x_1\|^2} \cdot (x_1/\lambda), x_2/2).
\end{align*}
Then $\varphi$ is a smooth $G$--equivariant self-diffeomorphism of $V=V_1\oplus V_2$, and we have $f\circ \varphi = f_a$. Since $\lambda\ge 2$ and $a\in (0,1)$, it is straightforward to verify that if $x=(x_1,x_2)$ satisfies $\|x\|\le 1$, then $\|\varphi(x)\|<1$. Therefore, applying Step 3 with $U=\varphi(M)$ and the assumption that $f_a\in \clS$, we conclude that $f\in \clS$. This finishes the proof of the lemma.
\end{proof}

The next few lemmas verify several special cases of Proposition \ref{prop_equivairant_cerf_finite_dim_induction}. We will later show that the general case can be deduced to these special cases by taking local slices and using the transversality results from Section \ref{subsec_transversality_finite_dim}.

The following lemma is the essence of the induction step.

\begin{Lemma}
\label{lem_special_case_linear_no_trivial_component}
Suppose $m(G)\preceq k$, and let $V$ be an orthogonal representation of $G$ without trivial component. Let $B(r)$ be the closed ball in $V$ centered at $0$ with radius $r$. Suppose $f_t$, $t\in[0,1]$ is a smooth 1-parameter family of $G$--invariant functions on $B(1)$, such that 
	\begin{enumerate}
		\item $f_0$ and $f_1$ are $G$--Morse,
		\item $\nabla f_t\neq 0$ on $\partial B(1)$ for all $t$.
	\end{enumerate}
	 Then $\ind f_0-\ind f_1\in \Bif_G= \Bif_G^{(k)}$.
\end{Lemma}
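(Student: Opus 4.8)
The plan is to exploit the fact that $V$ has no trivial $G$--component, so that the origin $0 \in B(1)$ is the only point whose stabilizer is all of $G$, while every other point has a strictly smaller stabilizer (in particular $m(\Stab(x)) \prec m(G) \preceq k$ for $x \neq 0$). This is the geometric input that lets us reduce to the inductive hypothesis of Proposition~\ref{prop_equivairant_cerf_finite_dim_induction} away from the origin, and handle the origin by an explicit local model. First I would perturb the family $f_t$ (rel endpoints, and without changing it near $\partial B(1)$) so that it is generic in the sense of Corollary~\ref{cor_wall_cross_finite_dim}: there are only countably many $t$ at which $f_t$ fails to be $G$--Morse, and at each such $t$ there is exactly one degenerate critical orbit with one-dimensional irreducible degeneracy representation. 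By Lemma~\ref{lem_n_f_invariant_under_Morse_deformation} this perturbation does not change $\ind f_0 - \ind f_1$, so it suffices to prove the claim for the perturbed family.

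Next I would separate the contributions at the origin from those away from it. Since $f_i$ is $G$--Morse, $0$ is a non-degenerate critical point of $f_i|_{V^G} = f_i|_{\{0\}}$ — more precisely, $\Hess_0 f_i$ restricted to $V$ has trivial fixed subspace, so $0$ is an isolated critical orbit of $f_i$, and its equivariant index is $[G, \Hess^-_0 f_i, \rho_i]$ for the representation $\rho_i$ of $G$ on the negative eigenspace $\Hess^-_0 f_i \subset V$. Using a cutoff argument in the spirit of Lemma~\ref{lem_connect_local_positive} (applied to a suitable translate making the Hessian positive), I would first reduce to the case where the two boundary data agree near $0$ — i.e. where $\Hess^-_0 f_0 = \Hess^-_0 f_1$ as $G$--representations — at the cost of a birth-death bifurcation term, which lies in $\Bif_G^{(m(G))} \subseteq \Bif_G^{(k)}$. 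Actually the cleaner route: arrange the generic family so that it is $G$--Morse at $0$ for all $t$ except finitely many, at which an irreducible bifurcation centered at $0$ occurs with model $F_{V \oplus V'}$ from Example~\ref{example_bifurcation_model_finite_dim_sum}; each such bifurcation changes $\ind$ by $\pm i^G_G(\xi_G(W, W', g)) \in \Bif_G^{(m(G))} \subseteq \Bif_G^{(k)}$, and the newly created critical orbits sit on a small sphere about $0$, hence have stabilizers strictly smaller than $G$.

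Finally, away from a small ball $B(\delta)$ about the origin, every point has stabilizer of type $\prec m(G) \preceq k$; cutting the family off to be constant near $\partial B(\delta)$ and applying the inductive hypothesis of Proposition~\ref{prop_equivairant_cerf_finite_dim_induction} to the $G$--manifold $B(1) \setminus \mathrm{int}\,B(\delta)$ shows that the part of $\ind f_0 - \ind f_1$ supported there lies in $\Bif_G^{(k')}$ with $k' \prec k$, hence in $\Bif_G^{(k)}$. Adding up the contributions — the origin term(s) in $\Bif_G^{(m(G))}$ and the exterior term in $\Bif_G^{(k')}$, both inside $\Bif_G^{(k)}$ — gives $\ind f_0 - \ind f_1 \in \Bif_G^{(k)}$. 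The main obstacle I anticipate is the bookkeeping at the origin: making the genericity perturbation compatible with keeping control of the equivariant index of the critical orbit at $0$, and correctly matching the birth-death / irreducible bifurcation contributions there against the definition of $\Bif_G^{(k)}$ (in particular invoking Lemma~\ref{lem_direct_sum_Bif} to absorb the extra $V'$ summand coming from $\Hess^- h$). The exterior piece is routine given the induction hypothesis; it is the interplay at $0$, where the full group $G$ acts, that carries the real content.
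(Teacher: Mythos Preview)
Your decomposition into ``the origin'' and ``the annulus'' is exactly right, and the use of the induction hypothesis on the annulus is correct. The gap is in how you handle the origin. You propose to perturb the path to be generic in the sense of Corollary~\ref{cor_wall_cross_finite_dim} and then assert that the degeneracies at $0$ can be ``arranged'' to be irreducible bifurcations modeled on $F_{V\oplus V'}$. But Corollary~\ref{cor_wall_cross_finite_dim} only tells you that at each bad $t$ the kernel of the Hessian is an irreducible $\Stab(p)$--module; it does \emph{not} say the family is locally conjugate to the normal form of Definition~\ref{def_irred_bifurcation}, and the remark following Theorem~\ref{thm_equivairant_cerf_finite_dim} gives an explicit counterexample. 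So the step ``each such bifurcation changes $\ind$ by $\pm i^G_G(\xi_G(W,W',g))$'' is not justified. A second, related problem: without controlling the Hessian at $0$ along the whole path you have no guarantee that $\nabla f_t\neq 0$ on $\partial B(\delta)$ for all $t$, so you cannot cleanly restrict the family to the annulus and invoke the induction hypothesis there.

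The paper sidesteps both issues by reversing the order of operations: it never tries to analyze bifurcations at $0$ along the path. Instead it first modifies each \emph{endpoint} $f_i$ to an $\hat f_i$ by a finite sequence of explicit irreducible bifurcations at $0$ (built from Example~\ref{example_bifurcation_model_finite_dim_sum}) so that $\Hess_0\hat f_i$ is positive definite; this gives $\ind f_i-\ind\hat f_i\in\Bif_G^{(k)}$ by construction. Then, using Lemma~\ref{lem_connect_local_positive}, it builds a \emph{new} path $\hat f_t$ from $\hat f_0$ to $\hat f_1$ with $\Hess_0\hat f_t$ positive definite for every $t$, so for small $\epsilon$ the restriction $\hat f_t|_{B(\epsilon)}$ is $G$--Morse with its unique critical point at $0$ for all $t$, and $\nabla\hat f_t\neq 0$ on $\partial B(\epsilon)$. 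Now the entire index change $\ind\hat f_0-\ind\hat f_1$ is supported on $\overline{B(1)\setminus B(\epsilon)}$, where $\max_p m(\Stab(p))\prec k$, and the induction hypothesis applies directly. You had the right ingredients (irreducible bifurcations at $0$, Lemma~\ref{lem_connect_local_positive}, induction on the annulus), but the point is to use the bifurcations to simplify the \emph{endpoints} rather than to classify degeneracies along a generic path.
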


\begin{proof}
Applying Lemma \ref{lem_flip_sign_of_Hess} repeatedly, we know that there exist $\hat f_i$, $i=0,1$, such that
\begin{enumerate}
	\item $\hat f_i=f_i$ on a neighborhood of $\partial B(1)$,
	\item $\hat f_i$ is $G$--Morse,
	\item $\ind \hat f_i- \ind f_i$ is in the subgroup of $\Bif_G$ generated by elements of the form $\xi_G(V_1,V_2,g)$,
	\item $\Hess \hat f_i$ is positive definite at $0$.
\end{enumerate}
By Lemma \ref{lem_connect_local_positive}, $\hat f_0$ and $\hat f_1$ can be connected by a 1-parameter family of $G$--invariant functions $\hat f_t$, such that $\nabla \hat f_t\neq 0$ on $\partial B(1)$, and $\Hess \hat f_t$ is positive definite at $0$ for all $t$. Therefore, there exists $\epsilon$ sufficiently small, such that for all $t$, we have $\nabla \hat f_t \neq 0$ on $\partial B(\epsilon)$, and $\hat f_t|_{B(\epsilon)}$ is $G$--Morse with a unique critical point at $0$. 

Since $V$ has no trivial component, we have
$$
k':= \max_{p\in B(1)-B(\epsilon)}\, m\big(\Stab(p)\big)\prec k.
$$
Applying the induction hypothesis on the closure of $B(1)-B(\epsilon)$ yields
$$\ind \hat f_0-\ind \hat f_1\in \Bif_G^{(k')}\subset \Bif_G^{(k)},$$ therefore the lemma is proved.
\end{proof}

\begin{Lemma}
\label{lem_Hess_change_at_non_trivial_direction}
	Suppose $f_0$ and $f_1$ are connected by a smooth family $f_t$, $t\in[0,1]$ of $G$--invariant functions $M$, with the following properties:
	\begin{enumerate}
		\item there exists $t_0\in(0,1)$, such that  $f_t$ is Morse when $t\neq t_0$,
		\item 
	the function $f_{t_0}$ has exactly one degenerate critical orbit at $\Orb(p)$, and $\nabla f_{t_0}\neq 0$ on $\partial M$,
	\item $\ker \Hess_p f_{t_0}/T_p\Orb(p)$ contains no trivial component as a representation of $\Stab(p)$,
	\item $m\big(\Stab(p)\big)\preceq k$.
	\end{enumerate}
	Then $\ind f_0-\ind f_1\in\Bif_G^{(k)}$.
\end{Lemma}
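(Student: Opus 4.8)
\textbf{Proof strategy for Lemma \ref{lem_Hess_change_at_non_trivial_direction}.}
The plan is to localize the bifurcation to a slice and then reduce to the cases already handled, principally Lemma \ref{lem_special_case_linear_no_trivial_component}. First I would use Lemma \ref{lem_local_slice} and Definition \ref{def_slice} to choose a slice $D$ at $p$, giving a $G$--invariant tubular neighborhood $U_p(D)$ of $\Orb(p)$; by the remark after Definition \ref{def_U_p(D)}, the family $f_t|_{U_p(D)}$ corresponds to a $1$--parameter family of $\Stab(p)$--invariant functions on $D$, and since $\Orb(p)$ is the only degenerate critical orbit of $f_{t_0}$, I may shrink the time interval to $[t_0-\epsilon,t_0+\epsilon]$ and arrange that for $t\ne t_0$ all critical orbits of $f_t$ in $U_p(D)$ lie in the interior and are non-degenerate. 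Away from $U_p(D)$ the critical set does not change across $t_0$, so $\ind f_{t_0-\epsilon}-\ind f_{t_0+\epsilon}$ equals the change of total index contributed by $U_p(D)$; by Lemma \ref{lem_index_on_slice} it suffices to prove the corresponding statement on $D$ for the group $H=\Stab(p)$, i.e.\ to show $\ind (f_{t_0-\epsilon}|_D)-\ind(f_{t_0+\epsilon}|_D)\in\Bif_H^{(k)}$.

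On $D\cong B(1)\subset S_p$ I would split $S_p$ into its $H$--fixed subspace $S_p^0$ and the complementary sum $W$ of non-trivial isotypic components, so that hypothesis (3) says $\ker\Hess_p f_{t_0}$ meets $S_p^0$ only in $T_p\Orb(p)$-direction—more precisely, $\Hess_p f_{t_0}$ is non-degenerate on $S_p^0/T_p\Orb(p)$. Thus near $p$ the family restricted to the fixed-point manifold $D^0$ is a $1$--parameter family of (classically) Morse functions, while the degeneracy lives entirely in the $W$--directions. I would then apply an equivariant Morse–Bott normal form / parametrized Morse lemma along $D^0$ to write $f_t$, after an $H$--equivariant change of coordinates, as $f_t(x,y)=c(t)+q_t(y)+g_t(x)$ with $y\in S_p^0$-direction, $q_t$ a fixed nondegenerate quadratic form (its signature constant on each side of $t_0$ since nothing degenerates there), and $g_t$ an $H$--invariant family on a ball in $W$; this is the standard splitting used implicitly in Lemma \ref{lem_characterize_Morse_on_M_sigma}. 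The total index of $f_t|_D$ is then, up to the constant contribution $\ind q_t$ which is unchanged across $t_0$, the direct sum (in the sense of \eqref{eqn_dir_sum_clR}) of that fixed piece with $\ind g_t$, so $\ind(f_{t_0-\epsilon}|_D)-\ind(f_{t_0+\epsilon}|_D)=\big(\ind g_{t_0-\epsilon}-\ind g_{t_0+\epsilon}\big)\oplus[\text{fixed rep}]$.

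Now $W$ is an orthogonal representation of $H$ with no trivial component, $m(H)\preceq k$, and $g_t$ is a family on a ball in $W$ with $G$--Morse (here $H$--Morse) endpoints and non-vanishing gradient on the boundary sphere (which I can arrange by shrinking the radius, since $p$ is an isolated critical orbit for $t=t_0$). Hence Lemma \ref{lem_special_case_linear_no_trivial_component}, applied with the group $H$ in place of $G$, gives $\ind g_{t_0-\epsilon}-\ind g_{t_0+\epsilon}\in\Bif_H^{(k)}$. Finally, Lemma \ref{lem_direct_sum_Bif} absorbs the $\oplus[\text{fixed rep}]$ factor, so the slice-level difference lies in $\Bif_H^{(k)}$, and Lemma \ref{lem_index_on_slice} pushes this to $\ind f_0-\ind f_1\in\Bif_G^{(k)}$, as desired.

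\textbf{Main obstacle.} The delicate point is the equivariant parametrized splitting of $f_t$ along the fixed-point manifold $D^0$: one needs an $H$--equivariant change of coordinates, depending smoothly on $t$, that simultaneously diagonalizes off the nondegenerate $S_p^0$--directions and isolates the $W$--part, valid on a full neighborhood and not just at $t=t_0$. This requires care because the coordinate change must be $H$--equivariant (so one works with $\sym_H$-valued data as in Section \ref{sebsec_linear_alg}) and must be performed in a one-parameter family, where the naive Morse-lemma iteration could fail to be smooth in $t$ near the degenerate time; I expect to handle it by applying the classical parametrized Morse–Bott lemma on $D^0$ first (legitimate since the $S_p^0$-Hessian is nondegenerate throughout a neighborhood of $(t_0,p)$) and only afterward keeping track of the residual $H$--action on the $W$--directions, which is linear to leading order and can be straightened equivariantly.
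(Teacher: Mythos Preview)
Your proposal is correct and ends at the same place as the paper (reduction to Lemma \ref{lem_special_case_linear_no_trivial_component} followed by Lemma \ref{lem_direct_sum_Bif} and Lemma \ref{lem_index_on_slice}), but the intermediate reduction is organized differently.

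The paper decomposes $S_p=S_p'\oplus S_p''$ with $S_p'=\ker\Hess_p f_{t_0}$ and uses a Lyapunov--Schmidt/Kuranishi step: it defines the finite-dimensional manifold $M_{t,\epsilon}=\{x:\Pi_{S_p''}\nabla f_t(x)=0\}$ via the implicit function theorem, restricts $f_t$ to it, and then spends most of the proof on a block-matrix analysis of $\Hess_q f_t$ (the $N_{ij}$ argument) to show that the equivariant index of $f_t$ at a critical point $q\in M_{t,\epsilon}$ equals the index of $\hat f_t:=f_t|_{M_{t,\epsilon}}$ direct-summed with a fixed $\sigma$. You instead split $S_p=S_p^0\oplus W$ along trivial versus non-trivial isotypic components and invoke an equivariant parametrized splitting (Gromoll--Meyer) lemma in the $S_p^0$--direction to obtain a genuine product form $q_t(y)+g_t(x)$, from which the index comparison is immediate.

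Your route trades the paper's elementary but somewhat lengthy Hessian estimate for a black-box normal-form statement. The obstacle you flag is real but milder than you suggest: since $H$ acts trivially on $S_p^0$, the $y\mapsto z$ change of coordinates produced by the (non-equivariant) parametrized splitting lemma is automatically $H$--equivariant provided its $W$--dependence is, and the latter follows from the $H$--invariance of $f_t$ and uniqueness of $\psi_t(x)$ solving $\partial_y f_t=0$. One small point to tighten: you should note that $\Hess_p f_t|_{S_p^0}$ is nondegenerate for \emph{all} $t$ near $t_0$ (not just on each side), so the signature of $q_t$ is constant across $t_0$ without further argument; and you should shrink $D$ so that $p$ is the \emph{only} critical point of $f_{t_0}|_D$, ensuring $0$ is the unique critical point of $g_{t_0}$ and hence $\nabla g_t\neq 0$ on a suitable $\partial B_W(r)$ for $t$ near $t_0$.
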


\begin{proof}
Since the critical orbits of $f_{t_0}$ are discrete on $M-\Orb(p)$, there exists a slice $D$ of $p$, such that $\nabla f_{t_0}\neq 0$ on $\partial D$. By shrinking the interval of $t$ if necessary and invoking Lemma \ref{lem_n_f_invariant_under_Morse_deformation}, we may assume without loss of generality that $\nabla f_{t}\neq 0$ on $\partial D$ for all $t$. 

Recall that $U_p(D)$ is the neighborhood of $\Orb(p)$ defined by Definition \ref{def_U_p(D)}.
By Lemma \ref{lem_n_f_invariant_under_Morse_deformation} again,
we only need to compare the total indices of $f_i$ on $U_p(D)$ for $i=0,1$. By Lemma \ref{lem_index_on_slice}, we may further assume without loss of generality that $M=D$ and $\Stab(p)=G$.

Identify $D$ with the closed unit ball of $S_p$, and decompose $S_p$ as $S_p=S_p'\oplus S_p''$, where $S_p'=\ker \Hess_p f_{t_0}$, and $S_p''$ is the orthogonal complement of $S_p'$ in $S_p$.
By the assumptions, the Hessian of $f_{t_0}$ is non-degenerate on $S_p''$.

Let $B_{V'}(r)$ be the closed ball in $V'$ centered at $0$ with radius $r$. Let $\pi',\pi''$ be the orthogonal projections of $S_p$ onto $S_p',S_p''$ respectively.  Define
$$
M_{t,\epsilon}:=\{x\in D|\pi''\nabla f_t(x) = 0, \|\pi'x\|\le \epsilon\}.
$$
By the implicit function theorem, for $t$ sufficiently close to $t_0$ and $\epsilon$ sufficiently small, and by shrinking $D$ if necessary, $M_{t,\epsilon}$ is  $G$--equivariantly diffeomorphic to $B_{V'}(\epsilon)$ via $\pi'$. 

Let $\sigma\in \clR_G([G])$ be represented by the subspace of $S_p''$ generated by the eigenvectors of $\Hess_p f_{t_0}|_{S_p''}$ as a $G$--representation. Let $\hat f_t$  be the restrictions of $f_t$ on $M_{t,\epsilon}$. Under the assumption that $M=D$ and $(\epsilon,t)$ being sufficiently close to $(0,t_0)$ with $t\neq t_0$, we claim that $\hat f_t $ is $G$--Morse on $M_{\epsilon, t}$, and 
$$\ind f_t = \ind \hat f_t\oplus \sigma.$$ 
The desired result then follows from this claim by Lemma \ref{lem_direct_sum_Bif} and Lemma \ref{lem_special_case_linear_no_trivial_component}.

To prove the claim, suppose $(\epsilon,t)$ is sufficiently close to $(0,t_0)$ such that $M_{t,\epsilon}$ is a $G$--manifold, and suppose $q\in M_{t,\epsilon}$ is a critical point of $\hat f_t$ on $M_{t,\epsilon}$, then $\Orb(q)\subset M_{t,\epsilon}$. Recall that $S_q\subset T_qD$ is the orthogonal complement of $T_q\Orb(q)$ in $T_qD$. Let $S_q'$ be the orthogonal complement of $T_q\Orb(q)$ in $T_q M_{t,\epsilon}$, and let $S_q''$ be the orthogonal complement of $T_q M_{t,\epsilon}$ in $T_qD$. Then we have
$$
S_q=S_q'\oplus S_q''.
$$
Suppose $\Hess_q f_t : S_q\to S_q$ is given by the matrix
$\begin{pmatrix} N_{11} & N_{12}\\ N_{21} & N_{22} \end{pmatrix}$ under the decomposition above, then 
 $$N_{11}: S_q'\to S_q'$$
 equals $\Hess_q \hat f_t|_{S_q'}$. Recall that $M=D$, which is identified with a closed ball in a linear space. For each $v\in S_q'$, we have
$$
N_{11}(v) + N_{21}(v) = (\Hess_q  f_t) (v) = \frac{\partial}{\partial v} (\nabla f_t) \in \ker \pi'' = S_p'
$$
by the definition of $M_{t,\epsilon}$.
Therefore, there exist constants $z_1$ and $\epsilon_1$, such that for all $(\epsilon,t)\in(0,\epsilon_1)\times (t_0-\epsilon_1,t_0+\epsilon_1)$, we have
\begin{equation}
\label{eqn_N11_controls_N21_finite_dim}
	z_1\cdot \|N_{11}(v) \|\ge \|N_{21}(v)\|.
\end{equation}
In particular, we have $\ker N_{11}\subset \ker N_{21}$, thus 
$$
\ker N_{11} \subset \ker 
\begin{pmatrix}
N_{11} & N_{12}\\ N_{21} & N_{22}
\end{pmatrix}.
$$
By the assumptions, $f_t$ is $G$--Morse for all $t\neq t_0$, which implies $\ker 
\begin{pmatrix}
N_{11} & N_{12}\\ N_{21} & N_{22}
\end{pmatrix}=0$ for all $t\neq t_0$. Therefore $N_{11}$ is invertible, and hence $\hat f_t$ is $G$--Morse, when $\epsilon<\epsilon_1$ and $t\in  (t_0-\epsilon_1,t_0)\cup(t_0,t_0+\epsilon_1)$.

For $(\epsilon,t)$ sufficiently close to $(0,t_0)$, the map $$
N_{22} : S_q'' \to S_q''
$$
is an approximation of $\Hess_p f_{t_0}|_{S_p''}$, which is invertible. By the continuity of $\Hess_q f_t$, we have
$$
\lim_{(\epsilon,t)\to(0,t_0)} \big(\|N_{11}\| + \|N_{21}\| + \|N_{12}\| \big)=0. 
$$
Therefore, there exist constants $z_2,\epsilon_2$, such that for all 
$$(\epsilon,t)\in(0,\epsilon_2)\times (t_0-\epsilon_2,t_0+\epsilon_2),$$
 we have 
 $$
 \|N_{22}^{-1}\| \le z_{2}, \quad \|N_{12}\|\le \frac{1}{2z_1\,z_2}.
 $$
 Let $\epsilon_0=\min\{\epsilon_1,\epsilon_2\}$, and suppose $\epsilon<\epsilon_0$, $t\in(t_0-\epsilon_0,t)\cup (t,t_0+\epsilon_0)$.
 Let $s\in[0,1]$. Notice that
	\begin{align}
	&\begin{pmatrix}
		\id & -s N_{12} \circ N_{22}^{-1} \\
	0  & \id
	\end{pmatrix}
	\cdot 
	\begin{pmatrix}
		N_{11} & sN_{12} \\
		sN_{21} & N_{22}
	\end{pmatrix}
	\cdot
	\begin{pmatrix}
		\id & 0 \\
		-s N_{22}^{-1}\circ N_{21} & \id
	\end{pmatrix}
	\nonumber
	\\
	\label{eqn_elementary_matrix_transform_N22_finite_dim}
	= &
	\begin{pmatrix}
		N_{11}-s^2 N_{12}\circ N_{22}^{-1}\circ N_{21} & 0 \\
		0 & N_{22}
	\end{pmatrix}.
	\end{align}
	For every $v\in S_q'$, we have 
	\begin{align*}
		\big\|\big(N_{11}-s^2 N_{12}\circ N_{22}^{-1}\circ N_{21}\big)(v)\big\| 
		&\ge
		 \|N_{11}(v)\|- s^2\|N_{12}\|\cdot \| N_{22}^{-1}\|\cdot \|N_{21}(v)\|
		\\
		& \ge \|N_{11}(v)\|- s^2\|N_{12}\|\cdot \| N_{22}^{-1}\|\cdot (z_1 \|N_{11}(v)\|)
		\\
		& \ge \|N_{11}(v)\| - s^2 \cdot \frac{1}{2z_1\,z_2}\cdot z_2 \cdot  (z_1 \|N_{11}(v)\|)
		\\
		& \ge \frac12  \|N_{11}(v)\|.
	\end{align*}
Since $N_{11}$ is injective, the estimates above imply that $N_{11}-s^2 N_{12}\circ N_{22}^{-1}\circ N_{21} $ is injective, therefore it 
	is invertible for all $s\in[0,1]$. By \eqref{eqn_elementary_matrix_transform_N22_finite_dim}, the map
	\begin{equation}
	\label{eqn_matrix_Nij_homotopy_finite_dim}
		\begin{pmatrix}
		N_{11} & sN_{12} \\
		sN_{21} & N_{22}
	\end{pmatrix}
	\end{equation}
	 is invertible for all $s\in[0,1]$.	Moreover, the family \eqref{eqn_matrix_Nij_homotopy_finite_dim} is $\Stab(q)$--equivariant for all $s\in[0,1]$. Therefore the equivariant index of $f_t$ at $q$ is represented by the subspace of $S_q$ generated by the negative eigenvectors of $N_{11}$ and $N_{22}$, and hence the claim is proved.
\end{proof}

\begin{Lemma}
\label{lem_Hess_change_at_trivial_direction}
	Suppose $f_0$ and $f_1$ are connected by a smooth family $f_t$, $t\in[0,1]$ of $G$--invariant functions $M$, with the following properties:
	\begin{enumerate}
		\item $\nabla f_{t}\neq 0$ on $\partial M$ for all $t$,
	\item $\Hess_\sigma f_t$ (from Definition \ref{def_Hess_sigma_f}) is non-degenerate for all $t$ and all $\sigma$.
	\end{enumerate}
Suppose also that
	$$\max_{p\in M}\, m\big(\Stab(p)\big) = k.$$
	Then $\ind f_0-\ind f_1\in\Bif_G^{(k)}$.
\end{Lemma}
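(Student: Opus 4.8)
The plan is to reduce the lemma to the classical Cerf theorem, applied stratum by stratum. By Lemma~\ref{lem_characterize_Morse_on_M_sigma}, hypothesis~(2) forces the only obstruction to $f_t$ being $G$--Morse to come from the reduced function $f_t|_{M_\sigma}$ on a quotient $M_\sigma/G$ acquiring a degenerate critical point; the normal Hessians never degenerate. So I would first decompose
\[
\ind f_t=\sum_{\sigma}\ind_\sigma f_t,
\]
the sum over the finitely many $\sigma$ with $M_\sigma\ne\emptyset$ (Lemma~\ref{lem_M_sigma_basic_properties}), where $\ind_\sigma f_t$ collects the equivariant indices of the critical orbits of $f_t$ lying in $M_\sigma$. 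It then suffices to show $\ind_\sigma f_0-\ind_\sigma f_1\in\Bif_G^{(k)}$ for each such $\sigma$.

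Fix $\sigma$, represented by an $H$--representation, so $m(H)\preceq k$. For a critical orbit $\Orb(p)\subset M_\sigma$ of $f_t$, split $S_p=\big(T_pM_\sigma/T_p\Orb(p)\big)\oplus\nu(M_\sigma)|_p$ into $\Stab(p)$--subrepresentations, the first trivial and the second with no trivial component; $\Hess_pf_t$ preserves this splitting. On the first factor it is the Hessian of the reduced Morse function $\bar f_t$ on $M_\sigma/G$ at $[p]$, so its negative eigenspace is the trivial representation of dimension $j=\ind_{[p]}\bar f_t$; on the second it is $\Hess_\sigma f_t|_p$, which by~(2) is invertible, with negative eigenspace the fibre of the $G$--equivariant subbundle $\nu^-(M_\sigma,f_t)\subset\nu(M_\sigma)$. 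As $t$ varies this subbundle varies continuously, so its isomorphism class in $\clR_G$ is locally constant on $M_\sigma\times[0,1]$, hence constant on each connected component $C$ of $M_\sigma$ and independent of $t$; call it $\tau_C\in\clR_G([H])$. Then $\ind\Orb(p)=\tau_C\oplus\bR^{\,j}$, and
\[
\ind_\sigma f_t=\sum_{C}\ \sum_{m\ge 0} n_m\!\big(\bar f_t|_{C/G}\big)\,\big(\tau_C\oplus\bR^{\,m}\big),
\]
where $n_m$ denotes the number of classical critical points of index $m$ and $\bR^{\,m}$ is the trivial $G$--representation on $\bR^m$.

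Now I would apply the classical Cerf theorem on each manifold $C/G$: the tuples $\big(n_m(\bar f_0|_{C/G})\big)_m$ and $\big(n_m(\bar f_1|_{C/G})\big)_m$ differ by a finite sequence of elementary birth/death moves, so
\[
\ind_\sigma f_0-\ind_\sigma f_1=\sum_{i}\ \pm\Big(\big(\tau_{C_i}\oplus\bR^{\,m_i}\big)+\big(\tau_{C_i}\oplus\bR^{\,m_i+1}\big)\Big).
\]
Since $\tau_{C_i}\oplus\bR^{\,m_i}\in\clR_G([H])$ and $\tau_{C_i}\oplus\bR^{\,m_i+1}=(\tau_{C_i}\oplus\bR^{\,m_i})\oplus\bR$, each summand is a defining generator $\sigma'+\sigma'\oplus\bR$ of $\Bif_G^{(k)}$ with $\sigma'\in\clR_G([H])$, $m(H)\preceq k$. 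Summing over $C$ and $\sigma$ proves the lemma.

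The genuinely delicate step -- and the main obstacle -- is that the quotients $M_\sigma/G$ need not be compact, so to invoke classical Cerf theory one must (i) know the critical points of $\bar f_t$ stay in a compact subset of $M_\sigma/G$ for $t\in[0,1]$, and (ii) perturb $\bar f_t$ rel endpoints to a Cerf--generic family, the perturbation realised $G$--equivariantly on $M$ via Lemmas~\ref{lem_surj_jet_M_sigma} and~\ref{lem_surj_jet_M_sigma_2pts} and small enough to preserve~(1)--(2) (the last point being subtle precisely because $M_\sigma$ may be non-compact). I would handle this through the stabilizer filtration: the set $\{(t,p):\nabla_pf_t=0\}$ is compact and disjoint from $[0,1]\times\partial M$, and any point of $\overline{M_\sigma}\setminus M_\sigma$ that is a limit of critical points lying in $M_\sigma$ is necessarily a \emph{degenerate} critical orbit of some $f_t$ with strictly larger $m(\Stab)$. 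Hence one first treats the $\sigma$ with $m(H_\sigma)$ maximal -- there $M_\sigma$ is a closed submanifold, $M_\sigma/G$ is closed, and the argument above runs verbatim -- and then removes $G$--invariant neighbourhoods of those strata and applies the inductive hypothesis of Proposition~\ref{prop_equivairant_cerf_finite_dim_induction} (whose value is $\prec k$, so the corresponding $\Bif_G^{(k')}\subset\Bif_G^{(k)}$) to the complement, on which $f_0$ and $f_1$ restrict to $G$--Morse functions joined by the same family.
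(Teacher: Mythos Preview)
Your approach is essentially the paper's: decompose the total index by strata and reduce to classical Cerf theory on each quotient $M_\sigma/G$. The paper's own proof is two sentences and simply asserts this reduction, so you are in fact being more careful than the paper about the details.

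Your handling of the non-compactness of $M_\sigma/G$, however, is more elaborate than necessary. Hypothesis~(2) already rules out any escape of critical points to $\overline{M_\sigma}\setminus M_\sigma$, directly and for every stratum at once. Indeed, suppose $p_n\in M_\sigma$ are critical points of $f_{t_n}$ with $(t_n,p_n)\to(t,p)$ and $p\in M_{\sigma'}$, $\sigma'\neq\sigma$. In a slice $D$ at $p$, write $D=D^0\times D^\perp$ with $D^0$ the $\Stab(p)$--fixed locus. The $\Stab(p)$--equivariance forces $\Pi^\perp\nabla(f_s|_D)\equiv 0$ on $D^0$ for all $s$, and its $D^\perp$--derivative at $p$ is exactly $\Hess_{\sigma'}f_t|_p$, which is invertible by~(2). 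The implicit function theorem then says the zero set of $\Pi^\perp\nabla(f_s|_D)$ near $(t,p)$ is precisely $[0,1]\times D^0$; in particular every critical point of any $f_s$ near $p$ lies in $M_{\sigma'}$, contradicting $p_n\in M_\sigma$. So for each $\sigma$ the critical set of the family stays in a compact subset of $M_\sigma/G$, and you can run classical Cerf on a compact piece with boundary---no recursion through the stabilizer filtration and no appeal to the inductive hypothesis of Proposition~\ref{prop_equivairant_cerf_finite_dim_induction} is needed. (As a bonus, this gives the paper's sharper ``in fact'' conclusion: the difference lies in the subgroup generated by the birth--death elements $\sigma'+\sigma'\oplus\bR$ alone.) Your observation that a hypothetical limit point would be ``necessarily degenerate'' understates the situation: under~(2) such limits simply do not exist.
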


\begin{proof}
	Suppose $\sigma\in \clR_G$. By the assumptions, $\Hess_\sigma f$ is always non-degenerate, and $f_t$ induces a family of smooth functions on $M_\sigma/G$. Therefore, the result follows from the classical Cerf theory on $M_\sigma/G$. In fact, $\ind f_0-\ind f_1$ is contained in the subgroup of $\bZ\clR_G$ generated by 
	$$
	\sigma  +\sigma \oplus \bR
	$$
	for $\sigma\in \cup_{p\in M}\, \clR_G([\Stab(p)])$.
\end{proof}

We can now finish the proof of Proposition \ref{prop_equivairant_cerf_finite_dim_induction}.
\begin{proof}[Proof of Proposition \ref{prop_equivairant_cerf_finite_dim_induction}]
Recall that we defined $\frp_0,\frp_1\in\clF'$ such that $\iota(\frp_0)=f_0$, $\iota(\frp_1)=f_1$, and $\frp_0$ and $\frp_1$ are on the same connected component of $\clF'$.

Let $\frp_t$, $t\in[0,1]$ be a generic path from $\frp_0$ to $\frp_1$ that is transverse to all $\clF_\sigma$ and $\clF_{\sigma_1,\sigma_2}$. 
	Let $f_t=\iota(\frp_t)$. By Corollary \ref{cor_wall_cross_finite_dim}, for every $t_0$ such that $f_{t_0}$ is not not Morse, the function $f_{t_0}$ has exactly one degenerate critical orbit $\Orb(p)$ where $\Hess_p f/T_p\Orb(p)$ is an irreducible $\Stab(p)$--representation.

Let $\clS$ be the set of $t\in[0,1]$ such that $f_t$ is not $G$--Morse, then $\clS$ is a closed subset of $[0,1]$. 
For each $\sigma \in \clR_G$, let $\clS_\sigma$ be the subset of $t\in \clS$, such that 
 $f_{t}$ is degenerate at 
a critical orbit $\Orb(p)\subset M_\sigma$ where $\ker \Hess_p f_t/T_p\Orb(p)$ is in the isomorphism class $\sigma$. 

Suppose $\sigma$ is given by a non-trivial irreducible representation.
For each $t_0\in \clS_\sigma$, let $\Orb(p)$ be the critical orbit of $f_{t_0}$, and let $D$ be a slice of $p$, let $D^0\subset D$ be the fixed-point set of the $\Stab(p)$ action. Then $f_{t_0}$ induces a Morse function on $M_\sigma/G$, therefore for $\epsilon>0$ sufficiently small, there is a unique smooth map 
$$p(t):(t_0-\epsilon,t_0+\epsilon)\to D_0,$$
such that $p(t_0)=p$, and $p(t)$ is a critical point of $f_t$. Since the path $\frp_t$ intersects $\clF_\sigma$ transversely (or more precisely, it intersects the projection of $\widetilde{\clF}_\sigma$ in the proof of Lemma \ref{lem_one_degeneracy_codim} transversely),  we have 
$$
\frac{d}{dt}\Big|_{t=0}\Hess_{p(t)} f_t \neq 0 \mbox{ on }  \ker \Hess_p f \cap S_p.
$$
Therefore $t_0$ is an isolated point of $\clS$. As a result, $\clS_\sigma$ is a finite set and is isolated in $\clS$.

Let $\clS'\subset \clS$ be the union of $\clS_\sigma$ for all $\sigma \in \clR_G$ that are given by non-trivial representations. Then by the previous argument, we can divide the interval $[0,1]$ into finitely many sub-intervals, such that each interval is either disjoint from $\clS'$, or contains exactly one point of $\clS$ which is also in $\clS'$. In the former case, the sub-interval defines a family of $G$--invariant functions that satisfies the conditions of Lemma \ref{lem_Hess_change_at_trivial_direction}; in the latter case, the sub-interval defines a family that satisfies the conditions of Lemma \ref{lem_Hess_change_at_non_trivial_direction}. Therefore, Proposition \ref{prop_equivairant_cerf_finite_dim_induction} is proved.
\end{proof}

\subsection{Invariant counting of critical orbits}

This section studies the weighted counting of critical orbits of $G$--Morse functions and prove Theorem \ref{thm_finite_dim_Cerf_into}. By Theorem \ref{thm_equivairant_cerf_finite_dim}, suppose 
$$w:\bZ\clR_G\to \bZ$$
is a homomorphism such that $w=0$ on $\Bif_G$, then the value of $w(\ind f)$ does not depend on the choice of the $G$--Morse function $f$. We will classify all such functions $w$ by investigating the group structure of $\bZ\clR_G/\Bif_G$. We start with the following definition.

\begin{Definition}
	Let $\clR_G^{(0)}\subset \clR_G$ be the subset represented by zero representations.
\end{Definition}

By definition,
	$\clR_G^{(0)}$ is in one-to-one correspondence with $\Conj(G)$.

\begin{Lemma}
\label{lem_reduce_independent_of_g}
Let $H$ be a compact Lie group, let $k=m(H)\in \mathfrak{M}$, and assume $k\succ (0,1)$.
	Suppose $V$ is a non-trivial, irreducible, orthogonal representation of $H$, and let $V'$ be another representation of $H$. Let $g_1,g_2$ be two $H$--Morse functions on the unit sphere of $V$. Then there exists $k'\prec k$, such that
	$$
	\xi_H(V,V',g_1)-\xi_H(V,V',g_2) \in \Bif^{(k')}_H.
	$$ 
\end{Lemma}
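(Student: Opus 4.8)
The plan is to realize the difference $\xi_H(V,V',g_1)-\xi_H(V,V',g_2)$ as the change of total index across a one-parameter family that, away from one carefully isolated spot, stays inside a region where every stabilizer is strictly smaller than $H$ in the order $\succeq$, so that the inductive Cerf theorem (Proposition \ref{prop_equivairant_cerf_finite_dim_induction} applied to the sphere) can be invoked. By \eqref{eqn_dir_sum_xi} we have $\xi_H(V,V',g_i)=\xi_H(V,0,g_i)\oplus[V']$, and by Lemma \ref{lem_direct_sum_Bif} the operation $\oplus[V']$ preserves each $\Bif_H^{(k')}$; hence it suffices to treat the case $V'=0$. So I would work on the closed unit ball $B(1)\subset V$ and compare the local models $F_V$ of Example \ref{example_bifurcation_model_finite_dim_irred} built from $g_1$ and from $g_2$.

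First I would set up the comparison at a fixed positive parameter value: fix a small $t>0$ and consider the $H$--Morse function $F_V(t,\cdot)$ on $B(2)$, which has one critical orbit at $0$ (of index given by the zero representation, by the normal form) and a finite set of critical orbits sitting on the sphere $\partial B_V(\sqrt{t/2})$ in bijection with the critical orbits of $g$. Using the product/cut-off construction in \eqref{eqn_bifurcation_model_finite_dim_irred}, I would build a one-parameter family $\{\Phi_s\}_{s\in[0,1]}$ of $H$--invariant functions on $B(2)$ interpolating from the $g_1$--model to the $g_2$--model, arranged so that (i) $\Phi_s$ agrees with a fixed function near $\partial B(2)$, so $\nabla\Phi_s\neq0$ on the boundary throughout; (ii) near the origin, $\Phi_s$ is identically $t\|x\|^2-\|x\|^4$ for all $s$, so that $0$ remains a nondegenerate critical point with constant equivariant index; and (iii) all the remaining action of the family takes place on the annulus $\{\,\sqrt{t/4}\le\|x\|\le \sqrt{t}\,\}$, which lies in $B(2)\setminus\{0\}$.

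The key point is the stabilizer bound. Since $V$ is \emph{non-trivial} and irreducible, $H$ acts with no nonzero fixed vector, so no point of $B(2)\setminus\{0\}$ has full stabilizer $H$; concretely $\max_{p\in B(2)\setminus\{0\}} m(\Stab(p))\prec m(H)=k$, because $\Stab(p)$ for $p\neq0$ is a proper closed subgroup, hence has either strictly smaller dimension or the same dimension and fewer components — in either case strictly smaller in the lexicographic order on $\mathfrak M$. Setting $k'$ to be this maximum, I would restrict the family $\Phi_s$ to the closure of the annulus (which is a compact $H$--manifold with boundary on which $\nabla\Phi_s\neq0$, after enlarging the cut-off region slightly) and apply the induction hypothesis of Proposition \ref{prop_equivairant_cerf_finite_dim_induction}: the change in total index over this sub-region lies in $\Bif_H^{(k')}$. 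Adding back the contribution of the orbit at $0$, which is unchanged, and using Lemma \ref{lem_n_f_invariant_under_Morse_deformation} to account for the region near $\partial B(2)$ where nothing moves, I get $\ind(\Phi_0)-\ind(\Phi_1)\in\Bif_H^{(k')}$. Finally I would identify $\ind(\Phi_i)$ up to an element of $\Bif_H^{(k')}$ with $\xi_H^+(V,0,g_i)$ (and the negative-$t$ side of the same models, which is $g$--independent, with $\xi_H^-(V,0,g_i)$, these two being equal), so that $\ind(\Phi_0)-\ind(\Phi_1)$ is exactly $\xi_H(V,0,g_1)-\xi_H(V,0,g_2)$; then re-attach $\oplus[V']$.

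The main obstacle I anticipate is the bookkeeping in the last step: the quantities $\xi_H^{\pm}(V,0,g_i)$ as defined in Definition \ref{def_xi_+-} package both the $0$-orbit and the sphere orbits, and one must check that the interpolating family $\Phi_s$ really has total index differing from $\xi_H^+(V,0,g_i)$ only by elements already in $\Bif_H^{(k')}$ (so that the $g$-independent pieces genuinely cancel when subtracting). This requires being slightly careful that the cut-off modifications used to make $\Phi_s$ boundary-trivial on $B(2)$ only introduce birth-death bifurcations and irreducible bifurcations indexed by subgroups $H''$ with $m(H'')\preceq k'$ — which again follows from the fixed-point-freeness of $V$, but must be stated. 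Everything else is a routine application of the machinery already built in Sections \ref{sec_G_Morse} and \ref{sec_equivariant_Cerf}.
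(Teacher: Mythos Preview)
Your proposal is correct but substantially more roundabout than the paper's argument, and the ``main obstacle'' you flag dissolves once you look again at Definition~\ref{def_xi_+-}. The paper observes directly from that definition that $\xi_H^-(V,0,g)=[H,V,\rho_V]$ does not depend on $g$ at all, so
\[
\xi_H(V,0,g_1)-\xi_H(V,0,g_2)=-\big(\xi_H^+(V,0,g_1)-\xi_H^+(V,0,g_2)\big)=-(\ind g_1-\ind g_2)\oplus\bR.
\]
Now $g_1,g_2$ are $H$--Morse functions on the closed manifold $S(V)$, where every stabilizer satisfies $m(\Stab(p))\preceq k'$ for $k':=\max_{p\in S(V)}m(\Stab(p))\prec k$ (since $V$ is non-trivial irreducible). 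Proposition~\ref{prop_equivairant_cerf_finite_dim_induction} applied to $S(V)$ gives $\ind g_1-\ind g_2\in\Bif_H^{(k')}$, and Lemma~\ref{lem_direct_sum_Bif} handles the $\oplus\,\bR$. That is the entire proof.

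Your approach---building the explicit ball models $\Phi_s$, isolating an annulus, and running the induction there---effectively re-derives the identity above geometrically: your $\ind(\Phi_i)$ \emph{is exactly} $\xi_H^+(V,0,g_i)$ by \eqref{eqn_total_index_t=+epsilon}, with no error term, so the bookkeeping you worry about is empty. The construction of $\Phi_s$ and the annulus restriction are unnecessary scaffolding; the paper bypasses the ball entirely and works on the sphere from the start. What you gain from your approach is nothing beyond a more concrete picture of why the statement holds; what the paper's approach buys is a two-line proof.
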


\begin{proof}
By \eqref{eqn_dir_sum_xi}
 and Lemma \ref{lem_direct_sum_Bif}, we only need to prove the lemma for $V'=0$.
Let $S(V)$ be the unit sphere of $V$, and let
$$
k' = \max_{p\in S(V)} m\big(\Stab(p)\big).
$$
Since $V$ is a non-trivial irreducible representation, we have
$$
k'\prec m(H).
$$
By Proposition \ref{prop_equivairant_cerf_finite_dim_induction}, we have
$$
\ind g_1-\ind g_2\in \Bif^{(k')}_H.
$$
	Therefore the result follows from the definition of $\xi_H(V,V',g_i)$.
\end{proof}

\begin{Lemma}
\label{lem_reduce_independent_of_order}
Let $H$ be a compact Lie group, let $k=m(H)\in \mathfrak{M}$, and assume $k\succ (0,1)$.
	Suppose $V_1,V_2$ are two non-trivial, irreducible, orthogonal representations of $H$, and let $V'$ be another orthogonal representation of $H$. Let $g_1$, $g_2$ be $H$--Morse functions on the unit spheres of $V_1$, $V_2$ respectively. Then there exists $k'\prec k$, such that
	$$
	\xi_H(V_1,V_2\oplus V',g_1) + \xi_H(V_2,V',g_2) - \xi_H(V_2,V_1\oplus V',g_2) - \xi_H(V_1,V',g_1)\in \Bif_H^{(k')}.
	$$ 
\end{Lemma}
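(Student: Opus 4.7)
The plan is to realize the two bifurcation orderings as two paths in a single 2-parameter family of $H$-invariant functions and equate the total-index changes along them. Take $M$ to be a large closed $H$-invariant ball in $V_1\oplus V_2\oplus V'$, and fix a smooth $H$-Morse function $h$ on $B_{V'}(2)$ whose unique critical point is the origin with $\Hess^-h(0)=V'$. Using the local model $F_V$ from Example \ref{example_bifurcation_model_finite_dim_irred} with $(V,g)=(V_1,g_1)$ and $(V,g)=(V_2,g_2)$, define a two-parameter family
$$
F_{s,t}(x_1,x_2,z) := F_{V_1}(t,x_1) + F_{V_2}(s,x_2) + h(z), \qquad (s,t)\in[-\epsilon,\epsilon]^2,
$$
near the origin, and extend by $H$-invariant cutoffs so that $\nabla F_{s,t}\ne 0$ on $\partial M$ uniformly in $(s,t)$. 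For $\epsilon>0$ sufficiently small, the four corner functions $F_{\pm\epsilon,\pm\epsilon}$ are $H$-Morse, so the two paths from $(-\epsilon,-\epsilon)$ to $(+\epsilon,+\epsilon)$ --- Path A through $(+\epsilon,-\epsilon)$ and Path B through $(-\epsilon,+\epsilon)$ --- induce equal changes in total index.

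Next I would compute the change along each edge of Path A. On Edge A1 (vary $t$ at $s=-\epsilon$) the only degeneracy occurs at the origin and is an irreducible bifurcation in the sense of Definition \ref{def_irred_bifurcation} with $V=V_1$ and passive direction $V_2\oplus V'$, contributing $-\xi_H(V_1,V_2\oplus V',g_1)$. On Edge A2 (vary $s$ at $t=+\epsilon$) there are two kinds of degeneracies. The first, at the origin, is an irreducible bifurcation with $V=V_2$ and passive direction $V'$ (since the $V_1$-direction is positive definite at $t=+\epsilon$), contributing $-\xi_H(V_2,V',g_2)$. The second occurs at each orbit $H\cdot p$ on the $V_1$-sphere of radius $\sqrt{\epsilon/2}$ coming from a critical point of $g_1$: there the Hessian degenerates precisely in the $V_2$-direction. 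Because $V_1$ is non-trivial irreducible, $H_p:=\Stab_H(p)$ is a proper closed subgroup of $H$ with $m(H_p)\prec m(H)=k$. I would pick a small $H$-invariant tubular neighborhood $U_p$ of $H\cdot p$ on which $\nabla F_{s,+\epsilon}\ne 0$ along $\partial U_p$ for all $s\in[-\epsilon,\epsilon]$; the restrictions $F_{\pm\epsilon,+\epsilon}|_{U_p}$ are then $H$-Morse, and by Lemma \ref{lem_index_on_slice} together with Proposition \ref{prop_equivairant_cerf_finite_dim_induction} applied to a slice of $p$, the change of total index on $U_p$ lies in $i^{H_p}_H\,\Bif_{H_p}^{(m(H_p))}\subset\Bif_H^{(k')}$, where $k':=\max_p m(H_p)\prec k$.

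A completely symmetric analysis of Path B, using that $V_2$ is non-trivial irreducible to bound the contributions from orbits on the $V_2$-sphere, shows that the change along Path B equals $-\xi_H(V_2,V_1\oplus V',g_2)-\xi_H(V_1,V',g_1)$ modulo $\Bif_H^{(k')}$. Equating the changes along the two paths and rearranging yields the stated identity. The main obstacle will be the bookkeeping of Step 2: I must check that the neighborhoods $U_p$ can be chosen mutually disjoint and disjoint from a small neighborhood of the origin on which the origin bifurcation takes place, that the boundary-gradient condition on each $\partial U_p$ holds uniformly in $s$ throughout the homotopy, and that every critical orbit of $F_{\pm\epsilon,+\epsilon}$ is captured by the union of these neighborhoods with the origin neighborhood. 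This is a careful but essentially routine choice of $\epsilon$ and of the cutoffs entering the definition of $F_{s,t}$.
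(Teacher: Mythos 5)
Your argument is correct but proceeds by a genuinely different route than the paper's. The paper first reduces to $V'=0$ using equation \eqref{eqn_dir_sum_xi} and Lemma \ref{lem_direct_sum_Bif}, then explicitly constructs two $H$--Morse functions $f_1$, $f_2$ on $B_{V_1\oplus V_2}(2)$ — each obtained from $-\|x\|^2$ by the two irreducible bifurcations in the two orders, and each having positive definite Hessian at the origin — connects them by a path on which the Hessian at $0$ stays positive definite (Lemma \ref{lem_connect_local_positive}), and then applies the induction hypothesis (Proposition \ref{prop_equivairant_cerf_finite_dim_induction}) on the annulus where $m(\Stab(p))\prec k$. You instead build a $2$--parameter family $F_{s,t}$ and equate the total-index changes along the two edge paths of the square, identifying the ``origin'' contribution on each edge with one $\xi_H$ term and absorbing the contributions from the critical orbits on the $V_1$-- or $V_2$--sphere into $\Bif_H^{(k')}$ via Lemma \ref{lem_index_on_slice} and the induction hypothesis on slices. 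Both approaches use the same essential ingredient (the induction for smaller stabilizers away from the origin), but the paper deploys it once on an annulus rather than orbit-by-orbit, which avoids the simultaneous degeneracy at $s=0$ of the origin and the sphere orbits on your Edge A2 and the attendant bookkeeping of disjoint tubular neighborhoods. Your edge computations are right: the passive negative eigenspace is $V_2\oplus V'$ on Edge A1 (because $F_{V_2}(-\epsilon,\cdot)$ is negative definite at $0$) but only $V'$ on Edge A2 (because $F_{V_1}(+\epsilon,\cdot)$ is positive definite at $0$), and this asymmetry is exactly what produces the four distinct $\xi_H$ terms. One streamlining worth adopting is the paper's initial reduction to $V'=0$: since $\xi_H(V_i,V_j\oplus V',g_i)=\xi_H(V_i,V_j,g_i)\oplus[V']$ and $\Bif_H^{(k')}\oplus[V']\subset\Bif_H^{(k')}$ by Lemma \ref{lem_direct_sum_Bif}, the $V'$--dependence is formal, and dropping it lets you also drop the auxiliary function $h$.
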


\begin{proof}
	By \eqref{eqn_dir_sum_xi}
 and Lemma \ref{lem_direct_sum_Bif}, we only need to prove the lemma for $V'=0$. Let $V=V_1\oplus V_2$. Let $B_V(r)$ be the closed ball in $V$ centered at $0$ with radius $r$. Let $f=-\|x\|^2$ be defined on $B_V(2)$. By the constructions of Example \ref{example_bifurcation_model_finite_dim_irred} and Example \ref{example_bifurcation_model_finite_dim_sum}, one can obtain an $H$--Morse function $f_1$ from $f$ by two irreducible bifurcations at $0$ such that
	\begin{enumerate}
		\item 	$\ind f_1 = - \xi_H(V_1,V_2,g_1) - \xi_H(V_2,0,g_2),$
		\item $\Hess f_1$ is positive definite at $0$. 
	\end{enumerate}
	Similarly, by switching the roles of $V_1$ and $V_2$, one obtains a function $f_2$ such that 
		\begin{enumerate}
		\item 	$\ind f_2 = - \xi_H(V_2,V_1,g_2) - \xi_H(V_1,0,g_1),$
		\item $\Hess f_2$ is positive definite at $0$. 
	\end{enumerate}
By definition, $f_1$ and $f_2$ can be connected by a smooth $G$--invariant functions $f_t$, $t\in[1,2]$, such that $\nabla f_t\neq 0$ on $\partial B(2)$. 

Let
$$
k' = \max_{p\in V-\{0\}} m\big(\Stab(p)\big).
$$
Since $V$ does not contain trivial component, we have
$$
k'\prec m(H).
$$
By Lemma \ref{lem_connect_local_positive} and Proposition \ref{prop_equivairant_cerf_finite_dim_induction}, we have 
	$$
	\xi_H(V_1,V_2,g_1) + \xi_H(V_2,0,g_2) - \xi_H(V_2,V_1,g_2) - \xi_H(V_1,0,g_1)= \ind f_2 -\ind f_1 \in \Bif_H^{(k')}.
	$$ 
Therefore the lemma is proved.
\end{proof}

\begin{Theorem}
\label{thm_equivariant_euler_finite_abstract}
The composition of the homomorphisms 
$$\Phi: \bZ\clR_G^{(0)}\xhookrightarrow{} \bZ\clR_G \to \bZ\clR_G /\Bif_G$$
is an isomorphism.
\end{Theorem}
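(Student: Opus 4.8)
The plan is to construct a group homomorphism $\eta\colon\bZ\clR_G\to\bZ\clR_G^{(0)}$ with three properties: (P1) $\eta$ restricts to the identity on $\bZ\clR_G^{(0)}$; (P2) $\sigma-\eta(\sigma)\in\Bif_G$ for every $\sigma\in\clR_G$; and (P3) $\eta(\Bif_G)=0$. Granting these, (P3) shows $\eta$ factors through a homomorphism $\bar\eta\colon\bZ\clR_G/\Bif_G\to\bZ\clR_G^{(0)}$; then $\bar\eta\circ\Phi=\id$ by (P1), while $\Phi\circ\bar\eta=\id$ by (P2) (for any $x\in\bZ\clR_G$ one has $x-\eta(x)\in\Bif_G$, hence $[x]=\Phi(\eta(x))$). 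Thus $\Phi$ is an isomorphism with inverse $\bar\eta$; this $\eta$ is also the map asserted in Theorem \ref{thm_finite_dim_Cerf_into}.

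I would define $\eta$ on a generator $\sigma=[H,V,\rho]$ of $\bZ\clR_G$ by induction on $\big(m(H),\dim V\big)$ with the lexicographic order on $\mathfrak{M}\times\bZ^{\ge 0}$, mimicking the reduction steps built into $\Bif_G$. Decompose $V=V^{H}\oplus V'$ into the trivial fixed subspace $V^{H}$, of dimension $j$, and its complement $V'$, which has no trivial component. If $V'=0$, set $\eta(\sigma):=(-1)^{j}[H,0,0]$; this already yields (P1), since then $\eta([H,0,0])=[H,0,0]$. If $V'\neq 0$, split off a non-trivial irreducible summand $V'=W\oplus V''$, choose an $H$--Morse function $g$ on the unit sphere of $W$, and set
$$
\eta([H,V,\rho]):=(-1)^{j}\,\eta\big(i^{H}_{G}(\xi_{H}^{+}(W,V'',g))\big),
$$
where $\xi_H^{+}$ is as in Definition \ref{def_xi_+-} and $\eta$ is extended linearly to $\bZ\clR_G$. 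The right-hand side makes sense inductively: in $\xi_H^{+}(W,V'',g)=[H,V'',\rho|_{V''}]+\ind g\oplus\bR\oplus[H,V'',\rho|_{V''}]$ the first term has $\dim V''<\dim V$, while every summand of $\ind g$ is represented on some stabilizer $\Stab_H(q)$, $q$ on the unit sphere of $W$, with $m(\Stab_H(q))\prec m(H)$ because $W$ is non-trivial and irreducible --- exactly the inequality used in Lemmas \ref{lem_reduce_independent_of_g} and \ref{lem_reduce_independent_of_order}. Property (P2) then follows by induction: the birth--death relations $\tau+\tau\oplus\bR\in\Bif_G$ give $[H,V,\rho]\equiv(-1)^{j}[H,V',\rho|_{V'}]\bmod\Bif_G$, and when $V'\neq 0$ the relation $i^{H}_{G}\xi_{H}(W,V'',g)=i^{H}_{G}\xi_{H}^{-}(W,V'',g)-i^{H}_{G}\xi_{H}^{+}(W,V'',g)\in\Bif_G$ together with $[H,V',\rho|_{V'}]=\xi_H^{-}(W,V'',g)$ gives $[H,V',\rho|_{V'}]\equiv i^{H}_{G}\xi_{H}^{+}(W,V'',g)\equiv\eta\big(i^{H}_{G}\xi_{H}^{+}(W,V'',g)\big)\bmod\Bif_G$, the last step by the inductive (P2) for the lower-complexity summands.

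The substance of the argument, and the step I expect to be hardest, is to check that $\eta$ is well defined and that $\eta(\Bif_G)=0$. For this I would strengthen the induction to also carry the statement that $\eta$ annihilates $\Bif_G^{(k')}$ for every $k'$ strictly below the $m$--level currently treated, using that $i^{H}_{G}$ maps $\Bif_H^{(k')}$ into $\Bif_G^{(k')}$. Well-definedness means independence of the choices of $W$, of the complement $V''$, and of $g$: the complement is harmless since any two complements of a fixed irreducible summand are isomorphic; independence of $g$ follows from Lemma \ref{lem_reduce_independent_of_g}, since two choices change $\xi_H^{+}(W,V'',g)$ by an element of $\Bif_H^{(k')}$ with $k'\prec m(H)$, hence change $\eta$'s input by an element of $\Bif_G^{(k')}$, which $\eta$ kills by the strengthened hypothesis; and independence of the order in which irreducible summands are peeled off reduces, via uniqueness of the isotypic decomposition, to transposing two adjacent irreducible summands, which is precisely the content of Lemma \ref{lem_reduce_independent_of_order}. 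Finally $\eta(\Bif_G)=0$ is checked on generators: on a birth--death generator $\tau+\tau\oplus\bR$ one uses that $\tau\oplus\bR$ carries exactly one more trivial summand than $\tau$, so $\eta(\tau\oplus\bR)=-\eta(\tau)$; and on $i^{H}_{G}\xi_{H}(W,V'',g)=i^{H}_{G}\xi_{H}^{-}(W,V'',g)-i^{H}_{G}\xi_{H}^{+}(W,V'',g)$ one evaluates $\eta\big(i^{H}_{G}\xi_{H}^{-}(W,V'',g)\big)=\eta\big([H,W\oplus V'',\cdots]\big)$ by peeling off exactly the summand $W$, which returns $\eta\big(i^{H}_{G}\xi_{H}^{+}(W,V'',g)\big)$ on the nose, using the naturality $\eta\circ i^{H}_{G}=i^{H}_{G}\circ\eta$ that is visible from the construction. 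With $\eta$ in hand, the first paragraph completes the proof.
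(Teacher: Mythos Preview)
Your proposal is correct and follows essentially the same approach as the paper: your map $\eta$ is the paper's projection $p$, defined by the same induction on $(m(H),\dim V)$ via $\xi_H^{+}$, with well-definedness secured by Lemmas \ref{lem_reduce_independent_of_g} and \ref{lem_reduce_independent_of_order} and the simultaneous inductive hypothesis that $\eta$ kills $\Bif_G^{(k')}$ for $k'\prec m(H)$. The only difference is organizational: you package surjectivity and injectivity together via (P1)--(P3), whereas the paper proves surjectivity separately by a minimal-counterexample argument before constructing $p$; the content is the same.
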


\begin{proof}
	We first prove that $\Phi$ is a surjection. Let 
	$\pi:\bZ\clR_G \to \bZ\clR_G /\Bif_G$ be the projection map.
	If $H$ is a closed subgroup of $G$ and $\rho:H\to \Hom(V,V)$ is a representation of $H$,  let $[H,V,\rho]\in \clR_G$ be the element represented by $(H,V,\rho)$.
	
	We deduce a contradiction assuming $\Phi$ is not surjective. Let $(H,V,\rho)$ a representation such that 
	\begin{equation}
	\label{eqn_assume_Phi_not_surj}
		\pi([H,V,\rho])\notin\ima \Phi,
	\end{equation}
	and choose one with the minimum value of $m(H)$. If there are multiple such representations with the same value of $m(H)$, we choose one with the minimum value of $\dim V$.
	
	If $\dim V=0$, then $[H,V,\rho]\in \clR_G^{(0)}$ and hence \eqref{eqn_assume_Phi_not_surj} contradicts the definition of $\Phi$. If $\dim V>0$, decompose $V$ as $V=V_1\oplus V_2$, where $V_1$ is an irreducible orthogonal representation of $H$. If $V_1$ is trivial, then we have
	$$
	\pi([H,V,\rho])= -\pi([H,V_2,\rho|_{V_2}]),
	$$
	therefore $\pi([H,V_2,\rho|_{V_2}])\notin \ima \Phi$, contradicting the definition of $(H,V,\rho)$.
	If $V_1$ is non-trivial, let $g$ be an $H$--Morse function on the unit sphere of $V_1$. We have
	$$
	[H,V,\rho] = i^H_G\big(\xi_H^-(V_1,V_2,g)\big),
	$$
	therefore by the definition of $\Bif_G$, we have
	$$\pi([H,V,\rho]) = \pi\circ i^H_G\big(\xi_H^+(V_1,V_2,g)\big).
	$$
	Notice that $i^H_G\big(\xi_H^+(V_1,V_2,g)\big)$ is given by a linear combination of $[H,V_2,\rho|_{V_2}]$ and elements of $\clR_H$ given by the representations of groups $K$ with $m(K)\prec m(H)$, which yields a contradiction to the definition of $(H,V,\rho)$. In conclusion, the map $\Phi$ is surjective.
	
To prove the injectivity of $\Phi$, we construct a projection
$$
p: \bZ\clR_G \to \bZ\clR_G^{(0)},
$$ 
such that $\ker p\supset \Bif_G$, and $p$ restricts to the identity map on $\bZ\clR_G^{(0)}$. Suppose $[H,V,\rho]\in \clR_G$, we define $p([H,V,\rho])$ by induction on $m(H)$ and $\dim V$ as follows. 

If $\dim V=0$, then $[H,V,\rho]\in \clR_G^{(0)}$, and we define $p([H,V,\rho])=[H,V,\rho]$. 

If $\dim V>0$, suppose $p([H',V',\rho'])$ is already defined when $m(H')\prec m(H)$, and when $m(H')=m(H)$, $\dim V'<\dim V$, such that $p=0$ on $\Bif_G^{(k')}$ for all $k'\prec m(H)$. Decompose $V$ as $V=V_1\oplus V_2$, where $V_1$ is an irreducible orthogonal representation of $H$. If $V_1$ is trivial, then we define 
$$
p([H,V,\rho]):=-p([H,V_2,\rho]).
$$
If $V_1$ is non-trivial, let $g$ be an $H$--Morse function on the unit sphere of $V_1$. Define
$$
p([H,V,\rho]) := p\circ i^H_G\big(\xi_H^+(V_1,V_2,g)\big).
$$
By Lemma \ref{lem_reduce_independent_of_g}, Lemma \ref{lem_reduce_independent_of_order}, and the induction hypothesis, the definition of $p$ does not depend on the choice of the decomposition of $V$ or the choice of the function $g$, and $p$ restricts to zero on $\Bif_G^{(m(H))}$.
\end{proof}

\begin{proof}[Proof of Theorem \ref{thm_finite_dim_Cerf_into}]
	Recall that we view $\Conj(G)$ as a subset of $\clR_G$ by identifying $[H]\in \Conj(G)$ with the element in $\clR_G$ represented by the zero representation of $H$. 
	Also recall that in the statement of Theorem \ref{thm_finite_dim_Cerf_into}, $\eta$ denotes a (set-theoretic) map from $\clR_G$ to $\bZ\Conj(G)$ such that $\eta(\sigma) = \sigma$ for all $\sigma \in \Conj(G)$. We need to show that the map $\eta$ described by Theorem \ref{thm_finite_dim_Cerf_into} is existent and unique.
	
By Theorem \ref{thm_equivairant_cerf_finite_dim} and the constructions of Section \ref{subsec_local_bif_model_finite_dim}, 
the sum
$$
\sum_{[p]\in\Crit(f)}\eta(\ind p)
$$
is independent of the function $f$ for all closed $G$--manifolds $M$, if and only if $\eta$ is identically zero on $\Bif_G$. Therefore the desired result is an immediate consequence of Theorem \ref{thm_equivariant_euler_finite_abstract}.
\end{proof}

\begin{Corollary}
\label{cor_equivariant_euler_finite}
Every map $$w:\clR_G^{(0)}\to \bZ$$
can be uniquely extended to a homomorphism
 $$w:\bZ\clR_G\to \bZ,$$ such that 
 $w=0$ on $\Bif_G$. 
 \qed
\end{Corollary}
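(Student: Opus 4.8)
The statement is an immediate consequence of Theorem \ref{thm_equivariant_euler_finite_abstract}, and my plan is simply to unwind the universal properties involved. First, since $\bZ\clR_G^{(0)}$ is by definition the free abelian group on the set $\clR_G^{(0)}$, the given map $w\colon\clR_G^{(0)}\to\bZ$ extends uniquely to a group homomorphism $\tilde w\colon\bZ\clR_G^{(0)}\to\bZ$; there is nothing to choose at this step.

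Next, write $\pi\colon\bZ\clR_G\to\bZ\clR_G/\Bif_G$ for the quotient projection and $\iota\colon\bZ\clR_G^{(0)}\hookrightarrow\bZ\clR_G$ for the inclusion, so that the map $\Phi$ of Theorem \ref{thm_equivariant_euler_finite_abstract} is $\Phi=\pi\circ\iota$. A homomorphism $W\colon\bZ\clR_G\to\bZ$ vanishes on $\Bif_G$ if and only if it factors as $W=\bar W\circ\pi$ for a (necessarily unique) homomorphism $\bar W\colon\bZ\clR_G/\Bif_G\to\bZ$, by the universal property of the quotient. I would therefore define
\[
W:=\tilde w\circ\Phi^{-1}\circ\pi,
\]
which makes sense precisely because Theorem \ref{thm_equivariant_euler_finite_abstract} guarantees that $\Phi$ is an isomorphism.

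It then remains to verify the two required properties and uniqueness, all of which are formal. Since $W$ factors through $\pi$, it vanishes on $\Bif_G=\ker\pi$. Its restriction to $\clR_G^{(0)}$ is $W\circ\iota=\tilde w\circ\Phi^{-1}\circ\pi\circ\iota=\tilde w\circ\Phi^{-1}\circ\Phi=\tilde w$, which agrees with $w$ on the generating set $\clR_G^{(0)}$, so $W$ extends $w$. For uniqueness, any homomorphism $W'\colon\bZ\clR_G\to\bZ$ with $W'=0$ on $\Bif_G$ equals $\bar{W'}\circ\pi$ for a unique $\bar{W'}$, and the requirement $W'|_{\clR_G^{(0)}}=w$ forces $\bar{W'}\circ\Phi=\tilde w$, hence $\bar{W'}=\tilde w\circ\Phi^{-1}$ as $\Phi$ is invertible, so $W'=W$.

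Since every step is just an application of a universal property, with the only substantive input being the isomorphism of Theorem \ref{thm_equivariant_euler_finite_abstract}, I do not expect any genuine obstacle here: the corollary is essentially a reformulation of that theorem, recording that homomorphisms on $\bZ\clR_G$ killing $\Bif_G$ are pinned down by their values on the subset $\clR_G^{(0)}\cong\Conj(G)$.
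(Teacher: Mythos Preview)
Your proof is correct and matches the paper's approach exactly: the paper gives no proof at all (just a \qed), signaling that the corollary is an immediate formal consequence of Theorem~\ref{thm_equivariant_euler_finite_abstract}. You have simply spelled out the universal-property argument that the paper leaves implicit.
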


\begin{Addendum}\label{add:universal}
Our construction in Theorem \ref{thm_finite_dim_Cerf_into} can be viewed as a Morse-theoretic realization of the \emph{universal equivariant Euler characteristic} of $G$-manifolds defined in \cite[Chapter IV, 1]{tom-Dieck}. Such notion is more generally defined for pointed $G$-CW complexes. 

An additive invariant on the category of pointed $G$-CW complexes is defined to be a pair $(b, \mathbf{B})$ where $\mathbf{B}$ is an abelian group and $b$ assigns each pointed pointed $G$-CW complex with an element in $\mathbf{B}$ such that:
\begin{enumerate}
	\item if $X$ and $Y$ are pointed $G$-CW complexes which are pointed $G$-homotopy equivalent, then $b(X) = b(Y)$;
	\item if $A$ is a pointed $G$-CW subcomplex of $X$, then the relation $b(A) - b(X) + b(X/A) = 0$ holds in $\mathbf{B}$.
\end{enumerate}
An additive invariant $(u, \mathbf{U})$ is called \emph{universal} if for every additive invariant $(b, \mathbf{B})$, there is a group homomorphism $\phi: \mathbf{U} \to \mathbf{B}$ such that $b = \phi \circ u$. It turns out that for a compact Lie group $G$, the universal additive invariant always exists, for which $\mathbf{U} = \mathbf{U}(G)$ is exactly the freely abelian group generated by conjugacy classes of closed subgroups of $G$; for any closed subgroup $H \subset G$ and pointed $G$-CW complex $X$, the coefficient of $u$ on the coordinate given by $[H]$ is given by the ordinary Euler characteristic of $X^H / NH$ minus $1$. Corollary \ref{cor_compute_equivariant_Euler} below can be used to show that the invariant defined by Theorem \ref{thm_finite_dim_Cerf_into} is equal to the universal equivariant Euler characteristic defined in \cite{tom-Dieck} for smooth closed manifolds.

\end{Addendum}

\subsection{Computations}
\label{subset_finite_dim_Cerf_examples}
This subsection describes a method that one can use to compute
$$
\sum_{[p]\in\Crit(f)}\eta(\ind p) \in \bZ\Conj(G)
$$
from Theorem \ref{thm_finite_dim_Cerf_into}. By Theorem  \ref{thm_finite_dim_Cerf_into}, this sum does not depend on the choice of the function $f$, therefore it can be regarded as a topological invariant of the closed $G$--manifold $M$. Addendum \ref{add:universal} discussed the algebro-topological interpretation of this invariant.

Results proved in the current subsection (Section \ref{subset_finite_dim_Cerf_examples}) will not be used in the proof of Theorem \ref{thm_existence_casson_intro}.

We start with the following lemma.

\begin{Lemma}
	Suppose $\eta$ is given by Theorem \ref{thm_finite_dim_Cerf_into}, and let $\bR$ denote the trivial 1-dimensional representation of $G$. Then for each $\sigma \in \clR_G$, we have
	$$
	\eta(\sigma \oplus \bR)  = - \eta(\sigma).
	$$
\end{Lemma}

\begin{proof}
By the definition of $\Bif_G$ and the proof of Theorem \ref{thm_finite_dim_Cerf_into}, we have 
$$
(\sigma\oplus \bR + \sigma) \in \Bif_G = \ker(\eta),
$$
and hence the lemma is proved.
\end{proof}

\begin{Corollary}
	Suppose $\sigma\in \Conj(G)$, then 
	$$\eta(\sigma\oplus (\oplus^n\bR)) = (-1)^n \sigma.$$
\end{Corollary}

\begin{proof}
By the definition of $\eta$, we have $\eta(\sigma) = \sigma$. Hence the result follows from the previous lemma.
\end{proof}

\begin{remark}
	Recall that $\Conj(G)$ is identified with a subset of $\clR_G$ via the zero representations. Therefore, for $\sigma = [H] \in \Conj(G)$, the element $\sigma\oplus (\oplus^n\bR)$ is represented by the $n$ dimensional trivial representation of $H$.
\end{remark}

Recall that if $f$ is a $G$--Morse function on $M$ and $p$ is a critical point of $f$, we use $\Hess^-_p f$ to denote the span of the negative eigenspaces of $\Hess f|_p$ (see Definition \ref{def_equi_index_finite_dim}). 
\begin{Corollary}
	\label{cor_compute_equivariant_Euler}
	Suppose $f$ is a $G$--Morse function on $M$ such that for every critical point $p$ of $f$, the space $\Hess^-_p f$ is a trivial representation of $\Stab(p)$. Then 
	$$
	\sum_{[p]\in\Crit(f)}\eta(\ind p) = \sum_{[p]\in \Crit(f)} (-1)^{\dim \Hess^-_p f} \cdot [\Stab(p)].
	\phantom\qedhere\makeatletter\displaymath@qed
	$$
\end{Corollary}

\begin{remark}
	By the equivariant triangulation theorem for smooth $G$--manifolds \cite{illman1983equivariant} and the correspondence between $G$--equivariant cellular structures and $G$--Morse functions (see \cite[Section 4]{wasserman1969equivariant}), we know that
	the required function $f$ in Corollary \ref{cor_compute_equivariant_Euler}  exists for all closed $G$--manifolds $M$.
\end{remark}

\subsection{Extending $\clR_G$ over $\bR$}
\label{subsec_extend_ind_over_R}
In the gauge-theoretic setting, the space spanned by the negative eigenvalues of the Hessian is always infinite-dimensional, and one has to use the spectral flow to define an analogue of the equivariant index. However, the spectral flow is in general not gauge invariant, and one way to cancel the gauge ambiguity is to add the spectral flow by a term given by the Chern-Simons functional. Since the Chern-Simons functional takes values in $\bR$, we need to extend the set $\clR_G$ to a space over $\bR$.

Notice that the definition of $\clR_G$ can be reformulated as follows.
Suppose $H$ is a compact Lie group, let $\clR(H)$ be the representation ring of $H$. In the following, we will only use the abelian group structure of $\clR(H)$. Let $\clR^+(H)\subset \clR(H)$ be the subset of elements given by $[H,V,\rho]-[H,0]$, where $[H,0]$ is the isomorphism class of the zero representation of $H$. 

Consider the disjoint union  $\sqcup_H\, \big(\clR(H)\big)$, where $H$ runs over all closed subgroups of $G$. The group $G$ acts on $\sqcup_H\, \big(\clR(H)\big)$ by conjugation as follows. Suppose 
	$$\rho:H\to\Hom(V,V)$$ 
	is a representation of $H$. Let $g\in G$. Define the action of $g$ on $[H,V,\rho]$ to be the isomorphism class of $(gHg^{-1}, V, g(\rho))$, where $g(\rho)$ is given by
	\begin{align*}
	g(\rho): gHg^{-1} &\to \Hom(V,V) \\
			h &\mapsto \rho(g^{-1}hg).
	\end{align*}
	Then the conjugation action of $g$ preserves $\sqcup_H\, \big(\clR^+(H)\big)$, and it extends linearly as maps from $\clR(H)$ to $\clR(gHg^{-1})$. The set $\clR_G$ equals the quotient set of $\sqcup_H\, \big(\clR^+(H)\big)$ by the conjugation action of $G$.
\begin{Definition}
\label{def_tilde_R_G}
	Define $\widetilde{\clR}_G$ to be the quotient set of $\sqcup_H\clR(H)\otimes \bR$, where $H$ runs over all closed subgroups of $G$, by the conjugation action of $G$. 
	
	Let $\widetilde{\clR}_G^{(0)}$ be the subset of $\widetilde{\clR}_G$ consisting of the elements represented  by 
	$$a_1\cdot[H,V_1,\rho_1] +\cdots +a_k\cdot [H,V_k,\rho_k ]\in\clR(H)\otimes\bR,$$ where $H$ is a closed subgroup of $G$, and  $(V_i,\rho_i)$ are non-isomorphic irreducible representations of $H$, and $a_i\in[0,1)$. 
	
	Suppose $H$ is a closed subgroup of $G$, let $\widetilde{\clR}_G([H])$ be the image of  $\sqcup_H\clR(H)\otimes \bR$ in $\widetilde{\clR}_G$.
\end{Definition}

The conjugation action of $G$ is trivial on $\clR(G)\otimes \bR$. Therefore similar to Lemma \ref{lem_defn_direct_sum}, we have a well-defined direct sum operation
$$
\oplus : \widetilde{\clR}_G\times (\clR(G)\otimes \bR) \to \widetilde{\clR}_G
$$
given as follows. Suppose $\sigma\in \widetilde{\clR}_G$ is given by 
$$a_1\cdot[H,V_1,\rho_1] +\cdots +a_k\cdot [H,V_k,\rho_k ]\in\clR(H)\otimes\bR,$$
and suppose $\tau \in \clR(G)\otimes \bR$ be given by 
$$b_1\cdot[G,W_1,\eta_1] +\cdots +b_s\cdot [G,W_s,\eta_s ]\in\clR(G)\otimes\bR,$$
then $\sigma\oplus \tau$ is defined to be the element represented by 
$$
\sum_{i=1}^k\sum_{j=1}^s a_ib_j\cdot [H,V_i\oplus W_j,\rho_i\oplus (\eta_j|_H)]\in\clR(H)\otimes\bR.
$$

\begin{Definition}
 Suppose $H$ is a closed subgroup of $G$, and $(V,\rho_V)$ is a non-trivial irreducible orthogonal representation of $H$. Let $g$ be an $H$--Morse function on the unit sphere of $V$, and let $\widetilde{V}\in \clR(H)\otimes \bR$. 
 Define 
$$
\xi_H(V,\widetilde{V},g):=\xi_H(V,0,g) \oplus \widetilde{V}.
$$
\end{Definition}

\begin{Definition}
	Let $\widetilde{\Bif}_G$ be the subgroup of $\bZ\widetilde{\clR}_G$ generated by all the elements given by $i^H_G\big(\xi_H(V,\widetilde{V},g)\big)$ and $\sigma+\sigma\oplus \bR$, where $\widetilde{V}\in \clR(H)\otimes \bR$, and $\sigma\in \widetilde{\clR}_G$.
\end{Definition}

The proof of Theorem \ref{thm_equivariant_euler_finite_abstract} can be easily modified to prove the following result.
\begin{Proposition}
\label{prop_quotient_by_tilde_Bif}
The composition of the homomorphisms 
$$\widetilde \Phi: \bZ\widetilde\clR_G^{(0)}\xhookrightarrow{} \bZ\widetilde\clR_G \to \bZ\widetilde\clR_G /\widetilde\Bif_G$$
is an isomorphism.
\qed
\end{Proposition}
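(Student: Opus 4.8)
\emph{Plan.} The strategy is to transplant, almost word for word, the two halves of the proof of Theorem~\ref{thm_equivariant_euler_finite_abstract}: surjectivity of $\widetilde\Phi$ by a minimal--counterexample argument that is a downward induction on $m(H)$, and injectivity by constructing an explicit retraction $p\colon\bZ\widetilde\clR_G\to\bZ\widetilde\clR_G^{(0)}$ with $\widetilde\Bif_G\subset\ker p$ and $p|_{\bZ\widetilde\clR_G^{(0)}}=\id$. First one sets up the evident filtration $\widetilde\Bif_G^{(k)}$ of $\widetilde\Bif_G$ by the value $m(H)$ of the groups entering the generators, exactly as $\Bif_G$ was filtered, so that the induction on $(\mathfrak M,\succeq)$ is available. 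The one structural change is that the secondary induction parameter, which was $\dim V$ before, must now also record how far the real coefficients are from lying in the normalizing window $[0,1)$: for a generator $[H,\widetilde V]$ with $\widetilde V=\sum_i c_i[V_i,\rho_i]$ written over pairwise non-isomorphic irreducibles, one uses a quantity such as $\sum_i\bigl(\lceil |c_i|\rceil-\mathbf 1_{[0,1)}(c_i)\bigr)$, arranged so that the base case ``$[H,\widetilde V]\in\widetilde\clR_G^{(0)}$'' is precisely ``all $c_i\in[0,1)$''.

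\emph{Surjectivity.} Writing $\pi$ for the quotient map, suppose $[H,\widetilde V]$ is a generator with $\pi([H,\widetilde V])\notin\ima\widetilde\Phi$, chosen with $m(H)$ minimal and then with the secondary parameter minimal; if no $c_i$ is bad we are in $\widetilde\clR_G^{(0)}$, done, so pick $i_0$ with $c_{i_0}\notin[0,1)$. If $V_{i_0}$ is non-trivial, pick an $H$--Morse function $g$ on its unit sphere and apply the relation $i^H_G\bigl(\xi_H(V_{i_0},\widetilde V',g)\bigr)\in\widetilde\Bif_G$ with $\widetilde V'=\widetilde V-[V_{i_0}]$. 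Using $\xi_H(V,\widetilde V,g)=\xi_H(V,0,g)\oplus\widetilde V$, $\xi_H(V,0,g)=\xi_H^-(V,0,g)-\xi_H^+(V,0,g)$, $\xi_H^-(V_{i_0},0,g)=[H,V_{i_0},\rho_{i_0}]$ and $\xi_H^+(V_{i_0},0,g)=[H,0,0]+\ind g\oplus\bR$, this identifies $\pi([H,\widetilde V])$ modulo $\widetilde\Bif_G$ with $\pi$ of $[H,\widetilde V-[V_{i_0}]]$ --- which has strictly smaller secondary parameter --- plus $i^H_G$ of classes assembled from $\ind g$, all of which are classes of the stabilizers $\Stab(p)$, $p$ in the sphere of $V_{i_0}$, hence have $m(\Stab(p))\prec m(H)$ since $V_{i_0}$ is a non-trivial irreducible. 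Iterating lowers $c_{i_0}$ into $[0,1)$. If instead $V_{i_0}$ is the trivial irreducible, the birth--death relations $\sigma+\sigma\oplus\bR\in\widetilde\Bif_G$ are used in the same fashion to trade $[H,\widetilde V]$ for a generator with its trivial coefficient pushed into $[0,1)$. Either way minimality is contradicted.

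\emph{Injectivity.} Define $p$ by the same double induction: $p([H,\widetilde V])=[H,\widetilde V]$ when all $c_i\in[0,1)$, and otherwise follow the recursion forced by whichever relation was used above --- $p\circ i^H_G\bigl(\xi_H^+(V_{i_0},0,g)\oplus\widetilde V'\bigr)$ for a non-trivial summand, the analogous birth--death recursion for the trivial one --- post-composing at each step with $a\mapsto a-\lfloor a\rfloor$ to return the coefficients to $[0,1)$. As in Theorem~\ref{thm_equivariant_euler_finite_abstract}, the real content is that $p$ is \emph{well defined}: independent of which bad summand is peeled off first, of the chosen irreducible splitting, and of $g$. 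Independence of $g$, and of the order in which two non-trivial summands are processed, follows from Lemmas~\ref{lem_reduce_independent_of_g} and~\ref{lem_reduce_independent_of_order}, which apply unchanged because the identity $\xi_H(V,\widetilde V,g)=\xi_H(V,0,g)\oplus\widetilde V$ reduces the tilde case to the case they already treat, together with the inductive hypothesis that $p$ annihilates $\widetilde\Bif_G^{(k')}$ for $k'\prec m(H)$; the remaining independence statements are elementary. Then $p\circ\widetilde\Phi=\id$, so $\widetilde\Phi$ is injective, and with surjectivity it is an isomorphism.

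\emph{Main obstacle.} I expect the well-definedness of $p$ to be the crux, exactly as in the finite-dimensional case; the genuinely new point --- bookkeeping the real coefficients and reducing them modulo $1$ against the birth--death relations --- is routine once the secondary induction parameter is chosen so that every elementary reduction strictly decreases it without leaving the filtration level $m(H)$.
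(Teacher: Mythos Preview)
Your proposal is correct and is precisely the approach the paper has in mind: the paper's own proof is the single sentence ``The proof of Theorem~\ref{thm_equivariant_euler_finite_abstract} can be easily modified to prove the following result,'' and you have spelled out that modification. The only substantive addition over the integral case is exactly what you identify --- replacing the secondary induction parameter $\dim V$ by a quantity measuring how far the real coefficients are from the window $[0,1)$, and using the birth--death and $\xi_H$ relations in both directions to drive each coefficient into $[0,1)$; the well-definedness of the retraction $p$ again reduces, via $\xi_H(V,\widetilde V,g)=\xi_H(V,0,g)\oplus\widetilde V$ and the analogue of Lemma~\ref{lem_direct_sum_Bif} for $\widetilde\Bif_G^{(k)}$, to Lemmas~\ref{lem_reduce_independent_of_g} and~\ref{lem_reduce_independent_of_order} together with the inductive hypothesis.
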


\section{Holonomy perturbations and transversality}
\label{sec_holonomy_perturbation}
The rest of this paper generalizes the results in Section \ref{sec_equivariant_Cerf} to the gauge-theoretic setting. This section establishes the transversality properties that are analogous to Lemma \ref{lem_one_degeneracy_codim} and  Lemma \ref{lem_two_degeneracy_codim}. 

\subsection{Preliminaries}
Let $Y$ be a smooth, oriented, closed $3$--manifold. Let $G$ be a compact, simply-connected simple Lie group, and let $\frg$ be the Lie algebra of $G$. By Cartan's theorem  \cite{cartan1952topologie,bott1956application}, we have 
\begin{equation}
\label{eqn_pi_i(G)}
\pi_1(G)=\pi_2(G)=0,~\pi_3(G)\cong \bZ,
\end{equation}
 therefore every principal $G$--bundle over $Y$ is trivial. Let 
\begin{equation}
\label{eqn_def_P_trivial}
P=Y\times G
\end{equation}
 be the trivial bundle. We will abuse notation and also use $\frg$ to denote the trivial $\frg$--bundle over $Y$ when there is no source of confusion.

Fix an integer $k\ge 2$. Let $\clC$ be the space of $L_k^2$--connections over $P$, then $\clC$ is an affine space over $L_k^2(T^*Y\otimes \frg).$ 
Let $\clG$ be the $L_{k+1}^2$--gauge group of $P$, then $\clG$ is identified with the set of $L_{k+1}^2$--maps from $Y$ to $G$.  

Let $\theta$ be the trivial connection associated to the product structure \eqref{eqn_def_P_trivial}, then
$$\clC=\theta+ L_k^2(T^*Y\otimes \frg),$$
and the action of $g\in \clG$ on $\theta+b\in \clC$ is given by 
$$
g(\theta+b) = \theta + \Ad_g(b) - g^{-1} dg.
$$
By the Sobolev multiplication theorem, $\clG$ is a Banach Lie group that acts smoothly on $\clC$. 

By \eqref{eqn_pi_i(G)}, we have 
\begin{equation}
\label{eqn_pi_0_clG}
	\pi_0(\clG)\cong \bZ,
\end{equation}
where the group structure on $\pi_0(\clG)$ is induced from the group structure of $\clG$. Since $Y$ is oriented, one can fix a canonical choice of the isomorphism \eqref{eqn_pi_0_clG}. For $g\in \clG$, we will use $\deg g$ to denote the image of $g$ in $\bZ$ under this isomorphism. 

Fix a Riemannian metric on $Y$, and define the inner product on $\frg$ by the Killing form.
The \emph{Chern-Simons functional} on $\clC$ is given by
\begin{equation}
\label{eqn_def_chern_simons}
	\CS(\theta+ b):= \frac12 \langle *d_{\theta}b,b\rangle_{L^2} + \frac13 \langle *[b\wedge b],b\rangle_{L^2}.
\end{equation} 
 $\CS(\theta+ b)$ is independent of the choice of the Riemannian metric. Let $\grad \CS$ be the formal gradient of $\CS$, then we have
\begin{align*}
\CS(\theta)&=0, \\
(\grad \CS)(B) &= *F_B\mbox{ for all }B\in\clC. 
\end{align*}

The Chern-Simons functional is only invariant under the identity component of $\clG$. In general, there exists a non-zero constant $c$ depending on $G$ such that 
$$
\CS(g(B))-\CS(B) = c\,\deg(g)
$$
for all $B\in \clC$ and $g\in \clG$.
Therefore, the Chern-Simons functional defines a $\clG$--invariant map from $\clC$ to $\bR/c\bZ$.
 
 We make the following definition analogous to Definition \ref{def_stab_orb_finite}.
 
 \begin{Definition}
 	Suppose $B\in \clC$. Define 
 	\begin{align*}
 	 	\Stab(B) &:= \{g\in\clG| g(B)=B \}\\
 	 	\Orb(B) &:= \{g(B) \in \clC | g\in\clG \}.
 	\end{align*}
 \end{Definition}
 
 Notice that although $\clG$ is an infinite dimensional group, the stabilizer $\Stab(B)$ is always finite dimensional for any $B \in \clC$. In fact, let $y_0\in Y$ be a fixed point, and let 
$$\clG_{y_0}:= \{g\in \clG | g = \id \mbox{ on } y_0 \},$$ 
then the action of $\clG_{y_0}$ on $\clC$ is free. Since  $\clG_{y_0}$ is a normal subgroup of $\clG$ and 
$$\clG/\clG_{y_0}\cong G,$$
the stabilizer group $\Stab(B)$ maps isomorphically to a closed subgroup of $G$ by restricting to $y_0$. 
 
 The following is a standard result in gauge theory and is analogous to Lemma \ref{lem_local_slice}. The reader may refer to, for example, \cite[Section 4.2.1]{donaldson1990geometry}, for more details.
 
 \begin{Proposition}[Slice theorem]
 \label{prop_slice_thm}
 	For $B\in \clC$, let 
 	$$
 	S_{B,\epsilon}:=\{B+b\,|\,d_B^*b=0,\|b\|_{L_{k,B}^2}< \epsilon\},
 	$$
 	where 
 	$$
 	\|b\|_{L_{k,B}^2}^2 := \sum_{i=0}^k\|\nabla^i_B b\|_{L^2}^2.
 	$$
 	Then for each $B$,  there exists $\epsilon>0$ depending on $B$, such that
 	\begin{align*}
 	 	\clG\times_{\Stab(B)} S_{B,\epsilon} & \to \clC \\
 	 	 [g,B+b] & \mapsto g(B+b)
 	\end{align*}
 	is a diffeomorphism onto an open neighborhood of $\Orb(B)$. 
 \end{Proposition}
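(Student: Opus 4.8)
The plan is to follow the standard argument for slice theorems in gauge theory; see, e.g., \cite[Section 4.2.1]{donaldson1990geometry}. Fix $B\in\clC$ and abbreviate the covariant derivative by $d_B$ and its formal $L^2$--adjoint by $d_B^*$. The first ingredient is the Coulomb (Hodge) decomposition: since $Y$ is a closed $3$--manifold and $k\ge 2$, the operator $d_B^*d_B$ on $L_{k+1}^2(\frg)$ is elliptic with finite--dimensional kernel $\ker d_B=\mathrm{Lie}(\Stab(B))$, and one obtains an $L^2$--orthogonal splitting $L_k^2(T^*Y\otimes\frg)=\ima d_B\oplus(\ker d_B^*\cap L_k^2(T^*Y\otimes\frg))$ with $\ima d_B$ closed. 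Here $\ima d_B=T_B\Orb(B)$, while $\ker d_B^*\cap L_k^2$ is precisely the (constant) tangent space of the affine subset $S_{B,\epsilon}$. Note also that $\Stab(B)$ is a compact Lie group acting freely (by right translation on the $\clG$--factor) and properly on $\clG\times S_{B,\epsilon}$, so $\clG\times_{\Stab(B)}S_{B,\epsilon}$ is a smooth Banach manifold and the map in the statement, call it $\Psi$, is well defined and smooth.

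I would next show that for $\epsilon$ sufficiently small $\Psi$ is a local diffeomorphism. By $\clG$--equivariance it suffices to check this at the points $[\id,B+b]$ with $\|b\|_{L_{k,B}^2}<\epsilon$. Using the complement $(\ker d_B)^\perp\subset L_{k+1}^2(\frg)$ to model the fibre directions, the differential of $\Psi$ at $[\id,B+b]$ is $(\xi,b')\mapsto -d_{B+b}\xi+b'$ on $(\ker d_B)^\perp\oplus(\ker d_B^*\cap L_k^2)$. At $b=0$ this is an isomorphism onto $L_k^2(T^*Y\otimes\frg)$ by the Hodge decomposition of the previous paragraph, and since $d_{B+b}=d_B+[b,\,\cdot\,]$ differs from $d_B$ by an operator whose norm is controlled by $\|b\|_{L_{k,B}^2}$ (dimension $3$ with $k\ge 2$ gives $L_k^2\hookrightarrow C^0$), it remains an isomorphism once $\epsilon$ is small. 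Hence $\Psi$ is a local diffeomorphism; in particular its image is open and contains $\Orb(B)$.

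It then remains to prove that $\Psi$ is \emph{injective} for $\epsilon$ small, which is the usual local uniqueness of Coulomb gauge. After translating by an element of $\clG$ (using equivariance of $\Psi$) it suffices to show: if $b_1,b_2$ lie in a small Coulomb ball and $g(B+b_2)=B+b_1$ for some $g\in\clG$, then $g\in\Stab(B)$, in which case the corresponding points of $\clG\times_{\Stab(B)}S_{B,\epsilon}$ coincide. Suppose this fails for every $\epsilon$; then there are $g_n\in\clG$ and $b_i^{(n)}\to 0$ with $g_n(B+b_2^{(n)})=B+b_1^{(n)}$ but $\mathrm{dist}(g_n,\Stab(B))\ge\delta>0$. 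The gauge identity rewrites as $g_n^{-1}dg_n=\Ad_{g_n}b_2^{(n)}-b_1^{(n)}\to 0$ in $L_k^2$; since the $g_n$ take values in the compact group $G$, elliptic bootstrapping for this equation bounds $g_n$ in $L_{k+1}^2$ and, after passing to a subsequence, yields strong convergence $g_n\to g_\infty$ with $g_\infty^{-1}dg_\infty=0$ and $g_\infty(B)=B$ (passing to the limit in the identity), i.e.\ $g_\infty\in\Stab(B)$, contradicting $\mathrm{dist}(g_n,\Stab(B))\ge\delta$. Thus $g$ is forced close to $\Stab(B)$, and the local diffeomorphism property of the previous paragraph then forces the two points to be equal. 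An injective local diffeomorphism of Banach manifolds is a diffeomorphism onto its (open) image, which completes the proof.

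The main obstacle is this injectivity step: $\clG\times_{\Stab(B)}S_{B,\epsilon}$ is non-compact, so global injectivity cannot be read off from the local statement and must be extracted from a compactness argument for the gauge transformations. What makes this tractable here is that all connections involved lie in an arbitrarily small Coulomb neighborhood of the \emph{fixed} connection $B$, so no Uhlenbeck-type non-compactness can occur and the required compactness is elementary elliptic estimates applied to $g^{-1}dg$.
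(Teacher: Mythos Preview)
The paper does not give its own proof of this proposition: it states that the result is standard and refers the reader to \cite[Section 4.2.1]{donaldson1990geometry}. Your sketch is precisely the standard argument from that reference (Hodge/Coulomb decomposition for the local model, inverse function theorem for the local diffeomorphism, and a compactness argument for gauge transformations to get injectivity), so it is in complete agreement with what the paper invokes.

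One small slip worth fixing: in the injectivity step the identity you wrote, $g_n^{-1}dg_n=\Ad_{g_n}b_2^{(n)}-b_1^{(n)}$, is only correct when $B=\theta$. For general $B$ the relation $g_n(B+b_2^{(n)})=B+b_1^{(n)}$ gives (in the paper's convention) $g_n^{-1}dg_n=\Ad_{g_n}(B-\theta+b_2^{(n)})-(B-\theta)-b_1^{(n)}$, which does \emph{not} tend to zero. What you actually need---and what the standard argument uses---is only that $dg_n$ is bounded in $L_k^2$ (since $g_n$ is $C^0$--bounded by compactness of $G$ and $B-\theta,b_i^{(n)}$ are bounded in $L_k^2$); Rellich then gives subsequential convergence in $L_k^2$, and bootstrapping the same identity upgrades this to $L_{k+1}^2$. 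The rest of your argument goes through unchanged.
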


\subsection{Holonomy perturbations}

Holonomy perturbations are widely used in gauge theory as a family of perturbations of the Chern-Simons functional. They were first used by Donaldson  \cite{donaldson1987orientation} in the study of $4$--dimensional Yang-Mills theory and Floer  \cite{floer1988instanton} in the construction of instanton Floer homology for $3$--manifolds. For our purpose, we briefly review the construction following the notation of \cite{kronheimer2011knot}.

\begin{Definition}
\label{def_cylinder_datum}
We regard the circle $S^1$ as $\bR/\bZ$, and let $D^2$ be the open unit disk in the plane.
A \emph{cylinder datum} is a tuple $(q_1,\cdots,q_m,\mu,h)$ with $m\in \bZ^+$ that satisfies the following conditions.
\begin{enumerate}
\item $q_i: S^1 \times D^2 \rightarrow Y$ is a smooth immersion for $i=1,\cdots,m$;
\item there exists $\epsilon > 0$ such that $q_1, \dots, q_m$ coincide on $(-\epsilon, \epsilon) \times D^2$,
\item $\mu$ is a non-negative, smooth, compactly supported 2-form on $D^2$, such that 
$$\int_{D^2} \mu = 1;$$
\item $h:G^m\to \bR$ is a smooth function that is invariant under the diagonal action of $G$ by conjugations. 
\end{enumerate}
\end{Definition}

Suppose $B\in \clC$, let ${\bf q} = (q_1,\cdots,q_m,\mu,h)$ be a cylinder datum, then for $z\in D^2$ and $i=1,\cdots,m$, the holonomy of $B$ at $q_i(0,z)$ along $q_i(S^1\times\{z\})$ defines a map on the fiber $P|_{q_i(0,z)}$.   Under the trivialization of $P$ given by \eqref{eqn_def_P_trivial}, this map is given by a left multiplication of an element in $G$, and we use $\hol_{q_i,z}(B)\in G$ to denote this element. Therefore we have a map
\begin{align*}
	\hol_{\bf q}(B) : D^2 &\to G^m \\
	z &\mapsto \big(\hol_{q_1,z}(B),\cdots,\hol_{q_m,z}(B)\big).
\end{align*}
The \emph{cylinder function} associated to $\bf q$ is defined to be
\begin{align*}
f_{{\bf q}}:\clC &\to \bR\\
B &\mapsto \int_{D^2} h\big(\hol_{\bf q}(B)\big) \,\mu. 
\end{align*}
By definition, $f_{\bf q}$ is a $\clG$--invariant function on $\clC$.

Let $\mathcal{T}$ be the tangent bundle of $\clC$. 
For $B\in \clC$, recall that the \emph{formal gradient} of $f_{{\bf q}}$ at $B$ is defined to be the unique vector $$b\in \mathcal{T}|_B = L_k^2(T^*Y\otimes\frg)$$
 such that for every $b'\in L_k^2(T^*Y\otimes\frg)$, we have
 $$
 \frac{d}{dt}f_{{\bf q}}(B+tb') = \langle b,b'\rangle _{L^2}.
 $$
 By \cite[Proposition 3.5 (i)]{kronheimer2011knot}, the formal gradient of $f_{{\bf q}}$ exists and is a smooth section of $\mathcal{T}$. We use $\grad f_{{\bf q}}$ to denote the formal gradient of $f_{{\bf q}}$.

Let $\{{\bf q}_i\}_{i\in\mathbb{N}}$ be a fixed sequence of  cylinder data, such that for every cylinder datum
$$
{\bf q} = (q_1,\cdots,q_m,\mu,h),
$$ 
there is a subsequence of $\{{\bf q}_i\}_{i\in\mathbb{N}}$ so that it consists of elements with the same value of $m$ and converges to $\bf q$ in $C^\infty$. By the discussion before \cite[Definition 3.6]{kronheimer2011knot}, there exists a sequence $\{C_i\}_{i\in\mathbb{N}}$ of positive real numbers, such that for every sequence $\{a_i\}_{i\in\mathbb{N}}\subset \bR$ satisfying
$$
\sum |a_i| \, C_i\ <+\infty,
$$
the series $\sum a_i f_{{\bf q}_i}$
converges to a smooth function on $\clC$, and the series
$$
\sum a_i \grad f_{{\bf q}_i}
$$
converges to a smooth section of $\mathcal{T}$ that equals the formal gradient of $\sum a_i f_{{\bf q}_i}$.

\begin{Definition}
\label{def_hol_perturbation_Banach}
	Let $\clP$ be the Banach space of sequences of real numbers $\{a_i\}_{i\in\mathbb{N}}$ such that $\sum |a_i| \, C_i\ <+\infty.$ For $\pi = \{a_i\}_{i\in\mathbb{N}}\in\clP$, define its norm in $\clP$  by
	$$
	\|\pi\|_{\clP} := \sum |a_i| \, C_i.
	$$
	Define 
	\begin{align*}
	f_\pi &:= \sum a_i f_{{\bf q}_i}, \\
	V_\pi &:= \sum a_i \grad f_{{\bf q}_i}.
	\end{align*}
\end{Definition}

For $\pi\in \clP$, let $DV_\pi$ be the derivative of $V_\pi$. Then for $B\in \clC$, the derivative $DV_\pi(B)$ defines a linear endomorphism on 
$$\mathcal{T}|_B=L_k^2(T^*Y\otimes \frg).$$
Let 
$$
\sym (\mathcal{T}|_B)
$$
be the Banach space of linear endomorphism on $\mathcal{T}|_B$ that are bounded with respect to the $L_k^2$--norm and symmetric with respect to the $L^2$--norm, and define the norm on $\sym (\mathcal{T}|_B)$ as the $L_k^2$--operator norm. Then $\sym (\mathcal{T}|_B)$ for $B\in \clC$ form a (trivial) Banach vector bundle $\sym(\mathcal{T})$ over $\clC$. By \cite[Proposition 3.7 (ii)]{kronheimer2011knot},
the map $B\mapsto DV_\pi(B)$ is a smooth section of  $\sym(\mathcal{T})$.

Fix a base point $y_0$ on $Y$, and let $\gamma$ be a smooth arc from $y_0$ to $y\in Y$. For  $B\in \clC$, the holonomy of $B$ along $\gamma$ is a map from $P|_{y_0}$ to $P|_{y}$. Since $P$ is the trivial bundle, the map is given by the left multiplication of an element in $G$. We  use
$$\hol_\gamma(B)\in G$$ 
to denote the corresponding element. Suppose $b\in L_k^2(T^*Y\otimes \frg)$, then 
$$
\frac{d}{dt}\Big|_{t=0} \hol_\gamma(B+tb)
$$
defines a tangent vector of $G$ at $\hol_\gamma(B)\in G$. 

The rest of this subsection proves that the holonomy perturbations are sufficiently flexible so that it can realize any $\clG$--invariant jet on any finite dimensional subspace of $\clC$. The precise statement is given by Proposition \ref{prop_abundance_2_pts}. This property will be used in the transversality argument in Section \ref{subsec_equi_trans_gauge}.

Recall that $y_0\in Y$ denotes a fixed base point for the rest of this subsection.

\begin{Lemma}
\label{lem_hol_distinguish_connection}
	Let $B_1,B_2\in\clC$. Suppose there exists $u\in G$, such that 
	\begin{equation}
	\label{eqn_hol_match}
	\hol_\gamma(B_1)=u\hol_\gamma(B_2)u^{-1}
	\end{equation}
	 for all immersed loops $\gamma$ that start and end at $y_0$. Then there exists $g\in \clG$, such that $g(B_1)=B_2$.
\end{Lemma}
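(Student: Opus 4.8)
The statement is the gauge-theoretic analogue of the classical fact that a connection is determined up to gauge by its holonomy representation $\pi_1 \to G$; the novelty here is that we work over $Y$ (not necessarily simply connected, since $Y$ is only assumed closed oriented at this point), with $L^2_k$ regularity, and with immersed loops rather than embedded ones. The plan is to build the gauge transformation $g$ pointwise by parallel transport, show it is well-defined using the holonomy hypothesis, verify it has the right regularity, and finally check $g(B_1) = B_2$.

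\textbf{Step 1: Construction of $g$.} Fix the base point $y_0$. For each $y \in Y$ choose a smooth arc $\gamma_y$ from $y_0$ to $y$, and define $g(y) \in G$ by requiring that $g(y) \cdot \hol_{\gamma_y}(B_2) = \hol_{\gamma_y}(B_1) \cdot u$, i.e. $g(y) = \hol_{\gamma_y}(B_1)\, u\, \hol_{\gamma_y}(B_2)^{-1}$. Set $g(y_0) = u$. The first thing to check is independence of the choice of arc $\gamma_y$: if $\gamma_y'$ is another arc, then $\gamma_y' \ast \overline{\gamma_y}$ is a loop at $y_0$, which can be approximated by (or decomposed into) immersed loops, and the hypothesis \eqref{eqn_hol_match} forces $\hol_{\gamma_y}(B_1)^{-1}\hol_{\gamma_y'}(B_1) = u\bigl(\hol_{\gamma_y}(B_2)^{-1}\hol_{\gamma_y'}(B_2)\bigr)u^{-1}$, from which one computes that $g(y)$ is unchanged. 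A mild technical point is that a general smooth loop need not be an immersion, so one must first observe that every loop is homotopic (rel endpoints) to an immersed one, or argue by a density/continuity argument that \eqref{eqn_hol_match} extends from immersed loops to all piecewise-smooth loops; this is routine.

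\textbf{Step 2: Regularity of $g$.} Since $B_1, B_2$ are $L^2_k$ connections with $k \ge 2$, holonomy along a smoothly varying family of paths depends smoothly enough on the endpoint: working in a local trivialization near $y$, $\hol_{\gamma_y}(B_i)$ solves a linear ODE whose coefficients are the $L^2_k$ connection coefficients, and standard elliptic/ODE estimates (together with the Sobolev embedding $L^2_k \hookrightarrow C^0$ in dimension $3$ for $k\ge 2$) show that $y \mapsto g(y)$ lies in $L^2_{k+1}$. Hence $g \in \clG$.

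\textbf{Step 3: $g(B_1) = B_2$.} It suffices to show the two connections have the same parallel transport along every path, which by the very construction of $g$ will follow from the intertwining $g(y') \cdot \hol_\gamma(B_2) = \hol_\gamma(B_1) \cdot g(y)$ for every path $\gamma$ from $y$ to $y'$; concatenating $\gamma$ with $\gamma_y$ and comparing with the arc choice used to define $g(y')$ gives precisely this identity (again using Step 1's well-definedness). Once parallel transports agree, the connection $1$-forms agree, i.e. $g(B_1) = B_2$; equivalently one differentiates the holonomy-intertwining relation to get $\Ad_g(B_1 \text{-coefficients}) - g^{-1}dg = B_2\text{-coefficients}$, which is exactly the gauge action formula $g(B_1) = B_2$.

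\textbf{Main obstacle.} The substantive point is Step 2, the regularity claim: one needs that the pointwise-defined $g$ actually lies in the Sobolev gauge group $\clG = L^2_{k+1}$, rather than merely being continuous. This requires care with how holonomy of a low-regularity connection varies with the endpoint — one gains one derivative over the connection because holonomy is an "integration" of the connection along paths, but making this precise (uniformly in $y$, with the right Sobolev norms) is where the real work lies. The well-definedness in Step 1 is conceptually the heart of the matter but is a short computation once the reduction from arbitrary loops to immersed loops is in hand.
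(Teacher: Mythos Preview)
Your proof is correct and follows the same strategy as the paper's: construct $g$ pointwise via holonomy along arcs from $y_0$, verify well-definedness from the hypothesis, and then establish the required regularity. The only differences are cosmetic---the paper first reduces to $u=\id$ by applying a preliminary gauge transformation $g_0$ with $g_0|_{y_0}=u$, and for regularity it bootstraps from the equation $g(B_1)=B_2$ rather than arguing directly via ODE-in-parameter estimates as you propose.
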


\begin{proof}
	Take $g_0\in\clG$ such that $g_0|_{y_0}=u$, then  
	$$
	\hol_\gamma\big(g_0(B_1)\big) = u\hol_\gamma(B_1)u^{-1}
	$$
	for all loops $\gamma$ based at $y_0$.
	Therefore, by replacing $B_1$ with $g_0(B_1)$, we may assume without loss of generality that $u=\id\in G$. 
	
	Define a gauge transformation $g$ of $P$ as follows. For $y\in Y$, take an arc $\gamma$ from $y_0$ to $y$, and define the value of $g$ at $y$ to be 
	$$
	g|_y = \hol_\gamma(B_2)\cdot \hol_\gamma(B_1)^{-1}.
	$$
	Since \eqref{eqn_hol_match} holds with $u=\id$, the definition of $g$ is independent of the choice of $\gamma$, and we have $g(B_1)=B_2$. Since $B_1$ and $B_2$ are both $L_k^2$--connections, the  standard regularity argument implies that $g$ is a $L_{k+1}^2$--gauge transformation, and hence $g\in \clG$.
\end{proof}

The following two lemmas prove an infinitesimal version of Lemma \ref{lem_hol_distinguish_connection}.

\begin{Lemma}
\label{lem_infinitesimal_gauge_transform_base_id}
	Let $B\in \clC$. 
	Suppose $b\in L_k^2(T^*Y\otimes \frg)$ satisfies 
	\begin{equation}
	\label{eqn_hol_change_zero}
				\frac{d}{dt}\Big|_{t=0} \hol_\gamma(B+tb)=0  
	\end{equation}
	for all immersed loops $\gamma$ that start and end at $y_0$, then there exists $s\in L_{k+1}^2(\frg)$, such that $s(y_0)=0$ and $d_Bs=b$.
\end{Lemma}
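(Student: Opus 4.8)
The plan is to reconstruct the infinitesimal gauge transformation $s$ directly from the holonomies, paralleling the proof of Lemma \ref{lem_hol_distinguish_connection}. Write $B=\theta+a$ with $a\in L^2_k(T^*Y\otimes\frg)$, and for a smooth immersed arc $\gamma$ from $y_0$ to a point $y\in Y$ set
$$
\sigma_\gamma:=\Big(\tfrac{d}{dt}\big|_{t=0}\hol_\gamma(B+tb)\Big)\cdot\hol_\gamma(B)^{-1}\in\frg ,
$$
the right translation to the identity of the tangent vector $\tfrac{d}{dt}|_{0}\hol_\gamma(B+tb)\in T_{\hol_\gamma(B)}G$. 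The first step is to show $\sigma_\gamma$ depends only on $y=\gamma(1)$. Given two immersed arcs $\gamma_1,\gamma_2$ from $y_0$ to $y$, the concatenation of $\gamma_2$ with the reverse of $\gamma_1$ is an immersed loop $\ell$ based at $y_0$ (after a $C^0$-small adjustment near the corners, the same technical point already used for Lemma \ref{lem_hol_distinguish_connection}), and $\hol_\ell(B+tb)=\hol_{\gamma_1}(B+tb)^{-1}\hol_{\gamma_2}(B+tb)$; differentiating at $t=0$, using the product and inverse rules, and invoking the hypothesis $\tfrac{d}{dt}|_0\hol_\ell(B+tb)=0$ gives $\sigma_{\gamma_1}=\sigma_{\gamma_2}$. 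Hence $s_0(y):=\sigma_\gamma$ is a well-defined $\frg$-valued function on $Y$, and taking $\gamma$ to be an immersed loop based at $y_0$ the hypothesis gives $s_0(y_0)=\sigma_\gamma=0$.

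The second step is the local identity $d_Bs_0=-b$. Fix $y\in Y$ and $v\in T_yY$, choose a smooth curve $c$ with $c(0)=y$ and $c'(0)=v$ together with an arc $\gamma$ from $y_0$ to $y$, and for small $\epsilon$ let $\gamma_\epsilon$ be the concatenation of $\gamma$ with $c|_{[0,\epsilon]}$, so that $\hol_{\gamma_\epsilon}(B')=\hol_{c|_{[0,\epsilon]}}(B')\,\hol_\gamma(B')$ for every connection $B'$. Differentiating this first in $t$ at $t=0$, then in $\epsilon$ at $\epsilon=0$, and inserting the elementary short-arc expansion $\hol_{c|_{[0,\epsilon]}}(\theta+\alpha)=\mathrm{id}-\epsilon\,\alpha(v)+O(\epsilon^2)$, one obtains
$$
\tfrac{d}{d\epsilon}\big|_{0}\,s_0\big(c(\epsilon)\big)=-b(v)-[a(v),s_0(y)] ,
$$
i.e.\ $(d s_0)(v)+[a(v),s_0(y)]=-b(v)$ for all $v$, which is exactly $d_Bs_0=-b$ with the convention $d_B s=ds+[a,s]$ (otherwise replace $s_0$ by $-s_0$). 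Put $s:=-s_0$, so $d_Bs=b$ and $s(y_0)=0$.

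It remains to check $s\in L^2_{k+1}(\frg)$. The function $s$ is continuous because $\hol_\gamma(B')$ depends continuously on the endpoint of $\gamma$ and on $B'\in\clC$, uniformly in the perturbation direction. From the distributional identity $ds=b-[a,s]$ with $b,a\in L^2_k$ one runs the usual bootstrap: on the $3$-manifold $Y$ the Sobolev multiplication theorem (valid since $k\ge2$) gives $[a,s]\in L^2_j$ whenever $s\in L^2_j$ for $j\le k$, so $s\in L^2_0\Rightarrow s\in L^2_1\Rightarrow\cdots\Rightarrow s\in L^2_{k+1}$; this is precisely the ``standard regularity argument'' cited in the proof of Lemma \ref{lem_hol_distinguish_connection}. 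The only step that is not a verbatim adaptation of that lemma or a routine bootstrap is the holonomy-variation computation above --- the bookkeeping of where the perturbation $b$ and the commutator $[a(v),s_0(y)]$ enter --- but this is just a first-order expansion of the parallel-transport ODE and causes no real trouble.
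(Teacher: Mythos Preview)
Your proof is correct and follows essentially the same approach as the paper: both define $s$ (up to sign) as the right-translate of $\tfrac{d}{dt}|_{0}\hol_\gamma(B+tb)$ to $\frg$, verify well-definedness via the loop hypothesis, check $d_Bs=b$ from the parallel-transport ODE, and bootstrap regularity. You are considerably more explicit than the paper about the well-definedness step and the short-arc computation of $d_Bs$, but the underlying argument is the same.
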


\begin{proof}
	We define the section $s$ as follows. Let $y\in Y$, take a smooth arc $\gamma$ from $y_0$ to $y$, and let $s(y)\in\frg$ be such that
	\begin{equation}
	\label{eqn_def_xi_by_der_hol}
			\frac{d}{dt}\Big|_{t=0}\hol_\gamma(B+tb) = \frac{d}{dt}\Big|_{t=0}\Big(\exp\big(-t\, s(y)\big)\cdot \hol_\gamma(B) \Big).
	\end{equation}
	Note that by definition, $\hol_\gamma(-)$ is an element in $G$, and $s(y)$ is uniquely determined by the above equation because the right translation by $\hol_\gamma(B)\in G$ maps $\frg$ isomorphically to the tangent space of $G$ at $\hol_\gamma(B)$.
	By \eqref{eqn_hol_change_zero}, the value of $s(y)$ does not depend on the choice of the arc $\gamma$.
	 By the Sobolev embedding theorems, $B-\theta$ is continuous, therefore \eqref{eqn_def_xi_by_der_hol} implies that $s$ is in $C^1$. 
	 
	 To compute $d_Bs$, it is convenient to view $G$ as a closed subgroup of $U(N)$ for some $N$ and view elements of $G$ and $\frg$ as matrices. This allows us to simplify notation by carrying out the computation using matrix multiplications instead of the tangent maps of the translation operators.   For each $r\in[0,1]$, let $\gamma_r$ be the path from $\gamma(0)$ to $\gamma(r)$ along $\gamma$, and let $h(r,t)=\hol_{\gamma_r}(B+tb)$. With respect to the standard trivialization of the bundle, define $\beta(r,t)\in \frg$ to be the pairing of the $1$--form $B+tb-\theta$ with the tangent vector of $\gamma$ at $\gamma(r)$, and define $s(r,t)\in \frg$ to be the value of $s$ at $\gamma(r)$. 
	 By the definitions of $\beta$, $h$, and $g$, we have 
	 $$
	 \frac{\partial}{\partial r}h(r,t) + \beta(r,t) h(r,t) = 0,\quad s(r) = -\frac{\partial}{\partial t}h(r,t)\cdot h^{-1}(r,t)|_{t=0}.
	 $$
	 In the following, we will denote partial derivatives using the subscript notation and omit the variables from the notation of functions. So the two equations above are rewritten as $h_r +\beta h=0$ and $s=-h_t h^{-1}$ at $t=0$. Therefore, at $t=0$, we have
	 $$
	 (sh)_r = -h_{tr} = (\beta h)_t,
	 $$
	 which gives
	 $$
	 s_r = \beta_t + \beta h_t h^{-1} -s h_r h^{-1}.
	 $$
	 Since $h_t h^{-1} = -s$, $h_r h^{-1} = -\beta$, the above equation implies 
     $s_r + [\beta,s] = \beta_t$. Note that $\beta_t$ equals the paring of $b$ with the tangent vector of $\gamma$ at $\gamma(r)$. Since this computation holds for all smooth arcs starting at $y_0$, we have $d_Bs = b$. 
	 
	 In conclusion, we have $s\in C^1$ and $d_Bs = b$, and the standard bootstrapping argument implies that $s\in L_{k+1}^2(\frg)$. 
\end{proof}

\begin{Lemma}
\label{lem_infinitesimal_gauge_transform_detected_by_hol}
	Let $B\in \clC$, $\xi\in \frg$. 
	Suppose $b\in L_k^2(T^*Y\otimes \frg)$ satisfies 
	\begin{equation}
		\label{eqn_cond_on_b_infinitesimal_gauge_on_loops}
				\frac{d}{dt}\Big|_{t=0} \hol_\gamma(B+tb)=\frac{d}{dt}\Big|_{t=0} \Big(\exp(t\xi)\cdot \hol_\gamma(B) \cdot \exp(-t\xi)\Big)  	\end{equation}
	for all immersed loops $\gamma$ that start and end at $y_0$, then there exists $s\in L_{k+1}^2(\frg)$, such that $s(y_0)=\xi$, and $d_Bs=b$.
\end{Lemma}

\begin{proof}
	Let $g_t\in \clG$, $t\in(-1,1)$, be a smooth family of gauge transformations such that 
	\[
		g_0=\id  
		\quad \mbox{and} \quad  \frac{d}{dt}\Big|_{t=0}\, g_t(y_0) = \xi.	\]
	Since holonomy is equivariant with respect to gauge transformations, we have
	$$
	\frac{d}{dt} \hol_\gamma(g_t(B))=\frac{d}{dt}\Big(\exp(t\xi)\cdot \hol_\gamma(B) \cdot \exp(-t\xi)\Big).
	$$
	Let 
	$$s_0=\frac{d}{dt}\Big|_{t=0} \, g_t,$$ then $s_0$ is a smooth section of $\frg$. Let
	$$b'= b-d_Bs_0.$$
	 By \eqref{eqn_cond_on_b_infinitesimal_gauge_on_loops} and the fact that the differentials of the operator $\hol_\gamma(-)$ are linear, we have		
	 \[
	 \frac{d}{dt}\Big|_{t=0} \hol_\gamma(B+tb')=0.
	 \]
	  The desired result now follows from Lemma \ref{lem_infinitesimal_gauge_transform_base_id}.
\end{proof}

Lemma  \ref{lem_hol_distinguish_connection} and Lemma \ref{lem_infinitesimal_gauge_transform_detected_by_hol} have the following immediate consequence.

\begin{Lemma}
\label{lem_2_pt_injectivity_finite_loops}
	 Suppose $B_1, B_2\in\clC$ are not gauge equivalent.
	 For $i=1,2$, let 
	$$L_i\subset \mathcal{T}|_{B_i} = L_k^2(T^*Y\otimes\frg)$$
	be a finite dimensional linear space such that
	 $$L_i\cap \ima d_{B_i}=\{0\}.$$ 
	Then there exists $m\in \mathbb{Z}^+$ and 
	$$\vec \gamma = (\gamma_1,\cdots,\gamma_m),$$
	where each $\gamma_j$ is an immersed loop in $Y$ based at $y_0$, such that the map
	\begin{align*}
    \hol_{\vec \gamma}:	\clC  & \to G^m \\
						B &\mapsto \big(\hol_{\gamma_1}(B),\cdots,\hol_{\gamma_m}(B)\big)
	\end{align*}
	satisfies the following conditions:
	\begin{enumerate}
		\item $\hol_{\vec \gamma}(B_1)\neq \hol_{\vec \gamma}(B_2)$ in the quotient set $G^m/G$ (where the action of $G$ on $G^m$ is the diagonal action by conjugations),
		\item for $i=1,2$, let $p_i= \hol_{\vec \gamma}(B_i)$, then the tangent map of $\hol_{\vec \gamma}$ maps $L_i$ injectively into $T_{p_i}G^m/T_{p_i}\Orb(p_i)$.
	\end{enumerate}
\end{Lemma}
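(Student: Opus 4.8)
The plan is to assemble $\vec\gamma$ by concatenating three finite tuples of based immersed loops: a first block forcing condition (1), and, for each $i=1,2$, a block forcing the injectivity in condition (2) for $B_i$. Two monotonicity facts make the concatenation harmless. First, if a subtuple of the holonomies of $B_1$ and $B_2$ is already non-conjugate in some $G^r$, then so is any extension of it in a larger $G^m$, since projection onto a subset of the components is conjugation-equivariant. Second, for fixed $i$ (suppressed from the notation) write $A_\gamma:=d\hol_\gamma(B_i)|_{L_i}\colon L_i\to T_{\hol_\gamma(B_i)}G$ and $C_\gamma\colon\frg\to T_{\hol_\gamma(B_i)}G$, $\xi\mapsto\tfrac{d}{dt}\big|_0\exp(t\xi)\,\hol_\gamma(B_i)\,\exp(-t\xi)$, and for a tuple $\vec\gamma=(\gamma_1,\dots,\gamma_m)$ set $A_{\vec\gamma}=\bigoplus_j A_{\gamma_j}$, $C_{\vec\gamma}=\bigoplus_j C_{\gamma_j}$; then $\ima C_{\vec\gamma}=T_{p_i}\Orb(p_i)$ by definition of the conjugation action, condition (2) for $B_i$ is exactly the assertion $A_{\vec\gamma}^{-1}(\ima C_{\vec\gamma})=\{0\}$, and the subspace $A_{\vec\gamma}^{-1}(\ima C_{\vec\gamma})\subset L_i$ can only shrink when more loops are appended to $\vec\gamma$.

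For condition (1): since $B_1$ and $B_2$ are not gauge equivalent, the contrapositive of Lemma \ref{lem_hol_distinguish_connection} supplies, for each $u\in G$, a based immersed loop $\gamma_u$ with $\hol_{\gamma_u}(B_1)\neq u\,\hol_{\gamma_u}(B_2)\,u^{-1}$. The set of $u'\in G$ for which this inequality still holds is an open neighborhood of $u$, so by compactness of $G$ finitely many loops $\gamma_{u_1},\dots,\gamma_{u_r}$ suffice: for every $u$ some index witnesses the inequality, which says precisely that $\hol_{(\gamma_{u_1},\dots,\gamma_{u_r})}(B_1)$ and $\hol_{(\gamma_{u_1},\dots,\gamma_{u_r})}(B_2)$ are not conjugate in $G^r$.

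Condition (2) is the substance, and the one genuine difficulty is the following claim: for each nonzero $b\in L_i$ there is a \emph{finite} tuple $\vec\gamma^{(b)}$ with $b\notin A_{\vec\gamma^{(b)}}^{-1}(\ima C_{\vec\gamma^{(b)}})$. I would prove it by contradiction. Suppose $A_{\vec\gamma}(b)\in\ima C_{\vec\gamma}$ for every finite $\vec\gamma$. To each finite set $F$ of based immersed loops attach the affine subspace $\Lambda_F:=\{\xi\in\frg:A_\gamma(b)=C_\gamma(\xi)\text{ for all }\gamma\in F\}$, which is nonempty by hypothesis and satisfies $\Lambda_{F'}\subseteq\Lambda_F$ whenever $F\subseteq F'$. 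Pick $F_0$ minimizing $\dim\Lambda_F$; then $\Lambda_{F_0\cup F}=\Lambda_{F_0}$ for every $F$, so any $\xi\in\Lambda_{F_0}$ satisfies $A_\gamma(b)=C_\gamma(\xi)$ for \emph{every} based immersed loop $\gamma$. This is exactly the hypothesis of Lemma \ref{lem_infinitesimal_gauge_transform_detected_by_hol}, which then yields $b=d_{B_i}s$ for some $s\in L_{k+1}^2(\frg)$, contradicting $b\neq 0$ and $L_i\cap\ima d_{B_i}=\{0\}$. The argument is insensitive to the size of $\Stab(B_i)$: a positive-dimensional stabilizer merely makes $\dim\Lambda_{F_0}$ positive.

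Granting the claim, I would finish condition (2) for $B_i$ by compactness of the unit sphere $S\subset L_i$ (compact since $\dim L_i<\infty$): for each $b_0\in S$ the proper linear subspace $A_{\vec\gamma^{(b_0)}}^{-1}(\ima C_{\vec\gamma^{(b_0)}})$ avoids a relative neighborhood of $b_0$ in $S$, finitely many such neighborhoods cover $S$, and the concatenation $\vec\gamma_i$ of the corresponding tuples has $A_{\vec\gamma_i}^{-1}(\ima C_{\vec\gamma_i})$ disjoint from $S$, hence equal to $\{0\}$. Concatenating the block from condition (1) with $\vec\gamma_1$ and $\vec\gamma_2$ produces the desired $\vec\gamma$, the monotonicity facts of the first paragraph guaranteeing that none of the three properties is spoiled by the enlargement. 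The main obstacle, as indicated, is the passage in the claim from the ``all loops'' statement of Lemma \ref{lem_infinitesimal_gauge_transform_detected_by_hol} to one using only finitely many loops; the minimal-dimension trick for the filtered family $\{\Lambda_F\}$ is what makes this work.
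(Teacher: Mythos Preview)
Your proof is correct and follows the same three-block concatenation strategy as the paper: one tuple $\vec\gamma^{(0)}$ from Lemma \ref{lem_hol_distinguish_connection} to separate $B_1$ and $B_2$ in $G^m/G$, and one tuple $\vec\gamma^{(i)}$ from Lemma \ref{lem_infinitesimal_gauge_transform_detected_by_hol} for each $i$ to secure the injectivity into the orbit quotient; the paper then concatenates exactly as you do.

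The difference is one of explicitness. The paper simply asserts that Lemmas \ref{lem_hol_distinguish_connection} and \ref{lem_infinitesimal_gauge_transform_detected_by_hol} yield the requisite \emph{finite} tuples, leaving to the reader the passage from ``all loops'' to ``finitely many loops''. You supply those reductions in full: compactness of $G$ for condition (1), and the minimal-dimension argument on the filtered family $\{\Lambda_F\}$ of affine subspaces for condition (2), followed by compactness of the unit sphere in $L_i$. These are the natural ways to fill the gaps, and your monotonicity observations (that both properties survive enlargement of the tuple) are exactly what is needed to justify the concatenation step, which the paper also leaves implicit. So your argument is a fleshed-out version of the paper's, not a different route.
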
 

\begin{proof}
	Since $B_1$ and $B_2$ are not gauge equivalent, by Lemma \ref{lem_hol_distinguish_connection}, there exists 
	$$\vec\gamma^{(0)}=(\gamma_1^{(0)},\cdots,\gamma_{m_0}^{(0)}),$$
	 where each $\gamma_j$ is an immersed loop in $Y$ based at $y_0$,  such that $
	 \hol_{\vec\gamma^{(0)}}(B_1)$ and $
	 \hol_{\vec\gamma^{(0)}}(B_2)$ are not conjugate in $G^{m_0}$.
	 
	 By Lemma \ref{lem_infinitesimal_gauge_transform_detected_by_hol}, for each $i=1,2$ and each vector $0\neq v\in L_i$, there exists 
	 $$
	 \vec\gamma^{(i,v)}=(\gamma^{(i,v)}_1,\cdots,\gamma^{(i,v)}_s),
	 $$
	 	where each $\gamma^{(i,v)}_{j}$ is an immersed loop in $Y$ based at $y_0$, 
	 	such that if we write 
	 	$$p_{i,v}:=\hol_{\vec\gamma^{(i,v)}}(B_i)\in G^{s},$$
	 	 then the tangent map of $\hol_{\vec\gamma^{(i,v)}}$ maps $v$ to a non-zero vector in 
	 	$T_{p_i'} G^{m_i}/T_{p_i'}\Orb(p_i)$.
	 	
	 	Since each $L_i$ is finite dimensional, the desired result is proved.
\end{proof}

Lemma \ref{lem_2_pt_injectivity_finite_loops} implies the following property of holonomy perturbations, which will be used in the transversality argument in Section \ref{subsec_equi_trans_gauge}.

\begin{Proposition}
\label{prop_abundance_2_pts}
	Suppose $B_1, B_2\in\clC$ are not gauge equivalent.
	 For $i=1,2$, let 
	$$L_i\subset \mathcal{T}|_{B_i} = L_k^2(T^*Y\otimes\frg)$$
	be finite dimensional linear spaces that are invariant under the action of $\Stab(B_i)$, such that 
	 $$L_i\cap \ima d_{B_i}=\{0\}.$$ 
	 We further assume that $B_1,B_2$ are smooth and that $L_1,L_2$ are spanned by smooth sections of $T^*Y\otimes\frg$.
	 
	 For $r\in \bZ^+$, let $\mathcal{J}_i(r)$ be the linear space of $r$--jets of $\Stab(B_i)$--invariant functions on the affine space $B_i+L_i$ at $B_i$, then every $\pi\in\clP$ defines an element in $\mathcal{J}_i(r)$ by restricting $f_{\pi}$ to $B_i+L_i$. Let
	\begin{equation}
	\label{eqn_restriction_map_jets}
			\Phi: \clP \to \mathcal{J}_1(r_1)\oplus \mathcal{J}_2(r_2) 
	\end{equation}
	be given by the restriction maps. Then $\Phi$ is surjective for all $r_1,r_2$.
\end{Proposition}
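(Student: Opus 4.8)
The plan is to use the fact that the target $\mathcal J_1(r_1)\oplus\mathcal J_2(r_2)$ is finite-dimensional, so that $\ima\Phi$ is automatically a closed linear subspace and it suffices to prove it is dense. The assignment sending a cylinder datum ${\bf q}$ to the $r_i$-jet of $f_{{\bf q}}|_{B_i+L_i}$ at $B_i$ (for $i=1,2$) is continuous in the $C^\infty$-topology on cylinder data, by the smooth dependence of holonomy on the defining immersions together with \cite[Proposition~3.5]{kronheimer2011knot}. Since $\Phi$ applied to the $j$-th unit sequence in $\clP$ returns this jet-pair for ${\bf q}={\bf q}_j$, and $\{{\bf q}_j\}$ is $C^\infty$-dense among cylinder data of each fixed size $m$, the closed subspace $\ima\Phi$ contains the jet-pair of $f_{{\bf q}}$ for every cylinder datum ${\bf q}$. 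So it is enough to produce cylinder data whose jet-pairs span $\mathcal J_1(r_1)\oplus\mathcal J_2(r_2)$.

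First I would invoke Lemma~\ref{lem_2_pt_injectivity_finite_loops} to obtain immersed loops $\gamma_1,\dots,\gamma_m$ based at $y_0$ with $p_i:=\hol_{\vec\gamma}(B_i)\in G^m$ lying on distinct conjugation orbits, and with $d\hol_{\vec\gamma}$ mapping $L_i$ injectively into $T_{p_i}G^m/T_{p_i}\Orb(p_i)$. Writing $H_i\subseteq G$ for the image of $\Stab(B_i)$ under restriction to $y_0$, I would then append finitely many further loops so that in addition $Z_G(p_i)=H_i$ for $i=1,2$: the $G$-centralizers of finite tuples of holonomies of $B_i$ form a descending chain of closed subgroups of the compact group $G$, which stabilizes at a subgroup centralizing the full holonomy group of $B_i$, hence at $H_i$; appending loops only strengthens the conclusions of Lemma~\ref{lem_2_pt_injectivity_finite_loops}. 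Thicken each $\gamma_j$ to an immersion $q_j\colon S^1\times D^2\to Y$ with central loop $\gamma_j$; after a standard modification making all the $\gamma_j$ share a short common initial segment (which perturbs the holonomies arbitrarily little and so keeps the open conditions above), the $q_j$ may be taken to coincide near $\{0\}\times D^2$, giving a valid cylinder-datum skeleton. For $\mu$ a nonnegative bump $2$-form of mass $1$ supported in a small disk about $0\in D^2$ and any $G$-conjugation-invariant $h\in C^\infty(G^m)$, the cylinder function $f_{{\bf q}}$ converges to $h\circ\hol_{\vec\gamma}$ in $C^\infty$ near each $B_i$ in $B_i+L_i$ as $\supp\mu$ shrinks to $0$, since $\hol_{{\bf q}}(B)(z)\to\hol_{\vec\gamma}(B)$ uniformly for $z$ near $0$. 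Thus $\ima\Phi$ contains the jet-pair of $h\circ\hol_{\vec\gamma}$ for every such $h$.

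Finally I would decouple the two points. Choose disjoint $G$-invariant open neighborhoods $N_1\supseteq\Orb(p_1)$ and $N_2\supseteq\Orb(p_2)$ in the compact manifold $G^m$. If $\supp h\subseteq N_1$ then $h\circ\hol_{\vec\gamma}$ vanishes on a neighborhood of $B_2$ in $B_2+L_2$, so its $r_2$-jet there is zero, and symmetrically for $N_2$; hence it remains to show that for each $i$ the $r_i$-jets of $h\circ\hol_{\vec\gamma}|_{B_i+L_i}$ at $B_i$, for $h$ conjugation-invariant with $\supp h\subseteq N_i$, exhaust $\mathcal J_i(r_i)$. The map $\hol_{\vec\gamma}|_{B_i+L_i}$ intertwines the $\Ad$-action of $\Stab(B_i)$ on the slice $B_i+L_i$, which factors through $H_i$, with the $H_i$-conjugation action on $G^m$, sends $B_i$ to $p_i$, and has derivative injective modulo $T_{p_i}\Orb(p_i)$. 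Since $H_i=Z_G(p_i)$ is exactly the $G$-stabilizer of $p_i$, the equivariant tube theorem (the statement of Lemma~\ref{lem_local_slice} applied to the $G$-action on $G^m$) identifies a $G$-invariant neighborhood of $\Orb(p_i)$ with $G\times_{H_i}W_i$, $W_i=T_{p_i}G^m/T_{p_i}\Orb(p_i)$, under which $\hol_{\vec\gamma}|_{B_i+L_i}$ becomes an $H_i$-equivariant embedding of a neighborhood of $0\in L_i$ into $W_i$. Conjugation-invariant functions on the tube correspond to $H_i$-invariant functions on $W_i$, and restriction along this embedding surjects onto the $r_i$-jets at $0$ of $H_i$-invariant functions---extend a given jet arbitrarily to $W_i$ and average over the compact group $H_i$---while cutting off by a $G$-invariant bump supported in $N_i$ leaves the jet at $p_i$ unchanged. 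Therefore every element of $\mathcal J_i(r_i)$ is realized, so $\ima\Phi$ contains $\mathcal J_1(r_1)\oplus\{0\}$ and $\{0\}\oplus\mathcal J_2(r_2)$, hence all of the target. The crux --- and the step I expect to require the most care --- is arranging that finitely many sampled holonomies already have $H_i$ as their centralizer, so that pull-back of conjugation-invariant functions hits all $\Stab(B_i)$-invariant jets; this, together with the routine but fiddly passage from the single-loop data of Lemma~\ref{lem_2_pt_injectivity_finite_loops} to honest cylinder data, is where the real work lies.
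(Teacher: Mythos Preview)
Your proof is correct and follows the same overall strategy as the paper's: reduce to density by finite-dimensionality of the target, apply Lemma~\ref{lem_2_pt_injectivity_finite_loops} to obtain loops $\vec\gamma$, and then realize arbitrary invariant jets by pulling back conjugation-invariant functions on $G^m$ along $\hol_{\vec\gamma}$ and approximating by cylinder data from the dense family. The paper proceeds more tersely---it cites Lemma~\ref{lem_surj_jet_M_sigma_2pts} to manufacture $h$ and uses the degenerate thickening $q_j(s,z)=\gamma_j(s)$---but your argument includes a step the paper does not spell out: appending further loops so that the $G$--centralizer $Z_G(p_i)$ equals $H_i$, the image of $\Stab(B_i)$. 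This is the right thing to do, since $G$--invariant functions on $G^m$ restrict to $Z_G(p_i)$--invariant (not merely $H_i$--invariant) functions on a slice at $p_i$, and without $Z_G(p_i)=H_i$ it is not clear one hits every $\Stab(B_i)$--invariant jet on $L_i$; your DCC argument on closed subgroups of a compact Lie group disposes of this cleanly. In this respect, and because you argue directly for arbitrary $r_i$ rather than via the $2$--jet Lemma~\ref{lem_surj_jet_M_sigma_2pts}, your write-up is the more self-contained of the two.
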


\begin{proof}
	Since $\mathcal{J}_1(r_1)\oplus \mathcal{J}_2(r_2) $ is a finite dimensional linear space and \eqref{eqn_restriction_map_jets} is a linear map, we only need to show that the image is dense. 
	
	Suppose $(j_1,j_2)\in \mathcal{J}_1(r_1)\oplus \mathcal{J}_2(r_2) $.
	As in Lemma \ref{lem_2_pt_injectivity_finite_loops}, for $m\in \bZ^+$, let $G$ act on $G^m$ diagonally by taking conjugation at each component. 
	By Lemma \ref{lem_2_pt_injectivity_finite_loops}, there exists $$\vec \gamma = (\gamma_1,\cdots,\gamma_m),$$
	where each $\gamma_j$ is an immersed loop in $Y$ based at $y_0$, such that the map
	\begin{align*}
\hol_{\vec \gamma}: \clC &\to G^m \\
		       B &\mapsto \big(\hol_{\gamma_1}(B),\cdots,\hol_{\gamma_m}(B)\big)
	\end{align*}
satisfies the statement of  Lemma \ref{lem_2_pt_injectivity_finite_loops} with respect to $B_1$, $B_2$, $L_1$, $L_2$. Since $B_1,B_2$ are smooth and that $L_1,L_2$ are spanned by smooth sections of $T^*Y\otimes\frg$, the map $\hol_{\vec \gamma}$ is smooth on the affine spaces $B_i+ L_i$ ($i=1,2$). Therefore, there exists a smooth $G$--invariant function 
	\begin{equation}
	\label{eqn_def_h_in_proof_of_abundance}
		h:G^m\to \bR,
	\end{equation}
 	such that the map
	\begin{align*}
				f_{\vec\gamma,h}:	\clC  & \to \bR \\
						B &\mapsto h\big(\hol_{\gamma_1}(B),\cdots,\hol_{\gamma_m}(B)\big)
	\end{align*}
	restricts to $(j_1,j_2)$ in $\mathcal{J}_1(r_1)\oplus \mathcal{J}_2(r_2) $. For $i=1,\cdots,m$, let the map $q_i$ be given by
	\begin{align*}
	q_i:S^1\times D^2 &\to Y \\
	      (s,z) &\mapsto \gamma_i(s),
	\end{align*}
	 let $\mu$ be an arbitrary smooth, non-negative, compactly supported 2-form on $D^2$ that integrates to $1$, and let $h$ be given by \eqref{eqn_def_h_in_proof_of_abundance}. Recall that  $\{{\bf q}_i\}_{i\in\mathbb{N}}$ is the sequence of  cylinder data used in Definition \ref{def_hol_perturbation_Banach}. Then by definition, there exists a subsequence of $\{{\bf q}_i\}_{i\in\mathbb{N}}$ that converges to $(q_1,\cdots,q_m,\mu,h)$ in $C^\infty$. Let $\{{\bf q}_{n_i}\}_{i\in\mathbb{N}}$ be such a subsequence, then we have
	 $$
	 \lim_{i\to\infty} \Phi(f_{{\bf q}_{n_i}}) = (j_1,j_2),
	 $$
	 and hence the result is proved.
\end{proof}

\subsection{Hessians of perturbed flat connections}
This subsection defines the Hessians of perturbed Chern-Simons functionals at critical points.

Let $\pi\in\clP$, let $\grad (\CS+f_\pi) (B)$ be the formal gradient of $\CS+f_\pi$ at $B$. Then
$$\grad (\CS+f_\pi) (B)= *F_B+V_\pi(B),$$
and the derivative of $\grad (\CS+f_\pi) $ at $B$ is given by the operator $*d_B + DV_\pi(B)$. 

\begin{Definition}
	A connection $B$ is called $\pi$--flat, if 
	\begin{equation}
	\label{eqn_pi_flatness}
			*F_B+V_\pi(B)=0
	\end{equation}
\end{Definition}

\begin{Lemma}
\label{lem_decomp_K_when_flat}
	If $B$ is $\pi$--flat, then the operator $*d_B + DV_\pi(B)$ is identically zero on $\ima d_B\cap L_k^2(T^*Y\otimes \frg)$.
\end{Lemma}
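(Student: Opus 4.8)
The plan is to exploit $\clG$--invariance of the perturbed Chern-Simons functional together with the $\pi$--flatness hypothesis. First I would recall that the infinitesimal action of the gauge group at $B$ is exactly $d_B\colon L^2_{k+1}(\frg)\to L^2_k(T^*Y\otimes\frg)$, so that $\ima d_B$ is the tangent space to $\Orb(B)$. Since $\CS+f_\pi$ is $\clG$--invariant, its formal gradient $\grad(\CS+f_\pi)=*F_B+V_\pi$ is a $\clG$--equivariant section of $\mathcal{T}$; differentiating the equivariance relation along a path of gauge transformations gives that the linearization $*d_B+DV_\pi(B)$ intertwines the infinitesimal action with itself. Concretely, for $s\in L^2_{k+1}(\frg)$, differentiating $\grad(\CS+f_\pi)(g_t(B))=g_t\cdot\grad(\CS+f_\pi)(B)$ at $t=0$ (with $\tfrac{d}{dt}|_0 g_t=s$) yields
\begin{equation*}
\big(*d_B+DV_\pi(B)\big)(d_B s) \;=\; \big[s,\, *F_B+V_\pi(B)\big] \;+\; d_B\big(\text{(terms from the $\grad$)}\big),
\end{equation*}
so that when $B$ is $\pi$--flat the first term on the right vanishes identically because $*F_B+V_\pi(B)=0$.

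The second step is to pin down the remaining term and show it also vanishes. The cleanest route: differentiate the scalar identity $(\CS+f_\pi)(g_t(B))=(\CS+f_\pi)(B)+c\cdot\deg(g_t)$ twice, or — more robustly — note that because $*F_B+V_\pi(B)=0$, the point $B$ is a genuine critical point of $\CS+f_\pi$, and at a critical point of a $\clG$--invariant functional the Hessian $\Hess_B(\CS+f_\pi)=*d_B+DV_\pi(B)$ annihilates the tangent space to the orbit $\Orb(B)=\clG\cdot B$. This is the direct analogue of the finite-dimensional fact, used repeatedly in Section \ref{sec_G_Morse}, that $\Hess_p f$ kills $T_p\Orb(p)$ when $p$ is critical. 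Since $\ima d_B\cap L^2_k(T^*Y\otimes\frg)$ is precisely that tangent space (intersected with the relevant Sobolev completion), this gives $(*d_B+DV_\pi(B))|_{\ima d_B}=0$, which is the claim.

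I expect the only real subtlety to be bookkeeping with the Sobolev completions and making the ``differentiate the invariance twice'' step rigorous: one must check that $t\mapsto g_t(B)$ is a smooth path in $\clC$, that $\grad(\CS+f_\pi)$ is a $C^1$ section (which is given by \cite[Proposition 3.5, 3.7]{kronheimer2011knot} for $f_\pi$ and is classical for $\CS$), and that one may legitimately commute $\tfrac{d}{dt}$ with the formal gradient. None of these is hard, but they are the points where care is needed. An alternative, entirely hands-on verification — expanding $*d_B(d_Bs)=*[F_B, s]$ via the Bianchi-type identity $d_B d_B s=[F_B,s]$, and separately computing $DV_\pi(B)(d_Bs)$ from the $\clG$--equivariance of $V_\pi$ — reduces everything to the single relation $*F_B+V_\pi(B)=0$ and avoids second derivatives altogether; I would present this computation as the proof, since it is short and self-contained.
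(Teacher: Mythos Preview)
Your proposal is correct and matches the paper's approach: the paper takes $s\in\ima d_B\cap L^2_k$, chooses a curve $B(t)$ in $\Orb(B)$ with $\dot B(0)=s$, observes that gauge invariance forces $*F_{B(t)}+V_\pi(B(t))\equiv 0$, and differentiates at $t=0$ to get $(*d_B+DV_\pi(B))(s)=0$---exactly your ``cleanest route.'' Your ``hands-on'' alternative is just an explicit unpacking of the same differentiation, so there is no genuine difference in strategy.
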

\begin{proof}
	Notice that $\ima d_B\cap L^2_k(T^*Y\otimes \frg)$ is the tangent space of $\Orb(B)$. Suppose $s\in \ima d_B\cap L_k^2(T^*Y\otimes \frg)$, let $B(t)$ be a smooth curve in $\Orb(B)$ such that 
	$$
	\frac{d}{dt} B(t) = s \quad \mbox{at } t=0.
	$$
	Then by the gauge invariance of Equation \eqref{eqn_pi_flatness}, we have
	$$
	*F_{B(t)} + V_\pi\big(B(t)\big)=0 \quad \mbox{for all } t,
	$$
	therefore 
	$$
	d_B s + DV_\pi(B)(s) = \frac{d}{dt}\Big|_{t=0} \Big(*F_{B(t)} + V_\pi\big(B(t)\big)\Big)=0,
	$$
	and hence the lemma is proved.
\end{proof}

Suppose $B\in \clC$ and $\pi\in\clP$.
Define the operator
$$
K_{B,\pi}: L_{k}^2(\frg)\oplus L_k^2(T^*Y\otimes \frg) \to L_{k-1}^2(\frg)\oplus L_{k-1}^2(T^*Y\otimes \frg) 
$$
by 
\begin{equation}
\label{eqn_def_K}
	K_{B,\pi}(\xi,b):=( d_B^*b, d_B\xi + *d_B b+ DV_\pi(B) (b) ).
\end{equation}
Then $K_{B,\pi}$ is self-adjoint and elliptic, therefore it is Fredholm with index zero, and its spectrum is discrete and is contained in $\bR$.

The domain of $K_{B,\pi}$ can be orthogonally decomposed as 
\begin{equation}
\label{eqn_decomp_domain_K}
	L_{k}^2(\frg)\oplus\big( \ima d_B \cap L_k^2(T^*Y\otimes \frg)\big)  \oplus \big(\ker d_B^* \cap L_k^2(T^*Y\otimes \frg)\big),
\end{equation}
and range of $K_{B,\pi}$ can be orthogonally decomposed as
\begin{equation}
\label{eqn_decomp_range_K}
	L_{k-1}^2(\frg)\oplus\big( \ima d_B \cap L_{k-1}^2(T^*Y\otimes \frg)\big)  \oplus \big(\ker d_B^* \cap L_{k-1}^2(T^*Y\otimes \frg)\big).
\end{equation}

If $B$ is $\pi$-flat, then by Lemma \ref{lem_decomp_K_when_flat} and the fact that $K_{B,\pi}$ is self-adjoint, the operator $K_{B,\pi}$ is given by 
\begin{equation}
\label{eqn_decomp_K_when_flat}
	\begin{pmatrix}
0 & d_B^* & 0 \\
d_B & 0 & 0 \\
0 & 0 & *d_B+DV_\pi(B)	
\end{pmatrix}
\end{equation}
under the decompositions \eqref{eqn_decomp_domain_K} and \eqref{eqn_decomp_range_K}.

\begin{Definition}
Suppose $B$ is $\pi$--flat,
	define $\Hess_{B,\pi}$ to be the operator
$$
\Hess_{B,\pi}: \ker d^*_B\cap L_k^2(T^*Y\otimes \frg) \to \ker d^*_B\cap L_{k-1}^2(T^*Y\otimes \frg)
$$
given by 
$$\Hess_{B,\pi} := *d_B+DV_\pi(B).$$ 
\end{Definition}

By definition, $\Hess_{B,\pi}$ is a self-adjoint Fredholm operator with index zero.

\begin{Definition}
\label{def_non-deg_perturbed_connection}
	We say that a $\pi$--flat connection $B$ \emph{non-degenerate}, if $\Hess_{B,\pi}$ is an isomorphism. We say that $\pi\in\clP$ is  \emph{non-degenerate}, if all critical points of $\CS+f_{\pi}$ are non-degenerate as $\pi$--flat connections.
\end{Definition}

\subsection{Equivariant transversality}
\label{subsec_equi_trans_gauge}
This subsection establishes the transversality properties of holonomy perturbations that are analogous to Lemma \ref{lem_one_degeneracy_codim} and Lemma \ref{lem_two_degeneracy_codim}.

Recall that for each $y\in Y$, the restriction to $y$ gives a map
$$
r_y: \Stab(B)\to G
$$
that sends $\Stab(B)$ to a closed subgroup of $G$. Suppose $y_1,y_2\in Y$, let $\gamma$ be an arc from $y_1$ to $y_2$. Then 
$$
r_{y_2} (g) = \hol_{\gamma}(B) \cdot r_{y_1}(g)\cdot \hol_{\gamma}(B)^{-1} \mbox{ for all } g\in\Stab(B).
$$
As a consequence, every finite dimensional representation of $\Stab(B)$ defines an element of $\clR_G$. 

Suppose $\pi\in\clP$ and let $\Orb(B)$ be a critical orbit of $\CS+f_\pi$, then $\ker \Hess_{B,\pi}$ defines an element in $\clR_G$  as a $\Stab(B)$--representation. It is straightforward to verify that this element is invariant under gauge transformations of $B$.

\begin{Definition}
	A linear map $P$ defined on a linear subspace of $L^2(T^*Y\otimes \frg)$ is called \emph{symmetric}, if $\langle Px,y\rangle = \langle x,Py\rangle $ for all $x,y$ in the domain of $P$.
\end{Definition}

Let $\mathcal{H}(B)$ be the Banach space of $\Stab(B)$--invariant, symmetric, bounded, linear operators from  the Banach space
$$\ker d_B^*\cap L_k^2(T^*Y\otimes\frg)$$
to the Banach space
$$\ker d_B^*\cap L_{k-1}^2(T^*Y\otimes\frg).$$
Define the norm on $\clH(B)$ as the operator norm, then $\clH(B)$ becomes a Banach space.  
 For $\sigma\in \clR_G$, let $\mathcal{H}_\sigma (B)\subset \mathcal{H}(B)$ consist of the elements that are Fredholm with index $0$ such that their kernels represent $\sigma$. 
 
Recall that the dimension $d(\sigma)$ is defined by Definition \ref{def_d(sigma)}. Suppose $s\in \mathcal{H}_\sigma (B)$, recall that we use $\sym_{\Stab(B)}\ker s$ to denote the space of $\Stab(B)$--equivariant symmetric maps on $\ker s$.

\begin{Lemma}
\label{lem_clH_sigma_submanifold}
	$\mathcal{H}_\sigma (B)$ is a submanifold of $\mathcal{H}(B)$ with codimension $d(\sigma)$. 
Moreover,  suppose $s\in \mathcal{H}_\sigma (B)$, let $\Pi$ be the $L^2$--orthogonal projection from 
  $$\ker d_B^*\cap L_{k-1}^2(T^*Y\otimes\frg)$$ to $\ker s$. Suppose  $L\subset \clH(B)$ is a linear subspace. Then $s+L$ is transverse to $\mathcal{H}_\sigma (B)$ if and only if the linear map
	\begin{align*}
	 L &\to \sym_{\Stab(B)}\ker s \\
	 l &\mapsto \Pi\circ (l|_{\ker s})
	\end{align*}
	is surjective.
\end{Lemma}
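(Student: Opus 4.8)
The plan is to mirror the proof of the finite-dimensional statement, Lemma \ref{lem_sym_sigma_submanifold_finite_dim}, the only new ingredient being that the ambient space $\clH(B)$ is an infinite-dimensional Banach space of operators rather than a finite-dimensional space of matrices. First I would fix $s \in \mathcal{H}_\sigma(B)$, set $V_1 := \ker s$ (a finite-dimensional $\Stab(B)$-representation, since $s$ is Fredholm of index $0$), and let $V_2$ be its $L^2$-orthogonal complement inside $\ker d_B^* \cap L_k^2(T^*Y\otimes\frg)$. The key structural fact is that $s$ restricts to an isomorphism from $V_2$ (with the $L_k^2$-topology) onto the corresponding closed complement of $\ker s$ inside $\ker d_B^*\cap L_{k-1}^2(T^*Y\otimes\frg)$; this follows because $s$ is Fredholm of index zero with kernel $V_1$, hence its image is the $L^2$-orthogonal complement of $\ker s^* = \ker s = V_1$ in the target, and $s|_{V_2}$ is injective with that image, so by the open mapping theorem it is a Banach-space isomorphism.

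Next I would write a general $s' \in \mathcal{H}(B)$ near $s$ in block form $s' = \begin{pmatrix} S_{11} & S_{12}\\ S_{21} & S_{22}\end{pmatrix}$ with respect to the splitting of domain and range induced by $V_1 \oplus V_2$ — here $S_{12}$ and $S_{21}$ are off-diagonal blocks between $V_1$ (finite-dimensional) and $V_2$, and $S_{22}$ maps $V_2$ to (the $L_{k-1}^2$-completion of) $V_2$. Symmetry of $s'$ forces $S_{21} = S_{12}^*$, but more importantly, for $s'$ sufficiently close to $s$ the block $S_{22}$ remains invertible by the openness of the set of isomorphisms. Then the Gaussian-elimination identity \eqref{eqn_elementary_matrix_trans_S22_finite_dim} applies verbatim: conjugating $s'$ by the block upper- and lower-triangular unipotent operators (which are invertible bounded operators) brings it to block-diagonal form $\mathrm{diag}(S_{11} - S_{12} S_{22}^{-1} S_{21},\, S_{22})$. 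Hence $s' \in \mathcal{H}_\sigma(B)$ — meaning $\ker s'$ is isomorphic to $V_1$ as a $\Stab(B)$-representation, i.e.\ $\ker s'$ has the same dimension and, being the kernel of a small perturbation, the same representation type — if and only if the finite-dimensional $\Stab(B)$-equivariant symmetric operator $S_{11} - S_{12}S_{22}^{-1}S_{21}$ on $V_1$ vanishes. This exhibits $\mathcal{H}_\sigma(B)$ near $s$ as the zero set of a smooth submersion from a neighborhood in $\mathcal{H}(B)$ onto the finite-dimensional space $\sym_{\Stab(B)}(V_1)$, so it is a Banach submanifold of codimension $\dim_\bR \sym_{\Stab(B)}(\ker s) = d(\sigma)$, and its tangent space at $s$ is $\{S_{11} = 0\}$, i.e.\ the kernel of $l \mapsto \Pi\circ(l|_{\ker s})$.

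Finally, the transversality criterion: $s + L$ is transverse to $\mathcal{H}_\sigma(B)$ at $s$ iff the composite of the inclusion $L \hookrightarrow \mathcal{H}(B)$ with the quotient map $\mathcal{H}(B) \to \mathcal{H}(B)/T_s\mathcal{H}_\sigma(B) \cong \sym_{\Stab(B)}(\ker s)$ is surjective; and under the block description this composite is exactly $l \mapsto \Pi\circ(l|_{\ker s})$ (the passage from $S_{11}-S_{12}S_{22}^{-1}S_{21}$ to its linearization $S_{11}$ at $s$, where $S_{11} = \Pi\circ(l|_{\ker s})$ for $l$ a variation). That gives the ``if and only if'' statement.

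I expect the only genuine subtlety — and hence the main point to be careful about — is the claim that $S_{22}$ stays invertible and that $s|_{V_2}$ is an isomorphism in the appropriate Banach-space sense despite the $L_k^2 \to L_{k-1}^2$ mismatch between domain and range; this is where the Fredholm-index-zero hypothesis and ellipticity of $*d_B + DV_\pi(B)$ are used, and where I would be most explicit. Everything else — the elementary matrix identity, the submanifold chart, the identification of the normal space with $\sym_{\Stab(B)}(\ker s)$, and the equivalence with surjectivity of the stated map — is a direct transcription of the finite-dimensional argument in Lemma \ref{lem_sym_sigma_submanifold_finite_dim}.
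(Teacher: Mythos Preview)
Your proposal is correct and matches the paper's own proof essentially line for line: both decompose the domain and range using $\ker s$ and its $L^2$-orthogonal complements in the $L_k^2$ and $L_{k-1}^2$ spaces, use the Fredholm-index-zero hypothesis to get that $s|_{V_2}$ is an isomorphism, apply the same block Gaussian-elimination identity to characterize $\mathcal{H}_\sigma(B)$ locally by the vanishing of the Schur complement, and read off the tangent space as $\{S_{11}=0\}$. The one minor over-specification is your mention of ellipticity of $*d_B + DV_\pi(B)$: the lemma is stated for arbitrary $s\in\mathcal{H}_\sigma(B)$, so only the Fredholm-index-zero assumption (built into the definition of $\mathcal{H}_\sigma(B)$) is actually used, not ellipticity of any particular operator.
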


\begin{proof}
The proof is similar to the argument of Lemma \ref{lem_sym_sigma_submanifold_finite_dim}.
Suppose $s\in \mathcal{H}_\sigma (B)$. For $i=k,k-1$, let $V^{(i)}$ be the $L^2$--orthogonal complement of $\ker s$ in $\ker d_B^*\cap L_i^2(T^*Y\otimes\frg)$. Then the domain and range of $s$ decompose as $\ker s\oplus V^{(k)}$ and $\ker s\oplus V^{(k-1)}$ respectively. By the assumptions, the map $s$ restricts to an isomorphism from $V^{(k)}$ to $V^{(k-1)}$.

 There exists an open neighborhood $U$ of $s$ in $\clH(B)$ such that all $s'\in U$ are Fredholm and have index zero. Suppose $s'$ decomposes as a map from $\ker s\oplus V^{(k)}$ to $\ker s\oplus V^{(k-1)}$ as
 $$
 s' = \begin{pmatrix}
 	S_{11} & S_{12} \\
 	S_{21} & S_{22}
 \end{pmatrix},
 $$
 then after shrinking $U$ if necessary, the map
 $$
 S_{22} : V^{(k)} \to V^{(k-1)}
 $$
 is always invertible. By the same computation as \eqref{eqn_elementary_matrix_trans_S22_finite_dim}, we have $s'\in \mathcal{H}_\sigma (B)$ if and only if 
 $$
S_{11}-S_{12}\circ S_{22}^{-1}\circ S_{21} = 0.
$$
Therefore $\mathcal{H}_\sigma(B)$ is a submanifold of $\mathcal{H}(B)$ near $s$, and its tangent space at $s$ is given by $S_{11}=0$. Hence the lemma is proved.
\end{proof}

\begin{Definition}
Suppose $\sigma\in \clR_G$
	Let 
	\begin{equation*}
		 \clP_\sigma:=\{\pi\in\clP\,|\,\exists B\in \clC \mbox{ such that } *F_B+V_\pi=0, \mbox{ and }
	 \ker \Hess_{B,\pi} \mbox{ represents }  \sigma\}.
	\end{equation*}
\end{Definition}

\begin{Lemma}
\label{lem_gauge_one_degeneracy_codim}
	$\clP_\sigma$ is a $C^\infty$--subvariety of $\clP$ with codimension at least $d(\sigma)$.
\end{Lemma}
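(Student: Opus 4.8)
The plan is to mirror the finite-dimensional argument of Lemma~\ref{lem_one_degeneracy_codim}, replacing the use of Lemma~\ref{lem_surj_jet_M_sigma} by the abundance statement for holonomy perturbations (Proposition~\ref{prop_abundance_2_pts}, or more precisely its one-point version, which follows by the same proof using Lemma~\ref{lem_infinitesimal_gauge_transform_detected_by_hol} in place of Lemma~\ref{lem_2_pt_injectivity_finite_loops}). First I would introduce the parametrized space
$$
\widetilde{\clP}_\sigma:=\{(\pi,[B])\in\clP\times (\clC/\clG)\,|\, *F_B+V_\pi(B)=0,\ \ker\Hess_{B,\pi}\text{ represents }\sigma\},
$$
work locally near a point $(\pi,[B])$ using the slice $S_{B,\epsilon}$ from Proposition~\ref{prop_slice_thm}, and show that $\widetilde{\clP}_\sigma$ is locally a Banach submanifold whose projection to $\clP$ is Fredholm of index $-d(\sigma)$. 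Separability of $\clP\times(\clC/\clG)$ then covers $\clP_\sigma$ by countably many such Fredholm images of index $-d(\sigma)$, which is exactly Definition~\ref{def_C_infty_subvariety}.

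For the local model, fix $(\pi_0,[B_0])\in\widetilde{\clP}_\sigma$ with $B_0$ $\pi_0$-flat. Since $\pi$-flatness and the Hessian are gauge-equivariant, I may pass to the slice and work with pairs $(\pi,b)$ where $b\in\ker d_{B_0}^*\cap L^2_k$. Consider the map
$$
\Psi:\clP\times (S_{B_0,\epsilon})\ \longrightarrow\ \bigl(\ker d_{B_0}^*\cap L^2_{k-1}(T^*Y\otimes\frg)\bigr)\times \sym_{\Stab(B_0)}\!\bigl(\ker d_{B_0}^*\cap L^2_k\bigr)
$$
sending $(\pi,B)$ to the $\ker d_{B_0}^*$-component of $*F_B+V_\pi(B)$ together with $\Hess_{B,\pi}$ (or rather its restriction to the finite-dimensional $\Stab(B_0)$-invariant subspace that matters, exactly as in the proof of Lemma~\ref{lem_one_degeneracy_codim}); then $\clP_\sigma$ is cut out near $(\pi_0,B_0)$ as the preimage of $\{0\}\times\mathcal{H}_{\sigma}(B_0)$-type stratum, and I invoke Lemma~\ref{lem_clH_sigma_submanifold} for the stratified transversality and codimension count. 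Surjectivity of the differential of $\Psi$ at $(\pi_0,B_0)$ is the crucial input: the $\clC$-directions already give surjectivity onto the first factor (the linearization of the perturbed flatness equation is the elliptic operator $K_{B_0,\pi_0}$, whose cokernel is accounted for by the slice/orbit), and surjectivity onto $\sym_{\Stab(B_0)}(\ker s)$ is supplied by varying $\pi$, using the second-order abundance of holonomy perturbations from Proposition~\ref{prop_abundance_2_pts}: one can realize an arbitrary $\Stab(B_0)$-invariant $2$-jet on the finite-dimensional space $\ker\Hess_{B_0,\pi_0}$, and taking its Hessian part yields any prescribed element of $\sym_{\Stab(B_0)}(\ker\Hess_{B_0,\pi_0})$. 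The Fredholm index bookkeeping is the same as in Lemma~\ref{lem_one_degeneracy_codim}: the inclusion of the zero set into $\clP\times S_{B_0,\epsilon}$ restricted to the relevant finite-dimensional slice directions has index $-\dim(\text{slice fixed part})-d(\sigma)$, and composing with the projection to $\clP$ adds back $\dim(\text{slice fixed part})$, leaving index $-d(\sigma)$.

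The main obstacle I expect is the careful handling of the infinite-dimensional functional-analytic setup so that the gauge-theoretic picture genuinely reduces to the finite-dimensional linear algebra of Lemma~\ref{lem_clH_sigma_submanifold}: one must check that $DV_\pi$ depends smoothly on $\pi$ in the appropriate operator topology (this is \cite[Proposition 3.7]{kronheimer2011knot}), that the slice $S_{B,\epsilon}$ can be chosen continuously in the parameter, and above all that the abundance of Proposition~\ref{prop_abundance_2_pts} really does produce perturbations whose Hessians span all of $\sym_{\Stab(B_0)}(\ker s)$ — the point being that the $2$-jet freedom of $h$ on $G^m$ translates, via the injectivity of the holonomy differential on $\ker\Hess_{B_0,\pi_0}$, into full freedom of the Hessian on that subspace. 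Once these are in place, everything else is a routine transcription of the finite-dimensional proof, and the statement follows.
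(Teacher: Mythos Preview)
Your proposal is correct and follows essentially the same route as the paper: parametrized space $\widetilde{\clP}_\sigma$, pass to the $\Stab(B)$-fixed part $S_{B,\epsilon}^0$ of a slice, set up the map $(\pi',B')\mapsto(*F_{B'}+V_{\pi'},\Hess_{B',\pi'})$, get transversality from Lemma~\ref{lem_clH_sigma_submanifold} together with the one-point version of Proposition~\ref{prop_abundance_2_pts}, and conclude that the projection to $\clP$ is Fredholm of index $-d(\sigma)$.

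The one place where the paper does more than a ``routine transcription'' of Lemma~\ref{lem_one_degeneracy_codim} is the Fredholm index computation, which you flag but slightly understate: since $S_{B,\epsilon}^0$ is now infinite-dimensional, the count ``$-\dim(\text{slice fixed part})-d(\sigma)$ plus $\dim(\text{slice fixed part})$'' is not available as written. Instead the paper writes out the tangent space $\widetilde{T}$ of $\varphi^{-1}(\{0\}\times\clH_\sigma(B))$ explicitly and observes that for $(\eta,b)\in\widetilde{T}$ the component $\Pi^\perp b$ is determined by $\eta$ via $\Pi^\perp b=-\Hess_{B,\pi}^{-1}\Pi^\perp(V_\eta)$; this gives an injection $\widetilde{T}\hookrightarrow\clP\times\ker\Hess_{B,\pi}$ whose image has codimension $\dim\ker\Hess_{B,\pi}+d(\sigma)$ (the abundance result is used again here), and composing with projection to $\clP$ then yields index $-d(\sigma)$. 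This is exactly the kind of care you anticipate in your last paragraph.
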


Recall that the concept of $C^\infty$--subvariety was introduced by Definition \ref{def_C_infty_subvariety}.

\begin{proof}
	Let 
	\begin{equation*}
		\widetilde \clP_\sigma := \{(\pi,B)\in \clP\times \clC \, | *F_B+V_\pi=0, 
		\mbox{ and }
	 \ker \Hess_{B,\pi}  \mbox{ represents }  \sigma\}.
	\end{equation*}
	Suppose $(\pi,B)\in \widetilde \clP_\sigma$. By elliptic regularity, $B$ is smooth and $\ker \Hess_{B,\pi}$ is spanned by smooth sections of $T^*Y\otimes \frg$ after a gauge transformation. 
	
	Let $S_{B,\epsilon}$ be a slice of $B$ given by Proposition \ref{prop_slice_thm}. 
	We claim that there exists an open neighborhood $U$ of $(\pi,B)$ in $\clP\times S_{B,\epsilon}$, such that 
	\begin{enumerate}
		\item $\widetilde \clP_\sigma\cap U$ is a Banach manifold,
		\item The projection of $\widetilde \clP_\sigma\cap U$ to $\clP$ is Fredholm and has index $-d(\sigma)$.
	\end{enumerate}
	The result then follows from the above claim and the separability of $\clP\times \clC$.
	
	To prove the claim, let $S_{B,\epsilon}^0$ be the fixed-point subspace of $S_{B,\epsilon}$ under the action of $\Stab(B)$, and let $\hat S_{B,\epsilon}^0$ be the closure of $S_{B,\epsilon}^0$ in $L_{k-1}^2(T^*Y\otimes \frg)$. Then
\begin{equation}
\label{eqn_intersect_clP_tilde_sigma}
\widetilde \clP_\sigma\cap \clP\times S_{B,\epsilon}=\widetilde \clP_\sigma\cap \clP\times S_{B,\epsilon}^0.
\end{equation}	
For each $B'\in S_{B,\epsilon}^0$, we have $\Stab(B')=\Stab(B)$. Suppose $i=k$ or $k-1$, then the spaces 
$$\ker d^*_{B'}\cap L_i^2(T^*Y\otimes \frg)$$ form a (trivial) $\Stab(B)$--equivariant Banach vector bundle over $S_{B,\epsilon}^0$. We fix a $\Stab(B)$--equivariant trivialization of this vector bundle near $B$. One way of constructing the trivialization is to take the $L^2$--orthogonal projection onto $\ker  d^*_{B}\cap L_i^2(T^*Y\otimes \frg)$.

Under the trivialization above, the set \eqref{eqn_intersect_clP_tilde_sigma} is given by the pre-image of $\{0\}\times\clH_\sigma(B)$ of the map
\begin{align*}
\varphi:\clP\times S_{B,\epsilon}^0 &\to \hat{S}_{B,\epsilon}^0 \times \mathcal{H}(B) \\
(\pi',B')&\mapsto (*F_{B'}+V_{\pi'}, \Hess_{B',\pi'}).
\end{align*}

We claim that $\varphi$ is transverse to $\{0\}\times \clH_\sigma (B)$ at $(\pi,B)$. Since transversality is an open condition, this implies that $\varphi^{-1}\big(\{0\}\times \mathcal{H}_{\sigma}(B)\big)$
is a Banach manifold near $(\pi,B)$. To prove the claim, note that by standard properties of elliptic operators, the differentials of the map 
\begin{align*}
\psi: S^0_{B,\epsilon}&\to \hat{S}_{B,\epsilon}^0\\
B' &\mapsto *F_{B'}
\end{align*}
are Fredholm. Let $L\subset \hat{S}_{B,\epsilon}^0$ be the $L^2$--orthogonal complement of the image of $d\psi$ at $B$. Then by elliptic regularity, $L$ is spanned by smooth sections, and it is a $\Stab(B)$--invariant finite dimensional subspace of $\hat{S}_{B,\epsilon}^0$. Let $\Pi_L$ and $\Pi$ denote the $L^2$--orthogonal projections to $L$ and $\ker \Hess_{B,\pi}$ respectively. 
By Proposition \ref{prop_abundance_2_pts}, the linear map 
\begin{align*}
\xi_1: \clP &\to L \\
\pi' &\mapsto \Pi_{L}(V_{\pi'})
\end{align*}
is surjective.  In fact, since $V_{\pi'} = \grad f_{\pi'}$, the value of $\Pi_{L}(V_{\pi'})$ is determined by the $1$--jet of the restriction of $f_{\pi'}$ to the finite-dimensional affine space $B+L$ by taking the gradient on this subspace. Similarly, we show that the linear map 
\begin{align*}
\xi_2: \ker \xi_1 &\to \sym_{\Stab(B)}(\ker \Hess_{B,\pi}) \\
\pi' & \mapsto \Pi\circ DV_{\pi'}|_{\ker \Hess_{B,\pi}}
\end{align*}
is surjective.
We apply Proposition \ref{prop_abundance_2_pts} with $B_1=B$ and $L_1=L+\ker \Hess_{B,\pi}$ (and an arbitrary choice of $B_2$ and $L_2$).  Since the Hessians of $2$-jets on $B_1+L_1$ at $B_1$, such that their projections to $1$-jets are zero, can realize every possible equivariant symmetric map (see Lemma \ref{lem_surj_jet_M_sigma}), Proposition \ref{prop_abundance_2_pts} implies that $\xi_2$ is surjective.
The desired claim then follows from the above surjectivity statements and Lemma \ref{lem_clH_sigma_submanifold}. In conclusion, we see that
$\varphi^{-1}\big(\{0\}\times \mathcal{H}_{\sigma}(B)\big)$
is a Banach manifold near $(\pi,B)$. 

We now compute the tangent map of the projection of $\varphi^{-1}\big(\{0\}\times \mathcal{H}_{\sigma}(B)\big)$ to $\clP$. 
Let $\Pi$ be the $L^2$--orthogonal projection to $\ker \Hess_{B,\pi}$, and let $\Pi^\perp = \id - \Pi$.
For each $\eta\in\clP$, let 
$j_1(\eta)=\Pi(V_\eta),$
 let 
 $$j_2(\eta)= \Pi\circ (DV_\eta|_{\ker \Hess_{B,\pi}}),$$
then $j_2(\eta)$ is a symmetric endomorphism on $\ker \Hess_{B,\pi}$.
 Let $T^0$, $\hat T^0$ be the tangent spaces of $S_{B,\epsilon}^0$, $\hat{S}_{B,\epsilon}^0$ at $B$ respectively. 
By Lemma \ref{lem_clH_sigma_submanifold}, the tangent space of $\varphi^{-1}\big(\{0\}\times \mathcal{H}_{\sigma}(B)\big)$ at $(\pi,B)$ is given by
$$
\widetilde{T} = \{ (\eta,b)\in \clP\times T^0 |\Hess_{B,\pi}(b) + V_\eta=0, \Pi\circ *ad(b)+ j_2(\eta)=0\}.
$$
Decompose $b$ as $b_1+b_2$ where $b_1=\Pi(b)$ and $b_2=\Pi^\perp(b)$, and let $\Hess_{B,\pi}^{-1}$ be the inverse of $\Hess_{B,\pi}$ on $(\ker \Hess_{B,\pi})^\perp$. Then $(\eta,b)\in \widetilde{T}$ if and only if 
\begin{enumerate}
	\item $j_2(\eta) = -\Pi\circ *ad(b)$,
	\item $j_1(\eta) = 0$,
	\item $b_2 = - \Hess_{B,\pi}^{-1} \Pi^\perp(V_\eta)$.
\end{enumerate}
Therefore by Proposition \ref{prop_abundance_2_pts}, the linear map
\begin{align*}
 \widetilde{T}&\to \clP\times \ker \Hess_{B,\pi} \\
 (\eta,b) &\mapsto (\eta,b_1)
\end{align*}
 is injective with closed image, and the codimension of its image equals 
 $$\dim \ker \Hess_{B,\pi} + d(\sigma).$$ 
 Since the projection from $\clP\times \ker \Hess_{B,\pi}$ to $\clP$ is Fredholm and with index equal to $\dim \ker \Hess_{B,\pi}$, we conclude that the projection of $\widetilde{T}$ to $\clP$ is Fredholm with index $-d(\sigma)$, and hence the result is proved.
\end{proof}

\begin{Definition}
Suppose $\sigma_1,\sigma_2\in \clR_G$.
	Let $\clP_{\sigma_1,\sigma_2}$ to be the set of $\pi\in\clP$, such that there exist $B_1,B_2\in \clC$ with the following properties:
	\begin{enumerate}
		\item $\Orb(B_1)\neq \Orb(B_2),$
		\item $*F_{B_1}+V_{\pi_1}=0$, 
		$*F_{B_2}+V_{\pi_2}=0, $
		\item  $\ker \Hess_{B_1,\pi}$   represents  $\sigma_1$,  
		\item 
		 $\ker \Hess_{B_2,\pi}$ represents  $\sigma_2$.
	\end{enumerate}
\end{Definition}

\begin{Lemma}
\label{lem_gauge_two_degeneracy_codim}
Suppose $\sigma_1,\sigma_2\in \clR_G$. Then
	$\clP_{\sigma_1,\sigma_2}$ is a $C^\infty$--subvariety of $\clP$ with codimension at least  $d(\sigma_1)+d(\sigma_2)$.
\end{Lemma}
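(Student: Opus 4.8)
The plan is to mimic the proof of Lemma \ref{lem_gauge_one_degeneracy_codim} almost verbatim, keeping track of two connections simultaneously, exactly as Lemma \ref{lem_two_degeneracy_codim} was obtained from Lemma \ref{lem_one_degeneracy_codim} in the finite-dimensional setting. First I would introduce the parametrized space
\begin{equation*}
\widetilde \clP_{\sigma_1,\sigma_2} := \{(\pi,B_1,B_2)\in \clP\times \clC\times\clC \, |\, \Orb(B_1)\neq\Orb(B_2),\ *F_{B_i}+V_{\pi}=0,\ \ker \Hess_{B_i,\pi}\text{ represents }\sigma_i\ (i=1,2)\},
\end{equation*}
and reduce, via the separability of $\clP\times\clC\times\clC$, to the local claim that around any point $(\pi,B_1,B_2)\in\widetilde\clP_{\sigma_1,\sigma_2}$ there is a neighborhood $U$ in $\clP\times S_{B_1,\epsilon}\times S_{B_2,\epsilon}$ on which $\widetilde\clP_{\sigma_1,\sigma_2}$ is a Banach manifold whose projection to $\clP$ is Fredholm of index $-d(\sigma_1)-d(\sigma_2)$. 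By elliptic regularity we may assume $B_1,B_2$ are smooth with $\ker\Hess_{B_i,\pi}$ spanned by smooth sections after gauge transformations, and since $\Orb(B_1)\neq\Orb(B_2)$, $B_1$ and $B_2$ are not gauge equivalent, so Proposition \ref{prop_abundance_2_pts} applies.

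The key step is to set up the local defining map. Passing to the fixed-point slices $S_{B_i,\epsilon}^0$ (so that $\Stab(B_i')=\Stab(B_i)$ on them), trivializing the bundles $\ker d^*_{B_i'}\cap L_j^2(T^*Y\otimes\frg)$ near $B_i$ by $L^2$--orthogonal projection, and writing $\hat S^0_{B_i,\epsilon}$ for the $L^2_{k-1}$--completions, one realizes $\widetilde\clP_{\sigma_1,\sigma_2}\cap(\clP\times S_{B_1,\epsilon}^0\times S_{B_2,\epsilon}^0)$ as the preimage of $\{0\}\times\clH_{\sigma_1}(B_1)\times\clH_{\sigma_2}(B_2)$ under
\begin{equation*}
\varphi:\clP\times S_{B_1,\epsilon}^0\times S_{B_2,\epsilon}^0 \to \hat S^0_{B_1,\epsilon}\times\clH(B_1)\times\hat S^0_{B_2,\epsilon}\times\clH(B_2),\qquad (\pi',B_1',B_2')\mapsto(*F_{B_1'}+V_{\pi'},\Hess_{B_1',\pi'},*F_{B_2'}+V_{\pi'},\Hess_{B_2',\pi'}).
\end{equation*}
The surjectivity of $D\varphi$ at $(\pi,B_1,B_2)$ onto the $\clP$--directions controlling the two jets is precisely the content of Proposition \ref{prop_abundance_2_pts} (with $L_i$ the span of $\ker\Hess_{B_i,\pi}$ inside $\mathcal{T}|_{B_i}$, which meets $\ima d_{B_i}$ trivially since $\ker\Hess_{B_i,\pi}\subset\ker d_{B_i}^*$), combined with the free $S_{B_i,\epsilon}^0$--directions as in Lemma \ref{lem_gauge_one_degeneracy_codim}. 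Then Lemma \ref{lem_clH_sigma_submanifold} shows $\varphi^{-1}(\{0\}\times\clH_{\sigma_1}(B_1)\times\clH_{\sigma_2}(B_2))$ is a Banach manifold near $(\pi,B_1,B_2)$.

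Finally I would compute the index of the projection to $\clP$ by the same linear-algebra bookkeeping as in Lemma \ref{lem_gauge_one_degeneracy_codim}, applied to each factor: decomposing the tangent vectors $b_i\in T^0_i$ as $b_i=\Pi_i b_i+\Pi_i^\perp b_i$ with $\Pi_i$ the $L^2$--projection onto $\ker\Hess_{B_i,\pi}$, the conditions cutting out the tangent space of $\varphi^{-1}(\cdots)$ force $\Pi_i^\perp b_i$ to be determined by $\eta$ via $\Hess_{B_i,\pi}^{-1}$, and reduce the problem to showing that $(\eta,b_1,b_2)\mapsto(\eta,\Pi_1 b_1,\Pi_2 b_2)$ is injective with closed image of codimension $\dim\ker\Hess_{B_1,\pi}+d(\sigma_1)+\dim\ker\Hess_{B_2,\pi}+d(\sigma_2)$; this again uses Proposition \ref{prop_abundance_2_pts} to hit the jet targets $\sym_{\Stab(B_i)}(\ker\Hess_{B_i,\pi})$ simultaneously. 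Since projection from $\clP\times\ker\Hess_{B_1,\pi}\times\ker\Hess_{B_2,\pi}$ to $\clP$ is Fredholm of index $\dim\ker\Hess_{B_1,\pi}+\dim\ker\Hess_{B_2,\pi}$, the composite has index $-d(\sigma_1)-d(\sigma_2)$, proving the claim. The only genuinely new point compared to Lemma \ref{lem_gauge_one_degeneracy_codim} is the simultaneous-jet surjectivity at two gauge-inequivalent connections, and this is exactly why Proposition \ref{prop_abundance_2_pts} was stated for a pair $B_1,B_2$ rather than a single connection; so I expect no serious obstacle, only care in organizing the two-point bookkeeping and in using the hypothesis $\Orb(B_1)\neq\Orb(B_2)$ to guarantee non-gauge-equivalence (noting when $\sigma_1=\sigma_2$ that one must also separate the slices in $\clC/\clG$, just as $\overline{A_1}\cap\overline{A_2}=\emptyset$ was imposed in Lemma \ref{lem_surj_jet_M_sigma_2pts}).
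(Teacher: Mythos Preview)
Your proposal is correct and follows essentially the same approach as the paper's own proof: define the parametrized space $\widetilde\clP_{\sigma_1,\sigma_2}$, localize in a product of slices $S_{B_1,\epsilon_1}\times S_{B_2,\epsilon_2}$, pass to the fixed-point slices, set up the map $\varphi$ to $\hat S^0_{B_1,\epsilon_1}\times\clH(B_1)\times\hat S^0_{B_2,\epsilon_2}\times\clH(B_2)$, and invoke Proposition~\ref{prop_abundance_2_pts} together with Lemma~\ref{lem_clH_sigma_submanifold}. In fact the paper is terser than you are on the index computation, simply writing ``by the same argument as the proof of Lemma~\ref{lem_gauge_one_degeneracy_codim}''; your spelled-out bookkeeping with $\Pi_i$ and $\Pi_i^\perp$ is exactly what that phrase unpacks to.
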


\begin{proof}
The proof is essentially the same as Lemma \ref{lem_gauge_one_degeneracy_codim}.
Let 
	\begin{multline*}
		\widetilde \clP_{\sigma_1,\sigma_2} := \{(\pi,B_1,B_2)\in \clP\times \clC \times \clC\, | \Orb(B_1)\neq \Orb(B_2),
		\\
		*F_{B_1}+V_{\pi_1}=0,\,
		\ker \Hess_{B_1,\pi}   \mbox{ represents } \sigma_1,
		\\
		*F_{B_2}+V_{\pi_2}=0,\,
		\ker \Hess_{B_1,\pi}   \mbox{ represents } \sigma_2
		\}. 
	\end{multline*}

	Suppose $(\pi,B_1,B_2)\in \widetilde \clP_{\sigma_1,\sigma_2}$. 
	By elliptic regularity, $B_i$ ($i=1,2$) are smooth and $\ker \Hess_{B_i,\pi}$ are spanned by smooth sections of $T^*Y\otimes \frg$ after gauge transformations.
	 
	For $i=1,2$, let $S_{B_i,\epsilon_i}$ be a slice of $B_i$ given by Proposition \ref{prop_slice_thm}. We claim that there exists an open neighborhood $U$ of $(\pi,B_1,B_2)$ in $\clP\times S_{B_1,\epsilon_1}\times S_{B_2,\epsilon_2}$, such that 
	\begin{enumerate}
		\item $\widetilde \clP_{\sigma_1,\sigma_2}\cap U$ is a Banach manifold,
		\item The projection of $\widetilde \clP_{\sigma_1,\sigma_2}\cap U$ to $\clP$ is Fredholm and has index $-d(\sigma_1)-d(\sigma_2)$.
	\end{enumerate}
	The result then follows from the above claim and the separability of $\clP\times \clC\times \clC$.

	To prove the claim, let $S_{B_i,\epsilon_i}^0$, $\hat{S}_{B_i,\epsilon_i}^0$ be as in the proof of  Lemma \ref{lem_gauge_one_degeneracy_codim}. Then
$$\widetilde \clP_{\sigma_1,\sigma_2}\cap \clP\times S_{B_1,\epsilon_1}\times S_{B_2,\epsilon_2} = \widetilde \clP_{\sigma_1,\sigma_2}\cap \clP\times S_{B_1,\epsilon_1}^0\times S_{B_2,\epsilon_2}^0,$$
and it is given by the pre-image of $\{0\}\times \mathcal{H}_{\sigma_1}(B_1)\times\{0\}\times  \mathcal{H}_{\sigma_2}(B_2)$ of the map
\begin{align*}
\varphi:\clP\times S_{B_1,\epsilon_1}^0\times S_{B_2,\epsilon_2}^0 &\to \hat{S}_{B_1,\epsilon_1}^0 \times \mathcal{H}(B_1) \times \hat{S}_{B_2,\epsilon_2}^0 \times \mathcal{H}(B_2)\\
(\pi',B_1',B_2')&\mapsto (*F_{B_1'}+V_{\pi'}, \Hess_{B_1',\pi'},*F_{B_2'}+V_{\pi'},\Hess_{B_2',\pi'}).
\end{align*}
Similar to the proof of Lemma \ref{lem_gauge_one_degeneracy_codim}, by Lemma \ref{lem_clH_sigma_submanifold} and  Proposition \ref{prop_abundance_2_pts}, we have
\begin{equation}
\label{eqn_varphi_inverse_2_pts_gauge}
\varphi^{-1}\big(\{0\}\times \mathcal{H}_{\sigma_1}(B_1)\times \{0\}\times \mathcal{H}_{\sigma_2}(B_2)\big)
\end{equation}	
is a Banach manifold near $(\pi,B_1,B_2)$. By the same argument as the proof of Lemma \ref{lem_gauge_one_degeneracy_codim}, the projection of \eqref{eqn_varphi_inverse_2_pts_gauge} to $\clP$ is Fredholm with index $-d(\sigma_1)-d(\sigma_2)$ near $(\pi,B_1,B_2)$.
\end{proof}

Lemma \ref{lem_gauge_one_degeneracy_codim} and Lemma \ref{lem_gauge_two_degeneracy_codim} have the following immediate corollaries. Recall that by Definition \ref{def_non-deg_perturbed_connection}, $\pi\in \clP$ is called non-degenerate if all the critical points of $\CS+f_\pi$ are non-degenerate.

\begin{Corollary}
The set $\clP^{reg}\subset \clP$ of non-degenerate holonomy perturbations is of Baire second category.
\end{Corollary}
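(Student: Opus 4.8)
The plan is to apply Lemma~\ref{lem_gauge_one_degeneracy_codim} to conclude that the complement $\clP \setminus \clP^{reg}$ is a countable union of $C^\infty$--subvarieties of positive codimension, and then invoke the Sard--Smale theorem. First I would observe that by Definition~\ref{def_non-deg_perturbed_connection}, $\pi \in \clP$ is non-degenerate precisely when every $\pi$--flat connection $B$ has $\ker \Hess_{B,\pi} = 0$; equivalently, no $\pi$--flat connection has $\ker \Hess_{B,\pi}$ representing any $\sigma \in \clR_G$ with $d(\sigma) > 0$. Recall that $d(\sigma) = 0$ if and only if $\sigma$ is represented by a zero representation, which in turn forces $\ker \Hess_{B,\pi} = 0$. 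Therefore $\pi$ is non-degenerate if and only if $\pi \notin \clP_\sigma$ for every $\sigma \in \clR_G$ with $d(\sigma) \ge 1$, so that
$$
\clP \setminus \clP^{reg} = \bigcup_{\substack{\sigma \in \clR_G \\ d(\sigma)\ge 1}} \clP_\sigma.
$$

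The next step is to check that this is a countable union. Here I would use the fact established earlier (in the proof of Lemma~\ref{lem_M_sigma_basic_properties}(4), and its gauge-theoretic analogues) that only finitely many strata actually arise: more precisely, for any $\pi$--flat connection $B$, the stabilizer $\Stab(B)$ is conjugate into one of finitely many closed subgroups of $G$ (the possible stabilizers of connections), and for each fixed closed subgroup $H \le G$ the relevant representations $\ker \Hess_{B,\pi}$, being finite-dimensional, range over a countable set of isomorphism classes in $\clR_G([H])$. Hence the index set $\{\sigma \in \clR_G : d(\sigma) \ge 1, \ \clP_\sigma \ne \emptyset\}$ is countable. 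By Lemma~\ref{lem_gauge_one_degeneracy_codim}, each $\clP_\sigma$ with $d(\sigma) \ge 1$ is a $C^\infty$--subvariety of $\clP$ of codimension at least $1$, hence (by Definition~\ref{def_C_infty_subvariety}) covered by countably many smooth Fredholm maps of negative index. Combining, $\clP \setminus \clP^{reg}$ is covered by countably many smooth Fredholm maps of negative index.

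The final step is the Sard--Smale theorem: the image of a smooth Fredholm map of negative index between separable Banach manifolds (or a Banach space, here $\clP$) is meager, i.e. its complement is of Baire second category. A countable union of meager sets is meager, so $\clP \setminus \clP^{reg}$ is meager, and therefore $\clP^{reg}$ is of Baire second category, as claimed. The only mildly delicate point — the main obstacle, such as it is — is justifying the countability of the index set $\{\sigma : \clP_\sigma \ne \emptyset, d(\sigma)\ge 1\}$; this requires knowing that the stabilizers of $\pi$--flat connections lie in finitely many conjugacy classes, which follows from the corresponding finiteness already proved in the finite-dimensional setting together with the slice theorem (Proposition~\ref{prop_slice_thm}), since $\Stab(B)$ embeds as a closed subgroup of the compact group $G$ via restriction to $y_0$ and the ambient group $G$ has only finitely many conjugacy classes of closed subgroups relevant here. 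Alternatively, one can sidestep this by noting that $\clP \setminus \clP^{reg}$ is contained in the countable union $\bigcup_i \clP_{\sigma_i}$ over any enumeration of those $\sigma$ with $d(\sigma)\ge 1$ that are represented by subrepresentations of the (separable) spaces $L^2_k(T^*Y\otimes\frg)$, which is automatically countable.
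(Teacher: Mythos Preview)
Your proof is correct and matches the paper's intended argument (the paper records this as an immediate corollary of Lemma~\ref{lem_gauge_one_degeneracy_codim} without further detail). One refinement: your two proposed justifications for countability are both slightly off---the first (finitely many stabilizer conjugacy classes) is established in the paper only for $G=\SU(n)$ in Section~\ref{sebsec_classify_stab}, not in the general setting of Section~\ref{sec_holonomy_perturbation}, and the second (subrepresentations of a separable Hilbert space) does not control the number of subgroups $H$. The clean fix is either to invoke the general fact that a compact Lie group has only countably many conjugacy classes of closed subgroups (so $\clR_G$ itself is countable), or, more in the spirit of the proof of Lemma~\ref{lem_gauge_one_degeneracy_codim}, to bypass the decomposition by $\sigma$ entirely: the degenerate locus $\{(\pi,B):*F_B+V_\pi=0,\ \ker\Hess_{B,\pi}\neq 0\}\subset\clP\times\clC$ is, near each of its points, a Banach submanifold whose projection to $\clP$ is Fredholm of index $-d(\sigma)\le -1$, and separability of $\clP\times\clC$ then furnishes the countable cover needed for Sard--Smale.
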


\begin{Corollary}
\label{generic_path}
For any pair $\pi_0, \pi_1 \in \clP^{reg}$, one can find a generic smooth path $\pi_t: [0,1] \to \clP$ from $\pi_0$ to $\pi_1$, such that there are only countably many $t$ where $\pi_t$ is degenerate. Moreover, for every such $t$ there is exact one degenerate critical orbit $\Orb(B)$ of $\CS+f_{\pi_t}$, and the kernel of $\Hess_{B,\pi_t}$ is an irreducible representation of $\Stab(B)$.
\end{Corollary}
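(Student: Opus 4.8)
The plan is to transplant the proof of Corollary \ref{cor_wall_cross_finite_dim} into the gauge-theoretic setting, with Lemma \ref{lem_gauge_one_degeneracy_codim} and Lemma \ref{lem_gauge_two_degeneracy_codim} playing the roles of Lemma \ref{lem_one_degeneracy_codim} and Lemma \ref{lem_two_degeneracy_codim}, and with a parametric Sard--Smale argument supplying the generic path. First I would record the elementary input from linear algebra: by Lemma \ref{lem_dim_of_sym_isotypic_component} and the explicit formula for $d_{\bK}(r)$, one has $d(\sigma)=0$ exactly when $\sigma$ is a zero representation, $d(\sigma)=1$ exactly when $\sigma$ is the class of a single nonzero irreducible representation, and $d(\sigma)\ge 2$ otherwise. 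Hence the only ``codimension-one'' failures of non-degeneracy a one-parameter family can meet are those at which exactly one operator $\Hess_{B,\pi}$ acquires an irreducible kernel.

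Next, the transversality step. Since the preceding corollary gives that $\clP^{reg}$ is of Baire second category, $\pi_0$ and $\pi_1$ may be joined by some smooth path in $\clP$, which I then perturb rel endpoints. As in the proof of Lemma \ref{lem_gauge_one_degeneracy_codim}, for each $\sigma$ with $d(\sigma)\ge 1$ the set $\clP_\sigma$ is covered by countably many smooth Fredholm maps into $\clP$ of index $-d(\sigma)$, and for each pair of nonzero $\sigma_1,\sigma_2$ the set $\clP_{\sigma_1,\sigma_2}$ is covered by countably many Fredholm maps of index $-d(\sigma_1)-d(\sigma_2)\le -2$; as in the preceding corollary, only countably many $\sigma$ and pairs actually occur for $\pi$--flat connections. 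Introducing a parameter for the path (working in a separable Banach space of smooth paths from $\pi_0$ to $\pi_1$ that is dense in the smooth topology, via the Floer--Taubes trick) and applying the Sard--Smale theorem, I obtain a smooth path $\pi_t$ transverse to every one of these Fredholm maps. For a map of index $-d$ the fibered product with the one-dimensional parameter interval is a manifold of dimension $1-d$: empty when $d\ge 2$, and $0$--dimensional, hence a countable set of values of $t$, when $d=1$.

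The conclusion is then immediate. Along such a $\pi_t$, no degenerate critical orbit of $\CS+f_{\pi_t}$ can carry kernel with $d(\sigma)\ge 2$, and no two distinct orbits can be simultaneously degenerate, since $\pi_t$ misses the corresponding Fredholm images; so for each $t$ there is at most one degenerate critical orbit $\Orb(B)$, and when it occurs $\ker\Hess_{B,\pi_t}$ represents some $\sigma$ with $d(\sigma)=1$, i.e.\ is an irreducible $\Stab(B)$--representation. The set of such $t$ lies in the countable union of the $0$--dimensional fibered products coming from the $d(\sigma)=1$ maps, hence is countable, and since $\pi_0,\pi_1\in\clP^{reg}$ the endpoints contribute no degeneracy; this is exactly the claim.

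I expect the genuine work to be in the Sard--Smale bookkeeping rather than in any new idea. One must confirm that the relevant universal moduli spaces (over the path parameter) are Banach manifolds whose projections to the path space are Fredholm of the stated index --- this combines Lemma \ref{lem_clH_sigma_submanifold}, Proposition \ref{prop_abundance_2_pts}, and openness of the Fredholm and regularity conditions, exactly as in the proof of Lemma \ref{lem_gauge_one_degeneracy_codim}, now with the perturbation ranging over an interval of paths rather than a point; one must set up an honest Banach manifold of \emph{smooth} paths on which the argument runs; and one must verify that the indexing sets of $\sigma$'s and pairs entering the countable union are genuinely countable, which reduces, as in the preceding corollary, to the constraints on which closed subgroups of $G$ can arise as stabilizers of $\pi$--flat connections with $\|\pi\|_{\clP}$ bounded.
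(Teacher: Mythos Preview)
Your proposal is correct and follows exactly the approach the paper intends: the paper states this corollary (like Corollary \ref{cor_wall_cross_finite_dim} in the finite-dimensional setting) as an immediate consequence of Lemma \ref{lem_gauge_one_degeneracy_codim} and Lemma \ref{lem_gauge_two_degeneracy_codim} without further argument, relying on the standard parametric Sard--Smale reasoning you spell out. Your elaboration of the $d(\sigma)$ trichotomy and the index bookkeeping is precisely the content the paper leaves implicit.
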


\section{$\SU(n)$ Casson invariants for integer homology spheres}
\label{sec_su(n)_casson}
This section proves Theorem \ref{thm_existence_casson_intro}.
From now on, we assume that $G=\SU(n)$ and $Y$ is an integer homology sphere.
\subsection{Classification of stabilizers}
\label{sebsec_classify_stab}
When $G=\SU(n)$, there is a combinatorial classification of all possible stabilizers on $\clC$. 

Recall that $P$ is the trivial $\SU(n)$--bundle given by \eqref{eqn_def_P_trivial}.
Let $B$ be a connection on $P$.  Let $y_0\in Y$, then $\Stab(B)$ embeds as a closed subgroup of $\SU(n)$ by restricting to $y_0$. Let $\hol_{y_0}(B)\subset G$ be the holonomy group of $B$ at $y_0$, then $\hol_{y_0}(B)$ is a closed subgroup of $\SU(n)$, and the restriction of $\Stab(B)$ to $y_0$ equals the commutator subgroup of $\hol_{y_0}(B)$.

View $\bC^n$ is a representation of $\hol_{y_0}(B)\subset \SU(n)$, then an element $g\in \SU(n)$ is in the commutator group of $\hol_{y_0}(B)$ if and only if it gives a $\hol_{y_0}(B)$--module homomorphism on $\bC^n$. Suppose the isotypic decomposition of $\bC^n$ as a representation of $\hol_{y_0}(B)$ is given by 
\begin{equation}
\label{eqn_Hol(B)_isotypic_decomp}
	\bC^n\cong V(n_1)^{\oplus m_1} \oplus\cdots \oplus V(n_r)^{\oplus m_r},
\end{equation}
where $V(n_i), 1 \leq i \leq r$ are $\bC$--vector spaces of dimension $n_i$. Then by Schur's lemma, every unitary $\hol_{y_0}(B)$--homomorphism $g$ can be decomposed as 
$$
g=\diag ( g_1, \cdots, g_r ),
$$
where 
$$
g_i\in \U(m_i)\otimes \id_{V(n_i)}.
$$
Hence the
commutator subgroup of $\hol_{y_0}(B)$ is given by the kernel of
\begin{align*}
\U(m_1)\times\cdots\times \U(m_r) &\to \U(1)\\
(u_1,\cdots,u_r)&\mapsto \det(u_1)^{n_1}\cdots \det(u_r)^{n_r},
\end{align*}
which will be denoted by 
\begin{equation}
\label{eqn_def_S(U)}
	\s (\U(m_1)^{n_1}\times\cdots\times \U(m_r)^{n_r}).
\end{equation}

Let $E$ be the unitary $\bC^n$--bundle associated to $P$. 
By taking parallel translations with respect to $B$, the decomposition \eqref{eqn_Hol(B)_isotypic_decomp} at $y_0$ gives a decomposition of $E$. Since $Y$ is an integer homology sphere, every unitary complex vector bundle over $Y$ is trivial, therefore we may write $E$ as 
$$
E = E(n_1)^{\oplus m_1} \oplus \cdots \oplus E(n_r)^{\oplus m_r},
$$
where every $E(n_i)$ is a trivial $\bC$--vector bundle of rank $n_i$, and the connection $B$ decomposes as the direct sum of irreducible unitary connections on $E(n_i)$. 

 We summarize the previous discussions as follows.

\begin{Definition}
\label{def_sigma_n}
	Let $\Sigma_n$ be the set of tuples of positive integers
\[ \big((n_1, m_1), \dots, (n_r, m_r)\big), \]
such that 
\begin{enumerate}
	\item $n = \sum_{i=1}^{r} m_i \, n_i$,
	\item $n_1 \leq n_2 \leq \cdots \leq n_r$,
	\item the $m_i$'s are in non-decreasing order if the corresponding $n_i$'s are the same.
\end{enumerate}
\end{Definition}

\begin{Definition}
\label{def_partition_clC}
Suppose $\sigma=\big((n_1, m_1), \dots, (n_r, m_r)\big)\in \Sigma_n$. Define $\clC_\sigma$ to be the subset of $\clC$ consisting of $B\in \clC$, such that $P\times_{\SU(n)} \bC^n$ decomposes as $$
E = E(n_1)^{\oplus m_1} \oplus \cdots \oplus E(n_r)^{\oplus m_r},
$$
where every $E(n_i)$ is a trivial $\bC$--vector bundle of rank $n_i$, and the connection $B$ decomposes as the direct sum of irreducible unitary connections on $E(n_i)$.
\end{Definition}

By the previous discussions, $\clC$ is stratified by the union of $\clC_\sigma$ for $\sigma\in\Sigma_n$.

\begin{Definition}
\label{def_partial_order_Sigma_n}
	Suppose $\sigma_1,\sigma_2\in\Sigma_n$. We write $\sigma_1\prec\sigma_2$ if and only if $\clC_{\sigma_1}$ is in the closure of $\clC_{\sigma_2}$. Then $\prec$ defines a partial order on $\Sigma_n$.
\end{Definition}

\begin{Definition}
\label{def_H_sigma}
	Let $\sigma =\big((n_1, m_1), \dots, (n_r, m_r)\big)\in \Sigma_n$. Define
	$$
		H_\sigma:=	\s (\U(m_1)^{n_1}\times\cdots\times \U(m_r)^{n_r}).
	$$
	to be the subgroup of $\SU(n)$ associated to $\sigma$.
\end{Definition}

\begin{Definition}
\label{def_clC^H}
	Suppose $H$ is a closed subgroup of $\SU(n)$. Recall that $\clG$ is identified with the set of $L_{k+1}^2$--maps from $Y$ to $\SU(n)$. Let $\clG_H$ be the subgroup of $\clG$ consisting of constant maps to $H$. Define $\clC^{H}\subset \clC$ to be the fixed point set of $\clG_H$.
\end{Definition}

By definition, $\clC^{H}$ is an affine space, and $\theta\in \clC^{H}$ for all $H$. Suppose $B\in \clC_\sigma$, then there exists a gauge transformation $g\in\clG$ such that $g(B)\in \clC^{H_\sigma}$, and the linear homotopy from $g(B)$ to $\theta$ remains in $\clC^{H_\sigma}$.

Suppose $\sigma_1\prec \sigma_2$, then there exists $g\in G$ such that $ H_{\sigma_2} \subset g\cdot H_{\sigma_1}\cdot g^{-1}$. We abuse the notation and also use $g\in\clG$ to denote the constant map from $Y$ to $g\in G$, then
$$
g(\clC^{H_{\sigma_1}})\subset \clC^{H_{\sigma_2}}.
$$

\subsection{Equivariant spectral flow}
\label{subsec_equiv_spec_flow}
Let $\mathcal{V}$ be a Hilbert space, and let $\mathcal{D}\subset \mathcal{V}$ be a dense subspace. Suppose $f_t:\mathcal{D}\to \mathcal{V}$, $t\in[0,1]$, is a smooth family of self-adjoint operators on $\mathcal{V}$, such that the spectra of $f_t$ is discrete on $\bR$ for all $t$, and that $0$ is not in the spectra of $f_0$ and $f_1$. Let $\lambda_0>0$ be the minimum of absolute values of the eigenvalues of $f_0$ and $f_1$. For a generic $c\in(-\lambda_0,\lambda_0)$, the eigenvalues of the family $f_t+c\cdot \id$ cross zero transversely. The \emph{spectral flow} of $f_t$, $t\in[0,1]$, is defined to be the number of times where a negative eigenvalue of $f_t+c\cdot \id$ crosses zero and becomes a positive eigenvalue, minus the number of times where a positive eigenvalue of $f_t+c\cdot \id$ crosses zero and becomes a negative eigenvalue, as $t$ goes from $0$ to $1$.

Suppose $H$ is a compact Lie group that acts on $\mathcal{V}$, and suppose the family $f_t$ is $H$--equivariant, then we can refine the definition of spectral flow and obtain an element in the representation ring of $H$ as follows. Recall that the representation ring of $H$ is denoted by $\clR(H)$. Let $\clR^{irr}(H)$ be the set of isomorphism classes of irreducible representations of $H$. Then for each $W\in \clR^{irr}(H)$, $f_t$ defines a family of self-adjoint operators on $\Hom_{H}(W,\mathcal{V})$. Let $n_W\in \bZ$ be the spectral flow of the induced operators on $\Hom_{H}(W,\mathcal{V})$ by $f_t$, then we define the \emph{equivariant spectral flow} of the family $f_t$ to be
$$
\sum_{W\in\clR^{irr}(H)} n_W \cdot[W] \in \clR(H).
$$

Alternatively, the equivariant spectral flow can be described as follows. As before, let $\lambda_0>0$ be the minimum of absolute values of the eigenvalues of $f_0$ and $f_1$, and take $c\in(-\lambda_0,\lambda_0)$ such that the eigenvalues of the family $f_t+c \cdot \id$ cross zero transversely. Suppose  $f_t+c\cdot \id$ has eigenvalue zero for $t=t_1,\cdots,t_r$. We may further perturb $c$ such that at each $t_i$, the eigenvalues either cross zero from the negative side to the positive side, or from positive the positive side to the negative side, but not in both directions. Let $\eta_i=1$ if the eigenvalues cross zero from the negative side to the positive side at $t_i$, and let $\eta_i=-1$ if the eigenvalues cross from the positive side to the negative side at $t_i$.
At each $t_i$, the kernel of $f_{t_i}+c\cdot \id$ is finite-dimensional and $H$--invariant, and hence it defines an element $[W_i]\in \clR(H)$. Then the equivariant spectral flow of $f_t$ is given by 
$$
\sum_{i=1}^r \eta_i \cdot [W_i].
$$

If $f_0$ or $f_1$ have non-trivial kernel, we define the \emph{equivariant spectral flow} to be the spectral flow from $f_0+\epsilon\cdot \id$ to $f_1+\epsilon\cdot \id$, for $\epsilon$ positive and sufficiently small.

\begin{remark}
	When $f_0$ or $f_1$ have non-trivial kernel,
	our convention of the spectral flow is different from \cite[Definition 4.1]{boden1998the}. The current convention is slightly more convenient for the later discussions.
\end{remark}

\begin{Definition}
\label{def_equivariant_spectral_flow}
	Suppose $H$ is a closed subgroup of $\SU(n)$, let $B\in \clC^{H}$, and let $\pi\in \clP$. Recall that the operator $K_{B,\pi}$ is defined by Equation \eqref{eqn_def_K}. Define
	$$
	Sf_{H} (B,\pi) \in \clR(H)
	$$
	to be the $H$-equivariant spectral flow from $K_{B,\pi}$ to $K_{\theta,0}$,  by the linear homotopy from $(B,\pi)$ to $(\theta,0)$. 
\end{Definition}

Suppose $H_1\subset H_2$, let 
$$
r^{H_2}_{H_1}:\clR(H_2)\to \clR(H_1)
$$
be the homomorphism given by restrictions of representations of $H_2$ to representations of $H_1$. 
Suppose $B\in \clC^{H_2}$, then we have
$$
Sf_{H_1}(B,\pi) = r^{H_2}_{H_1} \big(Sf_{H_2}(B,\pi)\big).
$$

If $\sigma_1,\sigma_2\in\Sigma_n$ satisfies $\sigma_1\prec \sigma_2$ (see Definition \ref{def_partial_order_Sigma_n}), let $g$ be an element of $G$ such that $ H_{\sigma_2} \subset g\cdot H_{\sigma_1}\cdot g^{-1}$. Then there is a homomorphism
\begin{equation}
\label{eqn_def_r_g}
	r_g: \clR(H_{\sigma_1}) \to \clR(H_{\sigma_2})
\end{equation}
that takes the isomorphism class of a  representation
$$
\rho:H_{\sigma_1}\to \Hom( V, V)
$$
to the isomorphism class of
\begin{align*}
 H_{\sigma_2} &\to \Hom(V, V) \\
							h &\mapsto \rho(g^{-1}hg).
\end{align*}
Suppose $B\in \clC^{H_{\sigma_1}}$. We abuse the notation and let $g\in\clG$ denote the constant map from $Y$ to $g$, then $g(B)\in \clC^{H_{\sigma_2}}$, and we have
\begin{equation}
\label{eqn_change_Sf_H_under_gauge}
Sf_{H_{\sigma_2}}\big(g(B)\big) = r_g \big(Sf_{H_{\sigma_1}} (B)\big).
\end{equation}

Notice that $Sf_H$ is in general not gauge invariant: if $B\in\clC^H$, and $g\in \clG$ is a gauge transformation such that $g(B)\in \clC^{H}$, then $Sf_H (B)$ may \emph{not} be equal to $Sf_H (g(B))$. This issue will be discussed in Section \ref{subsec_gauge_inv_equi_ind}.

\subsection{Index of non-degenerate perturbed flat connections}
\label{subsec_gauge_inv_equi_ind}
This subsection constructs a correction term that cancels the gauge ambiguity of the equivariant spectral flow. 

We start with the following technical lemma. Let $\clA\subset \clC$ be the space of flat connections on $P$. 

\begin{Lemma}
	\label{lem_clA_locally_connected}
	For each $\sigma$, the space $\clA\cap \clC^{H_\sigma}$ is locally connected.
\end{Lemma}

\begin{proof}
Take $B\in \clA\cap \clC^{H_\sigma}$, and let $S_{B,\epsilon}$ be a slice given by Proposition \ref{prop_slice_thm}. Since $\clC^{H_\sigma}$ is an affine subspace of $\clC$, the intersection of $S_{B,\epsilon}$ and $\clC^{H_\sigma}$ is a linearly embedded disk in $S_{B,\epsilon}$. The operator $B'\mapsto *F_{B'}$ on $S_{B,\epsilon}\cap \clC^{H_\sigma}$ is a nonlinear operator whose linearization at $B$ has a finite-dimensional kernel. If we restrict the image to $\ker d^*_B\cap \clC^{H_\sigma}$, then the linearization is a Fredholm operator. By Kuranishi reduction, the space of solutions to $*F_{B'}=0$ on $S_{B,\epsilon}\cap \clC^{H_\sigma}$ is homeomorphic to the zero set of a map between finite dimensional linear spaces.  It is straightforward to verify that the Kuranishi model is, in fact, given by analytic maps. Therefore, the space of solutions to $*F_{B'}=0$ on $S_{B,\epsilon}\cap \clC^{H_\sigma}$ is homeomorphic to an analytic variety. By \cite{whitney1965local}, every analytic variety is locally connected. Therefore, by the properties of slices in Proposition \ref{prop_slice_thm}, we conclude that $\clA\cap \clC^{H_\sigma}$ is locally connected.
\end{proof}

By the Uhlenbeck compactness theorem, the quotient space $\clA/\clG$ is compact.
By Lemma \ref{lem_clA_locally_connected}, there exists a $\clG$--invariant open neighborhood $\mathcal{U}$ of $\clA$, such that for each $\sigma\in \Sigma_n$, the inclusion 
$$
\clA\cap \clC^{H_\sigma} \hookrightarrow \mathcal{U}\cap \clC^{H_\sigma}
$$
induces a one-to-one correspondence on the set of connected components. Since $\clA/\clG$ is compact, there exists $r_0>0$ depending on $Y$, such that if $\|\pi\|_{\clP} < r_0$, then all critical points of $\CS + f_\pi$ lie in $\mathcal{U}$.

Take 
$$\sigma=\big((n_1, m_1), \dots, (n_r, m_r)\big)\in\Sigma_n,$$
 and suppose $B\in \mathcal{U}\cap \clC^{H_\sigma}$.
Then $E=P\times_{\SU(n)}\bC^n$  is decomposed as 
$$
E = E(n_1)^{\oplus m_1} \oplus \cdots \oplus E(n_r)^{\oplus m_r},
$$
 where $E(n_i)$ have rank $n_i$, and $E(n_i)$ are constant subbundles of $E$ with respect to the trivialization \eqref{eqn_def_P_trivial}. The connection $B$ is given by the direct sum of irreducible connections on each $E(n_i)$. 
 
 Since $B\in \mathcal{U}\cap \clC^{H_\sigma}$, there exists $\hat B\in \clA\cap \clC^{H_\sigma}$, such that $\hat B$ and $B$ are in the same connected component of $\mathcal{U}\cap \clC^{H_\sigma}$. The connection $\hat B$ is also given by the direct sum of flat connections on each $E(n_i)$. Let $\hat B_i$ be the restriction of $\hat B$ to $E(n_i)$. 
 Since the Chern-Simons functional is constant on the connected components of $\clA$, the value of $\CS(\hat B_i)$ is independent of the choice of $\hat B$. 
 
 Suppose $g:E\to E$ is a gauge transformation that decomposes as the direct sum of $g_1,\cdots,g_r$, where $g_i:E(n_i)\to E(n_i)$ is a unitary bundle map. Then each $g_i$ is given by a map from $Y$ to $U(n_i)$. Since $Y$ is an integer homology sphere, $g_i$ is homotopic to a constant map if $n_i=1$, and when $n_i\ge 2$, then the homotopy class of $g_i$ is classified by the induced map 
 \begin{equation}
 \label{eqn_H3_Y_to_H3_U}
 	  H_3(Y;\bZ)\to H_3(U(n_i))\cong \bZ.
 \end{equation}
Recall that $Y$ is oriented and fix an isomorphism from $H_3(U(n_i))$ to $\bZ$, the map \eqref{eqn_H3_Y_to_H3_U} identifies the homotopy classes of $g_i$ with $\bZ$. We use $\deg g_i\in \bZ$ to denote the image of the homotopy class of $g_i$ in $\bZ$, and we call it the \emph{degree} of $g_i$.
 
Fix an arbitrary $\pi_0\in\clP$ and $B_0\in \clC^{H_\sigma}$. For each $i$, let $g^{(i)}: E\to E$ be a fixed unitary bundle automorphism which commutes with $H_\sigma$ such that its component on $E(n_i)$ has degree $1$ and its components on $E(n_j)$ are the identity for all $j\neq 0$. Define $\tau_i\in \clR(H_\sigma)$ to be the $H_\sigma$--equivariant spectral flow from $K_{B_0,\pi_0}$ to $K_{g^{(i)}(B_0),\pi_0}$.
 Because the homotopy class of a unitary automorphism on $E(n_i)$ is determined by its degree, and because of the excision property of the index, we have for all $B\in\clC^{H_\sigma}$ and $\pi\in \clP$, the $H_\sigma$--equivariant spectral flow from $K_{B,\pi}$ to $K_{g(B),\pi}$ is equal to
 $$
 \sum_{n_i\ge 2}  \deg (g_i)\cdot \tau_i.
 $$

On the other hand, let $B_i$ be the restriction of $B$ to $E(n_i)$, then for each $n_i\ge 2$, we have
 $$
 \CS(g_i(B_i))-\CS(B_i) = 4\pi^2 n_i\cdot \deg (g_i).
 $$
 
 \begin{Definition}
	Suppose $B\in \mathcal{U}\cap \clC^{H_\sigma}$. Define 
	$$
	\CS_\sigma(B):= \sum_{n_i\ge 2} \frac{\CS(\hat B_i)}{4\pi^2 n_i} \cdot \tau_i \in \clR(H_\sigma)\otimes \bR,
	$$
	where $\tau_i$ are given as above. 
\end{Definition}
 
Now suppose $\pi\in \clP$ satisfies $\|\pi\|_{\clP} < r_0$ so that all the critical points of $\CS+f_\pi$ are contained in $\mathcal{U}$, and suppose $B$ is $\pi$--flat. We define a gauge-invariant equivariant index of $B$ that takes value in $\widetilde{R}_{\SU(n)}$ (see Definition \ref{def_tilde_R_G}).

 \begin{Definition}
 \label{def_equivariant_ind_gauge}
Let $\pi,B$ be as above. Take $\sigma\in\Sigma_n$ such that $B\in\clC_\sigma$, and take $g\in \clG$ such that $g(B)\in \clC^{H_\sigma}$.
 	Define $\ind(B,\pi)\in \widetilde\clR_{\SU(n)}([H_\sigma])$ to be the element represented by
 	\begin{equation}
 	\label{eqn_def_equiv_index_gauge}
 		 	Sf_{H_\sigma}(g(B),\pi) -[\ker d_{g(B)}] - \CS_\sigma(g(B)) \in \clR(H_\sigma)\otimes \bR,
 	\end{equation}
 	where $[\ker d_{g(B)}]\in\clR([H_\sigma])$ is given by $\ker d_{g(B)}\subset L_k^2(\frg)$ as an $H_\sigma$--representation. 
 \end{Definition}
 
 \begin{remark}
 	The extra term $[\ker d_{g(B)}]$ is necessary for the proof of Lemma \ref{lem_compare_indices_on_slice}.
 \end{remark}

The definition of $\ind(B)$ is independent of the choice of $g$ and therefore is gauge-invariant.

Notice that since $Y$ is an integer homology sphere, the Chern-Simons functional of any flat connection on a line bundle over $Y$ equals zero. Therefore we have
\begin{equation}
\label{eqn_split_of_CS}
\frac{\CS(\hat B)}{4\pi^2 n} = \sum_{n_i\ge 2} \frac{\CS(\hat B_i)}{4\pi^2 n_i} \cdot m_i,
\end{equation}
where the normalizing constants on the denominators come from the convention in the definition of the Chern-Simons functional \eqref{eqn_def_chern_simons}.

If $\sigma_1,\sigma_2\in\Sigma_n$ satisfies $\sigma_1\prec \sigma_2$ (see Definition \ref{def_partial_order_Sigma_n}), let $g$ be an element of $G$ such that $ H_{\sigma_2} \subset g\cdot H_{\sigma_1}\cdot g^{-1}$. The homomorphism $r_g$ given by \eqref{eqn_def_r_g} extends linearly to a homomorphism
$$
	r_g: \clR(H_{\sigma_1})\otimes \bR \to \clR(H_{\sigma_2})\otimes \bR.
$$
Suppose $B\in \clC^{H_{\sigma_1}}$. We abuse the notation and let $g\in\clG$ be the constant map from $Y$ to $g$, then $g(B)\in \clC^{H_{\sigma_2}}$, and \eqref{eqn_split_of_CS} implies that
\begin{equation}
\label{eqn_change_Cs_sigma_under_gauge}
\CS_{\sigma_2}\big(g(B)\big) = r_g \big(\CS_{\sigma_1} (B)\big).
\end{equation}

Recall that by Uhlenbeck's compactness theorem, the moduli space of $\pi$--flat connections is compact for all $\pi\in \clP$. If $\pi$ is non-degenerate, then the critical set is finite.

\begin{Definition}
\label{def_total_index_gauge}
	Suppose $\pi\in \clP$ satisfies $\|\pi\|_{\clP} < r_0$ so that all the critical points of $\CS+f_\pi$ are contained in $\mathcal{U}$, and suppose that $\pi$ is non-degenerate. Define the \emph{total index} of $\pi$ by
	$$
	\ind(\pi):=\sum_{\Orb(B)\textrm{ is }\pi-\textrm{flat}}\ind(B,\pi)\in \bZ\widetilde\clR_{\SU(n)}
	$$
\end{Definition}

We also introduce the following refinement of Definition \ref{def_total_index_gauge}.

\begin{Definition}
	Suppose $\pi\in \clP$ satisfies $\|\pi\|_{\clP} < r_0$ so that all the critical points of $\CS+f_\pi$ are contained in $\mathcal{U}$, and suppose that $\pi$ is non-degenerate. Let $\eta$ be a connected component of $\mathcal{U}$, define
	$$
	\ind_\eta(\pi):=\sum_{\substack{\Orb(B)\textrm{ is }\pi-\textrm{flat}\\ \Orb(B) \cap \eta\neq \emptyset}}\ind(B,\pi)\in \bZ\widetilde\clR_{\SU(n)}.
	$$
\end{Definition}

\subsection{Comparison of the total index}
\label{sebsec_change_total_index_gauge}
Suppose $\pi_0,\pi_1\in \clP$ are non-degenerate and sufficiently small such that $\ind(\pi_0), \ind(\pi_1)$ are defined. Let $\clA$, $\mathcal{U}$ be as in Section \ref{subsec_gauge_inv_equi_ind}. Suppose $\eta$ is a connected component of $\mathcal{U}$.

The main result of this subsection is the following theorem.

\begin{Theorem}
\label{thm_change_total_index_gauge}
	$\ind_\eta (\pi_0)- \ind_\eta (\pi_1) \in\widetilde{\Bif}_{\SU(n)}.$
\end{Theorem}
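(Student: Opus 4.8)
The plan is to follow the strategy of Theorem~\ref{thm_equivairant_cerf_finite_dim}, reducing the gauge-theoretic statement to the finite-dimensional equivariant Cerf theory of Sections~\ref{sec_G_Morse}--\ref{sec_equivariant_Cerf} by an equivariant Kuranishi reduction. Fix the connected component $\eta$ of $\mathcal{U}$ once and for all. Since $\pi_0,\pi_1$ lie in the convex ball $\{\|\pi\|_{\clP}<r_0\}$, Corollary~\ref{generic_path} furnishes a generic smooth path $\pi_t$, $t\in[0,1]$, from $\pi_0$ to $\pi_1$ inside this ball, so that $\pi_t$ is non-degenerate for all but countably many $t$, every critical orbit of $\CS+f_{\pi_t}$ stays in $\mathcal{U}$ (hence $\ind_\eta(\pi_t)$ is defined whenever $\pi_t$ is non-degenerate), and at each degenerate parameter there is a unique degenerate critical orbit $\Orb(B)$ with $\ker\Hess_{B,\pi_t}$ an irreducible $\Stab(B)$-representation. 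By Uhlenbeck's compactness theorem the parametrized moduli space over $[0,1]$ is compact; exactly as in the proof of Proposition~\ref{prop_equivairant_cerf_finite_dim_induction}, the degenerate parameters at which $\ker\Hess_{B,\pi_t}$ is a \emph{non-trivial} irreducible representation are finite and isolated, so $[0,1]$ is cut into finitely many subintervals each containing at most one such parameter, and on the rest only ``trivial-direction'' (birth--death) degeneracies occur.

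Away from the degenerate parameters, $\ind_\eta(\pi_t)$ is locally constant: as $t$ varies over an interval on which $\pi_t$ is non-degenerate, the implicit function theorem shows each critical orbit varies smoothly with unchanged stabilizer, the nearby flat connection $\hat B$ stays in the same component of $\mathcal{U}\cap\clC^{H_\sigma}$ (where $\clC_\sigma\ni B$) so $\CS_\sigma$ is constant, and $Sf_{H_\sigma}(g(B),\pi_t)$ together with $[\ker d_{g(B)}]$ is constant because $K_{B,\pi_t}$ is a continuous family of index-zero self-adjoint Fredholm operators with no spectral crossing at $0$. On a subinterval free of non-trivial degeneracies, the gauge-theoretic analogue of Lemma~\ref{lem_Hess_change_at_trivial_direction} applies: $\CS+f_{\pi_t}$ descends to a family of smooth functions on each quotient $\clC_\sigma/\clG$ with $\CS_\sigma$ locally constant, so classical Cerf theory stratum by stratum shows the change of $\ind_\eta$ over such a subinterval lies in the subgroup generated by the $\sigma+\sigma\oplus\bR$, hence in $\widetilde{\Bif}_{\SU(n)}$. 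It therefore remains to compute the jump of $\ind_\eta$ across each non-trivial degeneracy.

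The main step is the equivariant Kuranishi reduction near the unique degenerate orbit $\Orb(B_0)$ at $t=t_0$. Using the slice theorem (Proposition~\ref{prop_slice_thm}) and the finite-dimensionality of $H_0:=\Stab(B_0)$, for $t$ near $t_0$ the moduli space of $\pi_t$-flat connections near $\Orb(B_0)$ is $\clG$-equivariantly identified with the critical set of a smooth $H_0$-invariant function $\mathfrak f_t$ on a finite-dimensional $H_0$-invariant disk $D\subset S_{B_0,\epsilon}$, and $\ind(B,\pi_t)$ of a nearby critical orbit equals $i^{H_0}_{\SU(n)}$ applied to the finite-dimensional equivariant index of the corresponding critical point of $\mathfrak f_t$, plus a fixed element of $\clR(H_0)\otimes\bR$ accounting for the constant invertible complementary part of $\Hess_{B_0,\pi_{t_0}}$, the constant spectral flow along the slice, and the Chern--Simons correction $\CS_\sigma$ of the stratum. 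Crucially, the abundance of holonomy perturbations (Proposition~\ref{prop_abundance_2_pts}) — the same input behind the transversality Lemmas~\ref{lem_gauge_one_degeneracy_codim} and~\ref{lem_gauge_two_degeneracy_codim} — lets us arrange $\mathfrak f_t$ to be an instance of the local bifurcation models of Section~\ref{subsec_local_bif_model_finite_dim}: an irreducible bifurcation when $\ker\Hess_{B_0,\pi_{t_0}}$ is non-trivial, and a birth--death bifurcation when it is the trivial representation. Applying the gauge analogues of Lemma~\ref{lem_Hess_change_at_non_trivial_direction} and Lemma~\ref{lem_Hess_change_at_trivial_direction}, the jump of $\ind_\eta$ across $t_0$ equals $\pm\,i^{H_\sigma}_{\SU(n)}\big(\xi_{H_\sigma}(V,\widetilde V,g)\big)$ or $\pm(\sigma+\sigma\oplus\bR)$, where $\widetilde V\in\clR(H_\sigma)\otimes\bR$ packages the complementary spectral-flow index together with the Chern--Simons value of the lower stratum, compatibly with the transformation law~\eqref{eqn_change_Cs_sigma_under_gauge} and the splitting~\eqref{eqn_split_of_CS}; this is precisely an element of $\widetilde{\Bif}_{\SU(n)}$.

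Summing the finitely many contributions yields $\ind_\eta(\pi_0)-\ind_\eta(\pi_1)\in\widetilde{\Bif}_{\SU(n)}$. I expect the main obstacle to be the Kuranishi reduction together with the precise tracking of the two correction terms $\CS_\sigma(g(B))$ and $[\ker d_{g(B)}]$ through a bifurcation that changes the stratum: one must verify that the change of stratum along the Kuranishi slice is exactly absorbed by the $\bR$-linear slot $\widetilde V$ in $\xi_{H_\sigma}(V,\widetilde V,g)$ and by the Chern--Simons splitting, so that no residual term survives modulo $\widetilde{\Bif}_{\SU(n)}$. The transversality package of Section~\ref{sec_holonomy_perturbation} and the abstract description of $\widetilde{\Bif}_{\SU(n)}$ from Section~\ref{subsec_extend_ind_over_R} are set up precisely to make this bookkeeping close.
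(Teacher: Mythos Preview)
Your overall strategy---generic path, Kuranishi reduction at each degenerate parameter, then equivariant Cerf theory---matches the paper's, and you correctly identify the index-comparison through the reduction as the crux. But two features of your outline diverge from the paper and create unnecessary work or genuine gaps.

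First, the paper does \emph{not} re-run the internal case analysis of Theorem~\ref{thm_equivairant_cerf_finite_dim} in the gauge setting. Instead, after building the finite-dimensional $H_0$-manifold $M_{t,\epsilon}$ by the implicit function theorem, it proves a single comparison statement (Lemma~\ref{lem_compare_indices_on_slice}): for every nearby critical point $B\in M_{t,\epsilon}$,
\[
\ind(B,\pi_t)=i^{H_0}_{\SU(n)}\bigl(\ind(B_0,\pi_{t_0})\oplus\ind_{t,\epsilon}B\bigr),
\]
where $\ind_{t,\epsilon}B\in\clR_{H_0}$ is the finite-dimensional equivariant index on $M_{t,\epsilon}$. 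The finite-dimensional Theorem~\ref{thm_equivairant_cerf_finite_dim} is then applied as a black box to $M_{t_\pm,\epsilon}$, and Lemma~\ref{lem_direct_sum_Bif} pushes $\Bif_{H_0}$, shifted by the fixed element $\ind(B_0,\pi_{t_0})\in\clR(H_0)\otimes\bR$, into $\widetilde\Bif_{\SU(n)}$. All the bookkeeping you flag at the end---the $\CS_\sigma$ correction and the $[\ker d_{g(B)}]$ term---is absorbed into this single lemma, whose proof is a careful spectral-flow computation across the block decomposition~\eqref{eqn_K_at_B_decomp}. You should not try to separately establish ``gauge analogues'' of Lemmas~\ref{lem_Hess_change_at_non_trivial_direction} and~\ref{lem_Hess_change_at_trivial_direction}; those are already packaged inside Theorem~\ref{thm_equivairant_cerf_finite_dim}.

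Second, your claim that Proposition~\ref{prop_abundance_2_pts} ``lets us arrange $\mathfrak f_t$ to be an instance of the local bifurcation models'' is false and unnecessary. The Remark following Theorem~\ref{thm_equivairant_cerf_finite_dim} gives an explicit example where a generic one-parameter family has a degeneracy that is neither an irreducible nor a birth--death bifurcation; the same phenomenon occurs after Kuranishi reduction. The point of Theorem~\ref{thm_equivairant_cerf_finite_dim} is precisely that the jump lies in $\Bif_G$ regardless. Relatedly, your treatment of trivial-kernel degeneracies by ``classical Cerf theory stratum by stratum'' directly on the infinite-dimensional quotients $\clC_\sigma/\clG$ is not rigorous; this is exactly what the Kuranishi reduction is for. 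The paper handles \emph{all} degeneracies---trivial and non-trivial kernel alike---uniformly through the reduction, then closes with a compactness argument: the degenerate set $\clS\subset[0,1]$ is closed hence compact, each $t_0\in\clS$ has an open interval on which the jump is controlled, and a finite subcover finishes the proof.
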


And we have the following immediate corollary.
\begin{Corollary}
\label{cor_change_total_index_gauge_all}
	$\ind (\pi_0)- \ind (\pi_1) \in\widetilde{\Bif}_{\SU(n)}.$
\end{Corollary}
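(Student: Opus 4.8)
The plan is to deduce Corollary \ref{cor_change_total_index_gauge_all} from Theorem \ref{thm_change_total_index_gauge} by summing over the connected components of the neighborhood $\mathcal{U}$ of the space of flat connections. First I would recall that, by the construction in Section \ref{subsec_gauge_inv_equi_ind}, whenever $\|\pi\|_{\clP}<r_0$ every critical orbit of $\CS+f_\pi$ lies in $\mathcal{U}$, and moreover each such orbit lies in a single connected component $\eta$ of $\mathcal{U}$ (a critical orbit is connected, being the image of the connected group $\clG$). By the Uhlenbeck compactness theorem $\clA/\clG$ is compact, so $\mathcal{U}$ can be chosen to have only finitely many connected components $\eta_1,\dots,\eta_N$ that meet $\clA$; after shrinking $\mathcal{U}$ we may assume every connected component meets $\clA$, hence there are finitely many of them.

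Next I would observe that $\ind(\pi) = \sum_{j=1}^N \ind_{\eta_j}(\pi)$, since every $\pi$--flat orbit contributes to exactly one summand $\ind_{\eta_j}(\pi)$. Therefore
\begin{equation*}
\ind(\pi_0)-\ind(\pi_1) = \sum_{j=1}^N\bigl(\ind_{\eta_j}(\pi_0)-\ind_{\eta_j}(\pi_1)\bigr).
\end{equation*}
By Theorem \ref{thm_change_total_index_gauge} applied to each $\eta=\eta_j$, each summand on the right lies in $\widetilde{\Bif}_{\SU(n)}$. Since $\widetilde{\Bif}_{\SU(n)}$ is by definition a subgroup of $\bZ\widetilde{\clR}_{\SU(n)}$, it is closed under finite sums, so the total sum lies in $\widetilde{\Bif}_{\SU(n)}$, which is exactly the assertion of Corollary \ref{cor_change_total_index_gauge_all}.

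There is essentially no obstacle here; the only point requiring a small amount of care is the assertion that a connected component of $\mathcal{U}$ meeting a critical orbit is one of the finitely many $\eta_j$, i.e. that the perturbation neighborhood can be arranged so that no critical orbit escapes into a component of $\mathcal{U}$ disjoint from $\clA$. This is guaranteed by the choice of $r_0$: if $\|\pi\|_{\clP}<r_0$ then every critical point of $\CS+f_\pi$ lies in $\mathcal{U}$, and by the paragraph preceding Definition \ref{def_equivariant_ind_gauge} each such point is in the same connected component of $\mathcal{U}\cap\clC^{H_\sigma}$ as a genuine flat connection, hence in a component of $\mathcal{U}$ that meets $\clA$. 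Thus the finite decomposition above is valid, and the corollary follows. (If one prefers not to assume $\mathcal{U}$ has finitely many components, one can instead note that only the finitely many components actually meeting some critical orbit of $\pi_0$ or $\pi_1$ contribute, and run the same argument over that finite set.)
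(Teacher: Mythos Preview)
Your proposal is correct and is exactly the argument the paper has in mind: the paper states the corollary as ``immediate'' from Theorem \ref{thm_change_total_index_gauge} without further proof, and the intended deduction is precisely to decompose $\ind(\pi)=\sum_\eta \ind_\eta(\pi)$ over the (finitely many relevant) connected components of $\mathcal{U}$ and apply the theorem componentwise. Your extra care in justifying the finiteness of the sum is a welcome expansion of what the paper leaves implicit.
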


\begin{proof}[Proof of Theorem \ref{thm_change_total_index_gauge}]
	We use the Kuranishi reduction argument to reduce to the finite-dimensional case so that we can invoke Theorem \ref{thm_equivairant_cerf_finite_dim}.
	
	Take a generic smooth path $\pi_t$ ($t\in[0,1]$) from $\pi_0$ to $\pi_1$ in the sense of Corollary \ref{generic_path}. To simplify the notation, we will denote $\CS + f_{\pi_t}$ by $\CS_t$.
	
	Suppose $\CS_{t_0}$ is degenerate at $B_0$ where $B_0\in \eta$. Let $V_0\subset \ker d_{B_0}^*$ be the kernel of $\Hess_{B_0,\pi_{t_0}}$. For $m=k,k-1$,  let $V_0^{\perp(m)}$ be the orthogonal complement of $V_0$ in $\ker d_{B_0}^*\cap  L_{m}^2(T^*Y\otimes \frg)$. Then $\Hess_{B_0,\pi_t}$ restricts to an isomorphism from $V_0^{\perp(k)}$ to $V_0^{\perp(k-1)}$.
	
	Let 
	$$\Pi_0:L^2_k(T^*Y\otimes \frg)\to V_0$$
	 be the $L^2$--orthogonal projection onto $V_0$,
	 and let 
	 $$\Pi_0^{\perp(m)} :L^2_k(T^*Y\otimes \frg)\to V_0^{\perp(m)}$$
	 be the $L^2$--orthogonal projection onto $V_0^{\perp(m)}$ for $m=k,k-1$. 
	
	For $\epsilon>0$ sufficiently small, let $S_{B_0,\epsilon}$ be the slice of $B_0$ given by Proposition \ref{prop_slice_thm}, and define
	\begin{equation}
	\label{eqn_def_M_epsilon,t}
		M_{t,\epsilon}:=\{B\in S_{B_0,\epsilon}| \Pi_0^{\perp(k-1)} (*F_B+V_{\pi_t}) =0\}.
	\end{equation}
	Then by the implicit function theorem, there exists $\epsilon_0>0$, such that for all  $\epsilon\in(0,\epsilon_0)$ and all $t\in(t_0-\epsilon_0,t_0+\epsilon_0)$, the set $M_{t,\epsilon}$ is an embedded manifold with dimension $\dim V_0$, and the $L^2$--orthogonal projection of $M_{t,\epsilon}$ to $V_0$ is a smooth embedding with open image.
	
	Let $H_0$ be the stabilizer of $B_0$, then $H_0$ acts on $M_{t,\epsilon}$.
	Let $U_\epsilon(B_0)$ be the image of $S_{B_0,\epsilon}$ under the action of $\clG$. By Proposition  \ref{prop_slice_thm}, $U_\epsilon(B_0)$ is a $\clG$--invariant open neighborhood of $B_0$. For all  $\epsilon\in(0,\epsilon_0)$ and all $t\in(t_0-\epsilon_0,t_0+\epsilon_0)$, the critical orbit of $\CS_t$ on $U_\epsilon(B_0)$ is in one-to-one correspondence with the critical orbits of the restriction of $\CS_t$ to $M_{t,\epsilon}$. Since $\CS_{t_0}$ has exactly one degenerate critical orbit $B_0$, it has at most countably many critical orbits, and thus we may choose $\epsilon$ such that  $\partial U_\epsilon(B_0)$ contains no critical orbit of $\CS_{t_0}$. By Uhlenbeck's compactness theorem, there are only finitely many critical points of $\CS_{t_0}$ on $\clC-U_\epsilon(B_0)$. Also recall that $\pi_t$ is non-degenerate except for countably many values of $t$. Therefore there exist $t_+\in (t_0,t_0+\epsilon_0)$ and $t_-\in (t_0-\epsilon_0,t_0)$, such that 
	\begin{enumerate}
		\item for all $t\in (t_-,t_+)$, the boundary $\partial U_\epsilon(B_0)$ contains no critical orbit of $\CS_t$;
		\item for all  $t\in (t_-,t_+)$, all the critical points of $\CS_t$ on $\clC-U_\epsilon(B_0)$ are  non-degenerate.
	\end{enumerate}
	
	As a consequence, take $t_-'\in (t_-,t_0)$ and $t_+'\in (t_0,t_+)$ such that $\pi_{t_-'}$ and $\pi_{t_+'}$ are non-degenerate, then the difference 
	$$
	\ind \pi_{t_+'}- \ind \pi_{t_-'}
	$$
	is given by the difference of the total indices of $\CS_{t_+'}$ and $\CS_{t_-'}$ on $U_\epsilon(B_0)$. We claim that:
		\begin{Lemma}
	\label{lem_compare_indices_on_slice}
		Let $H_0$, $\CS_t$ and $M_{t,\epsilon}$ be as above, and suppose $B\in M_{t,\epsilon}$ is a critical point of the restriction of $\CS_{t}$ to $M_{t,\epsilon}$. Suppose $(\epsilon,t)$ is sufficiently close to $(0,t_0)$. Then $B$ is non-degenerate as a critical point of  $M_{t,\epsilon}$ when regarding $M_{t,\epsilon}$ as a finite dimensional $H_0$--manifold, if and only if $B$ is non-degenerate as a $\pi_t$--flat connection. If $B$ is non-degenerate, let $\ind_{t,\epsilon} B\in \clR_{H_0}$ be the index of the critical orbit of $B$ as a point on $M_{t,\epsilon}$, then $\ind (B,\pi_t)\in \widetilde\clR_{\SU(n)}$ is given by
		$$
		\ind (B,\pi_t) = i^{H_0}_{\SU(n)}\big( \ind B_0 \oplus \ind_{t,\epsilon} B).
		$$
	\end{Lemma}
	
	We will postpone the proof of Lemma \ref{lem_compare_indices_on_slice} to the next subsection.
	Lemma \ref{lem_compare_indices_on_slice} and Theorem \ref{thm_equivairant_cerf_finite_dim} imply that
	$$
		\ind \pi_{t_+'}- \ind \pi_{t_-'}\in \widetilde\Bif_{\SU(n)}.
	$$

	Now consider
	$$\clS:=\{t\in(0,1)|\pi_t \mbox{ is degenerate} \}.$$
	The value of $\ind \pi_t$ is constant on any open interval in $[0,1]-\clS$. 
	For each $t_0\in \clS$, let $I_{t_0}:=(t_-,t_+)$ be the open interval given as above, then by the previous argument, the image of $\ind \pi_t$ in 
	$$\widetilde \clR_{\SU(n)}/\widetilde \Bif_{\SU(n)}$$
	 is constant on $I_{t_0}-\clS$. Since $\clS$ is countable, $I_{t_0}-\clS$ is a dense subset of $I_{t_0}$.
	Since $\clS$ is compact, there exists a finite subset of $\clS$ such that the corresponding open intervals $I_{t_0}$ cover $\clS$, therefore the theorem is proved.
	\end{proof}

 We can now prove Theorem \ref{thm_existence_casson_intro}  as a straightforward consequence of Theorem \ref{thm_change_total_index_gauge} and Proposition \ref{prop_quotient_by_tilde_Bif}. We first repeat the statement of the theorem using the notation defined in Section \ref{sec_su(n)_casson}.

 \begin{reptheorem}{thm_existence_casson_intro}
 	For every $n\ge 3$, there exists a function 
 	$$
 	w: \widetilde{\clR}_{\SU(n)} \to \bC
 	$$
 	with the following property. 
 	Suppose $Y$ is an integer homology sphere, let 
 	$$P=\SU(n)\times Y$$ 
 	be the trivial $\SU(n)$--bundle over $Y$, let $\theta$ be the trivial connection of $P$.
 	Then for a generic holonomy perturbation $\pi$, the critical set of the perturbed Chern-Simons functional consists of finitely many non-degenerate orbits. 
 	Let $\clM_\pi$ be the moduli space of critical points of the Chern-Simons functional perturbed by $\pi$, and decompose $\clM_\pi$ as 
 	$$\clM_\pi=\clM_\pi^*\sqcup \clM_\pi^r,$$
 	where $\clM_\pi^*$ consists of irreducible critical orbits, and $\clM_\pi^r$ consists of reducible critical orbits. Then for $\pi$ sufficiently small, the sum
 	$$
 		\lambda_{w}:= \sum_{[B]\in \clM^*} (-1)^{Sf(B,\pi)} + \sum_{[B]\in \clM^r} e^{\pi i\cdot \CS(\hat B)/(\pi^2)}\cdot w(\ind B)
 	$$
 	is independent of $\pi$, where $Sf(B,\pi)\in \bZ$ is the (classical) spectral flow from $K_{B,\pi}$ to $K_{\theta,0}$ via the linear homotopy, and $\hat B$ is a flat connection close to $B$. 
 \end{reptheorem}
 
\begin{proof}[Proof of Theorem \ref{thm_existence_casson_intro}]
Let $\{1\}$ be the trivial group, then $\clR(\{1\})\otimes\bR \cong \bR$, therefore $\bR$ canonically embeds in $\widetilde{\clR}_{\SU(n)}$ as the image of $\clR(\{1\})\otimes\bR$. Under this identification, we have $\bR\cap \widetilde{\clR}_{\SU(n)}^{(0)}=[0,1)$. Let $w$ be an arbitrary function from $\widetilde{\clR}_{\SU(n)}^{(0)}$ to $\bC$ such that $w(s) = e^{\pi i s}$ on $[0,1)$. By Proposition \ref{prop_quotient_by_tilde_Bif}, the function $w$ can be uniquely extended to a homomorphism
$$w:\bZ\widetilde{\clR}_{\SU(n)}\to \bC.$$ such that $w=0$ on $\widetilde \Bif_{\SU(n)}$. 

We claim that $w$ satisfies the desired condition. Since $w=0$ on $\widetilde \Bif_{\SU(n)}$, we have $w(s+1)=-w(s)$ on the image of $\bR$ in $\widetilde{\clR}_{\SU(n)}$, therefore $w(s) = e^{\pi i s}$ on the image of $\bR$.

 Let $\clA$, $\mathcal{U}$ be as in Section \ref{subsec_gauge_inv_equi_ind}, and suppose $\eta$ is a connected component of $\mathcal{U}$. By Theorem \ref{thm_change_total_index_gauge}, the sum
	$$
			\lambda_{\eta,w}:= \sum_{\substack{[B]\in \clM^*\\ [B]\cap \eta\neq \emptyset }} w(\ind B) + \sum_{\substack{[B]\in \clM^r\\ [B]\cap \eta\neq \emptyset}}  w(\ind B)
	$$
	is independent of the perturbation $\pi$. Notice that for $[B]\in \clM^*$, the equivariant index of $B$ is an element of $\clR(\{1\})\otimes\bR\cong \bR$ given by 
	$$
	Sf(B,\pi) -\frac{\CS(B_\eta)}{\pi^2},
	$$
	where $B_\eta$ is a flat connection in the connected component $\eta$, and $Sf(B,\pi)\in \bZ$ is the (classical) spectral flow from $K_{B,\pi}$ to $K_{\theta,0}$ via the linear homotopy. By the definition of $\mathcal{U}$, the value of $\CS(B_\eta)$ is independent of the choice of $B_\eta$. Therefore
	$$
		e^{\pi i \CS(B_\eta)/(4\pi^2 n)} \cdot 	\lambda_{\eta,w}= \sum_{\substack{[B]\in \clM^*\\ [B]\cap \eta\neq \emptyset }} (-1)^{Sf(B,\pi)} + \sum_{\substack{[B]\in \clM^r\\ [B]\cap \eta\neq \emptyset }} e^{\pi i\cdot \CS(B_\eta)/(\pi^2)}\cdot w(\ind B)
	$$
	is independent of the choice of $\pi$, and hence the theorem is proved.
\end{proof}

\subsection{Proof of Lemma \ref{lem_compare_indices_on_slice}}
This subsection is devoted to the proof of Lemma \ref{lem_compare_indices_on_slice}. 	

The analogous statement for the finite-dimensional case is clear: the change of equivariant index in Example 3.3 is the same as the change of equivariant index in Example 3.2, because in Example 3.3, the contribution of the equivariant index from $V'$ are the same for $t>0$ and $t<0$ and hence can be canceled. This turns out to be less obvious in the infinite-dimensional case, because one cannot cancel the contribution from an infinite-dimensional subspace and has to work with spectral flows instead.

The idea of the proof is to compute the equivariant spectral flow by reducing it to a finite-dimensional problem. We modify the relevant 1-parameter families of self-adjoint operators by small perturbations, so that the resulting spectral flow is only contributed by a finite-dimensional subspace and can be computed directly.  The technical difficulty comes from the fact that we need to find a reduction simultaneously compatible with $d_B$ and $d_{B_0}$.

As in the proof of Theorem \ref{thm_change_total_index_gauge}, let $V_0$ be the kernel of $\Hess_{B_0,\pi_{t_0}}$. For $m=k,k-1$, let $V_0'^{(m)}$ be the $L^2$--orthogonal complement of $V_0$ in $\ker d_{B_0}^*\cap L_m^2(T^*Y\otimes \frg)$. Let $H_0=\Stab(B_0)$, $H=\Stab(B)$.

	Notice that $\ker d_{B_0}$ is the tangent space of $\Stab(B_0)$, and $\ker d_{B}$ is the tangent space of $\Stab(B)$. Since $B\in S_{B_0,\epsilon}$, by Proposition \ref{prop_slice_thm}, we have $\Stab(B)\subset \Stab(B_0)$, therefore $\ker d_B\subset \ker d_{B_0}$.
	
	Let $V_1(B)$ be the $L^2$--orthogonal complement of $\ker d_B$ in $\ker d_{B_0}$. Then $V_1(B)$ is a finite dimensional subspace of $\ker d_{B_0}$, and we have
	$$T_{\id} H_0 = T_{\id} H\oplus V_1(B).$$
	Since $B_0$ is in $L_{k}^2$, it follows from the standard bootstrapping argument that 
	$$\ker d_{B_0}\subset L_{k+1}^2(\frg).$$

	Let 
	$$
	V_2(B):=d_{B}(\ker d_{B_0})\subset L_{k}^2(T^*Y\otimes \frg),
	$$
	then $V_2(B)$ is a finite dimensional subspace of $\ima d_B$. Notice that $V_2(B)$ is the tangent space of the $H_0$--orbit of $B$. Therefore $V_2(B)\subset \ker d_{B_0}^*$,
	and the map
	$$
	d_B: V_1(B)\to V_2(B)
	$$
	is an isomorphism. 
	
	For $m=k,k-1$, let $V_2'(B)^{(m)}$ be the $L^2$--orthogonal complement of $V_2(B)$ in $\ima d_B\cap L_{m}^2(T^*Y\otimes \frg)$.
	
	 Let $\Pi_B$ be the $L^2$--orthogonal projection onto $\ker d_B^*$, and define
	$$V_3(B):= \Pi_B(V_2(B)^\perp),$$
	where $V_2(B)^\perp$ is the $L^2$--orthogonal complement of $V_2(B)$ in $T_B M_{\epsilon, t}$. 
	Then $V_3(B)$ is finite dimensional, and we have
	$$\dim V_0 = \dim T_BM_{\epsilon, t} = \dim V_2(B) +\dim  V_3(B).$$ 
	
	For $m=k,k-1$, let $V_3'(B)^{(m)}$ be the orthogonal complement of $V_3(B)$ in 
	$$ \ker d_B^*\cap L_{m}^2(T^*Y\otimes\frg).$$ 

To simplify the notation, we make the following definitions.
\begin{Definition}
	We say that a function $f(B)$ of $B$ \emph{converges} to $c$ as $(\epsilon,t)\to (0,t_0)$, if for every $\delta>0$, there exists $\epsilon_1>0$ depending on $B_0$, such that whenever $\epsilon<\epsilon_1$ and $t\in(t_0-\epsilon_1,t_0 +\epsilon_1)$, we have $|f(B)-c|<\delta$. 
\end{Definition}
\begin{Definition}
	Suppose $W,W'$ are Banach spaces, and $\iota_{V}: V \hookrightarrow W$, $\iota_{V'}: V' \hookrightarrow W'$ are embeddings of fixed closed subspaces. Suppose $V(B)\subset W$, $V'(B)\subset W'$ are closed subspaces depending on $B$. 
	
	\begin{enumerate}
		\item We say that $V(B)$ \emph{converges} to $V$ as $(\epsilon,t)\to (0,t_0)$, if for every $\delta>0$, there exists $\epsilon_1>0$ depending on $B_0$, such that whenever $\epsilon<\epsilon_1$ and $t\in(t_0-\epsilon_1,t_0 +\epsilon_1)$, there exists a bounded linear operator $\varphi:V\to W$, such that $\varphi(V)=V(B)$, and 
	$$\|\varphi-\iota_{V}\|<\delta,$$
	 where $\|\cdot\|$ denotes the operator norm.
	 \item Suppose $H:V\to V'$ is a bounded linear operator, and $H(B):V(B)\to V'(B)$ is a bounded linear operator that depends on $B$. We say that $H(B)$ converges to $H$ as $(\epsilon,t)\to (0,t_0)$, if for every $\delta>0$, there exists $\epsilon_1>0$ depending on $B_0$, such that whenever $\epsilon<\epsilon_1$ and $t\in(t_0-\epsilon_1,t_0 +\epsilon_1)$, there exist bounded linear operators $\varphi:V\to W$, $\varphi':V'\to W'$, such that 
	 $$\varphi(V)=V(B), \, \varphi'(V')=V'(B),$$
	$$\|\varphi-\iota_{V}\|<\delta,\, \|\varphi'-\iota_{V'}\|<\delta,$$
	$$ \|H - \varphi^{-1}\circ H(B)\circ \varphi \|<\delta.$$
	\end{enumerate}
\end{Definition}

\begin{Lemma}
\label{lem_subspaces_converge}
Suppose $m= k$ or $k-1$. Then 
	$V_2'(B)^{(m)}$ converges to $\ima d_{B_0}\cap L_m^2(T^*Y\otimes \frg),$ and $V_3'(B)^{(m)}$ converges to $V_0'^{(m)}$, as $(\epsilon,t)\to (0,t_0)$.
\end{Lemma}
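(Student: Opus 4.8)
The plan is to read the statement as a perturbation fact: as $(\epsilon,t)\to(0,t_0)$ one has $B\to B_0$, so every $B$-dependent operator and subspace degenerates to its value at $(B_0,\pi_{t_0})$, and the auxiliary spaces $V_1(B),V_2(B),V_3(B)$ are designed precisely to absorb the jumps. First I would record the basic inputs. Because $B\in M_{t,\epsilon}\subset S_{B_0,\epsilon}$ one has $\|B-B_0\|_{L^2_{k,B_0}}<\epsilon$, and after the usual elliptic bootstrapping ($B_0$ being smooth) $B$ converges to $B_0$ in all the Sobolev norms used below; hence $d_B$, $d_B^*$, $\Pi_B$ (the $L^2$-orthogonal projection onto $\ker d_B^*$), and the operator $*d_B+DV_{\pi_t}(B)$ converge in operator norm to their values at $(B_0,\pi_{t_0})$. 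The analytic input behind everything is that $d_{B_0}$ has closed range and a uniform lower bound on the $L^2$-orthogonal complement of $\ker d_{B_0}$, a bound that persists for $B$ near $B_0$; this forces $\ima d_B\cap L^2_m$ to converge to $\ima d_{B_0}\cap L^2_m$ and $\ker d_B^*\cap L^2_m$ to converge to $\ker d_{B_0}^*\cap L^2_m$, in the sense of the Definition above. Finally, the implicit-function-theorem construction of $M_{t,\epsilon}$ gives $T_BM_{t,\epsilon}\to V_0$, since the differential of its defining equation $\Pi_0^{\perp(k-1)}(*F_\bullet+V_{\pi_t})$ at $(B_0,\pi_{t_0})$, restricted to the slice, is $\Pi_0^{\perp(k-1)}\circ\Hess_{B_0,\pi_{t_0}}$, whose kernel in $\ker d_{B_0}^*\cap L^2_k$ is exactly $V_0$.

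Next I would analyse the ``small'' spaces and assemble the two claims. The space $V_2(B)=d_B(\ker d_{B_0})$ is the image of the fixed finite-dimensional smooth subspace $\ker d_{B_0}$ under the small-norm operator $d_B-d_{B_0}$ (as $d_{B_0}|_{\ker d_{B_0}}=0$), and $d_B$ carries $(\ker d_{B_0})^{\perp_{L^2}}$ onto a (not necessarily $L^2$-orthogonal) complement of $V_2(B)$ in $\ima d_B\cap L^2_m$; projecting this complement $L^2$-orthogonally onto $V_2'(B)^{(m)}$ and precomposing with $d_{B_0}^{-1}\colon\ima d_{B_0}\cap L^2_m\to(\ker d_{B_0})^{\perp_{L^2}}$ produces a map $\ima d_{B_0}\cap L^2_m\to V_2'(B)^{(m)}$ which, using $d_B\to d_{B_0}$ and the uniform lower bound, is a near-identity isomorphism — this is the first assertion. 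For the second, one uses $\Pi_B(\ima d_B)=0$ to see $V_3(B)=\Pi_B(T_BM_{t,\epsilon})$; combining $T_BM_{t,\epsilon}\to V_0$, $\Pi_B\to\Pi_{B_0}$ and $\Pi_{B_0}|_{V_0}=\mathrm{id}$ controls $V_3(B)$, and then passing to $L^2$-orthogonal complements inside $\ker d_B^*\cap L^2_m\to\ker d_{B_0}^*\cap L^2_m$ yields $V_3'(B)^{(m)}\to V_0'^{(m)}$. Throughout I would build these maps explicitly from $d_B$, $\Pi_B$ and the limiting operators rather than argue in the gap metric, because the relevant splittings are $L^2$-orthogonal while the convergence is an $L^2_m$ statement.

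The main obstacle is the bookkeeping forced by two features acting together. First, $\Stab(B)$, hence $\ker d_B$, can strictly drop as $B$ leaves $B_0$ inside $M_{t,\epsilon}$, so $V_2(B)$ and $V_3(B)$ genuinely change dimension in the limit; the auxiliary spaces are introduced exactly so that the ``stable'' complements $V_2'(B)^{(m)}$ and $V_3'(B)^{(m)}$ still converge. Second, the decompositions defining those complements are $L^2$-orthogonal, not $L^2_m$-orthogonal, so one cannot simply quote continuity of orthogonal projections in the $L^2_m$ inner product. The resolution in both cases is to organize the estimates around the already-established limits $d_B\to d_{B_0}$, $\Pi_B\to\Pi_{B_0}$, $T_BM_{t,\epsilon}\to V_0$ and the uniform closed-range estimate for $d_{B_0}$; granting these, the remaining work is routine use of the Sobolev multiplication theorem together with elliptic estimates for the operators $d_{B_0}$ and $d_{B_0}^*$.
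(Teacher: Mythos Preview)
Your proposal is correct and follows essentially the same route as the paper: for $V_2'(B)^{(m)}$ both you and the paper parametrize by $\Pi_2^\perp\circ d_B\circ d_{B_0}^{-1}$ acting on $\ima d_{B_0}\cap L^2_m$ and show this is a near-identity map, and for $V_3'(B)^{(m)}$ both use $T_BM_{t,\epsilon}\to V_0$ together with $\Pi_B\to\Pi_{B_0}$ to control $V_3(B)$ and then pass to complements. The only difference is cosmetic: where you say ``pass to $L^2$-orthogonal complements,'' the paper writes down the explicit near-identity map $\Pi_3^\perp\circ\Pi_B\colon V_0'^{(m)}\to V_3'(B)^{(m)}$, which makes the cancellation of the ``missing'' $V_2(B)$-piece (the dimension-jump obstacle you correctly flagged) transparent.
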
 
\begin{proof}
Let $(\ker d_{B_0})^\perp\subset L^2(\frg)$ be the $L^2$--orthogonal complement of $\ker d_{B_0}$.  Then 
$$
d_{B_0}:(\ker d_{B_0})^\perp \cap L_{m+1}^2 (\frg)\to \ima d_{B_0}\cap L_m^2(T^*Y\otimes \frg)
$$ 
is an isomorphism. Let
$$
d_{B_0}^{-1}: \ima d_{B_0}\cap L_m^2(T^*Y\otimes \frg)\to (\ker d_{B_0})^\perp \cap L_{m+1}^2 (\frg)
$$
be its inverse map. 
Let $\Pi_2$ be the $L^2$--orthogonal projection to $V_2(B)$, and
let $\Pi_2^\perp=\id - \Pi_2$.  Then 
 \begin{align*}
 V_2'(B)^{(m)} &=\Pi_2^\perp\circ d_B \big((\ker d_{B_0})^\perp \cap L_{m+1}^2 (\frg)\big)
 \\
 &= \Pi_2^\perp\circ d_B\circ d_{B_0}^{-1}\big(\ima d_{B_0}\cap L_m^2(T^*Y\otimes \frg)\big) .
\end{align*}

  Let $\iota: \ima d_{B_0}\cap L_m^2(T^*Y\otimes \frg) \hookrightarrow L_m^2(T^*Y\otimes \frg)$ be the inclusion map. Since $V_2(B)\subset \ker d_{B_0}^*$, we have $\Pi_2=0$ on $\ima d_{B_0}\cap L_m^2(T^*Y\otimes \frg) $. Therefore there exists a constant $z_1$, such that 
	\begin{align*}
		&\|\Pi_2^\perp\circ d_B\circ d_{B_0}^{-1} - \iota \| \\
	\le & \|\Pi_2^\perp\circ d_B\circ d_{B_0}^{-1} - \Pi_2^\perp\circ d_{B_0}\circ d_{B_0}^{-1}  \| + \|\Pi_2^\perp -\iota\| 
	\\
	= & \|\Pi_2^\perp\circ d_B\circ d_{B_0}^{-1} - \Pi_2^\perp\circ d_{B_0}\circ d_{B_0}^{-1}  \|
	\\
	\le & z_1 \cdot \|B-B_0\|_{L_{k,B_0}^2}\|d_{B_0}^{-1}\|\le \epsilon\, z_1\, \|d_{B_0}^{-1}\|.
	\end{align*}
	Hence  $V_2'(B)^{(m)}$ converges to $\ima d_{B_0}\cap L_m^2(T^*Y\otimes \frg)$ as $(\epsilon,t)\to (0,t_0)$.
	
 Recall that $V_2(B)^\perp$ denotes the $L^2$--orthogonal complement of $V_2(B)$ in $T_BM_{\epsilon,t}$, and $\Pi_B$ denotes the $L^2$--orthogonal projection onto $\ker d_B^*$. 
	 By definition,
	$$
	V_3(B) =\Pi_B( V_2(B)^\perp).
	$$
	
	Let $\Pi_B^\perp = \id - \Pi_B$, then $\Pi_B^\perp$ is  the orthogonal projection onto $\ima d_B\cap L_k^2(T^*Y\otimes \frg)$, which is spanned by $V_2(B)$ and the space
	\begin{equation}
	\label{eqn_space_ker_dB_perp}
		 d_B\big((\ker d_{B_0})^\perp\cap L_{k+1}^2(T^*Y\otimes \frg)\big).
	\end{equation}
	Moreover, as $(\epsilon,t)\to (0,t_0)$, the space \eqref{eqn_space_ker_dB_perp} converges to 
	$$
	d_{B_0}\big((\ker d_{B_0})^\perp\cap L_{k+1}^2(T^*Y\otimes \frg)\big) = V_0'^{(k)}.
	$$
	Since $V_2(B)^\perp\subset T_B M_{t,\epsilon}$ which converges to $V_0$, and $V_0$ is orthogonal to $V_0'^{(k)}$, we conclude that 
	\begin{equation}
	\label{eqn_lim_Pi_B_perp_on_V2_perp}
	\lim_{(\epsilon,t)\to(0,t_0)} \, \| \Pi_B^{\perp}\,|_{V_2(B)^\perp}\| = 0. 
	\end{equation}
	and 
	\begin{equation}
	\label{eqn_lim_id-Pi_B_perp_on_V2_perp}
	\lim_{(\epsilon,t)\to(0,t_0)} \, \| \Pi_B\,|_{V_0'}\| = 0,	
	\end{equation}

	By \eqref{eqn_lim_Pi_B_perp_on_V2_perp}, $V_3(B)$ gets arbitrarily close to $V_2(B)^\perp$ as $(\epsilon,t)\to (0,t_0)$. Therefore $V_3(B)$ is transverse to $V_0'^{(m)}$. 	Let $\Pi_3$ be the $L^2$--orthogonal projection to $V_3(B)$, and let $\Pi_3^\perp=\id -\Pi_3$, then we have 
	 \begin{equation}
	 \label{eqn_V3'(B)_as_maps_from_V0'}
	 	V_3'(B)^{(m)}=\Pi_3^\perp\circ \Pi_B^\perp(V_0'^{(m)}).
	 \end{equation}

	 By \eqref{eqn_lim_id-Pi_B_perp_on_V2_perp}, the space $\Pi_B^\perp(V_0'^{(m)})$ converges to $V_0'^{(m)}$ as $(\epsilon,t)\to(0,t_0)$. 
	 Since $V_2(B)^\perp$ is tangent to $TM_{t,\epsilon}$, which is orthogonal to $V_0'^{(m)}$, we have that $\Pi_3^\perp(V_0'^{(m)})$ converges to $V_0'^{(m)}$ as $(\epsilon,t)\to(0,t_0)$. Therefore the desired result follows from \eqref{eqn_V3'(B)_as_maps_from_V0'}. 
\end{proof}
	
	Now we return to the proof Lemma \ref{lem_compare_indices_on_slice}. 
	For $m=k,k-1$, let $(\ker d_{B_0})^\perp_{(m)}$ be the orthogonal complement of $\ker d_{B_0}$ in $L_m^2(\frg)$. Then the domain of the operator $K_{B_0,\pi_{t_0}}$ is orthogonally decomposed as 
	$$
	\ker d_{B_0}\oplus (\ker d_{B_0})^\perp_{(k+1)} \oplus (\ima d_{B_0}\cap L_k^2(T^*Y\otimes\frg) ) \oplus V_0\oplus V_0'^{(k)},
	$$
	and the range of $K_{B_0,\pi_{t_0}}$ is orthogonally decomposed as 
	$$
	\ker d_{B_0}\oplus (\ker d_{B_0})^\perp_{(k)} \oplus (\ima d_{B_0}\cap L_{k-1}^2(T^*Y\otimes\frg) )  \oplus V_0\oplus V_0'^{(k-1)}.
	$$
	Under this decomposition, the operator $K_{B_0,\pi_{t_0}}$ is given by the matrix
	\begin{equation}
	\label{eqn_K_at_B0_decomp}
		\begin{pmatrix}
		0 & 0& 0 & 0 & 0\\
		0 & 0 & d_{B_0}^* & 0 & 0\\
		0 & d_{B_0} & 0 & 0 & 0\\
		0 & 0 & 0 & 0 & 0 \\
		0 & 0 & 0 & 0 & \Hess_{B_0,\pi_{t_0}}
	\end{pmatrix}.
	\end{equation}
Moreover, the restricted maps
	\begin{equation}
	\label{eqn_isom_dB0_on_ker_perp}
		d_{B_0}
		: (\ker d_{B_0})^\perp_{(k)} \to \ima d_{B_0}\cap L_k^2(T^*Y\otimes\frg) 
	\end{equation}
	and 
	\begin{equation}
	\label{eqn_hess_isom_on_V0'}
		\Hess_{B_0,\pi_{t_0}}: V_0'^{(k)} \to V_0'^{(k-1)}
	\end{equation}
	are isomorphisms. 
	
	Now we study the operator $K_{B,\pi_t}$. Decompose the domain of $K_{B,\pi_t}$ as 
	\begin{equation*}
		\ker d_{B} \oplus V_1(B)   \oplus (\ker d_{B_0})^\perp_{(k+1)}  \oplus V_2(B) \oplus V_2'(B)^{(k)}  \oplus V_3(B)\oplus V_3'(B)^{(k)},
	\end{equation*}
	and decompose the range of  $K_{B,\pi_t}$ as 
	\begin{equation*}
	\ker d_{B} \oplus V_1(B)  \oplus (\ker d_{B_0})^\perp_{(k)} \oplus V_2(B) \oplus V_2'(B)^{(k-1)}   \oplus V_3(B)\oplus V_3'(B)^{(k-1)}.
	\end{equation*}
	Then by Equation \eqref{eqn_decomp_K_when_flat}, the operator is given by a matrix of the form
	\begin{equation}
	\label{eqn_K_at_B_decomp}
		\begin{pmatrix}
		0 & 0 & 0 & 0 & 0 & 0 & 0 \\
		0 & 0 & 0 & M_{11}^* & M_{21}^* & 0 & 0 \\
		0 & 0 & 0 & M_{12}^* & M_{22}^* & 0 & 0 \\
		0 & M_{11} & M_{12} & 0 & 0 & 0 & 0 \\
		0 & M_{21} & M_{22} & 0 & 0 & 0 & 0 \\	
		0 & 0      & 0      & 0      & 0      & N_{11} & N_{12} \\
		0 & 0      & 0      & 0      & 0      & N_{21} & N_{22} 
		\end{pmatrix}.
	\end{equation}

	By the previous arguments, the operator $K_{B,\pi_t}$ maps $V_1(B)$ isomorphically to $V_2(B)$. Therefore $M_{11}$ is invertible, and $M_{21}=0$. By Lemma \ref{lem_subspaces_converge}, $M_{22}$ converges to
	 the isomorphism \eqref{eqn_isom_dB0_on_ker_perp} as $(\epsilon,t)\to (0,t_0)$, therefore $M_{22}$ is an isomorphism when $|\epsilon|$ and $|t-t_0|$ are sufficiently small.
	
	We now study the matrix 
	$$
	\begin{pmatrix}
		N_{11} & N_{12} \\
		N_{21} & N_{22}
	\end{pmatrix}.
	$$
\begin{Lemma}
\label{lem_property_of_2by2_matrix_N}
		There exists a constant $\epsilon_1>0$ depending only on $B_0$, such that the following statements hold when $|\epsilon|, |t-t_0|<\epsilon_1$:
		\begin{enumerate}
			\item The operator	$$
	\begin{pmatrix}
		N_{11} & N_{12} \\
		N_{21} & N_{22}
	\end{pmatrix}
	$$
		is invertible if and only if $N_{11}$ is invertible.
		\item Suppose $N_{11}$ is invertible, then 
	$$
	\begin{pmatrix}
		N_{11} & sN_{12} \\
		sN_{21} & N_{22}
	\end{pmatrix}
	$$
	is invertible for all $s\in[0,1]$. 
	\end{enumerate}
	\end{Lemma}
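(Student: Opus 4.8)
The plan is to exploit the fact that, as $(\epsilon,t)\to(0,t_0)$, the matrix $\begin{pmatrix} N_{11} & N_{12} \\ N_{21} & N_{22}\end{pmatrix}$ degenerates in a controlled way: by Lemma \ref{lem_subspaces_converge} the spaces $V_3(B)^{(m)}$ and $V_3'(B)^{(m)}$ converge to $V_0$ and $V_0'^{(m)}$ respectively, and under this convergence the block $N_{22}$ (which acts from $V_3'(B)^{(k)}$ to $V_3'(B)^{(k-1)}$) converges to the isomorphism $\Hess_{B_0,\pi_{t_0}}\colon V_0'^{(k)}\to V_0'^{(k-1)}$ of \eqref{eqn_hess_isom_on_V0'}, while the off-diagonal blocks $N_{12}$ and $N_{21}$ converge to zero. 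This is exactly the situation already handled in the finite-dimensional setting in the proof of Lemma \ref{lem_Hess_change_at_non_trivial_direction}, in particular the computations around \eqref{eqn_elementary_matrix_transform_N22_finite_dim} and \eqref{eqn_matrix_Nij_homotopy_finite_dim}.

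First I would record the convergence facts: fix $\epsilon_1>0$ small enough (depending only on $B_0$) so that for $|\epsilon|,|t-t_0|<\epsilon_1$ the operator $N_{22}$ is invertible with $\|N_{22}^{-1}\|\le z_2$ for some constant $z_2$, and so that $\|N_{12}\|$ and $\|N_{21}\|$ are as small as we like — say $\|N_{12}\|\cdot\|N_{21}\|\le \tfrac{1}{4}\cdot \tfrac{1}{z_2}$ and also each of $\|N_{12}\|,\|N_{21}\|$ bounded by $\tfrac{1}{2z_2}$ times the inverse of a lower bound for $N_{11}$ when it is invertible. Then for part (1): if $N_{11}$ is invertible, the block factorization
\begin{equation*}
\begin{pmatrix} \id & -N_{12}N_{22}^{-1} \\ 0 & \id \end{pmatrix}
\begin{pmatrix} N_{11} & N_{12} \\ N_{21} & N_{22}\end{pmatrix}
\begin{pmatrix} \id & 0 \\ -N_{22}^{-1}N_{21} & \id\end{pmatrix}
=\begin{pmatrix} N_{11}-N_{12}N_{22}^{-1}N_{21} & 0 \\ 0 & N_{22}\end{pmatrix}
\end{equation*}
reduces invertibility of the full matrix to invertibility of the Schur complement $N_{11}-N_{12}N_{22}^{-1}N_{21}$; the estimate $\|N_{12}N_{22}^{-1}N_{21}(v)\|\le \tfrac12\|N_{11}(v)\|$ (from the smallness of $\|N_{12}\|\,\|N_{22}^{-1}\|\,\|N_{21}\|$ relative to a lower bound on $N_{11}$) shows this Schur complement is injective on a finite-dimensional space, hence invertible, so the full matrix is invertible. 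Conversely, if the full matrix is invertible, reading off the same factorization backwards — or noting that the factorization identity holds regardless and $N_{22}$ is invertible — forces $N_{11}-N_{12}N_{22}^{-1}N_{21}$ to be invertible; combined with $\|N_{12}N_{22}^{-1}N_{21}\|$ being a small multiple of $\|N_{11}\|$ in operator norm one concludes $N_{11}$ itself is invertible (an invertible operator perturbed by something dominated in norm by $\tfrac12$ its own action can only arise from an invertible operator on a finite-dimensional space). For part (2), replace $N_{12},N_{21}$ by $sN_{12},sN_{21}$: the Schur complement becomes $N_{11}-s^2 N_{12}N_{22}^{-1}N_{21}$, and the same estimate gives $\|(N_{11}-s^2 N_{12}N_{22}^{-1}N_{21})(v)\|\ge \tfrac12\|N_{11}(v)\|$ uniformly in $s\in[0,1]$, so it is injective hence invertible for every $s$, and the factorization shows $\begin{pmatrix} N_{11} & sN_{12} \\ sN_{21} & N_{22}\end{pmatrix}$ is invertible for all $s\in[0,1]$.

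The one point requiring a little care — and the likely main obstacle — is that here we are working with operators between different Sobolev completions ($L_k^2$ to $L_{k-1}^2$) rather than on a single finite-dimensional Euclidean space as in Section \ref{sec_G_Morse}. However, all the relevant blocks $N_{ij}$ act between the \emph{finite-dimensional} spaces $V_3(B),V_3'(B)^{(k)},V_3'(B)^{(k-1)}$, on which the $L_k^2$ and $L_{k-1}^2$ norms are equivalent (with constants that can be taken uniform in $(\epsilon,t)$ near $(0,t_0)$ by Lemma \ref{lem_subspaces_converge}); so the finite-dimensional linear algebra applies verbatim once the norms and convergence statements are interpreted through the isomorphisms $\varphi,\varphi'$ furnished by the definition of convergence. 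I would therefore state the proof by transporting everything via these isomorphisms to fixed model spaces $V_0'^{(k)},V_0'^{(k-1)},V_0$ and then invoking the estimates above; the constant $\epsilon_1$ is obtained by combining the $\epsilon_1$'s from the convergence of $N_{22}$ to \eqref{eqn_hess_isom_on_V0'} and of $N_{12},N_{21}$ to $0$, exactly as $\epsilon_0=\min\{\epsilon_1,\epsilon_2\}$ was assembled in the proof of Lemma \ref{lem_Hess_change_at_non_trivial_direction}.
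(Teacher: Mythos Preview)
Your argument has a genuine gap: the choice of $\epsilon_1$ cannot depend on ``a lower bound for $N_{11}$ when it is invertible'', because no such lower bound is uniform in $(\epsilon,t)$. As $(\epsilon,t)$ varies near $(0,t_0)$, the operator $N_{11}$ can be invertible but with $\|N_{11}^{-1}\|$ arbitrarily large (indeed this is exactly what happens as $t$ approaches a bifurcation time), so your smallness conditions on $\|N_{12}\|,\|N_{21}\|$ cannot be arranged once and for all in terms of $B_0$ alone. The same problem infects the converse direction of (1): knowing that the Schur complement $N_{11}-N_{12}N_{22}^{-1}N_{21}$ is invertible together with $\|N_{12}N_{22}^{-1}N_{21}\|$ small in operator norm does \emph{not} force $N_{11}$ to be invertible (take $N_{11}=0$, $N_{12}=N_{21}=\delta\cdot\id$ with $\delta$ tiny).

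What the paper uses instead --- and what you are missing --- is the \emph{pointwise} estimate $\|N_{21}(v)\|\le z_1\|N_{11}(v)\|$ for all $v\in V_3(B)$, with $z_1$ depending only on $B_0$. This comes from the very definition of $M_{t,\epsilon}$ in \eqref{eqn_def_M_epsilon,t}: for $v\in V_3(B)$ the vector $N_{11}(v)+N_{21}(v)$ is a directional derivative of $\grad\CS_t$ along $M_{t,\epsilon}$, hence lies in $\ker\Pi_0^{\perp(k-1)}$; since $V_3'(B)^{(k-1)}$ converges to $V_0'^{(k-1)}=\operatorname{im}\Pi_0^{\perp(k-1)}$, the $V_3'(B)^{(k-1)}$--component $N_{21}(v)$ is controlled by the $V_3(B)$--component $N_{11}(v)$. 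This is the exact analogue of \eqref{eqn_N11_controls_N21_finite_dim} in the finite-dimensional proof of Lemma~\ref{lem_Hess_change_at_non_trivial_direction}, which you cite but do not actually use. With this estimate in hand, $\ker N_{11}\subset\ker N_{21}$ gives the ``only if'' direction immediately, and in the Schur complement one bounds $\|N_{12}N_{22}^{-1}N_{21}(v)\|\le \|N_{12}\|\,z_2\,z_1\,\|N_{11}(v)\|$, so it suffices to make only $\|N_{12}\|$ small --- and that \emph{is} uniform in $(\epsilon,t)$.

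A smaller correction: $V_3'(B)^{(k)}$ and $V_3'(B)^{(k-1)}$ are \emph{not} finite-dimensional (they are the complements of the finite-dimensional $V_3(B)$ inside $\ker d_B^*\cap L_m^2$), so the remark that ``all the relevant blocks act between finite-dimensional spaces'' is incorrect. This does not affect the Schur-complement algebra, but it does mean the norm equivalences you invoke are not available for $N_{22}$; one must use Lemma~\ref{lem_subspaces_converge} directly.
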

		
\begin{proof}
	By Lemma \ref{lem_decomp_K_when_flat} and the defintions, the image of $K_{B,\pi_t}$ on $V_3(B)$ is given by the directional derivatives of $\grad \CS_t$ on $M_{t,\epsilon}$. By the definition of $M_{t,\epsilon}$ from \eqref{eqn_def_M_epsilon,t}, we have
	$$
	\Pi_0^{\perp(k-1)} \big(K_{B,\pi_t} (V_3(B)) \big)= 0.
	$$
	By Lemma \ref{lem_subspaces_converge}, the space $V_3'(B)^{(k-1)}$ converges to $V_0'^{(k-1)}$ as $(\epsilon,t)\to (0,t_0)$. Therefore, there exists a constant $z_3$ indepedent of $B$, such that
	\begin{equation}
	\label{eqn_estimate_N21_from_above}
			z_1\|N_{11}(v)\| \ge \|N_{21}(v)\| 
	\end{equation}
	for all $v\in V_3(B)$ when $(\epsilon,t)$ is sufficiently close to $(0,t_0)$.
	
	Recall that $N_{11}$ is a linear endomorphism on the finite dimensional space $V_3(B)$.
	If $N_{11}$ is non-invertible, then $\ker N_{11}\neq 0$. Let $v\in V_3(B)$ be a non-zero vector in $\ker N_{11}$, then by \eqref{eqn_estimate_N21_from_above}, we have $N_{21}(v)= 0$, thus 
	$\begin{pmatrix}
		N_{11} & N_{12} \\
		N_{21} & N_{22}
	\end{pmatrix}$
	is non-invertible.
	
	We now assume $N_{11}$ is invertible and prove Part (2). By taking $s=1$, Part (2) implies Part (1) of the lemma in the case that $N_{11}$ is invertible.
	
	 Since the operator $K_{B,\pi}$ is self-adjoint and depends continuously on $(B,\pi)$, we have
	$$\lim_{(\epsilon,t)\to (0,t_0)}\|N_{12}\| = \lim_{(\epsilon,t)\to (0,t_0)}\|N_{21}\|=0.
	$$
	By Lemma \ref{lem_subspaces_converge},  the operator $N_{22}$ converges to the isomorphism \eqref{eqn_hess_isom_on_V0'} as $(\epsilon,t)\to (0,t_0)$.
	Therefore, there exist constants $z_4$, $\epsilon_2$ only depending on $B_0$, such that when $\epsilon<\epsilon_2$, $t\in(t_0-\epsilon_2,t_0+\epsilon_2)$, we have
	$$
	\|N_{22}^{-1}\|\le z_2, \quad \mbox{ and } \quad 
	\|N_{12}\|, \, \|N_{21}\| \le \frac{1}{2z_1\,z_2}.
	$$
	Notice that
	\begin{align}
	&\begin{pmatrix}
		\id & -s N_{12} \circ N_{22}^{-1} \\
	0  & \id
	\end{pmatrix}
	\cdot 
	\begin{pmatrix}
		N_{11} & sN_{12} \\
		sN_{21} & N_{22}
	\end{pmatrix}
	\cdot
	\begin{pmatrix}
		\id & 0 \\
		-s N_{22}^{-1}\circ N_{21} & \id
	\end{pmatrix}
	\nonumber
	\\
	\label{eqn_elementary_matrix_transform_N22}
	= &
	\begin{pmatrix}
		N_{11}-s^2 N_{12}\circ N_{22}^{-1}\circ N_{21} & 0 \\
		0 & N_{22}
	\end{pmatrix}.
	\end{align}
	For every $v\in V_3(B)$ and $s\in[0,1]$, we have 
	\begin{align*}
		\big\|\big(N_{11}-s^2 N_{12}\circ N_{22}^{-1}\circ N_{21}\big)(v)\big\| 
		&\ge
		 \|N_{11}(v)\|- s^2\|N_{12}\|\cdot \| N_{22}^{-1}\|\cdot \|N_{21}(v)\|
		\\
		& \ge \|N_{11}(v)\|- s^2\|N_{12}\|\cdot \| N_{22}^{-1}\|\cdot (z_1 \|N_{11}(v)\|)
		\\
		& \ge \|N_{11}(v)\| - s^2 \cdot \frac{1}{2z_1\,z_2}\cdot z_2 \cdot  (z_1 \|N_{11}(v)\|)
		\\
		& \ge \frac12  \|N_{11}(v)\|.
	\end{align*}
Since $N_{11}$ is injective, the estimates above imply that $N_{11}-s^2 N_{12}\circ N_{22}^{-1}\circ N_{21} $ is injective, therefore it 
	is invertible for all $tsin[0,1]$. By \eqref{eqn_elementary_matrix_transform_N22}, the operator 
	$$\begin{pmatrix}
		N_{11} & sN_{12} \\
		sN_{21} & N_{22}
	\end{pmatrix}$$
	 is invertible for all $s\in[0,1]$.		
\end{proof}

We can now finish the proof of Lemma \ref{lem_compare_indices_on_slice}. By definition, $B$ is non-degenerate as a $\pi_t$--flat connection if and only if $\begin{pmatrix}
		N_{11} & N_{12} \\
		N_{21} & N_{22}
	\end{pmatrix}$ is invertible. By Part (1) of Lemma \ref{lem_property_of_2by2_matrix_N}, this is equivalent to $N_{11}$ being invertible. On the other hand, $N_{11}$ is conjugate to the Hessian of $\CS_t$ as a function on $M_{t,\epsilon}$ restricted to the normal direction of the $H_0$--orbit of $B$. Therefore $B$ is non-degenerate as a $\pi_t$--flat connection if and only if it is non-degenerate as a critical point of $\CS_t$ on the $H_0$--manifold $M_{t,\epsilon}$. 
	
To compare the indices of $B_0$ and $B$ as perturbed flat connections when $B$ is non-degenerate, we need to compute the $H$--equivariant spectral flow from the operator \eqref{eqn_K_at_B_decomp} to the operator \eqref{eqn_K_at_B0_decomp}. 

Recall that $M_{11}$ is invertible and $M_{12}=0$. Therefore by Lemma \ref{lem_property_of_2by2_matrix_N}, the linear deformation from \eqref{eqn_K_at_B0_decomp} to 
\begin{equation}
\label{eqn_spectral_flow_step_1}
\begin{pmatrix}
		0 & 0 & 0 & 0 & 0 & 0 & 0 \\
		0 & 0 & 0 & M_{11}^* & 0 & 0 & 0 \\
		0 & 0 & 0 & 0 & M_{22}^* & 0 & 0 \\
		0 & M_{11} & 0 & 0 & 0 & 0 & 0 \\
		0 & 0 & M_{22} & 0 & 0 & 0 & 0 \\	
		0 & 0      & 0      & 0      & 0      & N_{11} & 0 \\
		0 & 0      & 0      & 0      & 0      & 0 & N_{22} 
		\end{pmatrix}
\end{equation}
has zero spectral flow.

We then deform \eqref{eqn_spectral_flow_step_1} to the following operator via a linear deformation:
\begin{equation}
\label{eqn_spectral_flow_step_2}
	\begin{pmatrix}
		0 & 0 & 0 & 0 & 0 & 0 & 0 \\
		0 & 0 & 0 & 0 & 0 & 0 & 0 \\
		0 & 0 & 0 & 0 & M_{22}^* & 0 & 0 \\
		0 & 0 & 0  & 0 & 0 & 0 & 0 \\
		0 & 0 & M_{22} & 0 & 0 & 0 & 0 \\	
		0 & 0      & 0      & 0      & 0      & 0 & 0 \\
		0 & 0      & 0      & 0      & 0      & 0 & N_{22} 
\end{pmatrix}.
\end{equation}
Let $V_3^-(B)\subset V_3(B)$ be the subspace generated by the negative eigenvectors of $N_{11}$, then the $H$--equivariant spectral flow from \eqref{eqn_spectral_flow_step_1} to \eqref{eqn_spectral_flow_step_2} is given by 
$$
[V_2(B)] + [V_3^-(B)] \in \clR(H).
$$

Finally, notice that all the maps constructed in Lemma \ref{lem_subspaces_converge} are $H$--equivariant, therefore when $(\epsilon,t)$ is sufficiently close to $(0,t_0)$, the linear homotopy from \eqref{eqn_spectral_flow_step_2} to \eqref{eqn_K_at_B0_decomp} has zero spectral flow. In conclusion, the $H$--equivariant spectral flow from $K_{B,\pi_t}$ to $K_{B_0,\pi_{t_0}}$ is represented by the $H$--representation $V_2(B)\oplus V_3^-(B)$. Since $V_3^-(B)$ also represents the equivariant index of $B$ as a critical point on $M_{\epsilon,t}$, and 
$$
[V_2(B)] = [\ker d_{B_0}] - [\ker d_B]
$$
in the representation ring of $H$, the desired result follows from the definition of $\ind(B,\pi)$ and Equations \eqref{eqn_change_Sf_H_under_gauge} and \eqref{eqn_change_Cs_sigma_under_gauge}.

\section{Computations and examples}
\label{sec_examples}
\subsection{Characterization of irreducible bifurcations on $\clC$}
\label{subsec_cla}

Notice that although the definition of $\clR_{G}$ is given by the representations of \emph{all} closed subgroups of $G$, for a fixed $G$--manifold $M$, there are only finitely many possible conjugation classes of $\Stab(p)$ for $p\in M$, and there are only finitely many representations (up to conjugations by $G$) that can represent the equivariant index of critical points. 

Similarly, for the perturbed Chern-Simons functionals on $\clC$, only finitely many stabilizer groups (up to conjugations) and irreducible representations arise in the description of bifurcations. We already classified all the possible stabilizer groups in Section \ref{sebsec_classify_stab}, this subsection classify all the possible irreducible representations, and characterize the corresponding bifurcations. 

Recall that the set $\Sigma_n$ is defined by Definition \ref{def_sigma_n}. Let 
$$\sigma=((n_1, m_1), \dots, (n_r, m_r))\in\Sigma_n,$$
and let $B\in \clC_\sigma$. After a gauge transformation, we may assume that $B\in \clC^{H_\sigma}$. Then $E=P\times_{\SU(n)}\bC^n$ decomposes as 
\begin{equation}
\label{eqn_decompse_E_compute}
	E = E(n_1)^{\oplus m_1} \oplus \cdots \oplus E(n_r)^{\oplus m_r},
\end{equation}
where $E(n_i)$ are constant subbundles of $E$ with rank $n_i$, 
and $B$ is given by the direct sum of irreducible connections on each $E(n_i)$. 

Recall that $\clT$ denotes the tangent bundle of $\clC$, and we have
$$
\clT|_{B} = L_k^2(T^*Y\otimes \frg ).
$$
The action of $\Stab(B)\cong H_\sigma$ on $\clT|_{B}$ is given pointwise on $\frg$ by the adjoint action of $H_\sigma$. Therefore, we only need to find all the irreducible components of $\frg$ as $H_\sigma$--representations. 

Decompose $\frg = \mathfrak{su}(n)$ as
$$
\frg = \frg_\sigma \oplus \frg_\sigma^\perp,
$$
where $\frg_\sigma$ is the Lie algebra of the subgroup of $\SU(n)$ that preserves the decomposition \eqref{eqn_decompse_E_compute}, and $\frg_\sigma^\perp$ is the orthogonal complement of $\frg_\sigma$. Then the action of $H_\sigma$ on $\frg_\sigma$ is trivial, and one only needs to compute the irreducible components of $\frg_\sigma^\perp$ as a representation of $H_\sigma$. 
The space $\mathfrak{g}_\sigma^{\perp}\subset \mathfrak{su}(n)$ consists of the matrices 
\begin{equation}
\label{eqn_matrix_decompose_Wpq}
\begin{pmatrix}
W_{11} & W_{12} & \dots & W_{1r}  \\
W_{21} & W_{22} & \dots & W_{2r}  \\
\vdots & \vdots & \ddots & \vdots \\
W_{r1} & W_{r2} & \dots & W_{rr}
\end{pmatrix},
\end{equation}
such that for $1 \leq p, q \leq r$:
\begin{enumerate}
\item if $p \neq q$, $W_{pq}$ is a $\bC-$valued $m_p n_p \times m_q n_q$ matrix, and $W_{pq} = -W_{qp}^*$;
\item if $p = q$, $W_{pp}$ is given by
\[
\begin{pmatrix}
W^{(p)}_{11} & W^{(p)}_{12} & \dots & W^{(p)}_{1m_{p}} \\
W^{(p)}_{21} & W^{(p)}_{22} & \dots & W^{(p)}_{2m_{p}} \\
\vdots & \vdots & \ddots & \vdots \\
W^{(p)}_{m_p 1} & W^{(p)}_{m_p 2} & \dots & W^{(p)}_{m_p m_{p}}
\end{pmatrix}, \]
where $$W^{(p)}_{11}, \dots, W^{(p)}_{m_p m_{p}} \in \mathfrak{su}(n_p),$$
 $$W^{(p)}_{11} + \dots + W^{(p)}_{m_p m_{p}} = 0,$$
 and  $$W^{(p)}_{i, j} = -(W^{(p)}_{j, i})^{*} \mbox{ for all }i, j.$$
\end{enumerate}.

For $1\le p\le q\le r$, let $\frg_\sigma^\perp(p,q)$ be the subspace of $\mathfrak{g}_\sigma^{\perp}$ consisting of matrices in the form \eqref{eqn_matrix_decompose_Wpq} such that $W_{ij}=0$ unless $(i,j)=(p,q)$ or $(q,p)$. Then $\frg_\sigma^\perp(p,q)$ is invariant under the action of $H_\sigma$. 

For $p=1,\cdots,r$, let
$$
\varphi_p: H_\sigma \to \U(m_p)
$$
be given by the restriction of $H_\sigma$ to $E(n_p)^{m_p}$. Then 
\begin{equation}
\label{eqn_ima_varphi_p_from_H_sigma}
\ima \varphi_p = 
\begin{cases}
\U(m_p) & \textrm{ if } r\ge 2,\\
\SU(m_p) &  \textrm{ if } r=1.
\end{cases}  
\end{equation}
Moreover, for $1\le p<q\le r$, the image of 
$$\varphi_p\times \varphi_q:H_\sigma\to \U(m_p)\times \U(m_q)$$
 is given by 
\begin{equation}
\label{eqn_ima_varphi_p_times_varphi_q_from_H_sigma}
\ima (\varphi_p \times \varphi_q)= 
\begin{cases}
\U(m_p) \times \U(m_q) & \textrm{ if } r\ge 3,\\
	\s \big(\U(m_p)^{n_p}\times \U(m_q)^{n_q}\big).
&  \textrm{ if } r=2,
\end{cases}  
\end{equation}
where the group $\s \big(\U(m_p)^{n_p}\times \U(m_q)^{n_q}\big)$ is defined by \eqref{eqn_def_S(U)}.

For $1\le p\le r$, let $V_p$ be the representation of $H_\sigma$ on $\mathfrak{su}(m_p)$  given by the composition of $\varphi_p$ and the adjoint action of $\U(m_p)$.
For $1\le p<q\le r$, let $V_{p,q}$ be the representation of $H_\sigma$ on  $\mat_{m_p\times m_q}(\bC)$, where the action of $h\in H_\sigma$ on $x\in \mat_{m_p\times m_q}(\bC)$ is given by $\varphi_p(h)\cdot  x\cdot \varphi_q(h)^{-1}$. 

Then the following lemma gives a complete description of the isotypic decomposition of $\frg_\sigma^\perp$ as an $H_\sigma$--representation.

\begin{Lemma}
	Suppose $1\le p< q\le r$. 
	\begin{enumerate}
		\item  $V_p$ and $V_{p,q}$ are irreducible representations of $H_\sigma$. 
		\item The representation of $H_\sigma$ on $\frg_\sigma^\perp(p,q)$ is given by the direct sum of $n_p\cdot n_q$ copies of  $V_{p,q}$.
		\item The representation of $H_\sigma$ on $\frg_\sigma^\perp(p,p)$ is given by the direct sum of $n_p^2$ copies of  $V_{p}$.
	\end{enumerate}
\end{Lemma}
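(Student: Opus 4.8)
The plan is to reduce the entire statement to the representation theory of the unitary groups $\U(m_p)$ and their products, for which the only nontrivial inputs are the images of $\varphi_p$ and of $\varphi_p\times\varphi_q$ recorded in \eqref{eqn_ima_varphi_p_from_H_sigma} and \eqref{eqn_ima_varphi_p_times_varphi_q_from_H_sigma}. Write $\bC^n=\bigoplus_{p=1}^r M_p\otimes U_p$ for the decomposition underlying \eqref{eqn_decompse_E_compute}, with $M_p\cong\bC^{m_p}$ the multiplicity space carrying the $\U(m_p)$--action through $\varphi_p$ and $U_p\cong\bC^{n_p}$ the $p$--th irreducible holonomy factor with trivial $H_\sigma$--action. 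Then $\frg_\sigma$ is exactly the $H_\sigma$--fixed subspace of $\mathfrak{su}(n)$ (the action on it is trivial by construction, and the reverse inclusion is immediate from Schur's lemma), so $\frg_\sigma^\perp$ is the sum of the nontrivial $H_\sigma$--isotypic components, and the blocks $\frg_\sigma^\perp(p,q)$ of \eqref{eqn_matrix_decompose_Wpq} furnish the $H_\sigma$--invariant splitting $\frg_\sigma^\perp=\bigoplus_{1\le p\le q\le r}\frg_\sigma^\perp(p,q)$. Here, for $p<q$, the block $\frg_\sigma^\perp(p,q)$ is $\Hom_\bC(M_q,M_p)\otimes_\bC\Hom_\bC(U_q,U_p)$ realized as a real $H_\sigma$--module through the entry $W_{pq}$ (the entry $W_{qp}=-W_{pq}^*$ being determined), while $\frg_\sigma^\perp(p,p)$ sits inside $\mathfrak{su}(M_p\otimes U_p)$ as its nontrivial $H_\sigma$--isotypic part.

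For parts (2) and (3) I would fix orthonormal bases of each $U_p$ and $U_q$ and reorganize the blocks accordingly. For $p<q$ this exhibits $\frg_\sigma^\perp(p,q)$ as the space of $n_p\times n_q$ arrays with entries in $\mat_{m_p\times m_q}(\bC)$, no relation among the entries, with $H_\sigma$ acting entrywise by $x\mapsto\varphi_p(h)\,x\,\varphi_q(h)^{-1}$; this is literally $V_{p,q}^{\oplus n_pn_q}$, giving (2). For $p=q$, the same reorganization writes a skew--Hermitian traceless operator on $M_p\otimes U_p$ as an array $(X^{ab})_{a,b=1}^{n_p}$ with $X^{ab}\in\Hom_\bC(M_p,M_p)$ subject to $(X^{ab})^*=-X^{ba}$ and $\sum_a\mathrm{tr}\,X^{aa}=0$; decomposing $\Hom_\bC(M_p,M_p)=\bC\,\mathrm{id}_{M_p}\oplus\mathfrak{sl}(m_p,\bC)$ as $\U(m_p)$--modules, the scalar arrays form precisely the $H_\sigma$--fixed part of this block (lying in $\frg_\sigma$), so $\frg_\sigma^\perp(p,p)$ is the space of arrays $(Y^{ab})$ with $Y^{ab}\in\mathfrak{sl}(m_p,\bC)$ and $(Y^{ab})^*=-Y^{ba}$. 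Writing $\mathfrak{sl}(m_p,\bC)=\mathfrak{su}(m_p)\oplus i\,\mathfrak{su}(m_p)$, on which the adjoint involution $*$ acts by $-1$ and $+1$ respectively, the relation $(Y^{ab})^*=-Y^{ba}$ separates into the symmetric arrays valued in the first copy and the antisymmetric arrays valued in the second; since both copies are isomorphic to $\mathfrak{su}(m_p)=V_p$ as $\U(m_p)$--modules, this yields $\binom{n_p+1}{2}+\binom{n_p}{2}=n_p^2$ copies of $V_p$, which is (3), and the count $n_p^2(m_p^2-1)=n_p^2\dim V_p$ confirms nothing was lost.

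For part (1), $V_p$ factors through $\varphi_p$, whose image is all of $\U(m_p)$ because $r\ge 2$ by \eqref{eqn_ima_varphi_p_from_H_sigma}, and the adjoint action of $\U(m_p)$ on $\mathfrak{su}(m_p)$ is irreducible over $\bR$ since $\mathfrak{su}(m_p)$ is a simple real Lie algebra (the case $m_p=1$, where $V_p=0$, is vacuous). For $V_{p,q}=M_p\otimes M_q^*$ I would first note, using \eqref{eqn_ima_varphi_p_times_varphi_q_from_H_sigma}, that the image of $\varphi_p\times\varphi_q$ contains $\SU(m_p)\times\SU(m_q)$, over which $M_p\otimes M_q^*$ is an external tensor product of irreducibles and hence irreducible over $\bC$; so $V_{p,q}$ is $\bC$--irreducible as an $H_\sigma$--module. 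It then remains to see that $V_{p,q}$ is of complex type, so that its underlying real module is still irreducible — this is the one point that genuinely needs care, because over $\SU(2)\times\SU(2)$ a tensor product of standard representations is of real type. The $\U(1)$--scalings inside $H_\sigma$ resolve this: the element acting as $e^{i\alpha}\mathrm{id}_{M_p}$ on $M_p$, $e^{i\beta}\mathrm{id}_{M_q}$ on $M_q$, and trivially on the other summands lies in $H_\sigma=\s(\U(m_1)^{n_1}\times\cdots\times\U(m_r)^{n_r})$ exactly when $m_pn_p\alpha+m_qn_q\beta\in 2\pi\bZ$, and along this subgroup $\alpha-\beta$ attains every value in $\bR/2\pi\bZ$; on the resulting circle $V_{p,q}$ and $\overline{V_{p,q}}$ carry the characters $\gamma\mapsto e^{i\gamma}$ and $\gamma\mapsto e^{-i\gamma}$, which are distinct, so $V_{p,q}\not\cong\overline{V_{p,q}}$ and $V_{p,q}$ is of complex type. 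Assembling the three parts proves the lemma; the main obstacle is precisely the real--versus--complex bookkeeping just described, everything else being a direct unwinding of \eqref{eqn_matrix_decompose_Wpq} together with \eqref{eqn_ima_varphi_p_from_H_sigma}--\eqref{eqn_ima_varphi_p_times_varphi_q_from_H_sigma}.
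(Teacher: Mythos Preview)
Your argument is correct and follows the same route as the paper, which simply records that irreducibility follows from \eqref{eqn_ima_varphi_p_from_H_sigma}--\eqref{eqn_ima_varphi_p_times_varphi_q_from_H_sigma} and that the multiplicities are read off from the block description \eqref{eqn_matrix_decompose_Wpq}. You have unpacked these steps carefully; in particular, your verification that $V_{p,q}$ is of complex type (via the diagonal circle in $H_\sigma$ on which $V_{p,q}$ and $\overline{V_{p,q}}$ have distinct characters) is a point the paper leaves implicit, and your organization of $\frg_\sigma^\perp(p,p)$ by $U_p$--indices together with the splitting $\mathfrak{sl}(m_p,\bC)=\mathfrak{su}(m_p)\oplus i\,\mathfrak{su}(m_p)$ gives a clean count of the $n_p^2$ copies of $V_p$.
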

\begin{proof}
The irreducibility of $V_p$ and $V_{p,q}$ follows from \eqref{eqn_ima_varphi_p_from_H_sigma} and \eqref{eqn_ima_varphi_p_times_varphi_q_from_H_sigma}. 
	The rest of the lemma is a straightforward consequence of the definition of $\frg_\sigma^\perp(p,q)$.
\end{proof}

\begin{Definition}
Suppose 
$$\sigma=((n_1, m_1), \dots, (n_r, m_r))\in\Sigma_n,$$
and let $V_p$ and $V_{p,q}$ be given as above.
	We say that $\sigma'\in\Sigma_n$ \emph{bifurcates from} $\sigma$, if at least one of the following holds:
	\begin{enumerate}
		\item There exist $p$ and $0\neq x\in V_p$, such that $\Stab(x)\subset H_\sigma$ is conjugate to $H_{\sigma'}$ in $\SU(n)$.
		\item There exist $p<q$ and $0\neq x\in V_{p,q}$, such that $\Stab(x)\subset H_\sigma$ is conjugate to $H_{\sigma'}$ in $\SU(n)$.
	\end{enumerate}
\end{Definition}

Notice that for every $x\in V_p$, there exists $h\in H_\sigma$ such that $h(x)$ is given by a diagonal matrix. Similarly, for every $x\in V_{p,q}$, there exists $h\in H_\sigma$, such that $h(x)\in \mat_{m_p\times m_q}(\bC)$ has the form 
$$\left(\begin{array}{cccc|ccc}  
\lambda_{1} & 0 & \dots & 0  & 0 & \dots & 0 \\   
0 & \lambda_{2} & \dots & 0  & 0 & \dots & 0 \\
\dots & \dots & \dots & \dots  & 0 & \dots & 0 \\
0 & 0 & \dots & \lambda_{m_p}  & 0 & \dots & 0 \\
\end{array}\right).
$$
Therefore the following lemma follows from a straightforward computation in linear algebra.
\begin{Lemma}
Suppose $\sigma = ((n_1, m_1), \dots, (n_r, m_r))\in\Sigma_n$. Then $\sigma'$ bifurcates from $\sigma$ if and only if $\sigma'$ is given by one of the following, after a permutation of the entries of $\sigma'$:
\begin{enumerate}
\item replacing a pair $(n_p, m_p)$ in $\sigma$ by a sequence $(n_p, m_{1}'), \dots, (n_p, m_{j}')$, such that $m_p = m_{1}' + \cdots m_{j}'$,
\item replacing two pairs $(n_p, m_p), (n_q, m_q)$ in $\sigma$, where $m_p\le m_q$, by a sequence 
$$(n_q, m_q - m_p), (n_p + n_q, m_{1}'), \dots, (n_p + n_q, m_{j}'),$$
 or 
 $$(n_q, m_q - m_p),(n_p + n_q, m_{1}'), \dots, (n_p + n_q, m_{j-1}'), (n_p, m_{j}'),$$
 such that $m_p = m_{1}' + \cdots m_{j}'$.
\end{enumerate}
\qed
\end{Lemma}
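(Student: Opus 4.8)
The plan is to unwind the definition of ``$\sigma'$ bifurcates from $\sigma$'' and reduce it to an explicit computation of point stabilizers in the two families of irreducible $H_\sigma$--representations identified in the previous lemma, namely the $V_p=\mathfrak{su}(m_p)$ (appearing inside $\frg_\sigma^\perp(p,p)$) and the $V_{p,q}=\mat_{m_p\times m_q}(\bC)$ for $p<q$ (appearing inside $\frg_\sigma^\perp(p,q)$). By that lemma these, together with the trivial representation, exhaust the irreducible constituents of $\frg_\sigma^\perp$, so it suffices to determine, for every nonzero $x$ in each $V_p$ and each $V_{p,q}$, which $H_{\sigma'}$ the subgroup $\Stab(x)\subset H_\sigma$ is conjugate to in $\SU(n)$, and then check that the resulting list of $\sigma'$ matches the two cases in the statement. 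Throughout I would identify a subgroup of $\SU(n)$ of the form $\prod_i\U(m_i')$ acting as $u_i\otimes\id_{n_i'}$ on blocks $\bC^{m_i'}\otimes\bC^{n_i'}$ (intersected with $\SU(n)$) with $H_{\sigma'}$ for the tuple $\sigma'=((n_1',m_1'),\dots)$, after reordering the entries to respect Definition \ref{def_sigma_n}.

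First I would treat $x\in V_p=\mathfrak{su}(m_p)$. Since $\varphi_p\colon H_\sigma\to\U(m_p)$ is surjective for $r\ge 2$ (and $V_p=0$ when $r=1$), conjugate $x$ to a diagonal element; as $x\neq 0$ it has at least two distinct eigenvalues, say with multiplicities $m_1',\dots,m_j'$ with $j\ge 2$ and $\sum_k m_k'=m_p$, and $\Stab_{\U(m_p)}(x)=\U(m_1')\times\cdots\times\U(m_j')$. Intersecting with $H_\sigma$ and decomposing $\bC^n$, the only change is that the $p$--th isotypic block $\bC^{m_p}\otimes\bC^{n_p}$ refines as $\bigoplus_k\bC^{m_k'}\otimes\bC^{n_p}$, all other blocks being untouched; hence $\Stab(x)$ is conjugate to $H_{\sigma'}$ with $\sigma'$ obtained from $\sigma$ by replacing $(n_p,m_p)$ with $(n_p,m_1'),\dots,(n_p,m_j')$. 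Conversely every such replacement with $j\ge2$ is realized by a suitable diagonal $x$. This is precisely case (1).

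Next, for $x\in V_{p,q}$, assume $m_p\le m_q$ (swap $p$ and $q$ otherwise). Bring $x$ to the normal form described just before the statement, with nonnegative entries $\lambda_1,\dots,\lambda_{m_p}$ in the first $m_p$ columns and zeros elsewhere; this is possible inside the $\ima(\varphi_p\times\varphi_q)$--orbit by the singular value decomposition, the determinant constraint present when $r=2$ being absorbable by an overall phase. Let $s=\rank x\ge 1$ and group the nonzero $\lambda_i$ by value, with multiplicities $m_1',\dots,m_{j'}'$, $\sum_k m_k'=s$. A direct computation with $AxC^{-1}=x$ shows that $\Stab(x)$ in $\U(m_p)\times\U(m_q)$ consists exactly of the pairs $\big(\diag(C_1,A_2),\diag(C_1,C_2)\big)$ with $C_1\in\prod_k\U(m_k')$, $A_2\in\U(m_p-s)$, $C_2\in\U(m_q-s)$: the off--diagonal blocks of $A$ and $C$ vanish, and the left and right unitaries are forced to agree, through the block--diagonal $C_1$, on the $s$--dimensional row/column space of $x$, while being free on the orthogonal complements. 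Decomposing $\bC^n$ under this group (intersected with $H_\sigma$), each factor $\U(m_k')$ fuses the piece $\bC^{m_k'}\otimes\bC^{n_p}$ of the $p$--block with the piece $\bC^{m_k'}\otimes\bC^{n_q}$ of the $q$--block into $\bC^{m_k'}\otimes\bC^{n_p+n_q}$, while the leftovers contribute $\bC^{m_p-s}\otimes\bC^{n_p}$ and $\bC^{m_q-s}\otimes\bC^{n_q}$. Thus $\Stab(x)$ is conjugate to $H_{\sigma'}$ where $\sigma'$ replaces $(n_p,m_p),(n_q,m_q)$ by $(n_p+n_q,m_1'),\dots,(n_p+n_q,m_{j'}'),(n_p,m_p-s),(n_q,m_q-s)$, discarding zero--multiplicity entries; the subcase $s=m_p$ gives the first displayed form in case (2) and $s<m_p$ the second, and conversely every such $\sigma'$ is realized by an appropriate normal--form $x$. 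Combining with the $V_p$ analysis and reordering entries gives the claimed equivalence.

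I expect the main obstacle to be the bookkeeping in the $V_{p,q}$ step: establishing that the off--diagonal blocks of both unitaries must vanish and that the surviving diagonal blocks are coupled precisely through $C_1$, and then tracking how these couplings merge and split the isotypic summands of $\bC^n$, especially in the non--full--rank case where the ``$(n_p,\cdot)$'' and ``$(n_q,\cdot)$'' leftovers both appear. The $V_p$ case, the ``conversely'' directions, and the reduction to the normal form are routine; the only other point deserving care is separating the behaviour of $\ima(\varphi_p\times\varphi_q)$ according to whether $r\ge 3$, $r=2$, or $r=1$, which affects at most the single overall determinant relation and not the combinatorial output.
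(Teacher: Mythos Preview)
Your approach is exactly the one the paper sketches: reduce to the diagonal normal form for $x\in V_p$ and to the singular--value normal form for $x\in V_{p,q}$, then read off $\Stab(x)$ block by block. Your stabilizer computations are correct in both cases.

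One point deserves attention. In the $V_{p,q}$ case with $s=\rank x<m_p$, your computation yields the replacement
\[
(n_p+n_q,m_1'),\dots,(n_p+n_q,m_{j'}'),\ (n_p,\,m_p-s),\ (n_q,\,m_q-s),
\]
with $\sum_k m_k'=s$. You then assert this ``gives the second displayed form in case~(2),'' but as literally written in the lemma that form has $(n_q,\,m_q-m_p)$ rather than $(n_q,\,m_q-s)$. A quick dimension check shows the lemma's second displayed form, as printed, does not preserve $n$: with $m_1'+\cdots+m_j'=m_p$ one gets
\[
n_q(m_q-m_p)+(n_p+n_q)(m_p-m_j')+n_p m_j'=n_pm_p+n_qm_q-n_qm_j',
\]
which is off by $n_qm_j'$. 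So your computation is right and the printed statement carries a typo; the entry $(n_q,\,m_q-m_p)$ should read $(n_q,\,m_q-m_p+m_j')$, equivalently $(n_q,\,m_q-s)$ in your notation. It would strengthen your write--up to flag this rather than to claim a match.
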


\subsection{A closed formula of $\SU(4)$ Casson invariant}
\label{subsec_SU(4)_computation}
We write down an explicit closed formula of $\SU(4)$ Casson invariant using the previous computations. 

The set $\Sigma_4$ has 5 elements. For each $\sigma\in \Sigma_4$, we list the irreducible components that may appear in the isotypic decomposition of the equivariant spectral flows on $\clC^{H_\sigma}$, and introduce a notation for each component of the isotypic decomposition.

\begin{enumerate}
\item If $\sigma=((4,1))$, the equivariant spectral flow is given by trivial representations of $\bZ / 4$. Denote the equivariant spectral flow and the equivariant index by $Sf_{(4,1)}$ and $\ind_{(4,1)}$ respectively.

\item If $\sigma= ((1,1), (3,1))$, then $H_\sigma\cong \U(1)$, and it is give by
$$\diag ( e^{i \alpha}, e^{i \beta}, e^{i \beta}, e^{i \beta} )$$
where $\alpha + 3 \beta \equiv 0 \mod 2\pi.$
 Taking $\beta$ to be the coordinate on $\U(1)$, the Lie algebra $\mathfrak{su}(4)$ is decomposed as $\mathfrak{s}(\mathfrak{u}(1) \oplus \mathfrak{u}(3)) \oplus \bC^{3}$ such that $\U(1)$ acts trivially on the first component and acts by $e^{4 i \beta}$ on each $\bC$ component. Write the latter irreducible representation as $(\U(1)_{((1,1),(3,1)},  \bC, \rho_{4})$. 
 
 Decompose the spectral flow $Sf_{H_\sigma}$ as
 $$Sf_{(1,1), (3,1)} \oplus  Sf_{(1,1), (3,1)^{\perp}},$$
 where $Sf_{(1,1), (3,1)}$ is given by the trivial components, and $Sf_{(1,1), (3,1)^{\perp}}$ is given by the components that are isomorphic to $(\U(1)_{((1,1),(3,1)},  \bC, \rho_{4})$. 
  Decompose the equivariant index \eqref{eqn_def_equiv_index_gauge} similarly as $\ind_{(1,1), (3,1)} \oplus \ind_{(1,1), (3,1)^{\perp}}$.

\item If $\sigma = ((2,1), (2,1))$, then $H_\sigma\cong \U(1)$, and it is given by 
$$\diag ( e^{i \alpha}, e^{i \alpha}, e^{i \beta}, e^{i \beta} )$$ with $\alpha + \beta \equiv 0 \mod 2\pi$. Taking $\beta$ to be the coordinate on $\U(1)$, the Lie algebra $\mathfrak{su}(4)$ is decomposed as $\mathfrak{s}(\mathfrak{u}(2) \oplus \mathfrak{u}(2)) \oplus \bC^{4}$ such that $\U(1)$ acts trivially on the first component and acts by $e^{2 i \beta}$ on each $\bC$ component. Write the latter irreducible representation as $(\U(1)_{((2,1), (2,1))},  \bC, \rho_{2})$. 

Decompose the spectral flow $Sf_{H_\sigma}$ as 
$$Sf_{(2,1), (2,1)} \oplus Sf_{(2,1), (2,1)^{\perp}},$$
 where $Sf_{(2,1), (2,1)}$ is given by the trivial components, and $Sf_{(2,1), (2,1)^{\perp}}$ is given by the components that are isomorphic to $(\U(1)_{((2,1), (2,1))},  \bC, \rho_{2})$. 
Decompose the equivariant index \eqref{eqn_def_equiv_index_gauge} similarly as $\ind_{(2,1), (2,1)} \oplus \ind_{(2,1), (2,1)^{\perp}}$.

\item If $\sigma=(2,2)$, then $H_\sigma\cong \SU(2)$, and  it consists of the matrices of determinant $1$ with the form
\[ \begin{pmatrix}
u_{11} \id & u_{12} \id \\
u_{21} \id & u_{22} \id \\
\end{pmatrix}, \]
where $\id$ is the $2 \times 2$ identity matrix and
\[ \begin{pmatrix}
u_{11} & u_{12} \\
u_{21} & u_{22} \\
\end{pmatrix} \]
is a unitary $2 \times 2$ matrix. The Lie algebra $\mathfrak{su}(4)$ is decomposed as $\mathfrak{l}_{(2,2)} \oplus \mathfrak{su}(2)^{\oplus 4}$ such that $H_\sigma$ acts on the first component trivially and acts on each $\mathfrak{u}(2)$ component by restricting the adjoint action of $\U(2)$. Write the latter irreducible representation as $(\SU(2),  \mathfrak{su}(2), \Ad)$. 

Decompose the equivariant spectral flow $Sf_{H_\sigma}$ as $Sf_{(2,2)} \oplus Sf_{(2,2)^{\perp}}$, 
where $Sf_{(2,2)}$ is given by the trivial components, and $Sf_{(2,2)^{\perp}}$ is given by the components that are isomorphic to $(\SU(2),  \mathfrak{su}(2), \Ad)$. 
Decompose the equivariant index \eqref{eqn_def_equiv_index_gauge} similarly as  $\ind_{(2,2)} \oplus \ind_{(2,2)^{\perp}}$.

\item If $\sigma=((1, 2), (2,1))$, then $H_\sigma$   consists of matrices of the form
\[ \begin{pmatrix}
u_{11}  & u_{12} & 0 & 0 \\
u_{21}  & u_{22} & 0 & 0 \\
0 & 0 & e^{i \alpha} & 0 \\
0 & 0 & 0 & e^{i \alpha} \\
\end{pmatrix} \in\SU(4).
\]
 The Lie algebra $\mathfrak{su}(4)$ is decomposed as $\mathfrak{l}_{((1, 2), (2,1))} \oplus \mathfrak{su}(2) \oplus \mat_{2 \times 1}(\bC)^{\oplus 2}$. Write the last irreducible representation as $(\s(\U(2) \times \U(1)),  \mat(\bC)_{2 \times 1}, \mult)$. 
 
Decompose the equivariant spectral flow $Sf_{H_\sigma}$ as 
$$Sf_{((1, 2), (2,1))} \oplus Sf_{((1, 2), (2,1))^{\perp}}^{(1)} \oplus Sf_{((1, 2), (2,1))^{\perp}}^{(2)},$$
where $Sf_{((1, 2), (2,1))}$ is given by the trivial components, $Sf_{((1, 2), (2,1))^{\perp}}^{(1)}$ is given by the components that are isomorphic to the action of $H_\sigma$ on $\mathfrak{su}(2)$, and $ Sf_{((1, 2), (2,1))^{\perp}}^{(2)}$ is given by the components that are isomorphic to 
$$(\s(\U(2) \times \U(1)),  \mat(\bC)_{2 \times 1}, \mult).$$
 Decompose the equivariant index \eqref{eqn_def_equiv_index_gauge}  as
 $$\ind_{((1, 2), (2,1))} \oplus  \ind_{((1, 2), (2,1))^{\perp}}^{(1)} \oplus  \ind_{((1, 2), (2,1))^{\perp}}^{(2)}.$$
\end{enumerate}

The possible irreducible bifurcations on $\clC$ are given by
$$ ((1,1), (3,1)) \rightarrow ((4,1)), \quad ((2,1), (2,1)) \rightarrow ((4,1)), $$
$$ ((2,2)) \rightarrow ((2,1), (2,1)),  \quad ((1, 2), (2,1)) \rightarrow ((1,1), (3,1)). $$
Note that the isotypical piece $\mathfrak{su}(2)$ corresponding to type $((1, 2), (2,1))$ would induce the bifurcation $((1, 2), (2,1)) \to ((1, 1), (1, 1), (2,1))$, but the latter does not exist for perturbed-flat $\SU(4)$--connections over an integer homology sphere for small perturbations.
In the case of $\SU(4)$, the stabilizers act transitively on the unit spheres of the relevant irreducible representations. Therefore, the only equivariant Morse functions on the unit spheres are the constant functions.

Let $\rho_0$ denote the trivial representations, and let $0$ denote the zero representations. Then the following are the corresponding of values of $\xi_H(V,0,g)$ for the bifurcations above: 
$$[\U(1)_{((1,1),(3,1))},  \bC, \rho_{4}] - [\U(1)_{((1,1),(3,1))},  0]- [\bZ/4, \bR, \rho_0],$$
 $$ [\U(1)_{((2,1),(2,1))},  \bC, \rho_{2}]- [\U(1)_{((2,1),(2,1))},  0] - [\bZ/4, \bR, \rho_0], $$
$$[\SU(2),  \mathfrak{su}(2), \Ad] -[\SU(2),  0]- [\U(1)_{((2,1),(2,1))}, \bR, \rho_0], $$
$$[\s(\U(2) \times \U(1)),  \mat(\bC)_{2 \times 1}, \mult] - [\s(\U(2) \times \U(1)), 0] - [\U(1)_{((1,1),(3,1))}, \bR, \rho_0].$$

Suppose $H$ is a compact Lie group, let
$$
\dim: \clR(H)\to \bZ
$$
be the map given by taking formal dimensions. Then the map
extends linearly to 
$$\dim:\clR(H)\otimes \bR\to\bR,$$
and hence it defines a map
$$
\dim: \widetilde{\clR}_G \to \bR
$$
for any compact Lie group $G$. 
 
 Choose $\pi \in \clP$ to be a generic small perturbation and let $\mathcal{M}_{\pi}$ be the moduli space of $\pi$--flat connections. Then the following formula defines an $\SU(4)$--Casson invariant:
\begin{align*}
&\sum_{[B] \in \mathcal{M}_{\pi}(4,1)} (-1)^{\dim Sf_{(4,1)}(B)} \\
+ &\sum_{[B] \in \mathcal{M}_{\pi}((1,1), (3,1))} (-1)^{\dim Sf_{(1,1), (3,1)}(B)-1} \cdot\frac12  \dim \ind_{(1,1), (3,1)^{\perp}}(B) 
\\
+ &\sum_{[B] \in \mathcal{M}_{\pi}((2,1), (2,1))} (-1)^{\dim Sf_{(2,1), (2,1)}(B)-1} \cdot \frac12 \dim \ind_{(2,1), (2,1)^{\perp}}(B) 
\\
+ & \quad \frac12\,\sum_{[B] \in \mathcal{M}_{\pi}(2,2)} (-1)^{\dim  Sf_{(2,2)}(B)} \cdot \frac13 \dim  \ind_{(2,2)^{\perp}}(B) \cdot \big(\frac13 \dim  \ind_{(2,2)^{\perp}}(B) +1\big)
\\
+ & \quad \frac12\,\sum_{[B] \in \mathcal{M}_{\pi}((1, 2), (2,1))} (-1)^{ \dim Sf_{((1, 2), (2,1))}(B)-1} \cdot \frac14 \dim  \ind_{((1, 2), (2,1))^{\perp}}^{(2)}(B)
\\
& \qquad \qquad \cdot \big(\frac14 \dim  \ind_{((1, 2), (2,1))^{\perp}}^{(2)}(B)+1\big).
\end{align*}

\begin{remark}
	The extra $(-1)$'s on the exponents of $(-1)$ in the above formula comes from the trivial components of the term $-[\ker d_{g(B)}]$ in Definition \ref{def_equivariant_ind_gauge}
\end{remark}

\bibliographystyle{amsalpha}
\bibliography{references}

\end{document}